\documentclass[10pt]{amsart}

\usepackage[a4paper,margin=28mm]{geometry}
\usepackage{amsmath,amssymb}
\usepackage{mathrsfs}
\usepackage{tikz-cd}
\usepackage{microtype}
\usepackage{xcolor}
\usepackage{hyperref}
\numberwithin{equation}{section}

\theoremstyle{plain}
\newtheorem{theorem}{Theorem}[section]
\newtheorem{proposition}[theorem]{Proposition}
\newtheorem{lemma}[theorem]{Lemma}
\newtheorem{corollary}[theorem]{Corollary}
\theoremstyle{definition}
\newtheorem{definition}[theorem]{Definition}
\theoremstyle{remark}
\newtheorem{remark}[theorem]{Remark}
\newtheorem{example}[theorem]{Example}

\definecolor{AMSDarkBlue}{RGB}{0,51,102}
\hypersetup{
  colorlinks=true,
  linkcolor=AMSDarkBlue,
  citecolor=AMSDarkBlue,
  urlcolor=AMSDarkBlue,
  breaklinks=true,
  bookmarksnumbered=true,
  bookmarksopen=true,
  bookmarksopenlevel=2,
  bookmarksdepth=3,
  pdfpagemode=UseOutlines,
  pdftitle={Indecomposable Rational Functions over Finite Fields with Galois Closure of Genus One: Reconstruction and Exceptionality},
  pdfauthor={Xiang Fan}
}

\begin{document}

\title[Genus-one reconstruction and exceptionality]{Indecomposable Rational Functions over Finite Fields with Galois Closure of Genus One: Reconstruction and Exceptionality}
\author{Xiang Fan}
\address{School of Mathematics, Sun Yat-sen University, Guangzhou 510275, China}
\email{fanx8@mail.sysu.edu.cn}
\subjclass[2020]{Primary 14H30; Secondary 14G15, 11T06, 14H52}
\keywords{permutation rational functions, exceptional rational functions, genus-one Galois closures, genus-zero Galois closures, exceptional covers, elliptic isogenies, arithmetic monodromy, finite fields}

\begin{abstract}
Let $k=\mathbb F_q$.  We reconstruct every $k$-indecomposable
$g\in k(X)$ of degree greater than one whose normal closure has genus one.
Such a map is automatically separable and, after independent degree-one
changes of the source and target coordinates over $k$, arises from a separable
equivariant isogeny between elliptic curves equipped with compatible finite
group actions stable under Frobenius conjugation.  In particular,
$\deg g=\ell$ or $\ell^2$ for a prime $\ell$, with
$\ell\ne\operatorname{char}k$ in the latter case.  More generally, every
separable rational function with genus-one Galois closure admits a canonical
factorization class, modulo degree-one changes of the intermediate coordinates
over $k$, determined by the intrinsic translation subgroup of its geometric
monodromy group; every $k$-indecomposable factor of the remaining map has
Galois closure of genus zero.  For every equivariant-isogeny quotient and every
finite extension $k_r/k$, the same exact finite-kernel condition characterizes
both permutation of $\mathbf P^1(k_r)$ and exceptionality over $k_r$.  The
finite kernel and its induced Frobenius and linear symmetry actions also
determine the arithmetic and geometric monodromy permutation groups,
decomposition classes, and the exact periodic set of permutation extension
degrees, including its least period and limiting proportion.  Together with
the corresponding genus-zero theorem, these results yield permutation if and
only if exceptionality over every finite extension for every separable rational
function whose Galois closure has genus at most one; under $k$-decomposition,
the common extension-degree set is the intersection of the corresponding sets
for the factors.
\end{abstract}

\maketitle

\section{Introduction}

Let $k=\mathbb F_q$ and fix an algebraic closure $\bar k$.  Choose
$\mathbf x$ transcendental over $\bar k$, and throughout work inside one fixed
algebraic closure of $\bar k(\mathbf x)$.  Write $\operatorname{PGL}_2(k)$ for
the group of degree-one rational functions over $k$.  We call
$f,g\in k(X)$ \emph{$k$-M\"obius equivalent} if
\[
 g=\alpha\circ f\circ\beta
 \qquad\text{for some }\alpha,\beta\in\operatorname{PGL}_2(k).
\]
Thus the source and target coordinates may be changed independently.  We call
$f\in k(X)$ \emph{$k$-decomposable} if
$f=u\circ v$ for some $u,v\in k(X)$ with $\deg u,\deg v>1$; otherwise $f$ is
\emph{$k$-indecomposable}.  We call $f$ \emph{geometrically indecomposable} if
it is $\bar k$-indecomposable.

For every nonconstant $h\in k(X)$, let $\Omega_h$ denote the normal closure of
$k(\mathbf x)/k(h(\mathbf x))$ inside this fixed algebraic closure of
$\bar k(\mathbf x)$, put
\[
 \kappa_h:=\Omega_h\cap\bar k,
\]
and let $C_h/\kappa_h$ be the smooth projective curve with function field
$\Omega_h$.  By the \emph{genus of the normal closure of $h$} we mean the
genus of $C_h$.  Put
\[
 \overline{\Omega}_h:=\Omega_h\bar k.
\]
Extension of constants does not change the degree of the rational-function
extension, so $\overline{\Omega}_h$ is the normal closure of
$\bar k(\mathbf x)/\bar k(h(\mathbf x))$; moreover,
$(C_h)_{\bar k}$ has the same genus as $C_h$.  A nonconstant $h$ is
\emph{separable} if
$k(\mathbf x)/k(h(\mathbf x))$ is separable; equivalently,
$h'(X)\not\equiv0$.  For separable $h$, $\Omega_h$ and
$\overline{\Omega}_h$ are respectively its arithmetic and geometric Galois
closures.  Their Galois groups are the arithmetic and geometric monodromy
groups, in the standard cover terminology of \cite[\S1.1, p.~371]{Fri05}.

We use \emph{isogeny} for a nonzero morphism of elliptic curves sending origin
to origin, i.e. the nonzero case of the convention in
\cite[Chap.~III, \S4, p.~66]{Sil09}.  Our first main result reconstructs, in
arbitrary characteristic, every $k$-indecomposable map of degree greater than
one whose normal closure has genus one from the finite-field elliptic-isogeny
construction in Definition~\ref{def:equivariant-isogeny-datum}.  Our second
main result gives an exact finite-kernel criterion for permutation and
exceptionality over every finite extension.  The same finite data determine
monodromy, decomposition, and the full arithmetic of permutation extension
degrees.

The construction is summarized by the commutative diagram
\[
\begin{tikzcd}[column sep=large]
 E \arrow[r,"\varphi"] \arrow[d,"\pi"']
   & E' \arrow[d,"\pi'"] \\
 E/\Lambda \arrow[r,"f"']
   & E'/\Lambda .
\end{tikzcd}
\]
Every automorphism $\gamma$ of an elliptic curve $E$ admits a unique affine
decomposition
\[
 \gamma_E(P)=u_{\gamma,E}(P)+a_{\gamma,E},
 \qquad
 u_{\gamma,E}\in\operatorname{Aut}_{\bar k}(E,\mathrm O_E),
 \quad a_{\gamma,E}=\gamma_E(\mathrm O_E).
\]
The assignment $\gamma\mapsto u_{\gamma,E}$ is a homomorphism.

\begin{definition}[Equivariant isogeny datum]
\label{def:equivariant-isogeny-datum}
An \emph{equivariant isogeny datum} over $k$ consists of elliptic curves
$E,E'/k$, a finite abstract group $\Lambda$ with prescribed faithful actions
on $E_{\bar k}$ and $E'_{\bar k}$ by curve automorphisms, and a separable
$\Lambda$-equivariant $k$-isogeny $\varphi:E\to E'$, such that the prescribed
action on $E$ is invariant under conjugation by the $q$-power bijection of
$E(\bar k)$ and the image of $\gamma\mapsto u_{\gamma,E}$ is
nontrivial.
\end{definition}

Lemmas~\ref{lem:target-frobenius} and~\ref{lem:quotient} show that both
quotient curves in the diagram are defined over $k$ and are $k$-isomorphic to
$\mathbf P^1$.  Equivariance therefore determines a unique induced
$k$-morphism $f:E/\Lambda\to E'/\Lambda$ satisfying
$\pi'\circ\varphi=f\circ\pi$, with $\deg f=\deg\varphi$.  Its
$k$-M\"obius equivalence class is intrinsic to the datum.  For $m\geqslant1$,
let $C_m$ denote the cyclic group of order $m$.

The construction is exhaustive for degree-greater-than-one $k$-indecomposable
maps whose normal closure has genus one.

\begin{theorem}[Finite-field reconstruction from a genus-one normal closure]
\label{thm:intro-reconstruction}
Let $g\in k(X)$ have degree greater than one and be $k$-indecomposable.
Suppose that the normal closure of $g$ has genus one.  Then $g$ is
automatically separable and is $k$-M\"obius equivalent to the induced quotient
map associated with an equivariant isogeny datum over $k$.

For every equivariant-isogeny datum obtained by this reconstruction, put
\(\mathcal N:=\ker\varphi(\bar k).\)  Then, for a prime $\ell$, the isogeny kernel and degree satisfy
\[
 \mathcal N\cong C_\ell^e,
 \qquad e\in\{1,2\},
 \qquad \deg g=\ell^e,
 \qquad e=2\Longrightarrow \ell\ne\operatorname{char}k.
\]
The induced symmetry action on $\mathcal N$ is faithful, and the subgroup of
$\operatorname{Aut}(\mathcal N)$ generated by this symmetry action and the
action induced by $q$-power Frobenius acts \emph{irreducibly}: there is no
nonzero proper invariant $\mathbb F_\ell$-subspace of $\mathcal N$.
\end{theorem}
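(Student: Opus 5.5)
We work in the Galois-theoretic setting of the introduction. Let $\Omega=\Omega_g$, let $C=C_g$ be the genus-one curve over $\kappa=\kappa_g$, and let $G=\operatorname{Gal}(\Omega/k(g(\mathbf x)))$ be the arithmetic monodromy group, with normal subgroup $\bar G=\operatorname{Gal}(\overline\Omega/\bar k(g(\mathbf x)))$ and $G/\bar G\cong\operatorname{Gal}(\kappa/k)$.

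\textbf{Step 1: separability.} Suppose $g$ is inseparable. Since it is $k$-indecomposable, it must be purely inseparable of degree $p$, that is, $g$ is $k$-M\"obius equivalent to $X^p$. In that case the normal closure is $k(\mathbf x)$ itself, which has genus zero, not one. So $g$ is separable, and $\Omega$ really is a Galois closure.

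\textbf{Step 2: the elliptic structure.} Pick a point of $C_{\bar k}$ so that it becomes an elliptic curve $E'$. Every automorphism of $C_{\bar k}$ decomposes as $u_\gamma+a_\gamma$. Let $T\trianglelefteq\bar G$ be the translation subgroup, namely the kernel of $\gamma\mapsto u_\gamma$. It is characteristic in $\bar G$, hence normal in $G$. The quotient $C_{\bar k}/\bar G\cong\mathbf P^1$ has genus zero, so the image of $\bar G$ in $\operatorname{Aut}(E',\mathrm O)$ is nontrivial. Now let $H\le\bar G$ be the stabilizer of the intermediate field $\bar k(\mathbf x)$. Then:
\begin{itemize}
\item $C\to\mathbf P^1_{\mathbf x}$ is the quotient by $H$;
\item since $\Omega$ is the normal closure, $H$ is core-free;
\item $k$-indecomposability says $H\cdot\operatorname{Frob}$ is maximal among Frobenius-stable overgroups, i.e.\ $H$ is maximal among $G$-invariant intermediate subgroups in the arithmetic sense.
\end{itemize}
Consider $HT$, which is Frobenius-stable. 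There are two cases.
\begin{itemize}
\item If $HT=H$, then $T\le H$. Since $T$ is normal and $H$ is core-free, $T=1$, so $\bar G$ embeds into the finite group $\operatorname{Aut}(E',\mathrm O)$, which fixes a point. I would rule this case out, or reduce it, using the genus of $C/H$: $\bar G$ would then be cyclic of order at most $6$ acting with a fixed point. Here I expect to invoke an earlier lemma showing that such configurations force genus zero for the normal closure or a decomposition.
\item Otherwise $HT=\bar G$ by maximality. Then $\Lambda:=H$ acts on $E:=C_{\bar k}$, and $\mathcal N:=T\cap H$ vanishes, because $T\cap H$ is normal in $HT=\bar G$ and lies in the core-free $H$.
\end{itemize}
So in the main case $H\cong\bar G/T$ is a complement, and $T$ is a finite group of translations. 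Put $E=C_{\bar k}$ and $E'=E/T$, write $\varphi:E\to E'$ for the separable isogeny with kernel $T$, and let $\Lambda=H$ act on both curves. This gives the diagram with $f=g$ up to M\"obius changes. Rationality over $k$ comes from Frobenius-stability of the data together with Lemmas~\ref{lem:target-frobenius} and~\ref{lem:quotient}. I expect to reindex roles so that $\mathcal N=\ker\varphi$ corresponds to $T$ and $\deg g=[\bar G:H]=|T|$. Separability holds because translations by points in $T$ are automorphisms.

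\textbf{Step 3: the kernel structure.} Intermediate fields between $k(\mathbf x)$ and $k(g)$ correspond to $\Lambda$- and Frobenius-stable subgroups $T'\le T$, each giving a factorization through $E/T'$. Indecomposability over $k$ therefore says $T$ has no nonzero proper subgroup stable under both $\Lambda$ and Frobenius. Taking $T'=\ell T$ and $T'=T[\ell]$ for a prime $\ell\mid |T|$ shows $T$ is elementary abelian $\ell$-torsion. Since $T\subset E[\ell]$, we get $T\cong C_\ell^e$ with $e\le2$, and $e=2$ forces $\ell\ne p$ because $E[p]$ has rank at most one. Irreducibility is exactly the same statement. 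Faithfulness of the $\Lambda$-action on $\mathcal N$ follows because an element acting trivially on $T$ commutes with $T$, so it lies in a normal subgroup contained in $H$, and that subgroup is trivial.

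\textbf{Main obstacle.} The hardest step is Step 2: excluding the case $T=1$ (or showing it leads to a datum after re-choosing the curve), and verifying that the Frobenius-conjugation invariance and the nontriviality of $u$ hold over $k$, including when $\kappa\ne k$. I would first try to handle it via the twisting and descent lemmas cited before the theorem.
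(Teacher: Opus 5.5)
\textbf{Your approach matches the paper's, but two essential steps are missing.} Your skeleton is the paper's. Step~1 is Lemma~\ref{lem:inseparable-indecomposable}. Once $T\neq1$ is known, your main case ($HT=\bar G$ by indecomposability, $T\cap H=1$ by core-freeness, $\deg g=\lvert T\rvert$) and your Step~3 follow the paper's proof. Using $T[\ell]$ to get exponent $\ell$ is a slightly quicker substitute for the paper's Sylow and $\ell\mathscr T$ argument. However, the two points you defer are exactly the nontrivial ingredients, and you supply neither.

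\textbf{First gap: the case $T=1$.} No earlier lemma covers it. It is precisely Proposition~\ref{prop:nontrivial-translation-subgroup}, which has to be proved. Your sketch that $\bar G$ ``would be cyclic of order at most $6$'' is false in characteristics $2$ and $3$ at $j=0$, where $\operatorname{Aut}_{\bar k}(E,\mathrm O)$ is nonabelian of order $24$ or $12$.

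What is needed runs as follows. When $T=1$, linear parts embed $\bar G$ in $U=\operatorname{Aut}_{\bar k}(E,\mathrm O)$; identify $\bar G$ with its image. If $U$ is cyclic, then $\bar G$ is abelian, so $H\trianglelefteq\bar G$. Core-freeness then gives $H=1$, and $\overline\Omega_g=\bar k(\mathbf x)$ would be rational, a contradiction.

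In the enlarged cases, use the unique, hence central, involution $z$ of $U$ (Lemma~\ref{lem:small-char-involution}). If $\lvert H\rvert$ is even, then Cauchy gives $z\in H$, and $\langle z\rangle$ is a nontrivial normal subgroup of $\bar G$ inside $H$. If $\lvert H\rvert$ is odd, then $\lvert H\rvert=3$, so $\lvert\bar G\rvert\in\{6,12,24\}$ is even. Hence $z\in\bar G\setminus H$, and $z$ normalizes $H$. Since $N_{\bar G}(H)$ is Frobenius-stable, indecomposability forces $N_{\bar G}(H)=\bar G$, again contradicting core-freeness.

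Indecomposability is genuinely used here. Take $\operatorname{SL}_2(\mathbb F_3)\cong\operatorname{Aut}_{\bar k}(E,\mathrm O)$ acting on $y^2+y=x^3$ in characteristic $2$, with $H$ of order $3$. This yields a decomposable degree-$8$ map whose genus-one Galois closure contains no translations.

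\textbf{Second gap: descent to $k$.} Lemmas~\ref{lem:quotient} and~\ref{lem:target-frobenius} take as input an elliptic curve already defined over $k$ with a Frobenius-stable action. They do not produce a $k$-model of $C_{\bar k}$, which is only given over $\kappa_g$. Choosing an arbitrary point of $C_{\bar k}$ as origin gives only an elliptic curve over $\bar k$.

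The missing construction is this:
\begin{itemize}
\item Lift the $q$-power Frobenius to $\widetilde H:=\operatorname{Aut}_{k(\mathbf x)}(\overline\Omega_g)$.
\item Extend it to a continuous section $\Sigma$ of $\widetilde H\to\operatorname{Gal}(\bar k/k)\cong\widehat{\mathbb Z}$.
\item Take $\overline\Omega_g^{\Sigma}$ and $\overline\Omega_g^{T\Sigma}$ as $k$-forms of $C_{\bar k}$ and $C_{\bar k}/T$.
\end{itemize}
Since $\Sigma\subseteq\widetilde H$ normalizes $H$ and $T$, the $H$-actions are Frobenius-stable. The $H$-quotients have function fields $\overline\Omega_g^{\widetilde H}=k(\mathbf x)$ and $\overline\Omega_g^{T\widetilde H}=k(g(\mathbf x))$. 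A $k$-rational origin exists by the Hasse--Weil bound, and choosing it makes the quotient map a $k$-isogeny. This $k$-structure is also what gives meaning to the Frobenius action on $\mathcal N$ in your Step~3.

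\textbf{A smaller error.} $T$ need not be characteristic in $\bar G$. For $E[3]\rtimes C_3$ on a $j=0$ curve in characteristic $\neq3$, $\bar G$ is the Heisenberg group of order $27$. Its four subgroups of order $9$ are permuted transitively by its automorphisms. Normality of $T$ in the arithmetic group holds for a different reason: conjugation by semilinear automorphisms preserves $\operatorname{Trans}(C)$.
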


No exceptionality hypothesis is used in Theorem~\ref{thm:intro-reconstruction},
and no prime-to-characteristic ramification hypothesis is imposed.  More
generally, Theorem~\ref{thm:intrinsic-translation-factorization} associates to
every separable map whose Galois closure has genus one a factorization class in
which factorizations differing by degree-one changes of the intermediate
coordinates over $k$ are identified.  This class is determined by the intrinsic
translation subgroup of its geometric monodromy group; every
$k$-indecomposable factor of the remaining map has Galois closure of genus
zero.  In characteristic greater than $3$, the remaining map itself has Galois
closure of genus zero.

For a finite field $K$, write $\mathbf P^1(K)=K\cup\{\infty\}$.  A rational
function $h\in K(X)$ is a \emph{permutation rational function over $K$} if it
induces a bijection of $\mathbf P^1(K)$, and it is \emph{exceptional over $K$}
if its base change is a permutation over infinitely many finite extensions of
$K$.  The two notions need not agree for arbitrary rational functions over the
ground field: there are nonexceptional permutation rational functions of
degree $4$ over small finite fields \cite[Theorem~1.4]{DZ22}.

\begin{definition}[Stability under Frobenius conjugation]
A prescribed faithful $\Lambda$-action on $E_{\bar k}$ by curve automorphisms
is \emph{stable under $q$-power Frobenius conjugation} if its image in
$\operatorname{Aut}_{\bar k}(E)$ is invariant under conjugation by the
$q$-power bijection of $E(\bar k)$.
\end{definition}

For an equivariant isogeny datum, write $\mathcal F_E:E\to E$ for the
$q$-power Frobenius endomorphism and also for its induced bijection on
$E(\bar k)$, and put
\[
 \mathcal N:=\ker\varphi(\bar k),
 \qquad k_r:=\mathbb F_{q^r},
 \qquad L_E:=\{u_{\gamma,E}:\gamma\in\Lambda\}.
\]
For a finite set $S$, write $\lvert S\rvert$ for its cardinality.

\begin{theorem}[Kernel criterion and permutation--exceptionality equivalence]
\label{thm:intro-kernel}
Let $f$ be associated with an equivariant isogeny datum over $k$.  For every
$r\geqslant1$, the following are equivalent:
\begin{enumerate}
\item[\textup{(i)}] $f$ permutes $\mathbf P^1(k_r)$;
\item[\textup{(ii)}] $\mathcal N\cap\ker(\mathcal F_E^r-u_{\gamma,E})=\{\mathrm O_E\}$
for every $\gamma\in\Lambda$ (equivalently, for every $u\in L_E$);
\item[\textup{(iii)}] $f$ is exceptional over $k_r$.
\end{enumerate}
\end{theorem}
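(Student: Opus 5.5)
We will prove (i)$\Rightarrow$(ii)$\Rightarrow$(iii)$\Rightarrow$(i). The last implication is immediate once we know that exceptionality over $k_r$ is a property shared by all $k_{rs}$ in a way compatible with (ii). We first translate permutation into a statement about $k_r$-rational fibres. Identify $E/\Lambda$ and $E'/\Lambda$ with $\mathbf P^1$ over $k$ via Lemmas~\ref{lem:target-frobenius} and~\ref{lem:quotient}. A point $y\in\mathbf P^1(k_r)=(E'/\Lambda)(k_r)$ lifts to some $Q\in E'(\bar k)$, and $Q$ satisfies $\mathcal F_{E'}^r(Q)=\delta_{E'}(Q)$ for some $\gamma\in\Lambda$. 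This uses the Frobenius stability of the action, transported to $E'$ via the separable equivariant isogeny $\varphi$. Since $\varphi$ is surjective on $\bar k$-points, $Q=\varphi(P)$. The points of $f^{-1}(y)$ are the orbits $\Lambda P'$ with $P'\in P+\mathcal N$. Such an orbit is $k_r$-rational exactly when $\mathcal F_E^r(P')\in\Lambda P'$.

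Both sides have the same cardinality $|\mathbf P^1(k_r)|$, so $f$ permutes $\mathbf P^1(k_r)$ iff $f$ is injective on $k_r$-points, iff every rational fibre has exactly one rational point. We count twisted points: for each $\gamma\in\Lambda$, let $E^{(\gamma)}_r=\{P:\mathcal F_E^r(P)=\gamma_E(P)\}$. Write $\gamma_E=u+a$. The map $P\mapsto\mathcal F_E^r(P)-u(P)$ is an isogeny $\psi_u=\mathcal F_E^r-u$, which is nonzero because $\mathcal F^r$ is inseparable while $u$ is an automorphism. Hence $E^{(\gamma)}_r$ is a coset of $\ker\psi_u$, of size $\deg\psi_u$. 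The analogous set on $E'$ is a coset of $\ker(\mathcal F_{E'}^r-u')$, where $u'$ corresponds to $u$ under $\varphi$. Since $\varphi\circ\psi_u=\psi'_{u'}\circ\varphi$ and $\deg\psi_u=\deg\psi'_{u'}$ (isogenous curves give equal degrees), the map $\varphi$ sends $E^{(\gamma)}_r$ to $E'^{(\gamma)}_r$. Its fibres are cosets of $\mathcal N\cap\ker\psi_u$. So $\varphi$ restricted to $E^{(\gamma)}_r$ is surjective iff it is injective iff $\mathcal N\cap\ker\psi_u=0$.

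The main obstacle is passing from these twisted-point bijections to bijectivity of $f$ on $\mathbf P^1(k_r)$. This needs Burnside-type orbit bookkeeping: a rational point of $E/\Lambda$ corresponds to a $\Lambda$-orbit meeting some $E^{(\gamma)}_r$, and $\gamma$ is determined up to the twisted conjugacy given by the Frobenius action on $\Lambda$. We will write
\[
|(E/\Lambda)(k_r)|=\frac1{|\Lambda|}\sum_{\gamma}|E^{(\gamma)}_r|
\]
(twisted Burnside), and similarly for $E'$. If (ii) holds, each restriction $\varphi:E^{(\gamma)}_r\to E'^{(\gamma)}_r$ is a bijection. Equivariance then gives that $f$ is surjective on $k_r$-points, hence bijective, which is (i).

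Conversely, suppose (ii) fails for some $\gamma$. Pick $0\ne n\in\mathcal N\cap\ker\psi_u$ and a point $P\in E^{(\gamma)}_r$ in general position, meaning off the finitely many fixed loci of nontrivial elements of $\Lambda$. This is possible when $|E^{(\gamma)}_r|$ is large, and in general we instead argue by the failure of surjectivity onto some $E'^{(\gamma)}_r$. Then $\Lambda P$ and $\Lambda(P+n)$ are distinct rational points with the same image, so $f$ is not injective and (i) fails. Non-freeness subtleties in small cases would be handled by counting with the twisted Burnside formula, which shows directly that $|f(\mathbf P^1(k_r))|<|\mathbf P^1(k_r)|$.

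For (ii)$\Leftrightarrow$(iii), note that the group generated by the $\mathcal F_E$-action and $L_E$ on $\mathcal N$ is finite. Let $m$ be the order of $\mathcal F_E$ acting on $\mathcal N$ together with the conjugation action on $L_E$. Condition (ii) for $r$ depends only on $r$ modulo $m$, up to the finite set $L_E$. If (ii) holds at $r$, then it holds at every $r+jm$ with $j\ge0$, so $f$ permutes over $k_{r(1+jm')}$ for infinitely many extensions of $k_r$; choosing $j$ with $r\mid r+jm$, via multiples $r(1+tm)$, gives (iii). If (iii) holds, then (i) holds for some $k_{rs}$ with $s\equiv1\pmod m$ by periodicity, hence (ii) holds at $rs$, hence at $r$, hence (i) at $r$.
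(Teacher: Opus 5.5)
\textbf{What is fine.} Your (ii)$\Rightarrow$(i) is correct, and a little more direct than the paper's weighted fibre count. Surjectivity of each $\varphi\colon\mathcal S_{r,\gamma}(E)\to\mathcal S_{r,\gamma}(E')$ makes $f$ surjective on $\mathbf P^1(k_r)$, and both sides have $q^r+1$ points. To get $\lvert\mathcal S_{r,\gamma}(E)\rvert=\deg(\mathcal F_E^r-u)$ you need $\mathcal F_E^r-u$ separable, not just nonzero; this holds because its differential is $-du\neq0$. Your (ii)$\Rightarrow$(iii), via periodicity of $\mathcal F_{\mathcal N}^r$, is also fine. For (iii)$\Rightarrow$(i), nothing guarantees that one of the infinitely many good $s$ satisfies $s\equiv1\pmod m$. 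Use instead, as the paper does, that a collision of two distinct $k_r$-points persists over every extension.

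\textbf{The gap: (i)$\Rightarrow$(ii).} This is the heart of the theorem. Take $0\neq n\in K_\gamma:=\mathcal N\cap\ker(\mathcal F_E^r-u_{\gamma,E})$ and $P\in\mathcal S_{r,\gamma}(E)$. Then $\pi(P)$ and $\pi(P+n)$ are rational with the same image, but they are distinct only if $P+n\notin\Lambda P$. Choosing $P$ off the fixed loci does not give this, because a free orbit can still contain $P+n$. The bad $P$ are the solutions of $(u_{h,E}-1)P=n-a_{h,E}$ with $u_{h,E}\neq1$ (target faithfulness excludes $u_{h,E}=1$). This set is bounded in terms of $\Lambda$, so your argument works only when $q^r$ is large compared with $\lvert\Lambda\rvert$.

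For small $q^r$, a kernel point can a priori be hidden inside a single quotient orbit. This is exactly the regime where, for general rational functions, permutation does not imply exceptionality. Your fallbacks do not reach this case:
\begin{enumerate}
\item[(a)] The twisted Burnside formula is blind to (ii). Since $\lvert\mathcal S_{r,\gamma}(E)\rvert=\lvert\mathcal S_{r,\gamma}(E')\rvert$ for every $\gamma$ whether or not (ii) holds, the source and target sums agree identically and only return $\lvert\mathbf P^1(k_r)\rvert=\lvert\mathbf P^1(k_r)\rvert$.
\item[(b)] Even the fibrewise identity $\sum_\gamma m_Q(\gamma)=\lvert\Lambda\rvert$ is consistent with $f$ bijective and $K_\gamma\neq0$; it yields only $\lvert K_\gamma\rvert\mid\lvert\Lambda\rvert$.
\item[(c)] Non-surjectivity of $\varphi$ on one $\mathcal S_{r,\gamma}$ does not make $f$ non-surjective. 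The missed target orbit can be reached from another twisted class over a branch point.
\end{enumerate}

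\textbf{What is missing.} You need a structural argument such as Lemma~\ref{lem:injectivity-kernel-condition}:
\begin{enumerate}
\item[(1)] Make the linear-part homomorphism faithful by quotienting out its translation kernel.
\item[(2)] Observe that a nonzero $D\in\mathcal N$ fixed by Frobenius and by all linear parts gives a nontrivial $k$-automorphism $\bar\tau_D$ of $E/\Lambda\cong\mathbf P^1$ with $f\bar\tau_D=f$. Such a map must move one of the $q^r+1\geqslant3$ rational points, contradicting injectivity.
\item[(3)] Combine $\lvert K_\gamma\rvert\mid\lvert\Lambda\rvert$ with a case analysis of the possible linear images: $C_2,C_3,C_4,C_6$, and the enlarged $j=0$ groups in characteristics $2$ and $3$. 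The enlarged groups are handled via a common rational fixed point and reduction to the origin-preserving case. The analysis shows that a hidden kernel point always produces such a $D$.
\end{enumerate}

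Your large-field argument does rescue (iii)$\Rightarrow$(ii). The reason is that $\mathcal F_E^rD=uD$ propagates to $\mathcal F_E^{rs}D=u'D$ for some $u'\in L_E$. So the new idea is needed only for (i)$\Rightarrow$(ii) at small $q^r$, and hence for (i)$\Rightarrow$(iii).
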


\begin{corollary}[Indecomposable genus-one permutation--exceptionality equivalence]
\label{cor:intro-exhaustive}
Let $g\in k(X)$ have degree greater than one, be $k$-indecomposable,
and have Galois closure of genus one.  Then for every $r\geqslant1$,
\[
 g\text{ permutes }\mathbf P^1(k_r)
 \Longleftrightarrow
 g\text{ is exceptional over }k_r.
\]
For any datum reconstructed in Theorem~\ref{thm:intro-reconstruction}, this
is equivalent to
\[
 \mathcal N\cap
 \ker(\mathcal F_E^r-u_{\gamma,E})=\{\mathrm O_E\}
 \qquad\text{for every }\gamma\in\Lambda.
\]
\end{corollary}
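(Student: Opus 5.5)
The corollary should follow by combining Theorem~\ref{thm:intro-reconstruction} with Theorem~\ref{thm:intro-kernel}, once we check that both properties in question are invariant under $k$-M\"obius equivalence.

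First, apply Theorem~\ref{thm:intro-reconstruction} to $g$. It gives that $g$ is separable and that
\[
 g=\alpha\circ f\circ\beta,\qquad \alpha,\beta\in\operatorname{PGL}_2(k),
\]
where $f$ is the induced quotient map of an equivariant isogeny datum over $k$. Here $f$ is read in $k$-coordinates on the quotient curves, which are $k$-isomorphic to $\mathbf P^1$ by Lemmas~\ref{lem:target-frobenius} and~\ref{lem:quotient}.

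Second, verify the invariance. Since $\alpha,\beta$ have coefficients in $k\subseteq k_s$ for every $s\geqslant1$, each of them induces a bijection of $\mathbf P^1(k_s)$. Hence $g$ permutes $\mathbf P^1(k_s)$ if and only if $f$ does. This holds simultaneously for all $s$, so in particular for all $s$ that are multiples of $r$. Exceptionality over $k_r$ means permuting $\mathbf P^1(k_{rt})$ for infinitely many $t$. It is therefore also preserved, both from $f$ to $g$ and back.

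Third, apply Theorem~\ref{thm:intro-kernel} to $f$ with the given $r$. This yields
\[
 f\text{ permutes }\mathbf P^1(k_r)\iff \mathcal N\cap\ker(\mathcal F_E^r-u_{\gamma,E})=\{\mathrm O_E\}\ \text{for all }\gamma\in\Lambda\iff f\text{ exceptional over }k_r.
\]
Transporting both ends of this chain to $g$ by the second step gives the displayed equivalence for $g$ and its equivalence with the kernel condition.

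One point needs care. The final sentence speaks of \emph{any} reconstructed datum. Each such datum yields some $f$ that is $k$-M\"obius equivalent to $g$, and the argument above applies to it verbatim. So the kernel condition is equivalent to the permutation property for every choice of datum, not just one. There is no real obstacle here: the only substantive input is the pair of earlier theorems, and the main thing to keep straight is that the M\"obius maps are defined over $k$, not merely over $\bar k$. Were they defined only over $\bar k$, bijectivity on $\mathbf P^1(k_r)$ would not transfer.
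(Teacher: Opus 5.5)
Your proposal is correct and follows essentially the paper's route. The paper obtains this corollary by combining the reconstruction theorem with the permutation--exceptionality and kernel results for equivariant-isogeny quotients (Corollaries~\ref{cor:extension-kernel} and~\ref{cor:permutation-exceptional}, i.e.\ Theorem~\ref{thm:intro-kernel}); your explicit check that permuting $\mathbf P^1(k_r)$ and being exceptional over $k_r$ are both invariant under $k$-M\"obius equivalence is exactly the step the paper leaves implicit.
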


For any rational function $h\in k(X)$, define
\[
 \mathcal D_{\mathrm{perm}}(h)
 :=\{r\geqslant1:h\text{ permutes }\mathbf P^1(k_r)\},
 \qquad
 \mathcal D_{\mathrm{exc}}(h)
 :=\{r\geqslant1:h\text{ is exceptional over }k_r\}.
\]
We call their elements \emph{permutation extension degrees} and
\emph{exceptional extension degrees}, respectively.  Here $h$ is exceptional
over $k_r$ exactly when its base change to $k_r$ permutes
$\mathbf P^1(k_{rs})$ for infinitely many $s\geqslant1$; equivalently,
\[
 r\in\mathcal D_{\mathrm{exc}}(h)
 \Longleftrightarrow
 \{s\geqslant1:rs\in\mathcal D_{\mathrm{perm}}(h)\}\text{ is infinite}.
\]
When these two sets coincide, we write \(\mathcal D(h)\) for their common set.

For a field $K$, a \emph{$K$-decomposition} of a rational function
$f\in K(X)$ is an expression
\[
 f=f_s\circ\cdots\circ f_1,
 \qquad f_i\in K(X),\quad \deg f_i>1.
\]
Two $K$-decompositions $(f_1,\ldots,f_s)$ and
$(f_1',\ldots,f_s')$ are \emph{equivalent} if there exist
$\mu_0,\ldots,\mu_s\in\operatorname{PGL}_2(K)$ with $\mu_0=\mu_s=1$ and
\[
 f_i'=\mu_i\circ f_i\circ\mu_{i-1}^{-1}
 \qquad\text{for }1\leqslant i\leqslant s.
\]
A $K$-decomposition is \emph{complete} if every factor is
$K$-indecomposable.

\begin{theorem}[Permutation--exceptionality equivalence for genus at most one]
\label{thm:intro-genus-at-most-one-permutation-exceptionality}
Let $f\in k(X)$ be separable of degree greater than one, and suppose that the
Galois closure of $f$ has genus at most one.  Then for every
$r\geqslant1$,
\[
 f\text{ permutes }\mathbf P^1(k_r)
 \Longleftrightarrow
 f\text{ is exceptional over }k_r.
\]
No indecomposability hypothesis is required.  Equivalently,
$\mathcal D_{\mathrm{perm}}(f)=\mathcal D_{\mathrm{exc}}(f)$.
\end{theorem}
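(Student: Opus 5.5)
The proof reduces to $k$-indecomposable factors and then assembles along a complete $k$-decomposition. Fix $r\geqslant1$. The implication ``exceptional over $k_r$ $\Rightarrow$ permutes $\mathbf P^1(k_r)$'' holds for every separable rational function by the classical theory of exceptional covers: exceptionality over $k_r$ means the arithmetic and geometric monodromy groups satisfy the Fried condition, and then the map is bijective on $\mathbf P^1(k_{r})$. (Equivalently, exceptionality is preserved under passage to $k_{rs}$ for $s$ coprime to the relevant constant-field degree, and the Lang--Weil/Chebotarev count gives bijectivity.) So only ``permutes $\Rightarrow$ exceptional'' needs work.

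First choose a complete $k$-decomposition $f=f_s\circ\cdots\circ f_1$. Each $f_i$ is separable, since $f$ is. Each $f_i$ also has Galois closure of genus at most one. To see this, note that $\Omega_{f_i}$ is the normal closure of a subextension of a tower inside $\Omega_f$. Indeed, the normal closure of $k(f_{i-1}\circ\cdots\circ f_1(\mathbf x))/k(f_i\circ\cdots\circ f_1(\mathbf x))$ is a subfield of $\Omega_f$. Riemann--Hurwitz for the (possibly wildly ramified) separable map of curves $C_f\to C_{f_i}$, together with $\mathrm{Genus}(C_f)\leqslant1$, then gives $\mathrm{Genus}(C_{f_i})\leqslant1$.

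Next, a composition permutes $\mathbf P^1(k_r)$ if and only if every factor does, since all factors map the finite set $\mathbf P^1(k_r)$ into itself. Likewise, a composition is exceptional over $k_r$ if and only if every factor is; this is the standard fact for exceptional covers, following from the same finite-set argument applied over infinitely many $k_{rs}$, intersected. So if $f$ permutes $\mathbf P^1(k_r)$, each $f_i$ permutes $\mathbf P^1(k_r)$. If we show each $f_i$ is then exceptional over $k_r$, it follows that $f$ is exceptional over $k_r$. This also yields the statement in the abstract that $\mathcal D(f)=\bigcap_i\mathcal D(f_i)$.

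For each indecomposable factor there are two cases. If $C_{f_i}$ has genus one, Corollary~\ref{cor:intro-exhaustive} gives permutation $\Leftrightarrow$ exceptionality over $k_r$ directly. It in turn rests on Theorem~\ref{thm:intro-reconstruction} and the kernel criterion of Theorem~\ref{thm:intro-kernel}, whose condition (ii) is visibly independent of the choice between (i) and (iii). If $C_{f_i}$ has genus zero, we invoke the corresponding genus-zero theorem referred to in the abstract (established separately in the paper or in the companion work). That theorem says that a $k$-indecomposable separable map with genus-zero Galois closure permutes $\mathbf P^1(k_r)$ exactly when it is exceptional over $k_r$. The point is that its monodromy groups are among the finite subgroups of $\operatorname{PGL}_2(\bar k)$, and for these the point count is controlled exactly by Frobenius-twisted fixed points.

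The main obstacle is the genus-zero indecomposable case. That is where the finite list of monodromy groups must be checked, including wild characteristic-$p$ groups such as $\operatorname{PGL}_2(\mathbb F_{p^m})$ and additive subgroups. We lean on the earlier genus-zero result for this. A secondary subtlety is the genus bound for the factors: Riemann--Hurwitz must be used carefully with wild ramification, noting only that the genus cannot increase under a separable dominant map of curves, so no tameness is needed. Finally, the set equality $\mathcal D_{\mathrm{perm}}(f)=\mathcal D_{\mathrm{exc}}(f)$ is just a restatement of the per-$r$ equivalence.
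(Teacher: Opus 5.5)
Your outline matches the paper's. You take a complete decomposition and realize each factor's Galois-closure curve as a quotient of $C_f$, so its genus is at most one. You then apply Corollary~\ref{cor:intro-exhaustive} or the genus-zero theorem of \cite{FanG0} to each indecomposable factor and reassemble.

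\textbf{The gap is in the reassembly.} You justify ``every $f_i$ exceptional over $k_r$ $\Rightarrow$ $f$ exceptional over $k_r$'' by the finite-set argument over infinitely many $k_{rs}$, ``intersected''. That does not work. Each set $S_i:=\{s\geqslant1: f_i\text{ permutes }\mathbf P^1(k_{rs})\}$ is infinite, but finitely many infinite subsets of $\mathbb N$ can have empty intersection (odd versus even integers). Exceptionality of $f$ needs $\bigcap_i S_i$ to be infinite. This is the one step with content beyond the indecomposable cases, and it needs a genuine input.

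The paper uses two facts. Over a finite field, exceptionality of a separable map is equivalent to exceptionality of its monodromy triple \cite[\S2.8]{GMS03}. After checking that $A=GM$ (with $M$ a point stabilizer) and that $A/G$ is cyclic, it applies \cite[Lemma~3.5]{GMS03}: a two-step composite of exceptional triples is exceptional. It then iterates.

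Equivalently, you could use the coprime-degree fact you mention in your first paragraph but never deploy. Let $N_i$ be the index of the geometric monodromy group of $f_i$ in its arithmetic monodromy group over $k_r$. For $\gcd(s,N_i)=1$, the arithmetic monodromy group over $k_{rs}$ is the same as over $k_r$. By \cite[\S2.8]{GMS03}, $f_i$ therefore stays exceptional, and hence bijective, over $k_{rs}$. So $\bigcap_iS_i$ contains every $s$ coprime to $\prod_iN_i$, which is an infinite set. For the genus-one factors, this also follows from the periodicity in Theorem~\ref{thm:intro-permutation-extension-criterion} applied over $k_r$.

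\textbf{Two smaller points.} First, for ``exceptional $\Rightarrow$ permutes'' you do not need Fried's condition or Lang--Weil. A collision between two distinct $k_r$-points persists over every extension, so exceptionality forces injectivity, hence bijectivity, on $\mathbf P^1(k_r)$.

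Second, you decompose over $k$ but then apply the factor-level theorems over $k_r$, where a $k$-indecomposable factor may decompose further. This is harmless for genus-one factors, since Corollary~\ref{cor:intro-exhaustive} covers every $r$. For genus-zero factors, however, it assumes a form of the theorem in \cite{FanG0} that you have not verified. The paper avoids the issue by taking a complete $K$-decomposition with $K=k_r$ and treating $K$ as the ground field throughout.
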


\begin{theorem}[Extension degrees under functional decomposition]
\label{thm:intro-genus-at-most-one-decomposition-extension-degrees}
Let $f\in k(X)$ be separable of degree greater than one, and suppose that the
Galois closure of $f$ has genus at most one.  For any $k$-decomposition,
\[
 f=f_s\circ\cdots\circ f_1,
 \qquad \deg f_i>1,
\]
every $f_i$ is separable and has Galois closure of genus at most one.
Thus \(\mathcal D(f)\) and every \(\mathcal D(f_i)\) are defined, and
\[
 \mathcal D(f)=\bigcap_{i=1}^s\mathcal D(f_i).
\]
In particular, for any complete decomposition the common set of permutation
and exceptional extension degrees is obtained by intersecting the corresponding
sets for its $k$-indecomposable genus-zero and genus-one factors; the resulting set
is independent of the chosen complete decomposition.
\end{theorem}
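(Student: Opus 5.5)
The argument has three parts: show the factors inherit separability and the genus bound, prove the intersection formula for the permutation sets, and then transfer it to exceptional degrees using Theorem~\ref{thm:intro-genus-at-most-one-permutation-exceptionality}.

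\textbf{Step 1: the factors.} Put $h_i:=f_i\circ\cdots\circ f_1$. Then
\[
 k(\mathbf x)\supseteq k(h_1(\mathbf x))\supseteq\cdots\supseteq k(h_s(\mathbf x))=k(f(\mathbf x)).
\]
Since $k(\mathbf x)/k(f(\mathbf x))$ is separable, every subextension is separable. The extension $k(h_{i-1}(\mathbf x))/k(h_i(\mathbf x))$ is isomorphic to $k(\mathbf y)/k(f_i(\mathbf y))$, so each $f_i$ is separable.

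Let $M$ be the Galois closure of $f$. For each $i$, the normal closure of $k(h_{i-1}(\mathbf x))/k(h_i(\mathbf x))$ is the compositum of the Galois conjugates of $k(h_{i-1}(\mathbf x))$ over $k(h_i(\mathbf x))$. All of these lie in $M$, so this closure is a subfield of $M$. It is a function field of a curve covered by $C_f$ (over its own constant field). Riemann--Hurwitz, with $g\ge0$, shows that a curve covered by a curve of genus at most one has genus at most one. Hence each $f_i$ has Galois closure of genus at most one. By Theorem~\ref{thm:intro-genus-at-most-one-permutation-exceptionality}, $\mathcal D(f)$ and every $\mathcal D(f_i)$ are defined.

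\textbf{Step 2: permutation sets.} I prove $\mathcal D_{\mathrm{perm}}(f)=\bigcap_i\mathcal D_{\mathrm{perm}}(f_i)$. The inclusion $\supseteq$ holds because a composition of bijections of $\mathbf P^1(k_r)$ is a bijection. For $\subseteq$, suppose $f=f_s\circ\cdots\circ f_1$ is a bijection of the finite set $\mathbf P^1(k_r)$, with each $f_i$ mapping $\mathbf P^1(k_r)$ into itself. Then $f_1$ is injective, hence bijective. Then $f_s\circ\cdots\circ f_2$ is bijective, and induction on $s$ finishes.

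\textbf{Step 3: exceptional sets.} Combining Step~2 with Theorem~\ref{thm:intro-genus-at-most-one-permutation-exceptionality} applied to $f$ and to each $f_i$ gives
\[
 \mathcal D_{\mathrm{exc}}(f)=\mathcal D_{\mathrm{perm}}(f)=\bigcap_i\mathcal D_{\mathrm{perm}}(f_i)=\bigcap_i\mathcal D_{\mathrm{exc}}(f_i).
\]
This is the claimed formula. The set is intrinsic to $f$, so its independence of the chosen complete decomposition is automatic.

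For a complete decomposition, each factor is $k$-indecomposable with Galois closure of genus zero or one. Those of genus one are covered by Corollary~\ref{cor:intro-exhaustive}, and those of genus zero by the genus-zero theorem the paper cites.

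The only step needing care is the genus inheritance in Step~1. The normal closure of a factor should be a quotient of a subfield of $M$, not merely abstractly related to it. One also has to handle constant field extensions $\kappa$ correctly, which is done by passing to $\overline{\Omega}$, since genus is unchanged by constant extension. Everything else is formal.
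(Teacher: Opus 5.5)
Your proposal is correct and follows the paper's proof essentially step for step. Separability and the genus bound for each $f_i$ come from placing the normal closure of each adjacent step $k(h_{i-1}(\mathbf x))/k(h_i(\mathbf x))$ inside the Galois closure of $f$; the paper does this directly in the geometric closure $\overline{\Omega}_f$, using the cores $\mathcal K_i$, which amounts to your passage to $\bar k$. The intersection formula for permutation sets is the same finite-set bijection argument, and Theorem~\ref{thm:intro-genus-at-most-one-permutation-exceptionality} applied to $f$ and to each $f_i$ then converts it into the stated formula for $\mathcal D$.
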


Thus the two principal genus-one results are
Theorems~\ref{thm:intro-reconstruction} and~\ref{thm:intro-kernel}: the first
reconstructs the $k$-indecomposable genus-one class, while the second solves its
permutation--exceptionality problem over every finite extension by a finite
kernel test.  Combining this theory with the independently proved genus-zero
theorem \cite{FanG0} gives
Theorems~\ref{thm:intro-genus-at-most-one-permutation-exceptionality}
and~\ref{thm:intro-genus-at-most-one-decomposition-extension-degrees}.  The
key additional observation is that the Galois-closure curve of every factor is
realized by a quotient of the original Galois-closure curve and hence again has
genus zero or one.  Proofs are given in Section~\ref{sec:extensions}.

For multiplication maps $\varphi=[n]$ with $\gcd(n,q)=1$, condition
\textup{(ii)} in Theorem~\ref{thm:intro-kernel} becomes a finite determinant
test on $E[n]$.  More generally, the kernel together with its Frobenius and
induced linear symmetry actions determines affine monodromy, decomposition
classes, and all permutation extension degrees.

The geometric and Galois-theoretic antecedents are classical.  General
exceptional-cover theory relates exceptionality to fixed points of Frobenius
cosets in the arithmetic monodromy group; see Fried
\cite[Corollary~2.8]{Fri05} and Guralnick--Tucker--Zieve
\cite[Lemmas~4.1 and~4.3]{GTZ07}.  For the genus-one geometry, a
characteristic-zero precursor is Guralnick--M\"uller--Saxl: their
Theorems~6.5 and~6.6 give characteristic-zero elliptic quotient descriptions
for indecomposable Euclidean rational functions, including shifted quotient
automorphisms, while Lemma~6.10 gives the corresponding affine monodromy
calculation \cite[Theorems~6.5--6.6 and Lemma~6.10]{GMS03}.  Pakovich gives a
broader geometric description of complex rational functions whose
normalization has genus one \cite[Theorem~1.2]{Pak18}.  More recently, Hyde
proves a characteristic-zero canonical cyclic-quotient factorization for maps
from a genus-one curve with genus-one Galois closure and gives the corresponding
characterization of generalized Latt\`es maps
\cite[Lemmas~3.2 and~3.4]{Hyde26}.  Thus the elliptic-quotient geometry itself
is not new here.

Kre\v{s}o--Zieve call a cover a \emph{coordinate projection} when there are
nonconstant vertical morphisms making it a quotient of a morphism of
one-dimensional algebraic groups \cite[pp.~3--4]{KZ14}.  Geometrically, the
maps considered here lie in this broader class, which includes coordinate
projections of elliptic isogenies.  When the isogeny is
an endomorphism, they call the resulting coordinate projection a Latt\`es map
\cite[pp.~3--4]{KZ14}.  Our equivariant-isogeny data form a more structured
finite-field subclass of this setting: the two vertical maps are quotients by
compatible prescribed finite affine actions of the same abstract group.

The standard origin-preserving $C_2$ Latt\`es subfamily over finite fields is
already well understood.  K\"u\c{c}\"uksakall{\i} determines value sets for
reductions of complex-multiplication (CM) elliptic endomorphisms and proves a
necessary and sufficient bijectivity criterion over finite extensions
\cite[Corollary~2.8]{Kuc14}.  For multiplication maps on arbitrary elliptic
curves over finite fields, Bell et al. obtain the equivalent criterion
\cite[Corollary~2.6]{Bell22} and note its essential equivalence with
K\"u\c{c}\"uksakall{\i}'s formulation \cite[Remark~2.7]{Bell22}.
Bisson--Tibouchi prove the corresponding kernel criterion in a prime-degree
$C_2$ setting and use it algorithmically to construct permutation rational
functions \cite[Theorem~1 and Section~4]{BT18}.  Bell et al. primarily study
the density of periodic points inside $\mathbf P^1(\mathbb F_{q^r})$, which is
different from the limiting proportion considered here of extension degrees
$r$ for which the induced rational function is a permutation.
Panraksa--Samart--Sriwongsa study a different arithmetic notion of
exceptionality for Latt\`es maps \cite{PSS26}.

Against this background, our new contributions are the converse reconstruction
over a prescribed finite field, in arbitrary characteristic, for the full
class of degree-greater-than-one $k$-indecomposable rational functions whose
normal closure has genus one, without an exceptionality or tameness
hypothesis, and the resulting theory over all finite extensions for general
equivariant-isogeny quotients.  The finite kernel and its induced Frobenius and
linear symmetry actions give an exact permutation--exceptionality criterion
over every finite extension and determine the monodromy permutation groups,
decomposition classes, and extension-degree arithmetic.  Combining this
genus-one theory with the independent genus-zero theorem then gives the
genus-at-most-one consequences.

A separate companion manuscript \cite{FanEERF} treats the tame
$k$-indecomposable exceptional locus and develops the finer prescribed-field
classification there: affine translation parameters and branch arithmetic,
exact $k$-M\"obius equivalence, prescribed-field models, occurrence, and exact
enumeration.  The general reconstruction, factorization, monodromy, base-change,
and extension-degree results proved here apply to that locus as special cases;
none of the companion's fixed-field classification or counting results is used
here.

A key ingredient is the family of sets defined by
$\mathcal F_E(P)=\gamma_E(P)$
\[
 \mathcal S_\gamma(E)
 :=\{P\in E(\bar k):\mathcal F_E(P)=\gamma_E(P)\}.
\]
Unlike in the origin-preserving special case, such a set need not be a subgroup:
$\ker(\mathcal F_E-u_{\gamma,E})$ acts freely and transitively on it by
translations.  Section~\ref{sec:frobenius-equations-elliptic-quotients}
uses the exact identity
\[
 \sum_{\gamma\in\Lambda}
 \lvert\mathcal S_\gamma(E)\cap\pi^{-1}(Q)\rvert
 =\lvert\Lambda\rvert
 \qquad\text{for every }Q\in(E/\Lambda)(k),
\]
which exactly compensates for stabilizer multiplicities above branch points.
Section~\ref{sec:isogenies} uses these fixed-point sets to prove
Theorem~\ref{thm:intro-kernel}; the additional issue for affine actions is to
show that a nonzero kernel point cannot be hidden inside an affine quotient
orbit.

To express the kernel criterion in finite terms, we record the induced
Frobenius and symmetry actions on the kernel, together with the associated
monodromy and constant-field notation.  Define
\[
 \begin{aligned}
 \Lambda_{\mathcal N}
 &:=\operatorname{im}\bigl(
 \Lambda\longrightarrow\operatorname{Aut}(\mathcal N),\
 \gamma\longmapsto u_{\gamma,E}|_{\mathcal N}\bigr),
 &\qquad \mathcal F_{\mathcal N}&:=\mathcal F_E|_{\mathcal N},\\
 \mathcal A_{\mathcal N}
 &:=\langle\Lambda_{\mathcal N},\mathcal F_{\mathcal N}\rangle,
 &\qquad \mathcal C_{\mathcal N}&:=
 \mathcal A_{\mathcal N}/\Lambda_{\mathcal N},\\
 \overline{\mathcal F}_{\mathcal N}
 &:=\mathcal F_{\mathcal N}\Lambda_{\mathcal N},
 &\qquad c_{\mathcal N}&:=
 \operatorname{ord}_{\mathcal C_{\mathcal N}}
 (\overline{\mathcal F}_{\mathcal N}).
 \end{aligned}
\]
Let $G_f$ and $A_f$ denote the geometric and arithmetic monodromy groups of
$f$, respectively.  Theorem~\ref{thm:affine-monodromy} gives
\[
 G_f\cong\mathcal N\rtimes\Lambda_{\mathcal N},\qquad
 A_f\cong\mathcal N\rtimes\mathcal A_{\mathcal N},\qquad
 A_f/G_f\cong\mathcal C_{\mathcal N}.
\]
For $r\geqslant1$, let $A_f^{(r)}$ denote the arithmetic monodromy group
after base change to $k_r$.  Corollary~\ref{cor:base-change-monodromy}
gives the corresponding arithmetic monodromy after every constant extension:
over $k_r$,
\[
 A_f^{(r)}\cong
 \mathcal N\rtimes
 \langle\Lambda_{\mathcal N},\mathcal F_{\mathcal N}^{\,r}\rangle,
 \qquad
 [A_f^{(r)}:G_f]
 =\frac{c_{\mathcal N}}{\gcd(c_{\mathcal N},r)},
\]
and the algebraic closure of $k_r$ in the arithmetic Galois closure is
$\mathbb F_{q^{\operatorname{lcm}(c_{\mathcal N},r)}}$; equivalently, this is
the full constant field of the arithmetic Galois closure over $k_r$.
The affine realization also yields the decomposition classification.
Proposition~\ref{prop:decomposition}, obtained from \cite[Lemma~2.8]{KZ14},
identifies decompositions over $k$ with $\mathcal A_{\mathcal N}$-stable
subgroup chains of $\mathcal N$.  Corollary~\ref{cor:prime-prime-square-factor-degrees}
shows that the factors in a complete decomposition have only prime or
prime-square degree.  Let
$\mathcal C_{\mathcal N}^{\mathrm{fpf}}\subseteq\mathcal C_{\mathcal N}$ be
the set of cosets none of whose elements fixes a point of
$\mathcal N\setminus\{\mathrm O_E\}$.

As a quantitative refinement of Theorem~\ref{thm:intro-kernel}, the finite
module also yields the complete arithmetic of permutation extension degrees.

\begin{theorem}[Criterion for permutation extension degrees]
\label{thm:intro-permutation-extension-criterion}
For the quotient map $f$ associated with an equivariant isogeny datum over
$k$, the set of extension degrees on which $f$ is a permutation is
\[
 \mathcal D_{\mathrm{perm}}(f)
 =\{r\geqslant1:
 \overline{\mathcal F}_{\mathcal N}^{\,r}
 \in\mathcal C_{\mathcal N}^{\mathrm{fpf}}\}.
\]
Consequently this set is purely periodic modulo $c_{\mathcal N}$.  Write
$\operatorname{per}(f)$ for its least positive period and
\[
 \delta_f:=\lim_{M\to\infty}\frac{1}{M}
 \bigl\lvert\mathcal D_{\mathrm{perm}}(f)\cap\{1,\ldots,M\}\bigr\rvert
\]
for its limiting proportion among positive integers; the limit exists by
periodicity.  The empty set has least positive period $1$ and limiting
proportion $0$.  Define the setwise stabilizer for the left-translation action by
\[
 H_{\mathcal N}
 :=\operatorname{Stab}_{\mathcal C_{\mathcal N}}
 (\mathcal C_{\mathcal N}^{\mathrm{fpf}})
 =\{\xi\in\mathcal C_{\mathcal N}:
 \xi\mathcal C_{\mathcal N}^{\mathrm{fpf}}
 =\mathcal C_{\mathcal N}^{\mathrm{fpf}}\}.
\]
Then
\[
 \operatorname{per}(f)=[\mathcal C_{\mathcal N}:H_{\mathcal N}],
 \qquad
 \delta_f=
 \frac{\lvert\mathcal C_{\mathcal N}^{\mathrm{fpf}}\rvert}{c_{\mathcal N}}.
\]
Moreover, $f$ is exceptional over $k$ if and only if
$\mathcal C_{\mathcal N}^{\mathrm{fpf}}\ne\varnothing$.  In the complementary
case $\mathcal D_{\mathrm{perm}}(f)=\varnothing$, so
$\operatorname{per}(f)=1$ and $\delta_f=0$, in agreement with the formulas
above.  If
$\mathcal N\ne\{\mathrm O_E\}$, then the identity coset does not belong to
$\mathcal C_{\mathcal N}^{\mathrm{fpf}}$, so $c_{\mathcal N}\mid r$ implies
$r\notin\mathcal D_{\mathrm{perm}}(f)$.
\end{theorem}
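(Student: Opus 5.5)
The starting point is Theorem~\ref{thm:intro-kernel}, which says that $r\in\mathcal D_{\mathrm{perm}}(f)$ exactly when $\mathcal N\cap\ker(\mathcal F_E^r-u)=\{\mathrm O_E\}$ for every $u\in L_E$. We rewrite this inside the finite group $\mathcal A_{\mathcal N}$. Since $\varphi$ is $\Lambda$-equivariant and defined over $k$, the subgroup $\mathcal N$ is stable under both $\mathcal F_E$ and every $u_{\gamma,E}$; the stability of $\mathcal N$ under $u_{\gamma,E}$ comes from comparing the affine decompositions of $\gamma_E$ and $\gamma_{E'}$. Restricting to $\mathcal N$ then turns the condition into the following: no element $\mathcal F_{\mathcal N}^r\lambda^{-1}$ with $\lambda\in\Lambda_{\mathcal N}$ fixes a nonzero point of $\mathcal N$. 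Indeed, $\mathcal F_E^r(P)=u(P)$ holds exactly when $u|_{\mathcal N}^{-1}\mathcal F_{\mathcal N}^r$ fixes $P$, and $\Lambda_{\mathcal N}$ is a group, so it does not matter which side $\lambda^{-1}$ is written on. The elements $\mathcal F_{\mathcal N}^r\lambda$ with $\lambda\in\Lambda_{\mathcal N}$ make up the coset $\mathcal F_{\mathcal N}^r\Lambda_{\mathcal N}$. We need this to be a genuine coset in $\mathcal C_{\mathcal N}$, that is, $\Lambda_{\mathcal N}$ must be normal in $\mathcal A_{\mathcal N}$. This follows from the Frobenius-conjugation stability in Definition~\ref{def:equivariant-isogeny-datum}: conjugating $\gamma_E$ by $\mathcal F_E$ gives some $\gamma'_E$, and taking linear parts gives $\mathcal F_E u_{\gamma,E}\mathcal F_E^{-1}=u_{\gamma',E}$ on $\bar k$-points, hence also on $\mathcal N$. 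The coset is therefore $\overline{\mathcal F}_{\mathcal N}^{\,r}$, and we obtain $\mathcal D_{\mathrm{perm}}(f)=\{r:\overline{\mathcal F}_{\mathcal N}^{\,r}\in\mathcal C_{\mathcal N}^{\mathrm{fpf}}\}$.

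The remaining statements are group theory inside the cyclic group $\mathcal C_{\mathcal N}=\langle\overline{\mathcal F}_{\mathcal N}\rangle$ of order $c_{\mathcal N}$, which is cyclic because $\mathcal A_{\mathcal N}$ is generated by $\Lambda_{\mathcal N}$ and $\mathcal F_{\mathcal N}$.

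1. \emph{Periodicity.} Membership of $r$ depends only on $r\bmod c_{\mathcal N}$, so $\mathcal D_{\mathrm{perm}}(f)$ is purely periodic with period $c_{\mathcal N}$.

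2. \emph{Least period.} Put $\Phi=\overline{\mathcal F}_{\mathcal N}$ and $S=\mathcal C_{\mathcal N}^{\mathrm{fpf}}$. A positive integer $p$ is a period exactly when $r\in\mathcal D_{\mathrm{perm}}(f)\Leftrightarrow r+p\in\mathcal D_{\mathrm{perm}}(f)$ for all $r$. Since $r\mapsto\Phi^r$ hits every element of $\mathcal C_{\mathcal N}$ for $r\geqslant1$, this is equivalent to $\Phi^pS=S$, i.e.\ $\Phi^p\in H_{\mathcal N}$. The set of such $p$ is the set of multiples of $[\mathcal C_{\mathcal N}:H_{\mathcal N}]$, because $\mathcal C_{\mathcal N}/H_{\mathcal N}$ is cyclic generated by the image of $\Phi$. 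For $S=\varnothing$ we get $H_{\mathcal N}=\mathcal C_{\mathcal N}$, so the period is $1$, matching the stated convention.

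3. \emph{Density.} Exactly $\lvert S\rvert$ residues modulo $c_{\mathcal N}$ lie in the set, since $r\mapsto\Phi^r$ is a bijection from $\mathbb Z/c_{\mathcal N}$ onto $\mathcal C_{\mathcal N}$. Hence $\delta_f=\lvert S\rvert/c_{\mathcal N}$.

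4. \emph{Exceptionality over $k$.} By the equivalence of (i) and (iii) in Theorem~\ref{thm:intro-kernel} with $r=1$, $f$ is exceptional over $k$ if and only if $1\in\mathcal D_{\mathrm{perm}}(f)$. Alternatively, by the definition of exceptionality and periodicity, $f$ is exceptional over $k$ if and only if $\mathcal D_{\mathrm{perm}}(f)$ is infinite, which happens if and only if it is nonempty, which happens if and only if $S\ne\varnothing$. When $S=\varnothing$ the set $\mathcal D_{\mathrm{perm}}(f)$ is empty, and the values $\operatorname{per}(f)=1$ and $\delta_f=0$ agree with the formulas.

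5. \emph{The identity coset.} The identity coset contains the identity element, which fixes every point of $\mathcal N$. So if $\mathcal N\ne\{\mathrm O_E\}$, the identity coset is not in $S$. If $c_{\mathcal N}\mid r$, then $\Phi^r=1$, so $r\notin\mathcal D_{\mathrm{perm}}(f)$.

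The only delicate step is the first translation. We must check that restricting to $\mathcal N$ loses nothing, that the $u_{\gamma,E}$ really preserve $\mathcal N$, and that $\Lambda_{\mathcal N}$ is normalized by $\mathcal F_{\mathcal N}$. Both follow from equivariance and Frobenius stability, taking linear parts of the affine decompositions. Everything else is elementary cyclic-group bookkeeping.
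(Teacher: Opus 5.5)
Your proposal is correct and follows essentially the paper's own route: the kernel criterion over $k_r$ (Corollary~\ref{cor:extension-kernel}) is rewritten as the Frobenius-coset condition of Lemma~\ref{lem:frobenius-coset-characterization}, and the period, density and identity-coset claims are then read off in the cyclic group $\mathcal C_{\mathcal N}$. One caution for step~4: use only your ``alternatively'' argument (exceptional $\iff$ $\mathcal D_{\mathrm{perm}}(f)$ infinite $\iff$ nonempty $\iff$ $\mathcal C_{\mathcal N}^{\mathrm{fpf}}\ne\varnothing$). The paper derives the equivalence (i)$\iff$(iii) of Theorem~\ref{thm:intro-kernel} from this very theorem, so citing it here would be circular, and by itself it would only give exceptionality $\iff 1\in\mathcal D_{\mathrm{perm}}(f)$, not $\mathcal C_{\mathcal N}^{\mathrm{fpf}}\ne\varnothing$.
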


Proposition~\ref{prop:tameness} gives
\[
 e_{\pi(P)}(f)
 =[
 \operatorname{Stab}_\Lambda(\varphi(P)):
 \operatorname{Stab}_\Lambda(P)]
 \mid \lvert L_E\rvert.
\]
Thus, even if $\Lambda$ contains translations of characteristic order, every
ramification index is prime to $\operatorname{char}k$ when
$\operatorname{char}k>3$.
The reconstruction theorem itself is valid in every characteristic.  In characteristics $2$ and $3$, separability is automatic in the
$k$-indecomposable genus-one setting, but ramification indices need not be prime to
the characteristic.  When $\operatorname{char}k>3$, one has $L_E\cong C_m$ for
some $m\in\{2,3,4,6\}$.  Writing $L_E=\langle\beta\rangle$, Frobenius conjugation sends $\beta$ to
$\beta^q$.  This gives the following base-field criteria:
\begin{equation*}
\begin{array}{c|c|c}
L_E&\text{Frobenius action on }L_E&\text{permutation criterion}\\ \hline
C_2&\text{trivial}&
\mathcal N\cap E(\mathbb F_{q^2})=\{\mathrm O_E\}\\
C_3,\ q\equiv1\pmod3&\text{trivial}&
\mathcal N\cap E(\mathbb F_{q^3})=\{\mathrm O_E\}\\
C_3,\ q\equiv2\pmod3&\text{inversion}&
\mathcal N\cap E(k)=\{\mathrm O_E\}\\
C_4,\ q\equiv1\pmod4&\text{trivial}&
\mathcal N\cap E(\mathbb F_{q^4})=\{\mathrm O_E\}\\
C_4,\ q\equiv3\pmod4&\text{inversion}&
\mathcal N\cap E(\mathbb F_{q^2})=\{\mathrm O_E\}\\
C_6,\ q\equiv1\pmod6&\text{trivial}&
\mathcal N\cap E(\mathbb F_{q^6})=\{\mathrm O_E\}\\
C_6,\ q\equiv5\pmod6&\text{inversion}&
\mathcal N\cap E(\mathbb F_{q^2})=\{\mathrm O_E\}.
\end{array}
\end{equation*}
The cyclic quotient geometries in this table belong to the classical theory
of elliptic-isogeny coordinate projections and Latt\`es maps.  The table places
them under a single finite-field permutation criterion derived from the same
kernel condition.  The $C_2$ row is the classical finite-field
Latt\`es criterion discussed above; the other rows concern different cyclic
linear symmetries under the same finite-kernel criterion.

For rank-two kernels with cyclic linear symmetry, these base-field criteria
extend to closed formulas for all permutation extension degrees.  Under the
hypotheses of Section~\ref{subsec:explicit-rank-two-formulas},
Theorem~\ref{thm:split-spectral-criterion} treats the case in which Frobenius acts
trivially on the cyclic symmetry group, expressing the extension-degree set
through the orders of the two Frobenius eigenvalues, equivalently through the
divisibility of $\lvert E(\mathbb F_{q^{mr}})\rvert$ by $\ell$.
Theorem~\ref{thm:nonsplit-parity-residue-criterion} treats Frobenius acting by inversion
and gives an explicit odd/even criterion; Corollary~\ref{cor:nonsplit-constant-field-period}
gives the constant-field degree, least period, and limiting proportion.  These hypotheses include the cyclic cases described above whose ramification
indices remain prime to the characteristic in characteristics $2$ and $3$.

In particular, the condition
$\mathcal N\cap E(\mathbb F_{q^2})=\{\mathrm O_E\}$ reappears in the
nonsplit $C_4$ and $C_6$ rows because Frobenius acts by inversion on the cyclic
symmetry group; this does not identify the resulting quotient maps with the
standard $C_2$ Latt\`es family.  Section~\ref{sec:cyclic} proves these formulas and
their point-count versions.  Section~\ref{sec:examples} gives examples for
the different cyclic symmetries and a non-origin-preserving $C_2$-action over
$\mathbb F_5$ whose action is not $\mathbb F_5$-conjugate to an
origin-preserving involution.

The paper is organized as follows.  Section~\ref{sec:reconstruction} proves
the finite-field reconstruction and the canonical factorization determined by
the translation subgroup.  Section~\ref{sec:frobenius-equations-elliptic-quotients} develops the twisted
Frobenius fixed-point sets and the weighted counting identity, and
Section~\ref{sec:isogenies} converts these results into the exact finite-kernel
criterion.  Section~\ref{sec:extensions} develops affine monodromy, decomposition
classes, and the full set of permutation extension degrees.
Section~\ref{sec:cyclic} derives the closed cyclic formulas, and
Section~\ref{sec:examples} gives explicit realizations separating the effects
of linear quotient symmetry and affine translation components.

\section{Genus-one reconstruction over finite fields and the intrinsic translation subgroup}
\label{sec:reconstruction}

This section proves Theorem~\ref{thm:intro-reconstruction} and the more
general canonical factorization determined by the intrinsic translation
subgroup.  We first exclude the $k$-indecomposable inseparable case.  We then
use intermediate fields descending to $k$ and the intrinsic translation
subgroup of the genus-one Galois closure to recover the equivariant-isogeny
datum over the ground field.

\subsection{Intermediate fields over the ground field}

\begin{lemma}[The inseparable indecomposable case]
\label{lem:inseparable-indecomposable}
Let $k=\mathbb F_q$ have characteristic $p$, and let $g\in k(X)$ have
$\deg g>1$.  If $g$ is inseparable and $k$-indecomposable, then
\[
 g=\mu\circ X^p
\]
for some $\mu\in\operatorname{PGL}_2(k)$.  In particular, the inseparable
$k$-indecomposable case is disjoint from the genus-one normal-closure setting
considered below.
\end{lemma}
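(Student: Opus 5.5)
Let $\mathbf y:=g(\mathbf x)$, and let $F_s$ be the separable closure of $k(\mathbf y)$ in $k(\mathbf x)$.

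\emph{Extracting a $p$-th power.} Since $g$ is inseparable, $k(\mathbf x)/k(\mathbf y)$ is not separable, so $k(\mathbf x)/F_s$ is purely inseparable of degree $p^m>1$. The field $k$ is perfect, so the purely inseparable extensions of $k(\mathbf x)$ of degree $p^j$ inside it are exactly the fields $k(\mathbf x^{p^j})$. This follows because $k(\mathbf x)^{p^j}=k(\mathbf x^{p^j})$ and $k(\mathbf x)$ has degree $p^j$ over it. Hence $F_s=k(\mathbf x^{p^m})$, and in particular $F_s\subseteq k(\mathbf x^p)$. So $k(\mathbf y)\subseteq k(\mathbf x^p)\subseteq k(\mathbf x)$, and $[k(\mathbf x):k(\mathbf x^p)]=p$.

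\emph{Using indecomposability.} By L\"uroth's theorem the intermediate field $k(\mathbf x^p)$ gives a factorization $g=h\circ X^p$ with $h\in k(X)$. Concretely, $\mathbf y\in k(\mathbf x^p)$, so $\mathbf y=h(\mathbf x^p)$ for some $h\in k(X)$. Indecomposability of $g$ over $k$, together with $\deg X^p=p>1$, forces $\deg h=1$. Thus $h=\mu\in\operatorname{PGL}_2(k)$ and $g=\mu\circ X^p$.

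\emph{Disjointness from the genus-one setting.} For $g=\mu\circ X^p$, we have $k(\mathbf y)=k(\mathbf x^p)$, and $k(\mathbf x)/k(\mathbf x^p)$ is purely inseparable. Its normal closure is therefore $k(\mathbf x)$ itself: every conjugate of $\mathbf x$ over $k(\mathbf x^p)$ equals $\mathbf x$, since $X^p-\mathbf x^p=(X-\mathbf x)^p$. So $C_g=\mathbf P^1$ has genus zero, not one. This shows the inseparable $k$-indecomposable case never has genus-one normal closure.

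The only mildly delicate step is showing that the inseparability forces $k(\mathbf x^p)\supseteq k(\mathbf y)$, as opposed to an arbitrary purely inseparable subextension. This is handled by the standard description of purely inseparable subextensions over a perfect constant field given above. Alternatively, one can write $g=a/b$ with $g'=0$, so that $a'b-ab'=0$. After reducing to coprime $a,b$, this gives $a',b'=0$, so $a,b\in k[X^p]$; this also uses that $k$ is perfect.
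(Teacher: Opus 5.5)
Your proof is correct. Your last two steps match the paper's: indecomposability forces $\deg h=1$, and $k(\mathbf x)/k(\mathbf x^p)$ is purely inseparable, hence normal, so the closure is $k(\mathbf x)$ of genus zero.

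The difference is in how you obtain $g=h\circ X^p$. The paper does it in one line. Since $g'=0$ and the kernel of $d/dX$ on $k(X)$ is $k(X^p)$ (because $1,X,\dots,X^{p-1}$ is a $k(X^p)$-basis of $k(X)$), one gets $g\in k(X^p)$. This needs neither perfectness of $k$ nor any analysis of the field tower.

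Your main route goes through the separable closure $F_s$ and the classification of subfields $F\supseteq k$ with $k(\mathbf x)/F$ purely inseparable of degree $p^j$ as $F=k(\mathbf x^{p^j})$. It yields more, namely the standard factorization $g=g_{\mathrm{sep}}\circ X^{p^m}$ with separable left factor. However, the justification you give skips a step: you need the containment $k(\mathbf x^{p^j})\subseteq F$. It holds because each $z\in k(\mathbf x)$ has minimal polynomial $T^{p^e}-z^{p^e}$ over $F$ with $p^e=[F(z):F]\leqslant p^j$, so $z^{p^j}\in F$. Only then does equality of degrees give $F=k(\mathbf x^{p^j})$. Also, your sentence says ``extensions of $k(\mathbf x)$ inside it'' where you mean subfields of $k(\mathbf x)$.

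Your alternative via $g=a/b$ with $a'b=ab'$ is essentially the paper's argument made explicit. There perfectness is not actually used, since $a'=0$ already means $a\in k[X^p]$. Likewise, L\"uroth's theorem is superfluous in your second step, because $\mathbf y\in k(\mathbf x^p)$ directly gives $h$.
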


\begin{proof}
Since $g$ is inseparable, $g'(X)=0$.  The kernel of the derivation on
$k(X)$ is $k(X^p)$, so $g(X)=g_0(X^p)$ for some $g_0\in k(X)$.  Thus
$g=g_0\circ X^p$.  Since $g$ is $k$-indecomposable and $\deg X^p=p>1$, one
has $\deg g_0=1$, proving the first assertion.  Hence
$k(\mathbf x)/k(g(\mathbf x))=k(\mathbf x)/k(\mathbf x^p)$ is purely
inseparable and already normal.  Its normal closure therefore has function
field $k(\mathbf x)$ and genus zero, proving the final assertion.
\end{proof}

Now let $g\in k(X)$ be separable of degree greater than one, and put
$\mathbf t:=g(\mathbf x)$.  By the convention fixed in the Introduction,
$\overline{\Omega}_g/\bar k(\mathbf t)$ is the geometric Galois closure of
$\bar k(\mathbf x)/\bar k(\mathbf t)$.  Put
\[
 C:=(C_g)_{\bar k},
 \qquad
 G:=\operatorname{Gal}(\overline{\Omega}_g/\bar k(\mathbf t)),
 \qquad
 H:=\operatorname{Gal}(\overline{\Omega}_g/\bar k(\mathbf x)),
\]
and let
\[
 \widetilde G:=\operatorname{Aut}_{k(\mathbf t)}(\overline{\Omega}_g),
 \qquad
 \widetilde H:=\operatorname{Aut}_{k(\mathbf x)}(\overline{\Omega}_g).
\]
The group $\Gamma_k:=\operatorname{Gal}(\bar k/k)$ acts canonically
coefficientwise on $\bar k(\mathbf x)$, fixing $\mathbf x$ and $\mathbf t$.
An extension of an element of $\Gamma_k$ to the fixed algebraic
closure need not be unique; however, every such extension preserves
$\overline{\Omega}_g$ by the intrinsic minimality of the normal closure.
Thus $\widetilde G$ and $\widetilde H$ are the resulting profinite
semilinear descent groups, distinct from the finite geometric monodromy group
$G$.  One has $G\triangleleft\widetilde G$, $H=G\cap\widetilde H$, and
restriction to $\bar k$ gives the exact sequences
\[
 1\longrightarrow G\longrightarrow\widetilde G\longrightarrow
 \Gamma_k\longrightarrow1,
 \qquad
 1\longrightarrow H\longrightarrow\widetilde H\longrightarrow
 \Gamma_k\longrightarrow1.
\]
Because $\overline{\Omega}_g$ is the Galois closure of
$\bar k(\mathbf x)/\bar k(\mathbf t)$, the action of $G$ on $G/H$ is
faithful; equivalently,
\[
 \operatorname{core}_G(H)
 =\bigcap_{a\in G}aHa^{-1}
 =1.
\]

\begin{proposition}[Descent criterion for intermediate fields]
\label{prop:arithmetic-intermediate}
Let $H\leq J\leq G$.  The field $\overline{\Omega}_g^J$ is the base change of a
$k$-intermediate field
\[
 k(\mathbf t)\subseteq L_J\subseteq k(\mathbf x)
\]
if and only if $J$ is normalized by $\widetilde H$.  In this case
\[
 \begin{aligned}
 L_J&=\overline{\Omega}_g^{J\widetilde H},
 &\qquad L_J\bar k&=\overline{\Omega}_g^J,\\
 [k(\mathbf x):L_J]&=[J:H],
 &\qquad [L_J:k(\mathbf t)]&=[G:J].
 \end{aligned}
\]
Consequently, $g$ is $k$-indecomposable if and only if there is no
subgroup $J$ normalized by $\widetilde H$ with $H<J<G$.
\end{proposition}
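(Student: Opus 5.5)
The plan is to combine Galois correspondence over $\bar k$ with Galois descent for the semilinear group $\widetilde G$. We start from the structural facts already recorded: $G\triangleleft\widetilde G$, $H=G\cap\widetilde H$, and $\widetilde H\to\Gamma_k$ is surjective with kernel $H$.

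\emph{Field-theoretic side.} By the Galois correspondence for $\overline{\Omega}_g/\bar k(\mathbf t)$, intermediate fields $\bar k(\mathbf t)\subseteq M\subseteq\bar k(\mathbf x)$ correspond to subgroups $H\leq J\leq G$ via $M=\overline{\Omega}_g^J$. Such an $M$ is the base change $L\bar k$ of a $k$-intermediate field $L$ exactly when $M$ is stable under the coefficientwise $\Gamma_k$-action on $\bar k(\mathbf x)$. In that case one takes $L=M\cap k(\mathbf x)=M^{\Gamma_k}$. This is standard Galois descent for $\bar k$-subspaces of $\bar k(\mathbf x)=k(\mathbf x)\otimes_k\bar k$, using Hilbert 90 or Speiser's lemma for the vector-space descent.

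Conversely, if $L$ is a $k$-intermediate field, then $L\bar k$ is $\Gamma_k$-stable. Since $k$ is perfect, $L/k$ is regular, being a subfield of $k(\mathbf x)$ containing $k(\mathbf t)$. Linear disjointness therefore gives
\[
[L\bar k:\bar k(\mathbf t)]=[L:k(\mathbf t)]
\quad\text{and}\quad
[\bar k(\mathbf x):L\bar k]=[k(\mathbf x):L].
\]
This yields the degree formulas $[k(\mathbf x):L_J]=[J:H]$ and $[L_J:k(\mathbf t)]=[G:J]$.

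\emph{Group-theoretic translation.} Every $\sigma\in\widetilde H$ fixes $k(\mathbf x)$ and restricts to an element of $\Gamma_k$ on $\bar k$. Hence $\sigma$ preserves $\bar k(\mathbf x)$ and acts on it as the coefficientwise action of its image in $\Gamma_k$; since $\widetilde H\to\Gamma_k$ is surjective, every element of $\Gamma_k$ arises this way. Therefore $M=\overline{\Omega}_g^J$ is $\Gamma_k$-stable if and only if $\sigma(M)=M$ for all $\sigma\in\widetilde H$.

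Now $\sigma(\overline{\Omega}_g^J)=\overline{\Omega}_g^{\sigma J\sigma^{-1}}$, where $\sigma J\sigma^{-1}\subseteq G$ because $G$ is normal in $\widetilde G$. The Galois correspondence inside $G$ then shows that stability is equivalent to $\sigma J\sigma^{-1}=J$ for all $\sigma\in\widetilde H$, i.e. $\widetilde H$ normalizes $J$.

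\emph{The formula for $L_J$.} When $\widetilde H$ normalizes $J$, the product $J\widetilde H$ is a subgroup. An element of $\overline{\Omega}_g$ fixed by $J\widetilde H$ lies in $M=\overline{\Omega}_g^J$ and is fixed by $\widetilde H$, hence by all of $\Gamma_k$ acting on $M$. So it lies in $M\cap k(\mathbf x)=L_J$; the identification $\overline{\Omega}_g^{\widetilde H}=k(\mathbf x)$ follows from the exact sequence together with $\overline{\Omega}_g^H=\bar k(\mathbf x)$ and $\bar k(\mathbf x)^{\Gamma_k}=k(\mathbf x)$. The reverse inclusion $L_J\subseteq\overline{\Omega}_g^{J\widetilde H}$ is immediate. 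The identity $L_J\bar k=\overline{\Omega}_g^J$ is the descent statement from the first step.

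\emph{Indecomposability.} By Lüroth, $k$-decompositions $g=u\circ v$ with $\deg u,\deg v>1$ correspond to intermediate fields $k(\mathbf t)\subsetneq k(v(\mathbf x))\subsetneq k(\mathbf x)$. By the above, these correspond to subgroups $H<J<G$ normalized by $\widetilde H$, with strictness coming from the degree formulas. This gives the final assertion.

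\emph{Main obstacle.} The delicate step is the descent $M=(M\cap k(\mathbf x))\bar k$ for a $\Gamma_k$-stable $M$. We must also justify that $\widetilde H$ realizes every element of $\Gamma_k$ as the coefficientwise action on $\bar k(\mathbf x)$; the surjectivity is given by the exact sequence. For the descent we would apply the standard lemma that a $\Gamma_k$-stable $\bar k$-subspace of $V\otimes_k\bar k$ is spanned by its invariants. This works because $\bar k$ is a union of finite Galois extensions of $k$ and every element of $M$ lies in a finite-dimensional stable piece defined over some $k_r$.
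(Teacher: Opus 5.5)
Your proof is correct. Its converse half matches the paper's: elements of $\widetilde H$ fix $L$ and act on $\bar k$ through $\Gamma_k$, so they preserve $L\bar k$ and hence normalize $J$. The forward half takes a genuinely different route.

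\textbf{Your route.} You first descend $M=\overline{\Omega}_g^J$ using a Speiser-type lemma for $\Gamma_k$-stable $\bar k$-subspaces of $\bar k(\mathbf x)=k(\mathbf x)\otimes_k\bar k$, and set $L_J:=M\cap k(\mathbf x)$. Only afterwards do you identify $L_J$ with $\overline{\Omega}_g^{J\widetilde H}$. You obtain the degree formulas from regularity of $L/k$ and linear disjointness.

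\textbf{The paper's route.} It defines $L_J:=\overline{\Omega}_g^{J\widetilde H}$ from the start and works in the Galois correspondence for $\overline{\Omega}_g/k(\mathbf t)$ with profinite group $\widetilde G$. There the compositum $L_J\,\bar k(\mathbf t)$ corresponds to the intersection $(J\widetilde H)\cap G$. This intersection equals $J$ by the modular law, because $\widetilde H\cap G=H\leq J$. Hence $L_J\bar k=\overline{\Omega}_g^J$ follows immediately. The degrees also come straight from the correspondence, using $\widetilde G=G\widetilde H$:
\[
[\widetilde G:J\widetilde H]=[G:J],\qquad [J\widetilde H:\widetilde H]=[J:H].
\]

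\textbf{Comparison.} The step you flagged as the main obstacle, the vector-space descent $M=(M\cap k(\mathbf x))\bar k$, is absorbed in the paper by the rule that a compositum corresponds to an intersection of subgroups. The price is invoking infinite Galois theory. Implicitly this also requires $J\widetilde H$ to be closed, which holds because $J$ is finite and $\widetilde H$ is compact. Your argument needs only finite Galois theory over $\bar k(\mathbf t)$ plus elementary descent, and it makes the invariance of degrees under constant extension explicit. The paper's argument is shorter and yields the formula $L_J=\overline{\Omega}_g^{J\widetilde H}$ by definition rather than as an afterthought.
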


\begin{proof}
If $J$ is normalized by $\widetilde H$, then $J\widetilde H$ is a subgroup
of $\widetilde G$ and
\((J\widetilde H)\cap G=J,\)
because $\widetilde H\cap G=H\leq J$.  Hence
$L_J=\overline{\Omega}_g^{J\widetilde H}$ lies between $k(\mathbf t)$ and $k(\mathbf x)$, and
base change to $\bar k$ gives $L_J\bar k=\overline{\Omega}_g^J$.  The degree
formulas follow from Galois correspondence.

Conversely, if $L_J\subseteq k(\mathbf x)$ is a $k$-intermediate field with
$L_J\bar k=\overline{\Omega}_g^J$, then every element of $\widetilde H$ fixes
$k(\mathbf x)$ and preserves $L_J\bar k$, so it normalizes $J$.  The final
assertion follows from L\"uroth's theorem and the correspondence between
functional decompositions and intermediate rational function fields.
\end{proof}

\subsection{The factorization from the intrinsic translation subgroup}

Assume from now on that the fixed geometric Galois-closure curve
$C=(C_g)_{\bar k}$ has genus one.  Its translation subgroup
$\operatorname{Trans}(C)$ is intrinsic; after choosing any point of $C$ as
origin it becomes the usual group of translations of the resulting elliptic
curve.  Define
\[
 \mathscr T:=G\cap\operatorname{Trans}(C),
 \qquad
 S:=\mathscr T\cap H,
 \qquad
 J:=\mathscr T H.
\]
The subgroup $\operatorname{Trans}(C)$ is characteristic in
$\operatorname{Aut}_{\bar k}(C)$, so $\mathscr T$ and $J$ are normalized by
$\widetilde H$.

\begin{theorem}[Factorization from the intrinsic translation subgroup]
\label{thm:intrinsic-translation-factorization}
With the notation above, there is a canonical $k$-intermediate field
$L_{\mathrm{tr}}\subseteq k(\mathbf x)$ characterized geometrically by
\[
 L_{\mathrm{tr}}\bar k
 =\bar k(\mathbf x)\cap\overline{\Omega}_g^{\mathscr T}
 =\overline{\Omega}_g^{\mathscr T H}.
\]
It determines, uniquely up to $k$-decomposition equivalence, a factorization
\[
 g=h\circ g_{\mathrm{tr}}
\]
with
\[
 \deg g_{\mathrm{tr}}=[\mathscr T:S],
 \qquad
 \deg h=[G:\mathscr T H].
\]
Moreover, $g_{\mathrm{tr}}$ is $k$-M\"obius equivalent to the
induced quotient map of an equivariant isogeny datum over $k$.
\end{theorem}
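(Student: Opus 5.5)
The plan is to obtain the intermediate field from Proposition~\ref{prop:arithmetic-intermediate}, and then build the datum by taking quotients of $C$ by $S$ and by $\mathscr T$.

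\emph{The field $L_{\mathrm{tr}}$ and the factorization.} First record that $\operatorname{Trans}(C)$ is an abelian normal subgroup of $\operatorname{Aut}_{\bar k}(C)$. Hence $\mathscr T\triangleleft G$, $\mathscr T$ is abelian, and $S=\mathscr T\cap H\triangleleft H$. It follows that $J=\mathscr TH$ is a subgroup with $H\le J\le G$, and $J$ is normalized by $\widetilde H$ since $\operatorname{Trans}(C)$ is characteristic. Proposition~\ref{prop:arithmetic-intermediate} then gives $L_{\mathrm{tr}}:=\overline{\Omega}_g^{J\widetilde H}$ with $L_{\mathrm{tr}}\bar k=\overline{\Omega}_g^{\mathscr TH}$. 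Galois correspondence identifies this field with $\overline{\Omega}_g^{\mathscr T}\cap\overline{\Omega}_g^{H}=\overline{\Omega}_g^{\mathscr T}\cap\bar k(\mathbf x)$. The degree formulas are $[k(\mathbf x):L_{\mathrm{tr}}]=[\mathscr TH:H]=[\mathscr T:S]$ and $[L_{\mathrm{tr}}:k(\mathbf t)]=[G:\mathscr TH]$. By L\"uroth, $L_{\mathrm{tr}}=k(g_{\mathrm{tr}}(\mathbf x))$ and $g=h\circ g_{\mathrm{tr}}$. The generator of $L_{\mathrm{tr}}$ is unique up to $\operatorname{PGL}_2(k)$, which gives uniqueness up to $k$-decomposition equivalence. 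Canonicity holds because $\mathscr T$ is intrinsic.

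\emph{Building the datum.} Put $E:=C/S$, $E':=C/\mathscr T$ and $\Lambda:=H/S$. The group $H$ normalizes both $S$ and $\mathscr T$, so $\Lambda$ acts on $E$ and on $E'$.
\begin{itemize}
\item Faithfulness on $E$: an automorphism of $C$ inducing the identity on $C/S$ lies in $S$, since $C\to C/S$ is Galois with group $S$.
\item Faithfulness on $E'$: the kernel of the action on $C/\mathscr T$ is $H\cap\mathscr T=S$.
\item Genus and the isogeny: $E$ and $E'$ are genus one because translations act freely. The natural map $\varphi:E\to E'$ is the quotient by the translation group $\mathscr T/S$, hence an unramified Galois cover, hence a separable isogeny once origins are matched, and it is $\Lambda$-equivariant.
\item The quotients: $E/\Lambda=C/H$ has function field $\bar k(\mathbf x)$, and $E'/\Lambda=C/\mathscr TH$ has function field $L_{\mathrm{tr}}\bar k$. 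So the induced map is geometrically $g_{\mathrm{tr}}$.
\item Nontrivial linear part: if every element of $\Lambda$ acted on $E$ as a translation, then $H$ would consist of translations. Then $C/H$ would have genus one, contradicting that $\bar k(\mathbf x)$ is rational.
\end{itemize}

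\emph{Descent to $k$.} The subgroups $S\widetilde H$ and $\mathscr T\widetilde H$ of $\widetilde G$ are defined because $S$ and $\mathscr T$ are normalized by $\widetilde H$. Their fixed fields are function fields over $k$ with full constant field $k$, since $\widetilde H\to\Gamma_k$ is surjective. These fields descend $\bar k(E)$ and $\bar k(E')$, and they contain $k(\mathbf x)$ and $L_{\mathrm{tr}}$ respectively.

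By Lang's theorem each genus-one curve has a $k$-point. Choose $\mathrm O_E\in E(k)$ and set $\mathrm O_{E'}:=\varphi(\mathrm O_E)$. Then $\varphi$ becomes a $k$-isogeny of elliptic curves over $k$, and $E/\Lambda\to E'/\Lambda$ descends to the map $k(\mathbf x)\supseteq L_{\mathrm{tr}}$. This makes $g_{\mathrm{tr}}$ $k$-M\"obius equivalent to the induced quotient map.

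\emph{Main obstacle.} The step I expect to be hardest is Frobenius stability of the $\Lambda$-action on $E$. The plan is to realize the $q$-power bijection of $E(\bar k)$ as induced by an element $\sigma\in\widetilde H$ lifting the arithmetic Frobenius. Such a $\sigma$ acts on $\overline{\Omega}_g^S$ semilinearly over $k(\mathbf x)$, and its composition with the geometric Frobenius is $\bar k$-linear. Since $\sigma$ normalizes $H$ and $S$, conjugation by $\sigma$ preserves the image of $H/S$ in $\operatorname{Aut}_{\bar k}(E)$. The care needed is in matching the semilinear action on function fields with the $q$-power map on points, so that conjugation of automorphisms agrees in both pictures.
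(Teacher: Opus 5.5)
Your first part (the field $L_{\mathrm{tr}}$, the Galois-correspondence identity, the degree formulas and L\"uroth) matches the paper. Your faithfulness and nontrivial-linear-part arguments are also fine. The gap is in the descent step.

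Since $S\le H\le\widetilde H$, one has $S\widetilde H=\widetilde H$. Likewise $\mathscr T\widetilde H=(\mathscr TH)\widetilde H$. By Proposition~\ref{prop:arithmetic-intermediate} their fixed fields are exactly $k(\mathbf x)$ and $L_{\mathrm{tr}}$. Their base changes are $\overline{\Omega}_g^{H}$ and $\overline{\Omega}_g^{\mathscr TH}$, the function fields of the quotient lines $E/\Lambda$ and $E'/\Lambda$, not of $E=C/S$ and $E'=C/\mathscr T$. Galois correspondence reverses inclusions, so any subgroup containing $\widetilde H$ has fixed field inside $k(\mathbf x)$.

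So you never produce $k$-forms of the genus-one curves. The later steps (a $k$-point via Lang, $\varphi$ being a $k$-isogeny, the $q$-power bijection on $E(\bar k)$) then have nothing to refer to. To descend $\overline{\Omega}_g^S$ compatibly with $k(\mathbf x)$, you need a subgroup $U\le\widetilde H$ with $U\cap G=S$ and $U\to\Gamma_k$ surjective. The group $\widetilde H$ is too large because it contains all of $H$.

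The missing idea is to split off the geometric part:
\begin{itemize}
\item Choose $\widetilde{\operatorname{Fr}}_q\in\widetilde H$ over $\operatorname{Fr}_q$.
\item Extend $n\mapsto\widetilde{\operatorname{Fr}}_q^{\,n}$ to a continuous $s:\widehat{\mathbb Z}\to\widetilde H$, using that $\widetilde H$ is profinite and $\Gamma_k\cong\widehat{\mathbb Z}$.
\item Put $\Sigma=s(\Gamma_k)$, so that $\widetilde H=H\rtimes\Sigma$.
\end{itemize}
Then $K_0=\overline{\Omega}_g^{S\Sigma}$ and $K_1=\overline{\Omega}_g^{\mathscr T\Sigma}$ have the properties you need. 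Since $(S\Sigma)\cap G=S$ and $(\mathscr T\Sigma)\cap G=\mathscr T$, they satisfy $K_0\bar k=\overline{\Omega}_g^S$ and $K_1\bar k=\overline{\Omega}_g^{\mathscr T}$. Their constant fields are $k$. Moreover $K_1\subseteq K_0$, so the quotient morphism, and hence $\varphi$, is defined over $k$ once origins are chosen compatibly. Finally, $H\Sigma=\widetilde H$ and $\mathscr TH\Sigma=\mathscr T\widetilde H$ show that the quotients by $H/S$ have fixed fields $k(\mathbf x)$ and $L_{\mathrm{tr}}$. Using the \emph{same} complement $\Sigma$ for both curves is what makes the two descents compatible.

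This construction also removes your ``main obstacle''. Because the $k$-structures are defined by $\Sigma$, the $q$-power bijection on geometric points of the descended curves is induced by conjugation by $\widetilde{\operatorname{Fr}}_q$. That element normalizes $H$, $S$ and $\mathscr T$, so Frobenius stability of both $\Lambda$-actions is immediate. In your proposal, the lift $\sigma$ is unrelated to whatever $k$-structure you put on $E$, so nothing ties the $q$-power map on $E(\bar k)$ to conjugation by $\sigma$.
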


\begin{proof}
Apply Proposition~\ref{prop:arithmetic-intermediate} to
$J=\mathscr T H$.  Since $\overline{\Omega}_g^H=\bar k(\mathbf x)$, Galois correspondence
gives
\(\overline{\Omega}_g^{\mathscr T H} =\overline{\Omega}_g^{\mathscr T}\cap\overline{\Omega}_g^H,\)
which proves the intrinsic description of $L_{\mathrm{tr}}$.  L\"uroth's
theorem gives the factorization and the displayed degree formulas.

It remains to realize its right factor by an equivariant isogeny datum.
Recall that $\Gamma_k=\operatorname{Gal}(\bar k/k)\cong\widehat{\mathbb Z}$ as profinite groups,
and let $\operatorname{Fr}_q\in\Gamma_k$ denote the $q$-power automorphism.
The restriction map $\widetilde H\to\Gamma_k$ is surjective.  Since
$\widetilde H$ is a closed subgroup of the profinite group $\widetilde G$, it
is itself profinite.  Choose
$\widetilde{\operatorname{Fr}}_q\in\widetilde H$ above
$\operatorname{Fr}_q$.  The homomorphism
$\mathbb Z\to\widetilde H$, $n\mapsto\widetilde{\operatorname{Fr}}_q^{\,n}$,
is continuous for the profinite topology on $\mathbb Z$, and hence extends
uniquely to a continuous homomorphism
$s:\widehat{\mathbb Z}\to\widetilde H$.  Its composite with
$\widetilde H\to\Gamma_k$ is the identity on the dense subgroup generated by
$\operatorname{Fr}_q$, hence is the identity on $\Gamma_k$.  Thus
$\Sigma:=s(\Gamma_k)$ is a complement to $H$ in $\widetilde H$, so
\[
 \widetilde H=H\rtimes\Sigma.
\]

Since both $S$ and $\mathscr T$ are normalized by $\widetilde H$, the
products $S\Sigma$ and $\mathscr T\Sigma$ are subgroups.  Put
\[
 K_0:=\overline{\Omega}_g^{S\Sigma},
 \qquad
 K_1:=\overline{\Omega}_g^{\mathscr T\Sigma}.
\]
Because
\((S\Sigma)\cap G=S
\quad\text{and}\quad
(\mathscr T\Sigma)\cap G=\mathscr T,\)
base change to $\bar k$ gives
\[
 K_0\bar k=\overline{\Omega}_g^S,
 \qquad
 K_1\bar k=\overline{\Omega}_g^{\mathscr T}.
\]
Moreover $S\Sigma\leq\mathscr T\Sigma$, so $K_1\subseteq K_0$ and the
quotient morphism $C/S\to C/\mathscr T$ descends compatibly to $k$.

The geometric quotient of $C/S$ by $H/S$ has function field
$\overline{\Omega}_g^H=\bar k(\mathbf x)$, and its descended $k$-form has fixed field
\[
 \overline{\Omega}_g^{H\Sigma}
 =\overline{\Omega}_g^{\widetilde H}
 =k(\mathbf x).
\]
Likewise the geometric quotient of $C/\mathscr T$ by $H/S$ has function
field $\overline{\Omega}_g^{\mathscr T H}$, and its descended $k$-form has fixed field
\[
 \overline{\Omega}_g^{\mathscr T H\Sigma}
 =\overline{\Omega}_g^{\mathscr T\widetilde H}
 =L_{\mathrm{tr}}.
\]
Thus the quotient curves descend compatibly to genus-one curves over $k$,
and their quotients by $H/S$ are the two $k$-rational function fields above.
We continue to write $C/S$ and $C/\mathscr T$ for these descended $k$-forms.
Every genus-one curve over a finite field has a rational point by the
Hasse--Weil bound \cite[Theorem~5.2.3]{Sti09}.
Choose a point $O_0\in(C/S)(k)$ and let $O_1\in(C/\mathscr T)(k)$ be its
image under the descended quotient morphism.  Use $O_0$ and $O_1$ as origins
and write the resulting elliptic curves as $E_0$ and $E_1$.  The quotient
morphism now sends origin to origin and hence is a separable $k$-isogeny
\(\psi:E_0\longrightarrow E_1\)
whose geometric kernel is $\mathscr T/S$.  Put $\Lambda_0=H/S$.  Since $H$ normalizes both $S$ and $\mathscr T$, the same abstract group
$\Lambda_0$ acts on $E_0$ and $E_1$, and $\psi$ is equivariant.  Under the
compatible descent above, conjugation by
$\widetilde{\operatorname{Fr}}_q=s(\operatorname{Fr}_q)$ induces the
$q$-power Frobenius action on geometric points of the descended curves.
Hence both prescribed $\Lambda_0$-actions are stable under $q$-power
Frobenius conjugation.

Both actions are faithful.  Indeed, an element of $H/S$ acting trivially on
$C/S$ lies in $S$, and one acting trivially on
$C/\mathscr T$ lies in $H\cap\mathscr T=S$.  The image of the source linear-part homomorphism is nontrivial: otherwise
$H/S$ would act on $C/S$ only by translations, and
its quotient would again have genus one, whereas
\((C/S)/(H/S)=C/H\cong_{\bar k}\mathbf P^1.\)
Finally,
\(E_0/\Lambda_0\cong_k\mathbf P^1, \qquad E_1/\Lambda_0\cong_k\mathbf P^1,\)
because their function fields are respectively $k(\mathbf x)$ and
$L_{\mathrm{tr}}$.  The induced quotient map therefore represents
$g_{\mathrm{tr}}$ up to $k$-M\"obius equivalence.
\end{proof}

\begin{proposition}[Criterion for an equivariant-isogeny realization]
\label{prop:equivariant-isogeny-realization-criterion}
Let $g$ be separable with Galois closure of genus one.  Then $g$ is
$k$-M\"obius equivalent to the induced quotient map of an equivariant
isogeny datum if and only if
\[
 G=\mathscr T H,
\]
equivalently if and only if $\mathscr T$ is transitive on the sheets $G/H$.
In this case
\[
 \mathscr T\cap H=1,
 \qquad
 G=\mathscr T\rtimes H.
\]
\end{proposition}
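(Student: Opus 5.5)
The plan is to prove the two implications separately, using the factorization of Theorem~\ref{thm:intrinsic-translation-factorization} for one of them and an explicit computation of the Galois closure of an induced quotient map for the other. The reformulation is pure group theory: $\mathscr T$ is transitive on $G/H$ exactly when $\mathscr T H=G$.

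\emph{Sufficiency.} Suppose $G=\mathscr T H$. Theorem~\ref{thm:intrinsic-translation-factorization} gives $g=h\circ g_{\mathrm{tr}}$ with $\deg h=[G:\mathscr T H]=1$. Hence $h\in\operatorname{PGL}_2(k)$, and $g$ is $k$-M\"obius equivalent to $g_{\mathrm{tr}}$. The same theorem realizes $g_{\mathrm{tr}}$ by an equivariant isogeny datum.

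\emph{Necessity.} A $k$-M\"obius change of source and target coordinates replaces $(G,H)$ by a conjugate pair inside an isomorphic Galois closure. It therefore preserves the condition $G=\mathscr T H$, so we may take $g=f$ for a datum $(E,E',\Lambda,\varphi)$. Work over $\bar k$ with the tower $\bar k(E'/\Lambda)\subseteq\bar k(E/\Lambda)\subseteq\bar k(E)$.

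\begin{itemize}
\item[(a)] Let $\Gamma$ be the group generated by the $\Lambda$-action on $E$ and the translations $\tau_n$ for $n\in\mathcal N$. Every element of $\Gamma$ covers an element of $\Lambda$ acting on $E'$, so $\Gamma$ fixes $\bar k(E'/\Lambda)$.
\item[(b)] Show $\Lambda_E\cap\{\tau_n\}=1$. If $\gamma_E=\tau_n$ with $n\in\mathcal N$, then $\gamma_{E'}(\varphi(P))=\varphi(P)+\varphi(n)=\varphi(P)$. Faithfulness on $E'$ forces $\gamma=1$.
\item[(c)] Deduce $|\Gamma|=|\mathcal N|\,|\Lambda|=\deg\varphi\cdot|\Lambda|=[\bar k(E):\bar k(E'/\Lambda)]$, using separability of $\varphi$. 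Hence $\bar k(E)/\bar k(E'/\Lambda)$ is Galois with group $\Gamma=\mathcal N\rtimes\Lambda$, and $\bar k(E/\Lambda)$ is the fixed field of $\Lambda$.
\end{itemize}

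The geometric Galois closure of $f$ is then $\bar k(E)^{K}$ with $K=\operatorname{core}_\Gamma(\Lambda)$. This gives $C=E/K$, $G=\Gamma/K$ and $H=\Lambda/K$.

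The main point is to use the genus-one hypothesis. If some element of $K$ had nontrivial linear part, the quotient $E/K$ would have genus zero, since a quotient of a genus-one curve keeps genus one only under translations. So $K$ consists of translations, and each translation $\tau_n$ descends to a translation of $C=E/K$. Therefore the image of $\mathcal N$ in $G$ lies in $\mathscr T=G\cap\operatorname{Trans}(C)$. Since $\Gamma=\mathcal N\Lambda$, we get $G=(\mathcal N K/K)(\Lambda/K)\subseteq\mathscr T H$, hence $G=\mathscr T H$.

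\emph{The last assertions.} Assume $G=\mathscr T H$. The subgroup $\mathscr T\cap H$ is normalized by $\mathscr T$, because $\mathscr T$ is abelian. It is also normalized by $H$, because $\mathscr T\triangleleft G$. So $\mathscr T\cap H$ is normal in $\mathscr T H=G$ and contained in $H$. It therefore lies in $\operatorname{core}_G(H)=1$. With $\mathscr T\triangleleft G$ this gives $G=\mathscr T\rtimes H$.

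I expect the main obstacle to be step (c) together with the genus argument: one must verify that the fixed field of $\Gamma$ is exactly $\bar k(E'/\Lambda)$, which is the degree count in (c), and that a nontranslation element in the core really forces genus zero.
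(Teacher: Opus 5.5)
Your proposal is correct and follows essentially the same route as the paper. For sufficiency you use the intrinsic-translation factorization with $\deg h=[G:\mathscr T H]=1$. For necessity you realize the geometric Galois closure as $E/\operatorname{core}_{\mathcal N\rtimes\Lambda}(\Lambda)$, use Riemann--Hurwitz to force that core to consist of translations, and finish with the core-freeness argument for $\mathscr T\cap H=1$. The only cosmetic difference is that you obtain $G\subseteq\mathscr T H$ directly from $\Gamma=\mathcal N\Lambda$ by pushing the translations by $\mathcal N$ into $\mathscr T$. The paper instead identifies the core as $\ker(\Lambda\to\operatorname{Aut}(\mathcal N))$ and then checks that $\mathcal N$ descends faithfully and acts regularly on the sheets.
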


\begin{proof}
The reverse implication follows from
Theorem~\ref{thm:intrinsic-translation-factorization}.

For the forward implication, choose an equivariant isogeny datum representing
$g$.  Over $\bar k$, the composite $E\to E'/\Lambda$ is Galois
with deck group $\mathcal H=\mathcal N\rtimes\Lambda$, where $\mathcal N$
acts by translations and $\Lambda$ by the prescribed affine automorphisms.
The lower cover corresponds to the subgroup $\Lambda\leq\mathcal H$.  Put
\[
 \mathcal K:=\operatorname{core}_{\mathcal H}(\Lambda).
\]
Then $E/\mathcal K$ realizes the geometric Galois-closure curve of the induced
quotient-map representative.  Since that representative is $k$-M\"obius
equivalent to $g$, this curve is $\bar k$-isomorphic to the fixed curve
$C=(C_g)_{\bar k}$.  Choose the induced $\bar k$-isomorphism and, for the
remainder of the proof, identify $C\cong E/\mathcal K$.
For the geometric monodromy group and point stabilizer used above,
\[
 G\cong\mathcal H/\mathcal K,
 \qquad
 H\cong\Lambda/\mathcal K.
\]
Under these identifications, the corresponding sheet sets $G/H$ and
$\mathcal H/\Lambda$ are naturally bijective.

For $\gamma\in\Lambda$ and $Q\in\mathcal N$,
\[
 \tau_Q\gamma\tau_Q^{-1}
 =\tau_{Q-u_{\gamma,E}(Q)}\gamma.
\]
It follows that
\[
 \mathcal K
 =\ker\!\left(
 \Lambda\longrightarrow\operatorname{Aut}(\mathcal N),\
 \gamma\longmapsto u_{\gamma,E}|_{\mathcal N}
 \right).
\]

By hypothesis $C$ has genus one.  Since $E$ also has genus one, the
Riemann--Hurwitz formula with different for $E\to C$ gives zero different.
Thus every nonidentity element of $\mathcal K$ acts without fixed points on
$E$.  An affine automorphism $P\mapsto u(P)+a$ with $u\ne1$ has a fixed point
because $1-u$ is a nonzero isogeny and hence
surjective.  Consequently every element of $\mathcal K$ is a translation.

Since $\mathcal K\leq\Lambda$ and $\mathcal N\cap\Lambda=1$, one has
$\mathcal N\cap\mathcal K=1$.  Moreover the description of $\mathcal K$
above shows that $\mathcal K$ centralizes $\mathcal N$.  Hence the
translations by $\mathcal N$ descend faithfully to genuine translations of
the genus-one curve $C$ under the chosen identification.  On the sheets of the lower cover they remain free and transitive: the map
\[
 \mathcal N\longrightarrow\mathcal H/\Lambda,
 \qquad Q\longmapsto\tau_Q\Lambda,
\]
is a bijection.
Thus the intrinsic translation subgroup
$\mathscr T=G\cap\operatorname{Trans}(C)$ is transitive on $G/H$, and
therefore $G=\mathscr T H$.

Finally, if $G=\mathscr T H$, then $\mathscr T\cap H$ is normalized by $H$
and centralized by $\mathscr T$, hence is normal in $G$.  Since
$\operatorname{core}_G(H)=1$, one obtains
\[
 \mathscr T\cap H=1,
 \qquad
 G=\mathscr T\rtimes H.
\]
\end{proof}

\begin{remark}[The factor generated by translations in the Galois closure]
The field $L_{\mathrm{tr}}$ is intrinsic at the level of intermediate fields:
\(L_{\mathrm{tr}}\bar k =\bar k(\mathbf x)\cap\overline{\Omega}_g^{\mathscr T}.\)
Thus $g_{\mathrm{tr}}$ is the maximal right factor generated by translations
already present in the genus-one Galois closure.  This does not assert maximality among all right factors that might admit an
elliptic-isogeny realization after further quotienting.
\end{remark}

\subsection{Self-normalization and the existence of translations}

\begin{lemma}[The subgroup $H$ is self-normalizing]
\label{lem:self-normalizing}
If $g$ is $k$-indecomposable, then
\[
 N_G(H)=H.
\]
\end{lemma}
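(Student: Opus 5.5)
The plan is to apply the descent criterion, Proposition~\ref{prop:arithmetic-intermediate}, to the subgroup $J:=N_G(H)$. This subgroup satisfies $H\leq J\leq G$, so by indecomposability it must equal $H$ or $G$. The genus-one hypothesis will then rule out $J=G$.

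First I check that $J=N_G(H)$ is normalized by $\widetilde H$. Since $G\triangleleft\widetilde G$ and $\widetilde H\leq\widetilde G$, conjugation by any $\sigma\in\widetilde H$ preserves $G$. Since $H=G\cap\widetilde H$ is the kernel of the restriction $\widetilde H\to\Gamma_k$, the subgroup $H$ is normal in $\widetilde H$, so $\sigma H\sigma^{-1}=H$. Hence $\sigma N_G(H)\sigma^{-1}=N_{\sigma G\sigma^{-1}}(\sigma H\sigma^{-1})=N_G(H)$. By Proposition~\ref{prop:arithmetic-intermediate}, $\overline{\Omega}_g^{J}$ therefore descends to a $k$-intermediate field $L_J$ with $[k(\mathbf x):L_J]=[J:H]$. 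The $k$-indecomposability of $g$, via the final assertion of that proposition, forces $J=H$ or $J=G$.

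It remains to exclude $J=G$ with $H<G$; note that $H\neq G$ because $[G:H]=\deg g>1$. If $N_G(H)=G$, then $H$ is normal in $G$, so $H=\operatorname{core}_G(H)=1$. This gives $\bar k(\mathbf x)=\overline{\Omega}_g^H=\overline{\Omega}_g$, so the geometric Galois-closure curve $C$ would be $\mathbf P^1$ and have genus zero. That contradicts the standing assumption that $C$ has genus one. Hence $N_G(H)=H$.

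I expect no serious obstacle. The only point needing care is the normality of $H$ in $\widetilde H$ together with $\widetilde H$ normalizing $G$; both come directly from the exact sequences recorded before Proposition~\ref{prop:arithmetic-intermediate}. Separability is implicit in the setup: by Lemma~\ref{lem:inseparable-indecomposable} the genus-one case is separable, so $G$ and $H$ are defined.
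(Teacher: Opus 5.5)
Your proof is correct and follows the paper's argument: apply Proposition~\ref{prop:arithmetic-intermediate} to $N_G(H)$, use $k$-indecomposability to force $N_G(H)\in\{H,G\}$, and exclude $N_G(H)=G$ because $\operatorname{core}_G(H)=1$ would give $H=1$, making $\overline{\Omega}_g=\bar k(\mathbf x)$ rational and contradicting the genus-one hypothesis. The paper asserts without comment that $\widetilde H$ normalizes $N_G(H)$; your justification, from $G\triangleleft\widetilde G$ and $H\triangleleft\widetilde H$, makes that step explicit.
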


\begin{proof}
The normalizer $N_G(H)$ is normalized by $\widetilde H$.  A strict inclusion
$H<N_G(H)<G$ would therefore contradict
Proposition~\ref{prop:arithmetic-intermediate}.  If $N_G(H)=G$, then
$H\triangleleft G$, so $\operatorname{core}_G(H)=1$ forces $H=1$.  This would
make $\overline{\Omega}_g=\bar k(\mathbf x)$ rational, contrary to the
genus-one hypothesis.
\end{proof}

\begin{lemma}[The special small-characteristic involution]
\label{lem:small-char-involution}
Suppose $p\in\{2,3\}$ and, after choosing an origin $O$, the group
$\operatorname{Aut}_{\bar k}(C,O)$ is noncyclic (the exceptional $j=0$ case).
Then it has a unique nontrivial involution.
\end{lemma}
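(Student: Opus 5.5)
The plan is to use the classical structure of $\operatorname{Aut}_{\bar k}(E,O)$ for $j=0$ in characteristics $2$ and $3$.  The key point is that in a finite group the unique involution question reduces to knowing the Sylow $2$-subgroup.  Recall (Silverman, Appendix A / Theorem III.10.1) that when $p=3$ and $j=0$ one has $\operatorname{Aut}(E,O)\cong C_3\rtimes C_4$ of order $12$.  When $p=2$ and $j=0$ one has $\operatorname{Aut}(E,O)\cong \mathrm{SL}_2(\mathbb F_3)\cong Q_8\rtimes C_3$ of order $24$.  In every other case (including $j\neq0$, or $p\ge5$) the group is cyclic of order $2,4,6$.  So the noncyclic hypothesis forces exactly these two groups, and I would first record this case reduction.

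Next, rather than relying only on the abstract group identification, I would give a uniform argument via $[-1]$.  The automorphism $[-1]$ is a nontrivial involution in $\operatorname{Aut}(E,O)$, since $j=0$ forces $E$ not of characteristic-$2$ ordinary type with $[-1]$ trivial.  In characteristic $2$, $[-1]$ is trivial only on $E[2]$, and it is nontrivial on $E$.  The claim is that any involution $u$ equals $[-1]$.  The automorphism group acts faithfully on the $\ell$-adic Tate module $T_\ell(E)$ for some $\ell\ne p$ (e.g.\ $\ell=5$ or $7$); equivalently, it embeds in the unit group of a maximal order in the quaternion algebra ramified at $p,\infty$, since a $j=0$ curve in characteristic $2$ or $3$ is supersingular.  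Inside the multiplicative group of a definite quaternion algebra $B$ over $\mathbb Q$, an element with $u^2=1$ satisfies $(u-1)(u+1)=0$ in the division algebra $B$.  Hence $u=\pm1$.  So the only involution is $[-1]$, and it is central.

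Concretely, the steps are:
\begin{enumerate}
\item Note that noncyclicity forces $j=0$, with $E$ supersingular, in characteristic $2$ or $3$.
\item Embed $\operatorname{Aut}(E,O)$ into $\operatorname{End}(E)^\times$, where $\operatorname{End}(E)$ is an order in a division quaternion algebra.
\item Apply the zero-divisor argument to show every involution is $[-1]$.
\item Check that $[-1]\ne 1$, so the involution exists.
\end{enumerate}

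The main obstacle is the second step: justifying that $\operatorname{End}_{\bar k}(E)$ sits inside a division algebra.  I would cite that for supersingular $E$ it is a maximal order in the quaternion algebra ramified exactly at $p$ and $\infty$, which is a division algebra.  Alternatively, the no-zero-divisor property follows directly from the fact that composites of nonzero isogenies are nonzero, since $\operatorname{End}(E)$ has no zero divisors and $\deg$ is multiplicative.  That observation makes the argument elementary: $(u-1)(u+1)=0$ in the domain $\operatorname{End}(E)$ gives $u=\pm1$ immediately, with no quaternion theory needed.  Finally, $[-1]\neq[1]$ because $[2]\neq0$ as an endomorphism.
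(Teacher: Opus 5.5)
Your proof is correct, but it takes a different route from the paper. The paper works with explicit $j=0$ models: $y^2=x^3-x$ in characteristic $3$ and $y^2+y=x^3$ in characteristic $2$. It takes the complete lists of origin-preserving automorphisms $\Phi_{u,r}$ and $\Phi_{u,r,s}$ from the cited classification and squares them coordinatewise. This shows that only the identity and the elliptic involution satisfy $\Phi^2=1$.

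You instead observe that an automorphism fixing $O$ is an isogeny, so it lies in $\operatorname{End}_{\bar k}(E)$. That ring has no zero divisors, because composites of nonconstant isogenies are nonconstant. Since $\pm1$ are central, $u^2=1$ gives $(u-1)(u+1)=0$, hence $u=\pm1$. Finally $[-1]\ne[1]$ because $[2]$ has degree $4$.

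Your argument is shorter, model-free, and proves more: $[-1]$ is the unique involution of $\operatorname{Aut}_{\bar k}(C,O)$ for every elliptic curve in every characteristic. So the noncyclicity hypothesis, your case reduction in step (1), and the Tate-module/quaternion detour are all unnecessary. The centrality that the paper extracts from uniqueness in Proposition~\ref{prop:nontrivial-translation-subgroup} is also immediate.

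What the paper's computation buys is an explicit identification of the involution in the standard coordinates, which it reuses later. For instance, it uses that in characteristic $2$ the involution fixes only $\mathrm O_E$. In your framework that follows from $E[2](\bar k)=\{\mathrm O_E\}$ for the supersingular $j=0$ curve.

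Drop the garbled remark that $j=0$ ``forces $E$ not of characteristic-$2$ ordinary type with $[-1]$ trivial''. $[-1]$ is never the identity, as your own last step shows.
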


\begin{proof}
In characteristic $3$, the special curve may be written
\(y^2=x^3-x,\)
and its twelve origin-preserving automorphisms are
\(\Phi_{u,r}(x,y)=(u^2x+r,u^3y), \qquad u^4=1, \quad r\in\mathbb F_3,\)
by \cite[Proposition~2.1]{KST17}.  Squaring shows that an involution must
have $u^2=1$ and $r=0$, leaving only the identity and the elliptic
involution.

In characteristic $2$, the special curve may be written
\(y^2+y=x^3,\)
and its twenty-four origin-preserving automorphisms are
\(\Phi_{u,r,s}(x,y) =\bigl(u^2x+r,\ y+u^2r^2x+s\bigr),\)
where $u\in\mathbb F_4^\times$, $r\in\mathbb F_4$, and
$s^2+s+r^3=0$ \cite[Proposition~3.1]{KST17}.  If
$\Phi_{u,r,s}^2=1$, comparison of the $x$-coordinate gives $u=1$, and then
comparison of the $y$-coordinate gives $r=0$.  Thus $s\in\{0,1\}$, again
leaving the identity and the elliptic involution.
\end{proof}

\begin{proposition}[Existence of a nontrivial translation subgroup]
\label{prop:nontrivial-translation-subgroup}
Let $g\in k(X)$ be separable, of degree greater than one, $k$-indecomposable,
and suppose that its Galois closure has genus one.  Then
\[
 \mathscr T\ne1.
\]
\end{proposition}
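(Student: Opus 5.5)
Argue by contradiction: suppose $\mathscr T=1$, so that $G$ embeds in $\operatorname{Aut}_{\bar k}(C)$ with trivial intersection with $\operatorname{Trans}(C)$. Fix an origin $O$ on $C$ and consider the linear-part homomorphism $\lambda:\operatorname{Aut}_{\bar k}(C)\to\operatorname{Aut}_{\bar k}(C,O)$, $\gamma\mapsto u_\gamma$. Its kernel is $\operatorname{Trans}(C)$, so $\lambda|_G$ is injective. Thus $G$ is isomorphic to a finite subgroup of $\operatorname{Aut}_{\bar k}(C,O)$. This is a cyclic group of order dividing $2,3,4,6$ when $p>3$ or $j\ne0$, and a group of order $12$ or $24$ in the special case of Lemma~\ref{lem:small-char-involution}. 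The goal is to show that no such $G$ admits a core-free, self-normalizing subgroup $H$ with $1<H<G$ whose fixed field has genus zero.

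\emph{Cyclic case.} If $G$ is abelian, then every subgroup is normal. By Lemma~\ref{lem:self-normalizing}, $N_G(H)=H$, which forces $H=G$. Then $\operatorname{core}_G(H)=1$ gives $G=1$, contradicting $\deg g>1$. This disposes of every case in which $\operatorname{Aut}_{\bar k}(C,O)$ is cyclic.

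\emph{Special case $p\in\{2,3\}$, $j=0$.} Here $G$ embeds in $\operatorname{Aut}_{\bar k}(C,O)$, which is $C_3\rtimes C_4$ in characteristic $3$ and $\mathrm{SL}_2(\mathbb F_3)$ in characteristic $2$. By Lemma~\ref{lem:small-char-involution}, $\operatorname{Aut}_{\bar k}(C,O)$ has a unique involution $\iota$, the elliptic involution. Hence every subgroup of even order contains $\iota$, and $\iota$, being the unique involution, is central. If $H$ has even order, then $\iota\in\operatorname{core}_G(H)$, a contradiction. So $H$ has odd order and is a $3$-group of order $1$ or $3$. We have $H\ne1$, since otherwise $\overline{\Omega}_g=\bar k(\mathbf x)$ would have genus zero. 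Thus $|H|=3$. In characteristic $3$, $C_3\rtimes C_4$ has a normal Sylow $3$-subgroup, so $H$ would be normal, a contradiction. In characteristic $2$, a self-normalizing subgroup of order $3$ in $\mathrm{SL}_2(\mathbb F_3)$ requires $G$ to be of order $3$, where $H=G$ is impossible, or of order $12$, 6, or 24. Order $24$ is $\mathrm{SL}_2(\mathbb F_3)$, in which the normalizer of a Sylow $3$ is $C_6\ni\iota$, so $H$ is not self-normalizing. The only subgroups of order divisible by $3$ are $C_3$, $C_6$, and the whole group, since $\mathrm{SL}_2(\mathbb F_3)$ has no subgroup of order $12$. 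In $C_6$ the subgroup $H$ is normal. So every case fails.

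The main obstacle is the bookkeeping in the characteristic-$2$ special case: I must check that $\mathrm{SL}_2(\mathbb F_3)$ has no subgroup of order $12$ and that any $C_3$ inside it is never self-normalizing. I would rely on the unique-involution fact from Lemma~\ref{lem:small-char-involution} to make both points immediate, since any group of even order contains the central $\iota$. As an alternative to the self-normalizing argument, I would keep a genus check in reserve: a nontrivial origin-fixing automorphism group acting on $C$ gives the quotient $C/H\cong\mathbf P^1$, which is consistent, so the contradiction must come from group theory rather than from genus, exactly as above.
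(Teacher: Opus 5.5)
Your proof is correct and takes the paper's route. You embed $G$ into $\operatorname{Aut}_{\bar k}(C,O)$ via linear parts, rule out the abelian case because $H$ would be normal, and in the enlarged $j=0$ case use the unique central involution $\iota$ to force $\lvert H\rvert=3$. The only difference is the endgame: the paper skips your $C_3\rtimes C_4$ and $\mathrm{SL}_2(\mathbb F_3)$ subgroup bookkeeping by using the observation you flag yourself. Namely, $3<\lvert G\rvert\mid 24$ makes $\lvert G\rvert$ even, so $\iota\in G\setminus H$ is central, lies in $N_G(H)$, and contradicts Lemma~\ref{lem:self-normalizing}.
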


\begin{proof}
Assume $\mathscr T=1$.  After choosing an origin on $C$, the linear-part map
embeds $G$ into the finite group
$U=\operatorname{Aut}_{\bar k}(C,O)$.  Also $H\ne1$, since otherwise the
Galois-closure curve would be rational.

If $p>3$, then $U$ is cyclic of order $2$, $4$, or $6$; if
$p\in\{2,3\}$ and the automorphism group is not enlarged, then $U\cong C_2$.
In all these cases $G$ is abelian, so $H\triangleleft G$, contradicting
$\operatorname{core}_G(H)=1$ because $H\ne1$.

It remains to consider the enlarged groups in characteristics $2$ and $3$.
Let $z$ be their unique involution from
Lemma~\ref{lem:small-char-involution}; uniqueness makes $z$ central.  If
$\lvert H\rvert$ is even, Cauchy's theorem gives $z\in H$, so the nontrivial normal
subgroup $\langle z\rangle\leq H$ contradicts
$\operatorname{core}_G(H)=1$.  If $\lvert H\rvert$ is
odd, then $\lvert H\rvert=3$.  Since $[G:H]=\deg g>1$ and $\lvert G\rvert$ divides $12$ or $24$,
$\lvert G\rvert$ is even.  Hence $G$ contains an involution, necessarily $z$.  Since
$z\notin H$ is central,
\(H<N_G(H),\)
contradicting Lemma~\ref{lem:self-normalizing}.
\end{proof}

\subsection{Proof of the reconstruction theorem and the degree restriction}

\begin{proof}[Proof of Theorem~\ref{thm:intro-reconstruction}]
Lemma~\ref{lem:inseparable-indecomposable} excludes the inseparable case,
so $g$ is separable.  Proposition~\ref{prop:nontrivial-translation-subgroup} gives
$\mathscr T\ne1$.  Since $\mathscr T\triangleleft G$ and $H$ contains no nontrivial normal subgroup of $G$,
$\mathscr T$ cannot be contained in $H$; hence
\(H<\mathscr T H.\)
The subgroup $\mathscr T H$ is normalized by $\widetilde H$, so
indecomposability and Proposition~\ref{prop:arithmetic-intermediate} force
\(G=\mathscr T H.\)
Proposition~\ref{prop:equivariant-isogeny-realization-criterion} now reconstructs the required
equivariant isogeny datum, with
\(G=\mathscr T\rtimes H\)
and isogeny kernel identified with $\mathscr T$.

It remains to prove the structural assertions.  If
$1<C_0<\mathscr T$ is characteristic in $\mathscr T$, then $C_0H$ is
normalized by $\widetilde H$ and satisfies
\(H<C_0H<G,\)
contrary to indecomposability.  Thus the finite abelian group $\mathscr T$ has no nontrivial proper
characteristic subgroup.  If two distinct primes divided
$\lvert\mathscr T\rvert$, a Sylow subgroup would be a nontrivial proper
characteristic subgroup.  Hence $\mathscr T$ is an $\ell$-group for one
prime $\ell$.  If its exponent exceeded $\ell$, then $\ell\mathscr T$ would
be a nonzero proper characteristic subgroup.  Therefore $\mathscr T$ has
exponent $\ell$, and hence
\(\mathscr T\cong C_\ell^e\)
for some $e\geqslant1$.  Because it is the geometric kernel of a separable elliptic
isogeny, one has $e\leqslant2$; if $\ell=\operatorname{char}k$, reduced geometric
$\ell$-torsion has rank at most one, so $e=1$.  Hence
\(\deg g=\ell^e, \qquad e\in\{1,2\}, \qquad e=2\Longrightarrow \ell\ne\operatorname{char}k.\)

The conjugation action of $H$ on $\mathscr T$ is faithful: its kernel is
normal in $H$ and centralizes $\mathscr T$, hence is normal in
$G=\mathscr T\rtimes H$ and lies in $H$, which contains no nontrivial normal subgroup of $G$.  Finally, suppose that
\(0<W<\mathscr T\)
is stable under both the conjugation action of $H$ and the action induced by
$q$-power Frobenius.  By the construction above, this Frobenius action is
conjugation by $\widetilde{\operatorname{Fr}}_q$, so $W$ is stable under
$\Sigma$ as well.  Hence $WH$ is normalized by
\(H\Sigma=\widetilde H.\)
Since $\mathscr T\cap H=1$ and $0<W<\mathscr T$, one has
\(H<WH<G,\)
contradicting Proposition~\ref{prop:arithmetic-intermediate}.  Thus the group
generated by the induced $H$-action and Frobenius acts irreducibly on the
$\mathbb F_\ell$-space $\mathscr T$.
\end{proof}

\subsection{The remaining factor}

\begin{lemma}[Translations after quotienting by $\mathscr T$]
\label{lem:no-translations-after-translation-quotient}
The induced faithful action of $G/\mathscr T$ on $C/\mathscr T$ contains no
nontrivial translations.
\end{lemma}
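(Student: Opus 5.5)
Fix an origin $O$ on the genus-one curve $C$, so that $C$ becomes an elliptic curve over $\bar k$. Write every $\gamma\in G$ in its affine form $\gamma(P)=u_\gamma(P)+a_\gamma$. The quotient $\psi:C\to C/\mathscr T$ by the finite group of translations $\mathscr T$ is a separable isogeny once we give $C/\mathscr T$ the origin $\psi(O)$. Its kernel is the finite subgroup $\{\tau(O):\tau\in\mathscr T\}$ of $C(\bar k)$.

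\emph{Step 1: the induced affine form.} Since $\mathscr T\triangleleft G$, each $\gamma\in G$ descends to an automorphism $\bar\gamma$ of $C/\mathscr T$ with $\bar\gamma\circ\psi=\psi\circ\gamma$. We compute this affine form directly. Because $u_\gamma$ normalizes the translation group $\mathscr T$, it maps $\ker\psi$ onto itself, so it descends to an origin-preserving automorphism $\bar u_\gamma$ with $\psi\circ u_\gamma=\bar u_\gamma\circ\psi$. Hence
\[
 \bar\gamma(\psi(P))=\bar u_\gamma(\psi(P))+\psi(a_\gamma).
\]
By uniqueness of the affine decomposition on $C/\mathscr T$, the linear part of $\bar\gamma$ is $\bar u_\gamma$.

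\emph{Step 2: induced translations come from translations.} Suppose $\bar\gamma$ is a translation of $C/\mathscr T$; this notion is intrinsic, so it does not depend on the chosen origin. Then $\bar u_\gamma=1$, so $\psi\circ u_\gamma=\psi$, and therefore $\psi\circ(u_\gamma-1)=0$. The morphism $u_\gamma-1:C\to C$ thus has image inside the finite set $\ker\psi$. Since $C$ is irreducible, the image is a single point, and since $u_\gamma-1$ sends $O$ to $O$, we get $u_\gamma-1=0$. Hence $u_\gamma=1$, so $\gamma$ is a translation of $C$. As $\gamma$ also lies in $G$, we have $\gamma\in G\cap\operatorname{Trans}(C)=\mathscr T$, and its class in $G/\mathscr T$ is trivial.

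\emph{Step 3: faithfulness.} The action of $G/\mathscr T$ on $C/\mathscr T=C/\mathscr T$ is faithful because the function field of $C/\mathscr T$ is $\overline{\Omega}_g^{\mathscr T}$, and Galois correspondence identifies the automorphisms induced by $G$ on it with $G/\mathscr T$. So any nontrivial element of $G/\mathscr T$ acts nontrivially, and by Step 2 it does not act as a translation.

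The only point needing care is Step 1, namely that the induced linear part is exactly the descent $\bar u_\gamma$ of $u_\gamma$. Once this is settled, Step 2 is the elementary observation that an endomorphism with finite image is zero.
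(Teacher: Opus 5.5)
Your proposal is correct and follows the paper's proof. In both arguments an element inducing a translation on $C/\mathscr T$ has trivial induced linear part, so $\psi\circ(u_\gamma-1)=0$; connectedness then forces $u_\gamma=1$, which puts $\gamma$ in $G\cap\operatorname{Trans}(C)=\mathscr T$. Your Step~1 spells out the descent $\psi\circ u_\gamma=\bar u_\gamma\circ\psi$, which the paper uses without comment.
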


\begin{proof}
Choose origins and let $\pi:C\to C/\mathscr T$ be the quotient isogeny.
If $a\in G$ induces a translation on $C/\mathscr T$, then the induced
linear part is the identity, so
\(\pi\circ(u_a-1)=0.\)
Thus $(u_a-1)(C)$ is contained in the finite kernel of $\pi$.  Since the
image of a connected algebraic group is connected, $u_a=1$.  Hence $a$ was
already a translation on $C$, so $a\in\mathscr T$.
\end{proof}

\begin{proposition}[Genus-zero Galois closures for the remaining factors]
In the factorization $g=h\circ g_{\mathrm{tr}}$ of
Theorem~\ref{thm:intrinsic-translation-factorization}, every $k$-indecomposable factor of $h$ has Galois closure of genus zero.  If $G/\mathscr T$ is
abelian, then $h$ itself has Galois closure of genus zero.  In particular, the latter conclusion holds when $\operatorname{char}k>3$.
\end{proposition}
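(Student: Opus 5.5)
The plan is to read off the Galois closure of each factor of $h$ as a quotient of the genus-one curve $C/\mathscr T$. Whenever the relevant quotient group is nontrivial, it contains a non-translation by Lemma~\ref{lem:no-translations-after-translation-quotient}, and the quotient then has genus zero. The remaining case, where the quotient group is trivial, will be ruled out by Proposition~\ref{prop:nontrivial-translation-subgroup}.

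\emph{Setup.} By Proposition~\ref{prop:arithmetic-intermediate}, a $k$-decomposition of $h$ corresponds to a chain of subgroups
\[
 \mathscr T H=J_0\le J_1\le\cdots\le J_m=G
\]
normalized by $\widetilde H$. Each factor is geometrically the extension $\overline{\Omega}_g^{J_{i}}\subseteq\overline{\Omega}_g^{J_{i-1}}$. It is separable because $g$ is.

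\emph{Galois closure of a factor.} The geometric Galois closure of this extension is $\overline{\Omega}_g^{K_i}$, where
\[
 K_i:=\operatorname{core}_{J_i}(J_{i-1}).
\]
Since $\mathscr T\triangleleft G$ and $\mathscr T\le J_{i-1}$, we have $\mathscr T\le K_i$. Hence the closure curve is $(C/\mathscr T)/(K_i/\mathscr T)$. By Lemma~\ref{lem:no-translations-after-translation-quotient}, $G/\mathscr T$ acts faithfully on the genus-one curve $C/\mathscr T$ with no nontrivial translations.

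\emph{Case $K_i\ne\mathscr T$.} Then $K_i/\mathscr T$ contains an element with nontrivial linear part. Such an element has a fixed point, since $1-u$ is a surjective isogeny. The cover $C/\mathscr T\to(C/\mathscr T)/(K_i/\mathscr T)$ therefore has positive different. Riemann--Hurwitz then forces the quotient to have genus $0$.

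\emph{Case $K_i=\mathscr T$ (the main obstacle).} Here the closure of the factor is $C/\mathscr T$, of genus one. The factor's geometric monodromy group is $J_i/\mathscr T$, which acts faithfully on $C/\mathscr T$. By Lemma~\ref{lem:no-translations-after-translation-quotient} its intersection with the translations is trivial. So this factor is a separable rational function over $k$ whose Galois closure has genus one and whose intrinsic translation subgroup is trivial. Its degree is $[J_i:J_{i-1}]$.

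\emph{Conclusion for indecomposable factors.} Now suppose the factor is $k$-indecomposable and of degree $>1$. Proposition~\ref{prop:nontrivial-translation-subgroup}, applied to this factor, gives a contradiction. The point I expect to need care is identifying the factor's own Galois-closure data with $C/\mathscr T$ and $J_i/\mathscr T\supseteq J_{i-1}/\mathscr T$, including that the core condition gives faithfulness on the sheets. This is just Galois correspondence inside $\overline{\Omega}_g$. Hence every $k$-indecomposable factor has genus-zero closure.

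\emph{Abelian case.} If $G/\mathscr T$ is abelian, apply the same analysis to $h$ itself, with $J_0=\mathscr T H$ and $J_1=G$. Every subgroup of $G/\mathscr T$ is normal, so
\[
 \operatorname{core}_G(\mathscr T H)=\mathscr T H.
\]
The closure of $h$ is therefore $\overline{\Omega}_g^{\mathscr T H}=L_{\mathrm{tr}}\bar k$, which is rational by Lüroth's theorem. So $h$ has genus-zero closure; this also covers the trivial case $\deg h=1$.

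\emph{Characteristic $>3$.} Choose an origin on $C/\mathscr T$. Since $G/\mathscr T$ contains no translations, the linear-part map embeds it into $\operatorname{Aut}_{\bar k}(C/\mathscr T,O)$, which is cyclic when $\operatorname{char}k>3$. Hence $G/\mathscr T$ is abelian, and the previous paragraph applies.
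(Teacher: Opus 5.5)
Your proposal is correct and follows the paper's proof essentially step for step. You identify each factor's Galois closure with $C/\mathcal K$ for the core $\mathcal K\supseteq\mathscr T$, use Lemma~\ref{lem:no-translations-after-translation-quotient} together with the fixed-point property of nontranslations to force genus zero unless $\mathcal K=\mathscr T$, and exclude $\mathcal K=\mathscr T$ for indecomposable factors via Proposition~\ref{prop:nontrivial-translation-subgroup}; the abelian and characteristic-$>3$ cases are handled exactly as in the paper, and the only difference (splitting on whether the core equals $\mathscr T$ rather than on whether the genus is one) is just the contrapositive of the paper's argument.
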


\begin{proof}
Let a $k$-indecomposable factor of $h$ correspond to a subgroup interval
\(J\leq B_0<B_1\leq G\) arising from the corresponding $k$-factorization,
and put
\[
 \mathcal K_0:=\bigcap_{b\in B_1} bB_0b^{-1},
\]
the largest normal subgroup of $B_1$ contained in $B_0$.  Since
$\mathscr T\leq J\leq B_0$ and $\mathscr T\triangleleft G$, one has
$\mathscr T\leq \mathcal K_0$.  Its Galois-closure curve is
$C/\mathcal K_0$, a quotient of the genus-one curve $C/\mathscr T$, and
therefore has genus at most one.

Suppose it has genus one.  Riemann--Hurwitz with different for
$C/\mathscr T\to C/\mathcal K_0$ then forces zero different, so every nontrivial
element of $\mathcal K_0/\mathscr T$ acts without fixed points.  A nontranslation
automorphism $P\mapsto u(P)+a$ of a genus-one curve has a fixed point,
because $1-u$ is a nonzero isogeny and hence surjective.  Thus
$\mathcal K_0/\mathscr T$ consists of translations.  Lemma~\ref{lem:no-translations-after-translation-quotient}
forces $\mathcal K_0=\mathscr T$.  The factor would then be a $k$-indecomposable
genus-one map whose geometric monodromy has no nontrivial translations,
contradicting Proposition~\ref{prop:nontrivial-translation-subgroup}.  Hence its closure
has genus zero.

If $G/\mathscr T$ is abelian, then $J/\mathscr T$ is normal in
$G/\mathscr T$, so $J\triangleleft G$.  Therefore $C/J\to C/G$ is already
Galois, and its source $C/J$ is rational; hence the Galois closure of $h$ has
genus zero.  When $\operatorname{char}k>3$, the quotient
$G/\mathscr T$ embeds in the cyclic group of origin-preserving elliptic
curve automorphisms, so it is abelian.
\end{proof}

\section{Frobenius equations and rational points on elliptic quotients}
\label{sec:frobenius-equations-elliptic-quotients}

This section develops the rational-point argument underlying the exact kernel
criterion.  The equations $\mathcal F_E(P)=\gamma_E(P)$ describe rational
quotient fibres, and a weighted counting identity remains valid over ramified
fibres.

Let $E/k$ be an elliptic curve with a faithful action of a finite group
$\Lambda$ by curve automorphisms that is stable under Frobenius conjugation,
and assume that the image of the linear-part homomorphism is nontrivial.  Let
\(\pi:E\longrightarrow Y:=E/\Lambda\)
be the geometric quotient.  For $\gamma\in\Lambda$ write
$\gamma_E(P)=u_{\gamma,E}(P)+a_{\gamma,E}$.

\begin{lemma}[Descent and rationality of the quotient]
\label{lem:quotient}
The curve $Y$ and the morphism $\pi$ descend to $k$, and
$Y$ is $k$-isomorphic to $\mathbf P^1$.
\end{lemma}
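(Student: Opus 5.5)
The plan is to use Galois descent for quotients by a Frobenius-stable group, and then show rationality from the geometric genus and a rational point.

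\emph{Descent of $Y$ and $\pi$.} Let $\Lambda_E\leq\operatorname{Aut}_{\bar k}(E)$ be the image of the faithful action. By hypothesis this finite subgroup is invariant under conjugation by the $q$-power bijection. Equivalently, the natural action of $\operatorname{Fr}_q\in\Gamma_k$ on $\bar k(E)$ normalizes $\Lambda_E$, and hence so does all of $\Gamma_k$, since $\Lambda_E$ is finite and $\operatorname{Fr}_q$ topologically generates $\Gamma_k$. Consider the group $\widetilde\Lambda$ of field automorphisms of $\bar k(E)$ generated by $\Lambda_E$ and the coefficientwise Galois action. It is an extension $1\to\Lambda_E\to\widetilde\Lambda\to\Gamma_k\to1$, and the splitting is given by the coefficientwise action, which fixes $k(E)$. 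Put $F:=\bar k(E)^{\widetilde\Lambda}=k(E)^{\Lambda_E\text{-inv}}$, i.e.\ the $\Lambda_E$-invariants that are also $\Gamma_k$-invariant. The Galois group $\Gamma_k$ permutes $\bar k(E)^{\Lambda_E}$, and by Hilbert~90 / Galois descent for function fields one gets $F\bar k=\bar k(E)^{\Lambda_E}$ with $F\cap\bar k=k$. Thus the smooth projective $k$-curve $Y$ with function field $F$ is a $k$-form of $E_{\bar k}/\Lambda_E$. The inclusion $F\subseteq k(E)$ gives a $k$-morphism $\pi:E\to Y$, whose base change is the geometric quotient. This mirrors the argument already used in the proof of Theorem~\ref{thm:intrinsic-translation-factorization}, with $\Sigma$ the Galois complement.

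\emph{Geometric genus zero.} Since the linear-part image is nontrivial, some $\gamma\in\Lambda$ has $u_{\gamma,E}\ne1$. Then $1-u_{\gamma,E}$ is a nonzero isogeny, hence surjective, so $\gamma_E$ has a fixed point. Now apply Riemann--Hurwitz (with different, to cover wild ramification in characteristic $2,3$) to $E\to Y_{\bar k}$:
\[
0=2g_E-2=\lvert\Lambda\rvert(2g_Y-2)+\deg\mathrm{Diff}.
\]
The different is positive because there is a ramified point, so $g_Y<1$, and hence $g_Y=0$.

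\emph{Rationality over $k$.} A genus-zero curve over $k$ is a conic, and it is isomorphic to $\mathbf P^1$ as soon as it has a $k$-rational point. We could produce one as $\pi(\mathrm O_E)$, since $\mathrm O_E\in E(k)$ and $\pi$ is defined over $k$. Alternatively, by Wedderburn's theorem (Brauer group of $\mathbb F_q$ trivial), or Chevalley--Warning, every conic over a finite field has a point. Either way, $Y\cong_k\mathbf P^1$.

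The main obstacle is the descent step, specifically making sure that invariance of the action only up to conjugation (rather than each element being defined over $k$) still yields the quotient over $k$. The key point is that Frobenius permutes $\Lambda_E$, so the ring of invariants is Galois-stable. I would handle this through the fixed field of the semidirect-type group described above.
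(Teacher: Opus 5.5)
Your proposal is correct and follows the paper's proof essentially step for step. Frobenius-stability makes $\bar k(E)^{\Lambda}$ Galois-stable, so the quotient and $\pi$ descend; you spell out the Hilbert~90/Speiser descent that the paper delegates to Silverman's Exercise~III.3.13. An element $\gamma$ with $u_{\gamma,E}\ne1$ has a fixed point because $1-u_{\gamma,E}$ is a surjective isogeny, Riemann--Hurwitz with different then forces genus zero, and $\pi(\mathrm O_E)\in Y(k)$ gives $Y\cong_k\mathbf P^1$.
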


\begin{proof}
Since the action is stable under Frobenius conjugation,
$\bar k(E)^\Lambda$ is Galois-stable, so the quotient and quotient morphism
descend to $k$; compare
\cite[Exercise~III.3.13(a),(e)]{Sil09}.  Choose
$\gamma\in\Lambda$ with $u_{\gamma,E}\ne1$.  The fixed-point equation for
$\gamma_E$ is
\((1-u_{\gamma,E})P=a_{\gamma,E}.\)
Since $1-u_{\gamma,E}$ is a nonzero isogeny, this equation has a geometric
solution.  Hence $E\to Y$ is ramified.  The Riemann--Hurwitz formula with
different \cite[Theorem~3.4.13]{Sti09} gives
\[
 0=\lvert\Lambda\rvert(2\operatorname{genus}(Y)-2)
 +\deg\operatorname{Diff}(\bar k(E)/\bar k(Y)),
\]
and the positive different forces $Y$ to have genus zero.  Finally,
$\pi(\mathrm O_E)\in Y(k)$, so $Y\cong_k\mathbf P^1$.
\end{proof}

Write $\mathcal F_E:E\to E$ for the $q$-power Frobenius endomorphism and use
the same symbol for its induced bijection of $E(\bar k)$.  Although
$\mathcal F_E$ is not an isomorphism of curves, its action on geometric points
is bijective; throughout, $\mathcal F_E^{-1}$ denotes only the inverse of this
bijection on $E(\bar k)$.  On geometric points, Frobenius conjugates curve
automorphisms by $\alpha\mapsto\mathcal F_E\circ\alpha\circ\mathcal F_E^{-1}$.
Faithfulness and stability of the prescribed action determine a unique
$\vartheta\in\operatorname{Aut}(\Lambda)$ such that
\[
 \vartheta(\gamma)_E
 =\mathcal F_E\circ\gamma_E\circ\mathcal F_E^{-1}
 \qquad\text{for all }\gamma\in\Lambda,
\]
where the compositions involving $\mathcal F_E^{-1}$ are understood on
geometric points.
Recall that, for $\gamma\in\Lambda$,
\[
 \mathcal S_\gamma(E)
 =\{P\in E(\bar k):\mathcal F_E(P)=\gamma_E(P)\}.
\]
Equivalently,
\((\mathcal F_E-u_{\gamma,E})P=a_{\gamma,E}.\)
Since $\mathcal F_E-u_{\gamma,E}$ is a nonzero separable endomorphism,
$\mathcal S_\gamma(E)$ is nonempty, and
$\ker(\mathcal F_E-u_{\gamma,E})$ acts freely and transitively on it by
translations.
For $\gamma,\eta\in\Lambda$, define
$\gamma\sim_\vartheta\eta$ if
\(\eta=\vartheta(h)\gamma h^{-1} \qquad\text{for some }h\in\Lambda.\)
This is an equivalence relation on $\Lambda$.  Write $[\gamma]_\vartheta$ for the
$\sim_\vartheta$-equivalence class of $\gamma$, and put
\[
 C_\vartheta(\gamma)
 :=\{h\in\Lambda:
 \vartheta(h)\gamma h^{-1}=\gamma\}.
\]

\begin{lemma}[Transport under the $\Lambda$-action]
\label{lem:fixed-point-set-transport}
For $h,\gamma\in\Lambda$,
\[
 h\mathcal S_\gamma(E)
 =\mathcal S_{\vartheta(h)\gamma h^{-1}}(E).
\]
Consequently, elements in the same $\sim_\vartheta$-class give sets with the
same image in $Y$.
\end{lemma}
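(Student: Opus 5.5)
The argument is a direct computation on geometric points. Fix $h,\gamma\in\Lambda$ and $P\in E(\bar k)$, and put $\eta:=\vartheta(h)\gamma h^{-1}$. I will prove $h_E(\mathcal S_\gamma(E))\subseteq\mathcal S_\eta(E)$, obtain the reverse inclusion by symmetry, and then read off the statement about images in $Y$.

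\textbf{Step 1: the forward inclusion.} Let $P\in\mathcal S_\gamma(E)$, so $\mathcal F_E(P)=\gamma_E(P)$, and put $Q:=h_E(P)$. The defining identity of $\vartheta$ can be rewritten on geometric points as
\[
 \mathcal F_E\circ h_E=\vartheta(h)_E\circ\mathcal F_E .
\]
Using this,
\[
 \mathcal F_E(Q)=\vartheta(h)_E(\mathcal F_E(P))=\vartheta(h)_E(\gamma_E(P))
 =\vartheta(h)_E\gamma_E h_E^{-1}(Q)=\eta_E(Q).
\]
The last equality uses that $\gamma\mapsto\gamma_E$ is a group action. Hence $Q\in\mathcal S_\eta(E)$.

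\textbf{Step 2: the reverse inclusion.} Apply Step 1 with $h^{-1}$ in place of $h$ and $\eta$ in place of $\gamma$. Since $\vartheta$ is a homomorphism,
\[
 \vartheta(h^{-1})\,\eta\,h=\vartheta(h)^{-1}\vartheta(h)\gamma h^{-1}h=\gamma .
\]
So Step 1 gives $h_E^{-1}\mathcal S_\eta(E)\subseteq\mathcal S_\gamma(E)$, that is, $\mathcal S_\eta(E)\subseteq h_E\mathcal S_\gamma(E)$. Together with Step 1 this yields equality.

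\textbf{Step 3: images in $Y$.} The quotient map satisfies $\pi\circ h_E=\pi$. Therefore
\[
 \pi(\mathcal S_\eta(E))=\pi(h_E\mathcal S_\gamma(E))=\pi(\mathcal S_\gamma(E)),
\]
so all members of a $\sim_\vartheta$-class give the same image in $Y$.

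The only point needing care is the direction in which $\vartheta$ intertwines Frobenius. The identity $\vartheta(h)_E=\mathcal F_E h_E\mathcal F_E^{-1}$ must be used in the form $\mathcal F_E h_E=\vartheta(h)_E\mathcal F_E$. This is valid because $\mathcal F_E$ is a bijection on $E(\bar k)$, and all compositions here are taken on geometric points. Beyond this bookkeeping there is no real obstacle.
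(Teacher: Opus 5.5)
Your proposal is correct and follows essentially the same route as the paper. Both compute $\mathcal F_E(h_E P)=\vartheta(h)_E\gamma_E h_E^{-1}(h_E P)$ from the defining identity of $\vartheta$, obtain the reverse inclusion by applying this to $h^{-1}$, and deduce the statement about images in $Y$ from $\pi\circ h_E=\pi$.
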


\begin{proof}
If $\mathcal F_E(P)=\gamma_E(P)$, then
\(\mathcal F_E(hP) =\vartheta(h)\mathcal F_E(P) =\vartheta(h)\gamma h^{-1}(hP).\)
The reverse inclusion follows from $h^{-1}$, and $\pi h=\pi$ gives the last
assertion.
\end{proof}

Let $\mathcal F_Y$ be $q$-power Frobenius on $Y$.  Since $\pi$ is defined
over $k$, $\mathcal F_Y\pi=\pi\mathcal F_E$.

\begin{proposition}[Rational points on the quotient]
Let $\mathcal R$ be a complete set of representatives for the
$\sim_\vartheta$-classes in $\Lambda$.  Then
\begin{equation}
\label{eq:quotient-rational-point-union}
 Y(k)=\bigcup_{\gamma\in\Lambda}\pi(\mathcal S_\gamma(E))
 =\bigcup_{\gamma\in\mathcal R}\pi(\mathcal S_\gamma(E)).
\end{equation}
If
$E^\circ=\{P:\operatorname{Stab}_\Lambda(P)=\{1\}\}$ and
$Y^\circ=\pi(E^\circ)$, then there is a natural bijection
\begin{equation}
\label{eq:free-decomp}
 Y^\circ(k)
 \longleftrightarrow
 \bigsqcup_{\gamma\in\mathcal R}
 \frac{\mathcal S_\gamma(E)\cap E^\circ}{C_\vartheta(\gamma)}.
\end{equation}
Finally, if $P\in\mathcal S_\gamma(E)$ and $Q=\pi(P)$, then
$Q\in\pi(\mathcal S_\eta(E))$ if and only if there is $h\in\Lambda$ with
\begin{equation}
\label{eq:overlap}
 h^{-1}\eta^{-1}\vartheta(h)\gamma
 \in\operatorname{Stab}_\Lambda(P).
\end{equation}
Thus distinct $\sim_\vartheta$-classes meet only over the branch locus.
\end{proposition}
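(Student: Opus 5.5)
The plan is to argue directly on geometric points, using only that $\pi$ is defined over $k$ (so $\mathcal F_Y\pi=\pi\mathcal F_E$) and that the fibres of $\pi$ over $\bar k$-points are exactly the $\Lambda$-orbits in $E(\bar k)$. For \eqref{eq:quotient-rational-point-union}, take $Q\in Y(k)$ and any $P\in\pi^{-1}(Q)$. Then $\pi(\mathcal F_E P)=\mathcal F_Y Q=Q$, so $\mathcal F_E(P)$ and $P$ lie in the same fibre. Hence $\mathcal F_E(P)=\gamma_E(P)$ for some $\gamma\in\Lambda$, that is, $P\in\mathcal S_\gamma(E)$. Conversely, if $P\in\mathcal S_\gamma(E)$, then $\mathcal F_Y\pi(P)=\pi(\gamma_E P)=\pi(P)$, so $\pi(P)\in Y(k)$. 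The reduction of the union to the representatives $\mathcal R$ is immediate from Lemma~\ref{lem:fixed-point-set-transport}, since $\pi(h\mathcal S_\gamma)=\pi(\mathcal S_\gamma)$.

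Next comes the overlap criterion \eqref{eq:overlap}. Let $P\in\mathcal S_\gamma(E)$ and $Q=\pi(P)$. We have $Q\in\pi(\mathcal S_\eta(E))$ if and only if some $hP$ with $h\in\Lambda$ lies in $\mathcal S_\eta(E)$. By Lemma~\ref{lem:fixed-point-set-transport}, $hP\in\mathcal S_{\vartheta(h)\gamma h^{-1}}(E)$, so $\mathcal F_E(hP)=(\vartheta(h)\gamma h^{-1})_E(hP)$. Therefore $hP\in\mathcal S_\eta(E)$ if and only if $\eta_E(hP)=(\vartheta(h)\gamma h^{-1})_E(hP)$. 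This holds if and only if $h^{-1}\eta^{-1}\vartheta(h)\gamma h^{-1}\cdot h$ fixes $P$, which after conjugating the stabilizer is exactly $h^{-1}\eta^{-1}\vartheta(h)\gamma\in\operatorname{Stab}_\Lambda(P)$. When $\operatorname{Stab}_\Lambda(P)=1$, this forces $\eta=\vartheta(h)\gamma h^{-1}$, i.e.\ $\eta\sim_\vartheta\gamma$. This gives the final sentence.

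For the bijection \eqref{eq:free-decomp}, first note that $E^\circ$ is $\Lambda$-stable, and it is $\mathcal F_E$-stable because the action is stable under Frobenius conjugation ($\operatorname{Stab}(\mathcal F_E P)=\vartheta(\operatorname{Stab}(P))$). Define the map by sending the $C_\vartheta(\gamma)$-orbit of $P\in\mathcal S_\gamma(E)\cap E^\circ$ to $\pi(P)$. By Lemma~\ref{lem:fixed-point-set-transport}, $C_\vartheta(\gamma)$ is precisely the subgroup of $\Lambda$ preserving $\mathcal S_\gamma(E)$, and it preserves $E^\circ$, so the map is well defined.

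\emph{Surjectivity} follows from the first step: the set $\mathcal S_\gamma$ containing a preimage of $Q$ can be moved into one with $\gamma\in\mathcal R$ by translating by some $h$.

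\emph{Injectivity} is the one place requiring care, and I expect it to be the main (though mild) obstacle. Suppose $\pi(P)=\pi(P')$ with $P\in\mathcal S_\gamma$, $P'\in\mathcal S_{\gamma'}$, both free and $\gamma,\gamma'\in\mathcal R$. Then $P'=hP$ for a unique $h$, and $P'\in\mathcal S_{\vartheta(h)\gamma h^{-1}}$. Freeness of $P'$ makes the element $\eta$ with $\mathcal F_E(P')=\eta_E(P')$ unique, so $\gamma'=\vartheta(h)\gamma h^{-1}$. Since $\mathcal R$ is a set of representatives, $\gamma'=\gamma$, and hence $h\in C_\vartheta(\gamma)$. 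This completes the bijection.
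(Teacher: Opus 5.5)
Your proof is correct and follows essentially the same route as the paper. It uses Frobenius-equivariance of $\pi$ together with transitivity of $\Lambda$ on geometric fibres for the union formula, the computation $\eta hP=\mathcal F_E(hP)=\vartheta(h)\gamma P$ for the overlap criterion, and the transport lemma with freeness on $E^\circ$ for the bijection; you also usefully spell out the well-definedness, surjectivity and injectivity that the paper leaves implicit.

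One harmless overstatement: $C_\vartheta(\gamma)$ need not be \emph{exactly} the setwise stabilizer of $\mathcal S_\gamma(E)$. For example, take nonsplit $C_3$ acting by $\beta(x,y)=(\zeta_3x,y)$ on $y^2+y=x^3$ over $\mathbb F_2$. Then $\beta$ fixes all three points of $\mathcal S_1(E)=E(\mathbb F_2)$, although $C_\vartheta(1)=1$. Equality does hold whenever $\mathcal S_\gamma(E)\cap E^\circ\ne\varnothing$, and in any case you only use the inclusion $C_\vartheta(\gamma)\subseteq\operatorname{Stab}(\mathcal S_\gamma(E))$, so nothing in your argument breaks.
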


\begin{proof}
If $P\in\mathcal S_\gamma(E)$ then
$\mathcal F_Y(\pi(P))=\pi(\gamma_E(P))=\pi(P)$.  Conversely, if
$Q\in Y(k)$ and $P\in\pi^{-1}(Q)$, then
$\pi(\mathcal F_E(P))=\pi(P)$, so
$\mathcal F_E(P)=\gamma_E(P)$ for some $\gamma\in\Lambda$ because a
geometric fibre is a single $\Lambda$-orbit.  This proves
\eqref{eq:quotient-rational-point-union}.

If $P\in\mathcal S_\gamma(E)$ and $hP\in\mathcal S_\eta(E)$, then
\(\eta hP=\mathcal F_E(hP) =\vartheta(h)\mathcal F_E(P)=\vartheta(h)\gamma P,\)
which is equivalent to \eqref{eq:overlap}.  On $E^\circ$ the stabilizer is
trivial; Lemma~\ref{lem:fixed-point-set-transport} then gives
\eqref{eq:free-decomp}.
\end{proof}

Let $Q\in Y(k)$, choose $P\in\pi^{-1}(Q)$, and write
$\Lambda_P:=\operatorname{Stab}_\Lambda(P)$.  The geometric fibre
$\pi^{-1}(Q)$ is a single $\Lambda$-orbit.  The stabilizers of its points are therefore conjugate in
$\Lambda$, so $\lvert\Lambda_P\rvert$ depends only on $Q$, and
orbit--stabilizer gives
\(\lvert\pi^{-1}(Q)\rvert =\frac{\lvert\Lambda\rvert}{\lvert\Lambda_P\rvert}.\)
For $\gamma\in\Lambda$, define
\(m_Q(\gamma) :=\lvert\mathcal S_\gamma(E)\cap\pi^{-1}(Q)\rvert.\)

\begin{proposition}[Weighted fibre-counting identity]
\label{prop:weighted-fibre-counting}
With the notation above, every point of $\pi^{-1}(Q)$ lies in exactly
$\lvert\Lambda_P\rvert$ of the sets $\mathcal S_\gamma(E)$, $\gamma\in\Lambda$.
Consequently,
\begin{equation}
\label{eq:fibre-counting-sum}
 \sum_{\gamma\in\Lambda}m_Q(\gamma)=\lvert\Lambda\rvert.
\end{equation}
The function $\gamma\mapsto m_Q(\gamma)$ is constant on every $\sim_\vartheta$-class.  Thus, for any complete set
$\mathcal R$ of representatives for these equivalence classes,
\begin{equation}
\label{eq:class-weighted-fibre-counting}
 \sum_{\gamma\in\mathcal R}
 \lvert[\gamma]_\vartheta\rvert\,m_Q(\gamma)
 =\lvert\Lambda\rvert.
\end{equation}
\end{proposition}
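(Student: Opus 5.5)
Fix $Q\in Y(k)$ and $P\in\pi^{-1}(Q)$. First we count, for a point $R$ of the fibre, how many $\gamma\in\Lambda$ satisfy $\mathcal F_E(R)=\gamma_E(R)$. Since $Q$ is $k$-rational and $\mathcal F_Y\pi=\pi\mathcal F_E$, we have $\pi(\mathcal F_E(R))=\pi(R)$. The geometric fibre is a single $\Lambda$-orbit, so some $\gamma_0\in\Lambda$ satisfies $\mathcal F_E(R)=\gamma_{0,E}(R)$. The full set of such $\gamma$ is then the coset $\gamma_0\operatorname{Stab}_\Lambda(R)$: indeed $\gamma_E(R)=\gamma_{0,E}(R)$ holds if and only if $\gamma_0^{-1}\gamma\in\operatorname{Stab}_\Lambda(R)$. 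Points of one fibre have conjugate stabilizers, so this coset has exactly $\lvert\Lambda_P\rvert$ elements. This proves the first assertion.

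Next, \eqref{eq:fibre-counting-sum} follows by double counting the incidence set $\{(R,\gamma):R\in\pi^{-1}(Q),\ R\in\mathcal S_\gamma(E)\}$. Counting by $\gamma$ gives $\sum_\gamma m_Q(\gamma)$. Counting by $R$ gives $\lvert\pi^{-1}(Q)\rvert\cdot\lvert\Lambda_P\rvert$, which equals $\lvert\Lambda\rvert$ by orbit--stabilizer $\lvert\pi^{-1}(Q)\rvert=\lvert\Lambda\rvert/\lvert\Lambda_P\rvert$. This step is precisely what compensates for ramification.

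For constancy on $\sim_\vartheta$-classes, apply Lemma~\ref{lem:fixed-point-set-transport}: $h\mathcal S_\gamma(E)=\mathcal S_{\vartheta(h)\gamma h^{-1}}(E)$. Since $h$ is a bijection of $E(\bar k)$ preserving $\pi^{-1}(Q)$ (because $\pi h=\pi$), it maps $\mathcal S_\gamma(E)\cap\pi^{-1}(Q)$ bijectively onto $\mathcal S_{\vartheta(h)\gamma h^{-1}}(E)\cap\pi^{-1}(Q)$. Hence $m_Q(\gamma)=m_Q(\vartheta(h)\gamma h^{-1})$. Grouping the sum \eqref{eq:fibre-counting-sum} by classes then yields \eqref{eq:class-weighted-fibre-counting}.

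There is no real obstacle. The only points needing care are that the fibre identification uses geometric points together with the bijectivity of $\mathcal F_E$ on $E(\bar k)$, and that stabilizer orders are constant along the fibre. Both were already recorded before the statement.
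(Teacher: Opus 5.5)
Your proposal is correct and follows essentially the paper's argument. Both proofs show that each fibre point lies in a coset of size $\lvert\Lambda_P\rvert$, double count using orbit--stabilizer, and use Lemma~\ref{lem:fixed-point-set-transport} for constancy on classes. The only cosmetic difference is that you choose $\gamma_0$ separately for each $R$ and obtain the coset $\gamma_0\operatorname{Stab}_\Lambda(R)$. The paper instead fixes $\gamma_0$ at $P$ and writes the set for $R=hP$ as $\vartheta(h)\gamma_0\Lambda_Ph^{-1}$.
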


\begin{proof}
Choose $\gamma_0$ with $\mathcal F_E(P)=\gamma_0(P)$.  If $R=hP$, then
$R\in\mathcal S_\gamma(E)$ if and only if
$h^{-1}\gamma^{-1}\vartheta(h)\gamma_0\in\Lambda_P$.  Thus the set of $\gamma$ for which $R$ lies in $\mathcal S_\gamma(E)$ is
$\vartheta(h)\gamma_0\Lambda_Ph^{-1}$ and has size $\lvert\Lambda_P\rvert$.
Orbit--stabilizer gives
$\lvert\pi^{-1}(Q)\rvert=\lvert\Lambda\rvert/\lvert\Lambda_P\rvert$, proving \eqref{eq:fibre-counting-sum} by double
counting.  Lemma~\ref{lem:fixed-point-set-transport} identifies these intersections for elements in the same
$\sim_\vartheta$-class, which gives \eqref{eq:class-weighted-fibre-counting}.
\end{proof}

\begin{remark}
Equation~\eqref{eq:fibre-counting-sum} exactly compensates for the smaller
geometric orbit at a branch point: each lift then belongs to proportionally
more of the sets $\mathcal S_\gamma(E)$.
\end{remark}

\begin{lemma}[Multiplicity in a cyclic quotient fibre]
Assume that $\Lambda=\langle h\rangle$ is cyclic of order $m$ prime to
$\operatorname{char}k$ and that the Frobenius action satisfies
\(\vartheta(h)=h^q.\)
For $r\geqslant1$ write
\[
 \mathcal S_{r,\gamma}(E)
 :=\{P\in E(\bar k):\mathcal F_E^r(P)=\gamma_E(P)\}.
\]
Let $Q\in(E/\Lambda)(k_r)$, let
\(A_Q:=\operatorname{Stab}_\Lambda(P)\)
for $P\in\pi^{-1}(Q)$, and put
\(m_{r,Q}(\gamma):=\lvert\mathcal S_{r,\gamma}(E)\cap\pi^{-1}(Q)\rvert.\)
If $m_{r,Q}(\gamma)>0$, then
\[
 m_{r,Q}(\gamma)
 =\gcd\!\left(\frac{m}{\lvert A_Q\rvert},q^r-1\right)
 \mid\frac{m}{\lvert A_Q\rvert}\mid m.
\]
\end{lemma}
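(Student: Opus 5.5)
The plan is to redo the double-counting argument of Proposition~\ref{prop:weighted-fibre-counting} with $\mathcal F_E$ replaced by $\mathcal F_E^r$. Here $\Lambda$ is cyclic, so the count becomes the number of solutions of a linear congruence in a cyclic group.

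First, I record the facts the statement relies on. Since $\mathcal F_E$ conjugates $\gamma_E$ into $\vartheta(\gamma)_E$, iterating gives $\mathcal F_E^r\circ\gamma_E=\vartheta^r(\gamma)_E\circ\mathcal F_E^r$ on geometric points, with $\vartheta^r(h)=h^{q^r}$. Because $\Lambda$ is abelian, the stabilizers of the points in the single orbit $\pi^{-1}(Q)$ all coincide, so $A_Q$ is well defined. The fibre is then in bijection with $\Lambda/A_Q$, which is cyclic of order $n:=m/\lvert A_Q\rvert$ and generated by the image $\bar h$ of $h$.

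Second, I fix $P\in\pi^{-1}(Q)$. Since $Q\in(E/\Lambda)(k_r)$ and $\pi$ is defined over $k$, we have $\pi(\mathcal F_E^rP)=\pi(P)$, so $\mathcal F_E^r(P)=\gamma_0(P)$ for some $\gamma_0\in\Lambda$. For a point $R=h^jP$ of the fibre,
\[
 \mathcal F_E^r(h^jP)=h^{jq^r}\mathcal F_E^r(P)=h^{jq^r}\gamma_0P .
\]
Hence $R\in\mathcal S_{r,\gamma}(E)$ if and only if $h^{jq^r}\gamma_0P=\gamma h^jP$. Using commutativity, this is equivalent to
\[
 h^{j(q^r-1)}\gamma_0\gamma^{-1}\in A_Q ,
\]
that is, $\bar h^{\,j(q^r-1)}=\overline{\gamma\gamma_0^{-1}}$ in $\Lambda/A_Q$. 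Writing $\overline{\gamma\gamma_0^{-1}}=\bar h^{\,c}$, this reads $j(q^r-1)\equiv c\pmod n$. Since $j$ matters only modulo $n$ (via $R\leftrightarrow h^jA_Q$), the quantity $m_{r,Q}(\gamma)$ equals the number of solutions $j\in\mathbb Z/n$ of this congruence.

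Third, an elementary fact about linear congruences finishes the proof. The congruence $j(q^r-1)\equiv c\pmod n$ has either no solutions or exactly $\gcd(n,q^r-1)$ solutions modulo $n$. Under the hypothesis $m_{r,Q}(\gamma)>0$, this gives $m_{r,Q}(\gamma)=\gcd(m/\lvert A_Q\rvert,q^r-1)$. The divisibilities $\gcd(n,q^r-1)\mid n\mid m$ are immediate.

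The only delicate point is the bookkeeping for $\vartheta^r$ and the order of composition. The intertwining must be applied to geometric points only, where $\mathcal F_E$ is a bijection. Commutativity of $\Lambda$ is what lets the condition reduce to a single coset equation independent of the choice of $P$. The hypothesis $\gcd(m,\operatorname{char}k)=1$ is not otherwise needed, apart from guaranteeing that the relation $\vartheta(h)=h^q$ makes sense as stated.
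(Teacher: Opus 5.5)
Your proof is correct and is essentially the paper's argument. The paper takes its base point already in $\mathcal S_{r,\gamma}(E)\cap\pi^{-1}(Q)$ and counts the kernel of $bA_Q\mapsto b^{q^r-1}A_Q$ on the cyclic group $\Lambda/A_Q$, whereas you start from an arbitrary point of the fibre and count solutions of $j(q^r-1)\equiv c\pmod{m/\lvert A_Q\rvert}$, whose solution set, once nonempty, is a coset of that same kernel.
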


\begin{proof}
Choose $P\in\mathcal S_{r,\gamma}(E)\cap\pi^{-1}(Q)$.  For $b\in\Lambda$,
the point $bP$ remains in the same set $\mathcal S_{r,\gamma}(E)$ precisely
when
\(b^{-1}\vartheta^r(b)=b^{q^r-1}\in A_Q.\)
Thus $m_{r,Q}(\gamma)$ is the cardinality of the kernel of
\(\Lambda/A_Q\longrightarrow\Lambda/A_Q, \qquad bA_Q\longmapsto b^{q^r-1}A_Q.\)
Since $\Lambda/A_Q$ is cyclic of order $m/\lvert A_Q\rvert$, the kernel has the displayed
cardinality.
\end{proof}

\section{The exact finite-kernel permutation criterion}
\label{sec:isogenies}

We apply the fixed-point-set identities of
Section~\ref{sec:frobenius-equations-elliptic-quotients} to an equivariant
isogeny datum, converting bijectivity on quotient rational points into an
exact condition on the finite isogeny kernel.

Fix an equivariant isogeny datum over $k$.  For $\gamma\in\Lambda$ write
\[
 \gamma_E(P)=u_{\gamma,E}(P)+a_{\gamma,E},
 \qquad
 \gamma_{E'}(P')=u_{\gamma,E'}(P')+a_{\gamma,E'}.
\]
Equivariance means
\begin{equation}
\label{eq:equivariance}
 \varphi\circ\gamma_E=\gamma_{E'}\circ\varphi.
\end{equation}
Evaluating at $\mathrm O_E$ and then subtracting the translation parts gives
\begin{equation}
\label{eq:linear-equivariance}
 \varphi(a_{\gamma,E})=a_{\gamma,E'},
 \qquad
 \varphi\circ u_{\gamma,E}=u_{\gamma,E'}\circ\varphi.
\end{equation}

\begin{lemma}[Target action under Frobenius conjugation]
\label{lem:target-frobenius}
The prescribed $\Lambda$-action on $E'$ is stable under Frobenius
conjugation with the same induced automorphism $\vartheta$, and the image of
its linear-part homomorphism is nontrivial.  In particular,
\[
 \mathcal F_{E'}\circ\gamma_{E'}\circ\mathcal F_{E'}^{-1}
 =\vartheta(\gamma)_{E'}
 \qquad\text{for all }\gamma\in\Lambda.
\]
\end{lemma}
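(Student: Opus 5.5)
The plan is to transport everything from $E$ to $E'$ through the equivariance relation \eqref{eq:equivariance}, using that $\varphi$ is defined over $k$ and therefore commutes with Frobenius: $\varphi\circ\mathcal F_E=\mathcal F_{E'}\circ\varphi$. First I would prove the Frobenius-conjugation identity. Fix $\gamma\in\Lambda$ and put $\alpha:=\mathcal F_{E'}\circ\gamma_{E'}\circ\mathcal F_{E'}^{-1}$, understood on $E'(\bar k)$; this is the conjugate automorphism $\gamma_{E'}^{(q)}$ (apply $q$-power Frobenius to the coefficients), so it is a genuine curve automorphism. Since $\varphi$ is surjective on geometric points, every $P'\in E'(\bar k)$ has the form $\varphi(\mathcal F_E(P))$ for some $P\in E(\bar k)$; this uses that $\mathcal F_E$ is bijective on $E(\bar k)$. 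Then
\[
 \alpha(\varphi(\mathcal F_E P))
 =\mathcal F_{E'}\gamma_{E'}\varphi(P)
 =\mathcal F_{E'}\varphi\gamma_E(P)
 =\varphi\mathcal F_E\gamma_E(P)
 =\varphi\,\vartheta(\gamma)_E\,\mathcal F_E(P)
 =\vartheta(\gamma)_{E'}\,\varphi(\mathcal F_E P).
\]
Hence $\alpha=\vartheta(\gamma)_{E'}$ on all geometric points. Since $\gamma$ was arbitrary, the image of the $\Lambda$-action on $E'$ is stable under Frobenius conjugation, and the induced automorphism is the same $\vartheta$. No faithfulness is needed for this identity, since $\vartheta$ is already defined from $E$; faithfulness on $E'$ is part of the datum anyway.

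Second, I would show that the linear-part image on $E'$ is nontrivial. Choose $\gamma$ with $u_{\gamma,E}\ne1$ and use \eqref{eq:linear-equivariance}: $\varphi\circ u_{\gamma,E}=u_{\gamma,E'}\circ\varphi$. If $u_{\gamma,E'}=1$, then $\varphi\circ(u_{\gamma,E}-1)=0$, so the image $(u_{\gamma,E}-1)(E)$ lies in the finite group $\ker\varphi$. That image is connected, so $u_{\gamma,E}=1$, which is a contradiction. This is the same argument as in Lemma~\ref{lem:no-translations-after-translation-quotient}.

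The main point of care is making the conjugation by the non-invertible morphism $\mathcal F_{E'}$ rigorous. I would handle this by working purely on $\bar k$-points, as the paper's convention allows. The one fact that must be cited is that $\varphi$, being a $k$-isogeny, commutes with $q$-power Frobenius. I do not expect any further obstacle.
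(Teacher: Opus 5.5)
Your proposal is correct and follows essentially the paper's argument: conjugate the equivariance relation by Frobenius using that $\varphi$ is defined over $k$, then conclude by surjectivity of $\varphi$. Where the paper writes an identity of composites, you evaluate pointwise at $\varphi(\mathcal F_E P)$. The nontriviality of the target linear image is handled identically, since $(u_{\gamma,E}-1)(E)$ would be a connected subset of the finite group $\ker\varphi$.
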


\begin{proof}
Conjugating \eqref{eq:equivariance} by Frobenius and using that $\varphi$ is
defined over $k$ gives
\[
 (\mathcal F_{E'}\gamma_{E'}\mathcal F_{E'}^{-1})\varphi
 =\varphi\,\vartheta(\gamma)_E
 =\vartheta(\gamma)_{E'}\varphi.
\]
Surjectivity of $\varphi$ gives the Frobenius assertion.  If the image of the target linear-part homomorphism were trivial, then for
every $\gamma$ one would have
$\varphi\circ(u_{\gamma,E}-1)=0$.  The image of the group homomorphism
$u_{\gamma,E}-1:E\to E$ would then lie in the finite group
$\ker\varphi$, hence would be trivial; thus $u_{\gamma,E}=1$ for all
$\gamma$, contrary to Definition~\ref{def:equivariant-isogeny-datum}.
\end{proof}

Let $\pi:E\to E/\Lambda$ and $\pi':E'\to E'/\Lambda$ be the quotient
maps.  Lemma~\ref{lem:quotient} applied to both actions gives quotient
lines over $k$, and equivariance yields a unique $k$-morphism
$f:E/\Lambda\to E'/\Lambda$ with
$\pi'\varphi=f\pi$.  Since $\deg\pi=\deg\pi'=\lvert\Lambda\rvert$,
$\deg f=\deg\varphi$.

Choose and fix $k$-isomorphisms from $E/\Lambda$ and $E'/\Lambda$ to
$\mathbf P^1$, and let $f\in k(X)$ denote the resulting rational-function
representative of the induced quotient map.  Then $\deg f=\deg\varphi$.
Changing the source coordinate composes $f$ on the right with an element of
$\operatorname{PGL}_2(k)$, whereas changing the target coordinate composes it
on the left with such an element.  Hence the $k$-M\"obius equivalence class of
$f$ is intrinsic to the equivariant isogeny datum.  The map $f$ is separable because $k(E)/k(E'/\Lambda)$ is separable and
$k(E/\Lambda)$ is an intermediate field.  We retain these coordinates and
this representative for the rest of the paper.

For a finite morphism $\psi:X\to Y$ of smooth curves over $k$ and
$R\in X(\bar k)$, write $e_R(\psi)$ for the ramification index of $\psi$ at $R$,
characterized by
\(\operatorname{ord}_R(t\circ\psi)=e_R(\psi)\)
for any local parameter $t$ at $\psi(R)$; compare \cite[Chap.~II, \S2]{Sil09}.
We call $\psi$ \emph{tame} if all its ramification indices are prime to
$\operatorname{char}k$; this is the curve-morphism form of
\cite[Definition~3.5.4]{Sti09}.

\begin{proposition}[Ramification and automatic tameness]
\label{prop:tameness}
For $P\in E(\bar k)$,
\[
 e_{\pi(P)}(f)
 =[
 \operatorname{Stab}_\Lambda(\varphi(P)):
 \operatorname{Stab}_\Lambda(P)].
\]
Every ramification index divides $\lvert L_E\rvert$, and hence also $\lvert\Lambda\rvert$.
In particular, if $\operatorname{char}k>3$, then $f$ is tame.
\end{proposition}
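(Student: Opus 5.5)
We compute ramification indices multiplicatively along the commutative square $\pi'\circ\varphi=f\circ\pi$, working over $\bar k$ throughout.

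\emph{Ramification formula.} Fix $P\in E(\bar k)$ and take ramification indices at $P$ of both composites. Multiplicativity of ramification indices gives
\[
 e_P(\pi'\circ\varphi)=e_{\varphi(P)}(\pi')\,e_P(\varphi),
 \qquad
 e_P(f\circ\pi)=e_{\pi(P)}(f)\,e_P(\pi).
\]
Since $\varphi$ is a separable isogeny it is unramified, so $e_P(\varphi)=1$. For a quotient of a smooth curve by a finite group acting faithfully, the ramification index at a point equals the order of its stabilizer (Galois theory of the local extension, valid in every characteristic). Hence $e_P(\pi)=\lvert\operatorname{Stab}_\Lambda(P)\rvert$ and $e_{\varphi(P)}(\pi')=\lvert\operatorname{Stab}_\Lambda(\varphi(P))\rvert$. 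Equivariance shows that $\operatorname{Stab}_\Lambda(P)\leq\operatorname{Stab}_\Lambda(\varphi(P))$, so the quotient of the two orders is the stated index.

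\emph{Divisibility by $\lvert L_E\rvert$.} Write $A=\operatorname{Stab}_\Lambda(P)\leq B=\operatorname{Stab}_\Lambda(\varphi(P))$. We show that the restriction to $B$ of the linear-part homomorphism $\gamma\mapsto u_{\gamma,E}$ is injective. Suppose $\gamma\in B$ with $u_{\gamma,E}=1$. Then $\gamma_E$ is translation by $a_{\gamma,E}$. Since $\gamma$ fixes $\varphi(P)$, equivariance gives $\varphi(a_{\gamma,E})=a_{\gamma,E'}=0$, so $\gamma_{E'}$ is the identity. Faithfulness of the action on $E'$ then forces $\gamma=1$. Consequently $B$ embeds in $L_E$, so $\lvert B\rvert$ divides $\lvert L_E\rvert$. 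Since $[B:A]$ divides $\lvert B\rvert$, it also divides $\lvert L_E\rvert$. Finally, $L_E$ is a homomorphic image of $\Lambda$, so $\lvert L_E\rvert$ divides $\lvert\Lambda\rvert$.

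\emph{Tameness.} When $\operatorname{char}k>3$, $L_E$ is a subgroup of $\operatorname{Aut}_{\bar k}(E,\mathrm O_E)$, which is cyclic of order $2$, $4$ or $6$. Its order is therefore divisible only by the primes $2$ and $3$. By the divisibility just proved, every ramification index is prime to $\operatorname{char}k$, so $f$ is tame.

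The main point needing care is the equality $e=\lvert\mathrm{Stab}\rvert$ for the quotient maps in characteristic $p$ when stabilizers are wild. This still holds, because $\bar k(E)/\bar k(E)^\Lambda$ is Galois and residue fields are algebraically closed, so the inertia group equals the decomposition group, which equals the stabilizer, and the ramification index is its order. No tameness is needed for this identification.
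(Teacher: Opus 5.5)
Your proof is correct and follows the paper's argument essentially step for step. It uses multiplicativity along $\pi'\circ\varphi=f\circ\pi$ with $e_P(\varphi)=1$ and $e=\lvert\operatorname{Stab}\rvert$ for the quotient maps. It then shows the linear-part map is injective on $\operatorname{Stab}_\Lambda(\varphi(P))$ via target faithfulness, and finishes with $\operatorname{char}k\nmid\lvert\operatorname{Aut}_{\bar k}(E,\mathrm O_E)\rvert$ when $\operatorname{char}k>3$.

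The only compressed point is the passage from $a_{\gamma,E'}=0$ to $\gamma_{E'}=1$. This also needs $u_{\gamma,E'}=1$, or directly $\gamma_{E'}\circ\varphi=\varphi\circ\tau_{a_{\gamma,E}}=\varphi$ together with surjectivity of $\varphi$; the paper states this step explicitly.
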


\begin{proof}
Equivariance gives
$\operatorname{Stab}_\Lambda(P)\leq
 \operatorname{Stab}_\Lambda(\varphi(P))$.  For a finite group quotient,
\[
 e_P(\pi)=\lvert\operatorname{Stab}_\Lambda(P)\rvert,
 \qquad
 e_{\varphi(P)}(\pi')=\lvert\operatorname{Stab}_\Lambda(\varphi(P))\rvert
\]
\cite[Exercise~III.3.13(b)]{Sil09}.  Since the separable isogeny $\varphi$ has ramification index $1$ at every
geometric point, multiplicativity in $\pi'\varphi=f\pi$ gives the formula.
Put
$A_P=\operatorname{Stab}_\Lambda(P)$ and
$B_P=\operatorname{Stab}_\Lambda(\varphi(P))$.  The linear-part map
$\ell_E:\Lambda\to L_E$ is injective on $B_P$.  Indeed, suppose that
$\gamma\in B_P$ and $u_{\gamma,E}=1$.  By
\eqref{eq:linear-equivariance} and the surjectivity of $\varphi$, one has
$u_{\gamma,E'}=1$.  Hence $\gamma_{E'}$ is translation by
$a_{\gamma,E'}=\varphi(a_{\gamma,E})$.  Since $\gamma$ fixes
$\varphi(P)$, this translation is trivial, so the target action of $\gamma$
is the identity.  Target faithfulness therefore gives $\gamma=1$.  Thus
$\ell_E$ is injective on $B_P$, and hence also on $A_P$.  Consequently
\(e_{\pi(P)}(f)=[B_P:A_P] =[\ell_E(B_P):\ell_E(A_P)] \mid \lvert L_E\rvert,\)
and therefore also $e_{\pi(P)}(f)\mid\lvert\Lambda\rvert$.  If
$p=\operatorname{char}k>3$, then
$L_E\leq\operatorname{Aut}_{\bar k}(E,\mathrm O_E)$ and
$p\nmid\lvert\operatorname{Aut}_{\bar k}(E,\mathrm O_E)\rvert$ by
\cite[Theorem~III.10.1]{Sil09}.  Hence $p\nmid\lvert L_E\rvert$, proving tameness.
\end{proof}

\begin{lemma}[Isogeny on the fixed-point sets]
\label{lem:isogeny-on-fixed-point-sets}
For every $\gamma\in\Lambda$,
$\varphi(\mathcal S_\gamma(E))\subseteq\mathcal S_\gamma(E')$.
Two points of $\mathcal S_\gamma(E)$ have the same image under $\varphi$ if
and only if their difference lies in
$\mathcal N\cap\ker(\mathcal F_E-u_{\gamma,E})$.
Hence this group acts freely and transitively on every nonempty fibre of the
restricted map.  Moreover,
\begin{equation}
\label{eq:fixed-point-set-cardinality}
 \lvert\mathcal S_\gamma(E)\rvert=\lvert\mathcal S_\gamma(E')\rvert.
\end{equation}
Thus the restricted map is bijective if and only if
$\mathcal N\cap\ker(\mathcal F_E-u_{\gamma,E})=\{\mathrm O_E\}$.
\end{lemma}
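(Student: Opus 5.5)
The plan is to check the claims one at a time, relying on equivariance \eqref{eq:equivariance}, the fact that $\varphi$ is defined over $k$, and a comparison of degrees of separable endomorphisms.

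\emph{Inclusion.} Let $P\in\mathcal S_\gamma(E)$. Because $\varphi$ is defined over $k$, we have $\mathcal F_{E'}\varphi=\varphi\mathcal F_E$. Hence
\[
\mathcal F_{E'}(\varphi P)=\varphi(\mathcal F_EP)=\varphi(\gamma_EP)=\gamma_{E'}(\varphi P),
\]
and so $\varphi(P)\in\mathcal S_\gamma(E')$.

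\emph{Fibres.} Take $P,R\in\mathcal S_\gamma(E)$. By the remark before Lemma~\ref{lem:fixed-point-set-transport}, $\ker(\mathcal F_E-u_{\gamma,E})$ acts simply transitively on $\mathcal S_\gamma(E)$, so $R-P\in\ker(\mathcal F_E-u_{\gamma,E})$. Since $\varphi$ is a homomorphism, $\varphi(R)=\varphi(P)$ holds exactly when $R-P\in\mathcal N$. Conversely, if $D\in\mathcal N\cap\ker(\mathcal F_E-u_{\gamma,E})$, then $P+D\in\mathcal S_\gamma(E)$ and it has the same image as $P$. This proves the free and transitive action on every nonempty fibre.

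\emph{Cardinality identity.} This is the main step. The set $\mathcal S_\gamma(E)$ is a torsor under $\ker(\mathcal F_E-u_{\gamma,E})$. Because $\mathcal F_E-u_{\gamma,E}$ is separable, its order equals $\deg(\mathcal F_E-u_{\gamma,E})$. The same holds on $E'$, where Lemma~\ref{lem:target-frobenius} shows that the target linear part $u_{\gamma,E'}$ satisfies the corresponding Frobenius compatibility. Equation \eqref{eq:linear-equivariance} together with $\varphi\mathcal F_E=\mathcal F_{E'}\varphi$ gives
\[
\varphi\circ(\mathcal F_E-u_{\gamma,E})=(\mathcal F_{E'}-u_{\gamma,E'})\circ\varphi .
\]
Taking degrees, which are multiplicative, yields
\[
\deg(\mathcal F_E-u_{\gamma,E})=\deg(\mathcal F_{E'}-u_{\gamma,E'}).
\]
Here both endomorphisms are nonzero: the first is nonzero since it is separable, and the second then cannot be zero because of the displayed identity. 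The target one is separable because its pullback of invariant differentials equals $-u_{\gamma,E'}^*\ne0$, as $\mathcal F_{E'}^*=0$. Hence the two torsors have the same size, giving \eqref{eq:fixed-point-set-cardinality}.

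\emph{Bijectivity.} The restricted map $\mathcal S_\gamma(E)\to\mathcal S_\gamma(E')$ is a map between finite sets of equal size. It is bijective if and only if it is injective. By the fibre description, injectivity holds precisely when the group $\mathcal N\cap\ker(\mathcal F_E-u_{\gamma,E})$ is trivial.

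The only delicate point is checking that the degree comparison is legitimate: both twisted Frobenius differences must be nonzero and separable, so that kernel order equals degree. The differential argument above handles this.
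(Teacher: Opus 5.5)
Your proposal is correct and follows essentially the same route as the paper: the intertwining relation $\varphi\circ(\mathcal F_E-u_{\gamma,E})=(\mathcal F_{E'}-u_{\gamma,E'})\circ\varphi$, nonvanishing and separability via invariant differentials, equality of degrees, and the torsor description of both fixed-point sets. The only cosmetic difference is that you obtain the inclusion directly from the affine equivariance $\varphi\circ\gamma_E=\gamma_{E'}\circ\varphi$, whereas the paper uses the linear intertwining together with $\varphi(a_{\gamma,E})=a_{\gamma,E'}$.
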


\begin{proof}
By \eqref{eq:linear-equivariance} and Frobenius-equivariance,
\begin{equation}
\label{eq:intertwine}
 (\mathcal F_{E'}-u_{\gamma,E'})\circ\varphi
 =\varphi\circ(\mathcal F_E-u_{\gamma,E}).
\end{equation}
Together with
$\varphi(a_{\gamma,E})=a_{\gamma,E'}$, this sends $\mathcal S_\gamma(E)$ to $\mathcal S_\gamma(E')$.  If
$P_1,P_2$ lie in $\mathcal S_\gamma(E)$, subtracting their defining
equations shows
$P_1-P_2\in\ker(\mathcal F_E-u_{\gamma,E})$, which gives the fibre
criterion.  The two endomorphisms in \eqref{eq:intertwine} are nonzero and
separable because their differentials are $-du_{\gamma,E}$ and
$-du_{\gamma,E'}$.  Taking degrees in \eqref{eq:intertwine} gives equal degrees, and the
corresponding geometric kernel acts freely and transitively on each set
$\mathcal S_\gamma$ by translations.  This proves \eqref{eq:fixed-point-set-cardinality} and the last assertion.
\end{proof}

\begin{lemma}[Injectivity forces the kernel condition]
\label{lem:injectivity-kernel-condition}
If
$f:(E/\Lambda)(k)\to(E'/\Lambda)(k)$ is injective, then
\[
 \mathcal N\cap\ker(\mathcal F_E-u_{\gamma,E})
 =\{\mathrm O_E\}
 \qquad\text{for every }\gamma\in\Lambda.
\]
\end{lemma}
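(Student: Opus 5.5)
The plan is to argue by contraposition: assume that for some $\gamma\in\Lambda$ there is a point $\mathrm O_E\ne T\in\mathcal N\cap\ker(\mathcal F_E-u_{\gamma,E})$, and produce two distinct points of $(E/\Lambda)(k)$ with the same image under $f$. This is the ``kernel point hidden inside an affine quotient orbit'' issue mentioned in the Introduction. A purely local argument on $E$ is unattractive: one would take $P\in\mathcal S_\gamma(E)$, note $P+T\in\mathcal S_\gamma(E)$ and $\varphi(P)=\varphi(P+T)$, and then need $P+T\notin\Lambda P$. Bounding the bad $P$, namely solutions of $(u_h-1)P=T-a_h$, against $\lvert\mathcal S_\gamma(E)\rvert$ fails for small $q$. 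So I will instead argue globally through the counting identities of Section~\ref{sec:frobenius-equations-elliptic-quotients}.

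\textbf{Step 1.} Both $E/\Lambda$ and $E'/\Lambda$ are $k$-isomorphic to $\mathbf P^1$ by Lemmas~\ref{lem:quotient} and~\ref{lem:target-frobenius}, so each has $q+1$ rational points. Hence injectivity of $f$ on $k$-points upgrades to bijectivity. By Lemma~\ref{lem:isogeny-on-fixed-point-sets}, $\varphi:\mathcal S_\gamma(E)\to\mathcal S_\gamma(E')$ is a map between finite sets of equal size, and the assumed $T$ makes it non-injective. So it is not surjective: choose $P'\in\mathcal S_\gamma(E')\setminus\varphi(\mathcal S_\gamma(E))$, and put $Q'=\pi'(P')\in(E'/\Lambda)(k)$. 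By bijectivity, $Q'=f(Q)$ for a unique $Q\in(E/\Lambda)(k)$.

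\textbf{Step 2.} I will compare fibres. Since $\varphi$ is $\Lambda$-equivariant, $\varphi(\pi^{-1}(Q))$ is a $\Lambda$-orbit inside $\pi'^{-1}(Q')$, hence equals it. Thus $\varphi:\pi^{-1}(Q)\to\pi'^{-1}(Q')$ is surjective, and each fibre of this map has size $[B:A]$ with $A=\operatorname{Stab}_\Lambda(R)$ and $B=\operatorname{Stab}_\Lambda(\varphi(R))$. For $R\in\pi^{-1}(Q)$, let $I(R)=\{\eta:R\in\mathcal S_\eta(E)\}$, and define $I'(R')$ for $R'\in\pi'^{-1}(Q')$ in the same way. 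Proposition~\ref{prop:weighted-fibre-counting} gives $\lvert I(R)\rvert=\lvert A\rvert$ and $\lvert I'(R')\rvert=\lvert B\rvert$. Lemma~\ref{lem:isogeny-on-fixed-point-sets} gives $I(R)\subseteq I'(\varphi(R))$. For fixed $R'$, summing over its $[B:A]$ preimages $R$ in the fibre gives total size exactly $\lvert B\rvert=\lvert I'(R')\rvert$. Consequently $\bigcup_{\varphi(R)=R'}I(R)=I'(R')$ if and only if these sets are pairwise disjoint. If that holds, applying it to $R'=P'$ and $\gamma\in I'(P')$ gives some $R$ over $Q$ with $R\in\mathcal S_\gamma(E)$ and $\varphi(R)=P'$, contradicting the choice of $P'$.

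\textbf{Step 3 (main obstacle).} It remains to prove the disjointness of the sets $I(R)$ and $I(bR)$ for $b\in B\setminus A$. Suppose $\eta\in I(R)\cap I(bR)$. Then $bR-R\in\mathcal N$, and the defining equations give $\eta bR=\mathcal F_E(bR)=\vartheta(b)\eta R$. Note that $b$ fixes $\varphi(R)$, so by the argument in Proposition~\ref{prop:tameness} its linear part determines it. I would try to use this together with the relation $\varphi(a_{b,E})=a_{b,E'}$ to force $bR=R$. If this local argument does not close, the fallback is to drop the disjointness route and sum the identities of Step 2 over all $Q'\in(E'/\Lambda)(k)$. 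Bijectivity of $f$ gives $\sum_\eta\lvert\mathcal S_\eta(E)\rvert=\lvert\Lambda\rvert(q+1)=\sum_\eta\lvert\mathcal S_\eta(E')\rvert$, with equality for each $\eta$ separately, and the aim would be to derive that every restricted map $\mathcal S_\eta(E)\to\mathcal S_\eta(E')$ is surjective, hence injective, which is the kernel condition.
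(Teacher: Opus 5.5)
\textbf{There is a genuine gap.} Your Steps~1 and~2 are correct. They do not, however, reduce the difficulty: Step~3 carries the entire content of the lemma, and you give no argument for it.

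Suppose $\eta\in I(R)\cap I(bR)$ with $bR\ne R$. Then $t:=bR-R$ is a nonzero point of $K_\eta:=\mathcal N\cap\ker(\mathcal F_E-u_{\eta,E})$ lying in the $\Lambda$-orbit of $R$. This is exactly a kernel point hidden inside an affine quotient orbit. Conversely, injectivity forces every nonzero $K_\eta$ to be hidden in this way. For $x\in\mathcal S_\eta(E)$, the points of $x+K_\eta\subseteq\mathcal S_\eta(E)$ have $k$-rational images under $\pi$ and a common image under $f$, so they all lie in $\Lambda x$. Thus your Step~3 is a statement of the same kind as the lemma itself. Moreover, a single unhidden translate $x+t$ would already give two distinct rational points $\pi(x)\ne\pi(x+t)$ with the same image, without needing Steps~1--2.

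The local route you sketch cannot work. It uses only equivariance, $\varphi(a_{b,E})=a_{b,E'}$ and the linear parts, and never injectivity, whereas disjointness is false in general. Take $\Lambda=\{\pm1\}$ acting linearly, $\varphi=[3]$ with $\operatorname{char}k\ne3$, and $R\in E(k)$ of order~$3$. Over $R'=\mathrm O_{E'}$ one has $B=\Lambda$, $A=1$ and $I(R)=I(-R)=\{1\}$.

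Your fallback is also empty. The identity $\sum_\eta\lvert\mathcal S_\eta(E)\rvert=\lvert\Lambda\rvert(q+1)=\sum_\eta\lvert\mathcal S_\eta(E')\rvert$ follows from Proposition~\ref{prop:weighted-fibre-counting} whether or not $f$ is injective, since both quotients have $q+1$ rational points. The termwise equalities are the unconditional Lemma~\ref{lem:isogeny-on-fixed-point-sets}. So these identities carry no information from the hypothesis.

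What is missing is structural input that uses injectivity a second time, together with the specific automorphism groups of elliptic curves. The paper proceeds as follows.
\begin{enumerate}
\item It quotients by the translation kernel of $\gamma\mapsto u_{\gamma,E}$. This preserves injectivity, $\mathcal N$ up to isomorphism, and the kernel condition, and it makes the linear-part map faithful.
\item It shows that no nonzero $D\in\mathcal N$ is fixed by Frobenius and by all linear parts. Otherwise $\tau_D$ would descend to a nonidentity $k$-automorphism $\bar\tau_D$ of $E/\Lambda\cong\mathbf P^1$ with $f\bar\tau_D=f$. Such an automorphism fixes at most two geometric points, so it moves one of the $q+1\geqslant3$ rational points, contradicting injectivity.
\item For cyclic $\Lambda$, it derives from hiding the divisibility $\lvert K_\gamma\rvert\mid\lvert\mathcal S_\gamma(E)\cap\pi^{-1}(\pi(P))\rvert\mid\lvert\Lambda\rvert$.
\item It runs a case analysis over the linear images $C_2,C_3,C_4,C_6$ and the $j=0$ groups in characteristics $2$ and~$3$. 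Each case produces either a Frobenius- and symmetry-fixed kernel point, contradicting the second step, or a common $k$-rational fixed point of $\Lambda$. In the latter case, translating makes the action origin-preserving, and $\mathrm O_E$ itself witnesses the collision $f(\pi(D))=f(\pi(\mathrm O_E))$ with $\pi(D)\ne\pi(\mathrm O_E)$.
\end{enumerate}
None of these ingredients appears in your proposal.
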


\begin{proof}
If the source action is origin-preserving, the assertion is immediate.
Indeed, suppose that
\(D\ne\mathrm O_E,\quad D\in \mathcal N\cap\ker(\mathcal F_E-u_{\gamma,E})\)
for some $\gamma\in\Lambda$.  Then $u_{\gamma,E}=\gamma_E$ and
$D\in\mathcal S_\gamma(E)$, so both $\pi(D)$ and $\pi(\mathrm O_E)$ are
$k$-rational.  They are distinct, because every element of $\Lambda$ fixes
$\mathrm O_E$, whereas
\(f(\pi(D)) =\pi'(\varphi(D)) =\pi'(\mathrm O_{E'}) =f(\pi(\mathrm O_E)).\)
This contradicts injectivity.  We may therefore assume from now on that the
source action is not origin-preserving.

Let $T$ be the kernel of the linear-part homomorphism
$\Lambda\to L_E$.  It is $\vartheta$-stable, so its elements act by
translations through a Frobenius-stable subgroup $A\subset E(\bar k)$; on
$E'$ they act through $A'=\varphi(A)$.  Target faithfulness gives
$A\cap\mathcal N=\{\mathrm O_E\}$.  Hence, with
$q_A:E\to\bar E:=E/A$, the map $\varphi$ descends over $k$ to a separable
isogeny $\bar\varphi:\bar E\to\bar E':=E'/A'$ with
$\bar{\mathcal N}:=\ker\bar\varphi=q_A(\mathcal N)\cong\mathcal N$, and
the actions factor through $\bar\Lambda:=\Lambda/T$.  The induced source
linear-part map is faithful: triviality of the linear part of $\gamma T$
would give $(u_{\gamma,E}-1)(E)\subseteq A$, hence $u_{\gamma,E}=1$ by
connectedness, so $\gamma\in T$.  The induced target action is faithful as
well: if $\gamma T$ acts trivially on $\bar E'$, then
$(u_{\gamma,E'}-1)(E')\subseteq A'$, hence $u_{\gamma,E'}=1$ and, by
\eqref{eq:linear-equivariance}, $(u_{\gamma,E}-1)(E)\subseteq\mathcal N$;
connectedness again gives $\gamma\in T$.  Since $T$ is $\vartheta$-stable,
the induced source action remains stable under Frobenius conjugation, and its
linear image is nontrivial.  Thus
$(\bar E,\bar E',\bar\Lambda,\bar\varphi)$ is again an equivariant isogeny
datum over $k$.

Moreover $q_A$ intertwines Frobenius and the induced linear parts and is
injective on $\mathcal N$, so for every $\gamma\in\Lambda$ it identifies
\[
 \mathcal N\cap\ker(\mathcal F_E-u_{\gamma,E})
 \quad\text{with}\quad
 \bar{\mathcal N}\cap
 \ker(\mathcal F_{\bar E}-u_{\gamma T,\bar E}).
\]
Quotienting in stages identifies the lower map of this new datum with the
original $f$.  Hence injectivity of $f$ and failure of the kernel condition
are both preserved.  Via the explicit isomorphism
$q_A|_{\mathcal N}:\mathcal N\xrightarrow{\sim}\bar{\mathcal N}$, and after
relabeling the quotient datum as $(E,E',\Lambda,\varphi)$, we may therefore
assume that the linear-part homomorphism is faithful.

We use two elementary observations.  First, there is no nonzero
$D\in\mathcal N$ fixed by Frobenius and by every linear part.  Indeed,
translation by such a $D$ commutes with $\Lambda$ and, since $D$ is
Frobenius-fixed, is defined over $k$; hence it descends to a $k$-automorphism
$\bar\tau_D$ of $E/\Lambda\cong\mathbf P^1$ satisfying
$f\bar\tau_D=f$.  If $\bar\tau_D=1$, then $\tau_D$ belongs to the deck
group $\Lambda$ of $E\to E/\Lambda$.  Its linear part is trivial, so
faithfulness of the linear-part homomorphism forces $D=\mathrm O_E$,
a contradiction.  Thus $\bar\tau_D\ne1$.  A nonidentity $k$-automorphism
of $\mathbf P^1$ fixes at most two geometric points, whereas
$\lvert\mathbf P^1(k)\rvert=q+1\ge3$; hence it moves a $k$-rational point,
contradicting injectivity.

Second, suppose the resulting group $\Lambda$ is cyclic and put
$K_\gamma=\mathcal N\cap\ker(\mathcal F_E-u_{\gamma,E})$.  If
$K_\gamma\ne\{\mathrm O_E\}$, choose $P\in\mathcal S_\gamma(E)$ and set
$B=\operatorname{Stab}_\Lambda(P)$.  Since $\Lambda$ is abelian, $B$ is
$\vartheta$-stable, and the overlap formula
\eqref{eq:overlap} gives
\begin{equation}
\label{eq:hiding-divisibility}
 \lvert K_\gamma\rvert
 \mid\lvert\mathcal S_\gamma(E)\cap\pi^{-1}(\pi(P))\rvert
 =\lvert(\Lambda/B)^\vartheta\rvert
 \mid\lvert\Lambda\rvert.
\end{equation}
Here the first divisibility follows because $K_\gamma$ acts freely by
translations on the displayed fibre intersection; injectivity of $f$ forces all
these translates into the same quotient fibre.

With the translation kernel of the linear-part action removed, we rule out
nonzero points violating the kernel condition first in characteristic greater
than $3$, and then in the exceptional small-characteristic automorphism
groups.

Assume first that $\operatorname{char}k>3$.  Then the image of the faithful
linear-part homomorphism is $C_m$ with $m\in\{2,3,4,6\}$
\cite[Theorem~III.10.1]{Sil09}.  For $C_2$, a nonzero $K_\gamma$ has order
$2$ by \eqref{eq:hiding-divisibility}; its unique nonzero element is fixed
by Frobenius and by every element of this cyclic image, contradicting the
first observation.  For $C_3$, \eqref{eq:hiding-divisibility} forces
$\lvert K_\gamma\rvert=3$.  The middle term in
\eqref{eq:hiding-divisibility} therefore also has size $3$, which forces
$B=\operatorname{Stab}_\Lambda(P)=1$.  Hence the preceding fibre argument
gives $P+K_\gamma\subseteq\Lambda\cdot P$, and both sets have three
points, so they are equal.  On this common three-point set, the translations
by $K_\gamma$ and the action of $\Lambda\cong C_3$ are regular; hence their
permutation images are the same unique subgroup of order $3$ in $S_3$.
If $\lambda$ generates $\Lambda$, there is therefore some
$D_0\in K_\gamma$ such that $\lambda$ restricts to translation by $D_0$ on
$P+K_\gamma$.  For every $D\in K_\gamma$, comparing the images of $P+D$
and $P$ gives
$u_{\lambda,E}(D)=D$.  Thus every linear part fixes $K_\gamma$ pointwise.
The defining relation
$\mathcal F_E(D)=u_{\gamma,E}(D)$ then makes every $D\in K_\gamma$
Frobenius-fixed, contradicting the first observation for $D\ne\mathrm O_E$.
We also use the following reduction.  If the source action has a common
fixed point $P_0\in E(k)$, then $\varphi(P_0)\in E'(k)$ is fixed by the
target action.  Conjugating the source and target actions by translations by
$-P_0$ and $-\varphi(P_0)$ makes both actions origin-preserving.  Since
$\varphi$ is a group homomorphism,
$\tau_{-\varphi(P_0)}\circ\varphi\circ\tau_{P_0}=\varphi$; because the two
points are $k$-rational, these conjugations also preserve Frobenius, the
linear parts, $\mathcal N$, the kernel condition, and injectivity of the
induced map on quotient $k$-points.  The initial origin-preserving argument therefore applies after this reduction.

For $C_4$, \eqref{eq:hiding-divisibility} gives
$\lvert K_\gamma\rvert\in\{2,4\}$.  Choose
$D\ne\mathrm O_E$ in $K_\gamma[2]$ and let $u$ generate the linear image.
If $uD\ne D$, replace $D$ by $D+uD$; in either case we obtain a nonzero
$\delta\in\mathcal N[2]$ with $u\delta=\delta$.  Since $u^2=-1$ and
$1+u=u(1-u)$,
\[
 (1-u)(1+u)=[2],\qquad \deg(1-u)=2.
\]
Here $1-u$ is separable, so its kernel consists of $\mathrm O_E$ and the
unique nonzero point $\delta$.  Frobenius conjugation sends $u$ to
$u^{\pm1}$, and $\ker(1-u^{-1})=\ker(1-u)$; hence Frobenius preserves this
two-point kernel and fixes $\delta$.  Since $u$ generates the linear image,
$\delta$ is fixed by every linear part as well, contradicting the first
observation.

For $C_6$, let $\lambda$ generate $\Lambda$ and put
$u=u_{\lambda,E}$.  Then $u^2-u+1=0$, so $1-u=-u^2$ is an automorphism.
Thus the affine automorphism $\lambda_E$ has a unique geometric fixed point
$P_0$.  Stability gives
$\mathcal F_E\lambda_E\mathcal F_E^{-1}=\vartheta(\lambda)_E$, and
$\vartheta(\lambda)$ is again a generator of $\Lambda$; hence
$\mathcal F_E(P_0)$ is fixed by a generator and therefore equals $P_0$.
Thus $P_0\in E(k)$, and because $\lambda$ generates $\Lambda$ it is a common
fixed point.  The common-fixed-point reduction gives the required contradiction.

It remains to treat the enlarged origin-preserving automorphism groups in
characteristics $3$ and $2$.  For $j=0$, Silverman gives respectively a group
of order $12$ with a normal $C_3$, and a group of order $24$ with a normal
quaternion subgroup; see \cite[Appendix~A, Proposition~A.1.2(c) and
Exercise~A.1]{Sil09}.  In characteristic $3$, suppose first that
$3\mid\lvert\Lambda\rvert$.  The normal $C_3$ is the unique Sylow
$3$-subgroup of the full origin-preserving automorphism group, hence is
contained in the faithful linear image of $\Lambda$.  Let $\lambda$ be a
nontrivial element of its preimage and put $u=u_{\lambda,E}$.  In the standard
$j=0$ form, $u$ acts by a nonzero translation of the $x$-coordinate and fixes
only $\mathrm O_E$.  Thus $\ker(1-u)(\bar k)=\{\mathrm O_E\}$; since
$1-u$ is a nonzero isogeny, the affine automorphism $\lambda_E$ has a unique
geometric fixed point $P_0$.  Normality of the $C_3$ implies that every
$h\in\Lambda$ conjugates $\lambda$ to $\lambda^{\pm1}$, so $h(P_0)$ is again
fixed by $\lambda$ and hence equals $P_0$.  Stability under Frobenius gives
the same conclusion for $\mathcal F_E(P_0)$.  Therefore $P_0$ is a common
$k$-rational fixed point, and the common-fixed-point reduction applies.  If
$3\nmid\lvert\Lambda\rvert$, then $\lvert\Lambda\rvert$ is $2$ or $4$;
Lemma~\ref{lem:small-char-involution} excludes a Klein four subgroup, so the
$C_2$ or $C_4$ argument above applies verbatim in characteristic $3$.

In characteristic $2$, the full $j=0$ group has the unique involution
$[-1]$ by Lemma~\ref{lem:small-char-involution}; in the standard form it
fixes only $\mathrm O_E$.  If $\lvert\Lambda\rvert$ is even, the unique
involution $\lambda\in\Lambda$ has this linear part and is central.  Since
$1-[-1]=[2]$ is a nonzero isogeny with trivial geometric kernel,
$\lambda_E$ has a unique geometric fixed point $P_0$.  Centrality makes
$P_0$ fixed by all of $\Lambda$, while uniqueness of the involution and
Frobenius stability give $\vartheta(\lambda)=\lambda$ and hence
$\mathcal F_E(P_0)=P_0$.  The common-fixed-point reduction therefore applies.
If $\lvert\Lambda\rvert$ is odd, nontriviality of the linear image and
$\lvert\Lambda\rvert\mid24$ force $\Lambda\cong C_3$, so the cyclic
$C_3$ argument above applies verbatim in characteristic $2$.  This completes
the proof.
\end{proof}

\begin{theorem}[Kernel criterion on $\sim_\vartheta$-classes]
\label{thm:kernel}
The induced map
$f:(E/\Lambda)(k)\to(E'/\Lambda)(k)$ is bijective if and only if
\begin{equation}
\label{eq:kernel}
 \mathcal N\cap\ker(\mathcal F_E-u_{\gamma,E})
 =\{\mathrm O_E\}
\end{equation}
for $\gamma$ in a complete set of representatives for the
$\sim_\vartheta$-classes of $\Lambda$.
\end{theorem}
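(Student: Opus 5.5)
The plan is to prove the two implications separately, after first reducing the condition on representatives to the condition for every $\gamma\in\Lambda$. The forward direction will be a direct appeal to Lemma~\ref{lem:injectivity-kernel-condition}. The reverse direction will be a surjectivity argument through the fixed-point sets $\mathcal S_\gamma$, combined with the fact that both quotient lines have $q+1$ rational points.

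\emph{Step 1: invariance under $\sim_\vartheta$.} I will show that the group $K_\gamma:=\mathcal N\cap\ker(\mathcal F_E-u_{\gamma,E})$ has the same order for all $\gamma$ in one $\sim_\vartheta$-class.
\begin{itemize}
\item Let $\eta=\vartheta(h)\gamma h^{-1}$. The homomorphism $\gamma\mapsto u_{\gamma,E}$ gives $u_{\eta,E}=u_{\vartheta(h),E}\,u_{\gamma,E}\,u_{h,E}^{-1}$.
\item Taking linear parts in $\vartheta(h)_E=\mathcal F_E h_E\mathcal F_E^{-1}$ gives $u_{\vartheta(h),E}=\mathcal F_E u_{h,E}\mathcal F_E^{-1}$ on geometric points.
\item Combining these, $(\mathcal F_E-u_{\eta,E})\circ u_{h,E}=u_{\vartheta(h),E}\circ(\mathcal F_E-u_{\gamma,E})$. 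Hence the automorphism $u_{h,E}$ maps $\ker(\mathcal F_E-u_{\gamma,E})$ bijectively onto $\ker(\mathcal F_E-u_{\eta,E})$.
\item By \eqref{eq:linear-equivariance}, $\varphi\circ u_{h,E}=u_{h,E'}\circ\varphi$, so $u_{h,E}$ preserves $\mathcal N=\ker\varphi$.
\end{itemize}
Therefore $u_{h,E}(K_\gamma)=K_\eta$. So \eqref{eq:kernel} holds for a set of representatives exactly when it holds for every $\gamma\in\Lambda$, and the theorem reduces to the all-$\gamma$ version.

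\emph{Step 2: bijectivity implies the kernel condition.} A bijective map is injective, so Lemma~\ref{lem:injectivity-kernel-condition} gives \eqref{eq:kernel} for every $\gamma$ directly. This direction contains all the hard work, namely the reduction to a faithful linear part and the case analysis showing that a nonzero kernel point cannot be hidden inside an affine quotient orbit. That work is already done in the earlier lemma, so here it is just an invocation.

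\emph{Step 3: the kernel condition implies bijectivity.} Assume \eqref{eq:kernel} for all $\gamma$.
\begin{itemize}
\item Take $Q'\in(E'/\Lambda)(k)$. By \eqref{eq:quotient-rational-point-union} applied to $E'$, which is legitimate because Lemma~\ref{lem:target-frobenius} says the target action satisfies the same hypotheses, there are $\gamma\in\Lambda$ and $P'\in\mathcal S_\gamma(E')$ with $\pi'(P')=Q'$.
\item By Lemma~\ref{lem:isogeny-on-fixed-point-sets}, the kernel condition makes $\varphi:\mathcal S_\gamma(E)\to\mathcal S_\gamma(E')$ bijective. So $P'=\varphi(P)$ for some $P\in\mathcal S_\gamma(E)$.
\item Then $\pi(P)\in(E/\Lambda)(k)$ by \eqref{eq:quotient-rational-point-union} for $E$, and $f(\pi(P))=\pi'(\varphi(P))=Q'$.
\end{itemize}
Thus $f$ is surjective on $k$-points. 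Both quotients are $k$-isomorphic to $\mathbf P^1$ by Lemma~\ref{lem:quotient}, so both point sets have $q+1$ elements, and a surjection between them is a bijection.

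I do not expect a real obstacle. The only point that needs care is the Frobenius-conjugation identity for linear parts in Step 1, which is what links the kernels across a $\sim_\vartheta$-class.
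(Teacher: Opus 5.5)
Your proof is correct, and Steps 1 and 2 coincide with the paper's. The paper also checks class-invariance by letting $u_{h,E}$ carry $\ker(\mathcal F_E-u_{\gamma,E})$ onto $\ker(\mathcal F_E-u_{\eta,E})$ while preserving $\mathcal N$, and it cites Lemma~\ref{lem:injectivity-kernel-condition} for necessity.

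Step 3 takes a genuinely different route. The paper proves sufficiency fibre by fibre. From the bijections $\mathcal S_\gamma(E)\to\mathcal S_\gamma(E')$ and $\pi'\varphi=f\pi$ it obtains $m'_{Q'}(\gamma)=\sum_{f(Q)=Q'}m_Q(\gamma)$. It then sums over $\gamma$ and applies the weighted fibre-counting identity \eqref{eq:fibre-counting-sum}, once on $E'$ and once on each source fibre. This gives
\[
\lvert\Lambda\rvert=\bigl\lvert\{Q\in(E/\Lambda)(k):f(Q)=Q'\}\bigr\rvert\cdot\lvert\Lambda\rvert,
\]
so every rational target point has exactly one rational preimage.

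You instead use those bijections, together with \eqref{eq:quotient-rational-point-union}, only to get surjectivity. You then finish by pigeonhole, using $\lvert(E/\Lambda)(k)\rvert=\lvert(E'/\Lambda)(k)\rvert=q+1$ from Lemma~\ref{lem:quotient}, applied to the target via Lemma~\ref{lem:target-frobenius}.

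Your version is shorter and never needs Proposition~\ref{prop:weighted-fibre-counting} or the stabilizer bookkeeping over branch points. The paper's version gives the exact preimage count over each target point directly, without appealing to the equality of the two global point counts. It is also where the paper actually uses the weighted identity it highlights in the introduction.
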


\begin{proof}
The condition is invariant on each $\sim_\vartheta$-class.  Indeed, if
$\eta=\vartheta(h)\gamma h^{-1}$ and
$\mathcal F_E(D)=u_{\gamma,E}(D)$, then
$\mathcal F_E(u_{h,E}D)=u_{\eta,E}(u_{h,E}D)$; the linear action preserves
$\mathcal N$.  Necessity is Lemma~\ref{lem:injectivity-kernel-condition}.

Conversely, assume \eqref{eq:kernel}; by invariance it holds for every
$\gamma$.  Lemma~\ref{lem:isogeny-on-fixed-point-sets} gives bijections
$\mathcal S_\gamma(E)\xrightarrow{\sim}\mathcal S_\gamma(E')$.
For $Q'\in(E'/\Lambda)(k)$ put
$m'_{Q'}(\gamma):=\lvert\mathcal S_\gamma(E')\cap(\pi')^{-1}(Q')\rvert$.
Since $\pi'\varphi=f\pi$,
\[
 m'_{Q'}(\gamma)
 =\sum_{\substack{Q\in(E/\Lambda)(k)\\ f(Q)=Q'}}m_Q(\gamma).
\]
Applying \eqref{eq:fibre-counting-sum} first on $E'$ and then on every
source fibre gives
\[
 \begin{aligned}
 \lvert\Lambda\rvert
 &=\sum_{\gamma\in\Lambda}m'_{Q'}(\gamma)\\
 &=\sum_{\substack{Q\in(E/\Lambda)(k)\\ f(Q)=Q'}}
   \sum_{\gamma\in\Lambda}m_Q(\gamma)\\
 &=\bigl\lvert\{Q\in(E/\Lambda)(k):f(Q)=Q'\}\bigr\rvert
   \,\lvert\Lambda\rvert.
 \end{aligned}
\]
Thus every rational target point has exactly one rational preimage.
\end{proof}

Frobenius conjugation preserves $L_E$, the image of the linear-part
homomorphism.  We write
\(\vartheta_L(u):=\mathcal F_E\circ u\circ\mathcal F_E^{-1}\) for all \(u\in L_E\)
for the induced automorphism of $L_E$.  Thus
\(\vartheta_L(u_{\gamma,E}) =u_{\vartheta(\gamma),E}.\)
For $u,v\in L_E$, define $u\sim_{\vartheta_L}v$ by
\(v=\vartheta_L(h)uh^{-1} \qquad\text{for some }h\in L_E.\)

\begin{corollary}[Multiplication maps]
Suppose $\varphi=[n]$ with $\gcd(n,q)=1$, put $L=L_E$, and let
$\mathcal R_L$ be a complete set of representatives for the
$\sim_{\vartheta_L}$-classes of $L$.  Then $\mathcal N=E[n]$, and
the quotient map is a permutation if and only if
\[
 \prod_{u\in\mathcal R_L}
 \det(\mathcal F_E-u\mid E[n])
 \in(\mathbb Z/n\mathbb Z)^\times.
\]
Equivalently, one may take the product over all $u\in L$.
\end{corollary}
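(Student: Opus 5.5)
The plan is to reduce the corollary to Theorem~\ref{thm:kernel} and then turn each kernel condition into a determinant condition on $E[n]$.

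\emph{Step 1.} Since $\gcd(n,q)=1$, the map $[n]$ is separable and $\mathcal N=\ker[n](\bar k)=E[n]$. Moreover $E[n]\cong(\mathbb Z/n\mathbb Z)^2$ is a free $\mathbb Z/n\mathbb Z$-module of rank two.

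\emph{Step 2.} The endomorphisms $\mathcal F_E$ and $u_{\gamma,E}$ commute with $[n]$, so they restrict to $\mathbb Z/n\mathbb Z$-linear endomorphisms of $E[n]$. Hence $M_\gamma:=\mathcal F_E-u_{\gamma,E}$ acts on $E[n]$, and
\[
 \mathcal N\cap\ker(\mathcal F_E-u_{\gamma,E})=\ker(M_\gamma\mid E[n]).
\]
For a finite free module, an endomorphism is injective if and only if it is bijective, and this holds if and only if its determinant is a unit of $\mathbb Z/n\mathbb Z$. One way to see this is to reduce modulo each prime $\ell\mid n$: a nontrivial kernel modulo $\ell^a$ produces a nonzero $\ell$-torsion kernel vector, hence a nonzero kernel vector over $\mathbb F_\ell$. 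Consequently condition~\eqref{eq:kernel} for $\gamma$ is equivalent to $\det(\mathcal F_E-u_{\gamma,E}\mid E[n])\in(\mathbb Z/n\mathbb Z)^\times$.

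\emph{Step 3.} Theorem~\ref{thm:kernel} says $f$ permutes $\mathbf P^1(k)$ if and only if~\eqref{eq:kernel} holds for all $\gamma$. A product of elements of $\mathbb Z/n\mathbb Z$ is a unit exactly when each factor is a unit. The condition depends only on $u=u_{\gamma,E}\in L$, so it suffices to check it over $u\in L$; this already gives the ``product over all $u\in L$'' form.

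\emph{Step 4.} To pass to the representatives $\mathcal R_L$, I need the determinant condition to be constant on $\sim_{\vartheta_L}$-classes. Suppose $v=\vartheta_L(h)uh^{-1}$ with $h\in L$. Then
\[
 \mathcal F_E-v=\vartheta_L(h)(\mathcal F_E-u)h^{-1},
\]
using the relation $\vartheta_L(h)\mathcal F_E=\mathcal F_E h$, which holds as endomorphisms because it holds on $\bar k$-points. The elements $h$ and $\vartheta_L(h)$ are automorphisms, so they act invertibly on $E[n]$ and their determinants are units. Therefore $\det(\mathcal F_E-v\mid E[n])$ is a unit multiple of $\det(\mathcal F_E-u\mid E[n])$, and the unit condition is class-invariant. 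This mirrors the invariance argument in the proof of Theorem~\ref{thm:kernel}.

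The main care point is the commutative-algebra step over the non-field $\mathbb Z/n\mathbb Z$ (injective iff determinant a unit). It is standard via adjugates and finiteness.
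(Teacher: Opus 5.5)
Your proposal is correct and follows the paper's proof essentially step for step. Both arguments use the same ingredients: freeness of $E[n]$ over $\mathbb Z/n\mathbb Z$, with invertibility detected by a unit determinant; Theorem~\ref{thm:kernel} read as invertibility of $\mathcal F_E-u$ on $E[n]$ for every $u\in L$; class-invariance via $(\mathcal F_E-v)h=\vartheta_L(h)(\mathcal F_E-u)$, of which your conjugated identity is just a rearrangement; and the fact that a finite product in $\mathbb Z/n\mathbb Z$ is a unit exactly when each factor is.
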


\begin{proof}
The module $E[n]$ is free of rank two over $\mathbb Z/n\mathbb Z$, and an
endomorphism is invertible exactly when its determinant is a unit.
Theorem~\ref{thm:kernel} is equivalently the requirement that
$\mathcal F_E-u$ be invertible on $E[n]$ for every $u\in L$.

This condition is constant on $\sim_{\vartheta_L}$-classes.
Indeed, if $v=\vartheta_L(h)uh^{-1}$ with $h\in L$, then on $E[n]$,
\((\mathcal F_E-v)h =\vartheta_L(h)(\mathcal F_E-u).\)
Hence $\mathcal F_E-v$ is invertible if and only if $\mathcal F_E-u$ is
invertible.  It therefore suffices to test $u\in\mathcal R_L$.  Finally, a
finite product in $\mathbb Z/n\mathbb Z$ is a unit if and only if every
factor is a unit.
\end{proof}

\section{Affine monodromy, decomposition, and permutation extension degrees}
\label{sec:extensions}

The criterion of Section~\ref{sec:isogenies} depends only on
$\mathcal N$, $\Lambda_{\mathcal N}$, and $\mathcal F_{\mathcal N}$.  These
data also determine monodromy after base change, decomposition classes, and all
permutation extension degrees.  From this point, when
$\mathcal N$ is viewed as an additive finite module, we write $0$ for its
identity point $\mathrm O_E$.

For $r\geqslant1$ put $k_r=\mathbb F_{q^r}$.  Applying
Theorem~\ref{thm:kernel} over $k_r$ gives the following.

\begin{corollary}[Extension-field kernel criterion]
\label{cor:extension-kernel}
For every $r\geqslant1$, the map
$f:(E/\Lambda)(k_r)\to(E'/\Lambda)(k_r)$ is bijective if and only if
\[
 \mathcal N\cap\ker(\mathcal F_E^r-u_{\gamma,E})=\{\mathrm O_E\}
\]
for a complete set of representatives for the equivalence relation defined by
$\eta=\vartheta^r(h)\gamma h^{-1}$ on $\Lambda$.  Equivalently, the condition may be imposed
for every $\gamma\in\Lambda$, or every $u\in L_E$.
\end{corollary}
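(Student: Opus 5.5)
The plan is to deduce Corollary~\ref{cor:extension-kernel} from Theorem~\ref{thm:kernel} by base change to $k_r$. We show that the given datum, viewed over $k_r$, is again an equivariant isogeny datum. Its $q^r$-power Frobenius is $\mathcal F_E^r$, and its induced automorphism of $\Lambda$ is $\vartheta^r$.

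\textbf{Step 1: base change of the datum.} The curves $E,E'$ and the separable isogeny $\varphi$ are defined over $k\subseteq k_r$, so they remain a separable $k_r$-isogeny. The $\Lambda$-actions, their faithfulness and their linear parts $u_{\gamma,E}$ are geometric data, so they are unchanged. Stability under conjugation by $q$-power Frobenius implies stability under its $r$-th iterate. Iterating the defining relation for $\vartheta$ gives
\[
\mathcal F_E^r\circ\gamma_E\circ\mathcal F_E^{-r}=\vartheta^r(\gamma)_E
\qquad\text{for all }\gamma\in\Lambda .
\]
Hence the $\sim_\vartheta$-relation over $k_r$ is exactly the relation $\eta=\vartheta^r(h)\gamma h^{-1}$ of the statement.

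\textbf{Step 2: the quotient map over $k_r$.} The quotients $E/\Lambda$ and $E'/\Lambda$, the quotient maps and $f$ were all descended to $k$ in Lemma~\ref{lem:quotient}. Their base changes to $k_r$ are therefore the corresponding objects for the $k_r$-datum, and $f$ on $k_r$-points is the induced map of that datum. Applying Theorem~\ref{thm:kernel} over $k_r$, where $\mathcal F_E$ is replaced by $\mathcal F_E^r$ and $\vartheta$ by $\vartheta^r$, gives the stated criterion on a set of class representatives.

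\textbf{Step 3: the equivalent formulations.} The condition is constant on the new classes. The proof of Theorem~\ref{thm:kernel} transfers verbatim: if $\eta=\vartheta^r(h)\gamma h^{-1}$ and $\mathcal F_E^r(D)=u_{\gamma,E}(D)$, then $u_{h,E}D$ lies in $\ker(\mathcal F_E^r-u_{\eta,E})$. The linear action preserves $\mathcal N$ by \eqref{eq:linear-equivariance}. So imposing the condition on representatives is the same as imposing it for every $\gamma\in\Lambda$. Finally, the condition depends on $\gamma$ only through $u_{\gamma,E}$, and $L_E$ is by definition the set of these linear parts. So quantifying over all $\gamma$ is the same as quantifying over all $u\in L_E$.

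\textbf{Main obstacle.} The only point needing care is Step 2: that the induced quotient map of the base-changed datum agrees with the base change of $f$. This holds because $\pi'\circ\varphi=f\circ\pi$ is an identity of $k$-morphisms and base change preserves quotients of curves by finite groups. The argument of Lemma~\ref{lem:injectivity-kernel-condition} used only the datum axioms and the inequality $\lvert\mathbf P^1(k)\rvert\ge3$, both of which hold over $k_r$.
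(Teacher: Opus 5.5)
Your proposal is correct and follows the paper's proof, which consists of applying Theorem~\ref{thm:kernel} over $k_r$, where Frobenius becomes $\mathcal F_E^r$ and the induced automorphism of $\Lambda$ becomes $\vartheta^r$. You additionally write out the routine checks the paper leaves implicit: that the datum remains an equivariant isogeny datum over $k_r$, that the quotients commute with base change, and that the condition is constant on the classes for $\vartheta^r$.
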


Since $\varphi$ is defined over $k$, $\mathcal N$ is stable under Frobenius.  Recall the notation introduced above:
\[
 \mathcal F_{\mathcal N}=\mathcal F_E|_{\mathcal N},
 \qquad
 \Lambda_{\mathcal N}=
 \operatorname{im}\bigl(
 \Lambda\to\operatorname{Aut}(\mathcal N),\
 \gamma\mapsto u_{\gamma,E}|_{\mathcal N}\bigr),
\]
\[
 \mathcal A_{\mathcal N}
 =\langle\Lambda_{\mathcal N},\mathcal F_{\mathcal N}\rangle,
 \qquad
 \mathcal C_{\mathcal N}
 =\mathcal A_{\mathcal N}/\Lambda_{\mathcal N},
 \qquad
 \overline{\mathcal F}_{\mathcal N}
 =\mathcal F_{\mathcal N}\Lambda_{\mathcal N}.
\]
Frobenius normalizes $\Lambda_{\mathcal N}$, and
$\mathcal C_{\mathcal N}$ is the cyclic group generated by
$\overline{\mathcal F}_{\mathcal N}$.  Recall also that
\[
 c_{\mathcal N}
 =\operatorname{ord}_{\mathcal C_{\mathcal N}}
 (\overline{\mathcal F}_{\mathcal N})
 =\min\{\nu\geqslant1:
 \mathcal F_{\mathcal N}^{\,\nu}\in\Lambda_{\mathcal N}\}.
\]

For the fixed representative $f\in k(X)$ chosen in
Section~\ref{sec:isogenies}, let $\Omega_f$ and
$\kappa_f=\Omega_f\cap\bar k$ be its arithmetic Galois closure and full
constant field, as in the Introduction.  Put
\[
 A_f:=\operatorname{Gal}(\Omega_f/k(f(\mathbf x))),
 \qquad
 G_f:=\operatorname{Gal}(\Omega_f/\kappa_f(f(\mathbf x))).
\]

\begin{theorem}[Affine monodromy realization]
\label{thm:affine-monodromy}
There are permutation-group isomorphisms
\[
 G_f\cong\mathcal N\rtimes\Lambda_{\mathcal N},
 \qquad
 A_f\cong\mathcal N\rtimes\mathcal A_{\mathcal N}.
\]
Consequently
\[
 A_f/G_f\cong\mathcal C_{\mathcal N},
 \qquad
 [\kappa_f:k]=c_{\mathcal N}.
\]
\end{theorem}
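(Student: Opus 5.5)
We will realize the Galois closure of $f$ inside the function field of $E$, extending the argument already used for the forward implication of Proposition~\ref{prop:equivariant-isogeny-realization-criterion}.

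\textbf{Geometric monodromy.} Over $\bar k$, the composite $\pi'\circ\varphi=f\circ\pi:E\to E'/\Lambda$ is Galois with deck group $\mathcal H=\mathcal N\rtimes\Lambda$. Here $\mathcal N$ acts by translations and $\Lambda$ by its affine action; this is the group of automorphisms of $E$ lying over $E'/\Lambda$. Its degree is $\lvert\mathcal N\rvert\lvert\Lambda\rvert$. The source $E/\Lambda$ corresponds to the subgroup $\Lambda$. Hence the geometric Galois closure of $f$ corresponds to $\mathcal K=\operatorname{core}_{\mathcal H}(\Lambda)$, and $G_f\cong\mathcal H/\mathcal K$ acting on $\mathcal H/\Lambda$.

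The conjugation formula $\tau_Q\gamma\tau_Q^{-1}=\tau_{Q-u_{\gamma,E}(Q)}\gamma$ shows that $\mathcal K$ is the kernel of $\gamma\mapsto u_{\gamma,E}|_{\mathcal N}$. Therefore $\Lambda/\mathcal K=\Lambda_{\mathcal N}$. The map $Q\mapsto\tau_Q\Lambda$ identifies the sheets with $\mathcal N$. Under this identification, $\tau_Q$ acts by translation and $\gamma$ acts on $\mathcal H/\Lambda$ by $\tau_Q\Lambda\mapsto\tau_{u_{\gamma,E}(Q)}\Lambda$, since $\gamma\tau_Q=\tau_{u(Q)}\gamma$. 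So $G_f\cong\mathcal N\rtimes\Lambda_{\mathcal N}$ as permutation groups on $\mathcal N$.

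\textbf{Arithmetic monodromy.} We next add Frobenius. Since $\varphi$, $\pi$ and $\pi'$ are defined over $k$, the extension $k(E)/k(E'/\Lambda)$ is a model, and the $q$-power Frobenius (acting coefficientwise on $\bar k(E)$) normalizes $\mathcal H$. It acts on $\mathcal H$ via $\vartheta$ and via $\mathcal F_E$ on translations. It preserves $\Lambda$, hence $\mathcal K$, and fixes the base point $\mathrm O_E\Lambda$ of the sheets, because $\mathrm O_E$ is $k$-rational.

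The arithmetic monodromy group is the image in $\operatorname{Sym}(\mathcal H/\Lambda)$ of the group generated by $\mathcal H$ and a Frobenius lift. On sheets labelled by $\mathcal N$, the Frobenius lift acts by $\tau_Q\Lambda\mapsto\tau_{\mathcal F_E(Q)}\Lambda$, that is, as the linear map $\mathcal F_{\mathcal N}$. So $A_f$ is the permutation group generated by $\mathcal N$, $\Lambda_{\mathcal N}$ and $\mathcal F_{\mathcal N}$, which equals $\mathcal N\rtimes\mathcal A_{\mathcal N}$.

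\textbf{Main obstacle.} The delicate point is justifying that the arithmetic group is exactly this permutation image. We would argue as follows. $\Omega_f$ is the fixed field in $\bar k(E)^{\mathcal K}$ of the closed subgroup of the semilinear group topologically generated by $\mathcal H$ and Frobenius. Restricting to $\Omega_f$, i.e.\ taking the image in the symmetric group of sheets, gives $A_f$, with kernel the subgroup acting trivially on sheets. The cleanest route is to use the standard fact that $A_f$ is the permutation image of the Galois group of $\Omega_f\bar k$ over $k(f(\mathbf x))$, for which an element is determined by its restriction to $\bar k$ together with its geometric component.

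\textbf{Quotient and constant field.} The quotient $A_f/G_f\cong(\mathcal N\rtimes\mathcal A_{\mathcal N})/(\mathcal N\rtimes\Lambda_{\mathcal N})\cong\mathcal C_{\mathcal N}$. Standard exceptional-cover theory gives $A_f/G_f\cong\operatorname{Gal}(\kappa_f/k)$, so $[\kappa_f:k]=\lvert\mathcal C_{\mathcal N}\rvert=c_{\mathcal N}$, since $\mathcal C_{\mathcal N}$ is cyclic generated by $\overline{\mathcal F}_{\mathcal N}$.

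The one remaining check is that $G_f$ as defined over $\kappa_f$ agrees with the $\bar k$-monodromy computed above. This follows because extension of constants from $\kappa_f$ to $\bar k$ preserves the Galois group.
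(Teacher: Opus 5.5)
Your proposal is correct and follows essentially the same route as the paper. Both arguments use the deck group $\mathcal H=\mathcal N\rtimes\Lambda$ of $\pi'\varphi$, identify $\operatorname{core}_{\mathcal H}(\Lambda)$ with the kernel of the linear action on $\mathcal N$ via the conjugation formula, and label the sheets by $\mathcal N$, with $q$-power Frobenius acting on them as $\mathcal F_{\mathcal N}$; then $A_f$ is generated by $G_f$ and a Frobenius lift, and the constant-field exact sequence gives $A_f/G_f\cong\operatorname{Gal}(\kappa_f/k)$.

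Two small points to tidy. First, state the check $\mathcal N\cap\Lambda=1$ explicitly, as the paper does: if $\gamma_E=\tau_Q$ with $Q\in\mathcal N$, then $\gamma_{E'}\varphi=\varphi$, so $\gamma=1$ by target faithfulness. You need this both for $\lvert\mathcal H\rvert=\deg(\pi'\varphi)$ and for the inclusion $\operatorname{core}_{\mathcal H}(\Lambda)\subseteq\ker(\Lambda\to\operatorname{Aut}(\mathcal N))$. Second, in your ``main obstacle'' paragraph, $\Omega_f$ is the fixed field of the subgroup acting trivially on the sheets. It is not the fixed field of the whole group generated by $\mathcal H$ and Frobenius, whose fixed field is $k(f(\mathbf x))$.
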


\begin{proof}
Over $\bar k$, the composite
$\pi'\varphi:E\to E'/\Lambda$ is Galois.  Let $\tau_Q$ denote translation
by $Q\in\mathcal N$.  The translations $\mathcal N$ and the prescribed
affine action of $\Lambda$ preserve $\pi'\varphi$, and
\(\gamma_E\tau_Q\gamma_E^{-1}=\tau_{u_{\gamma,E}(Q)}.\)
Their intersection is trivial: if $\tau_Q$ is also the action of
$\gamma\in\Lambda$, then the target action of $\gamma$ is trivial because
$Q\in\ker\varphi$, contrary to target faithfulness unless $Q=0$ and
$\gamma=1$.  Hence the automorphism group of this Galois cover is
$\mathcal H=\mathcal N\rtimes\Lambda$ and has the required degree.
The intermediate cover $E/\Lambda\to E'/\Lambda$ corresponds to
$\Lambda\leq\mathcal H$.

Let
$K=\ker(\Lambda\to\operatorname{Aut}(\mathcal N))$, with the map given by
linear parts.  Then $K\triangleleft\mathcal H$.  Conversely, if $\gamma$ lies in the largest
normal subgroup of $\mathcal H$ contained in $\Lambda$, then for every
$Q\in\mathcal N$,
\(\tau_Q\gamma\tau_Q^{-1} =\tau_{Q-u_{\gamma,E}(Q)}\gamma\in\Lambda.\)
The trivial intersection above forces $u_{\gamma,E}(Q)=Q$ for every $Q$,
so $\gamma\in K$.  Thus
\(G_f\cong(\mathcal N\rtimes\Lambda)/K \cong\mathcal N\rtimes\Lambda_{\mathcal N}.\)
The geometric sheets are canonically identified with $\mathcal N$ via the
bijection
\[
 \mathcal N\longrightarrow\mathcal H/\Lambda,
 \qquad
 Q\longmapsto\tau_Q\Lambda.
\]
Let $\sigma$ be the permutation of these sheets induced by $q$-power
semilinear Frobenius.  Frobenius stability of $\mathcal N$ and of the
prescribed $\Lambda$-action makes $\sigma$ normalize the geometric
monodromy action and fix the zero sheet $\Lambda$.  Moreover
\[
 \mathcal F_E\tau_Q\mathcal F_E^{-1}
 =\tau_{\mathcal F_{\mathcal N}(Q)},
\]
so $\sigma$ acts on $\mathcal N$ as the linear map
$\mathcal F_{\mathcal N}$.  The constant-field exact sequence
\[
 1\longrightarrow G_f\longrightarrow A_f
 \longrightarrow\operatorname{Gal}(\kappa_f/k)\longrightarrow1
\]
has cyclic quotient generated by the image of $q$-power Frobenius.  Hence
$A_f$ is generated, in its sheet action, by $G_f$ and $\sigma$, and therefore
\[
 A_f\cong
 \mathcal N\rtimes
 \langle\Lambda_{\mathcal N},\mathcal F_{\mathcal N}\rangle
 =\mathcal N\rtimes\mathcal A_{\mathcal N}.
\]
Taking the quotient by $G_f$ gives
$A_f/G_f\cong\mathcal A_{\mathcal N}/\Lambda_{\mathcal N}
=\mathcal C_{\mathcal N}$.  Comparing with the same constant-field exact
sequence yields
$[\kappa_f:k]=|\mathcal C_{\mathcal N}|=c_{\mathcal N}$.
\end{proof}

\begin{remark}
The affine translation components of the prescribed symmetry action do not
appear in this permutation representation: both $G_f$ and $A_f$ depend on the
symmetry only through its induced linear action on $\mathcal N$.  Compare
the characteristic-zero cyclic affine-monodromy calculation in
\cite[Lemma~6.10]{GMS03}.  Theorem~\ref{thm:affine-monodromy}
gives the finite-field form, compatible with Frobenius conjugation, used
here; in particular,
\(A_f/G_f\cong\mathcal C_{\mathcal N}\)
is the cyclic quotient governing arithmetic monodromy after constant-field
extension.
\end{remark}

\begin{corollary}[Base change of affine monodromy]
\label{cor:base-change-monodromy}
For $r\geqslant1$, put
\[
 \Omega_{f,r}:=\Omega_f k_r,
 \qquad
 \kappa_{f,r}:=\Omega_{f,r}\cap\bar k,
 \qquad
 \mathcal A_{\mathcal N}^{(r)}
 :=\langle\Lambda_{\mathcal N},\mathcal F_{\mathcal N}^{\,r}\rangle.
\]
Then $\Omega_{f,r}$ is the arithmetic Galois closure of
$k_r(\mathbf x)/k_r(f(\mathbf x))$.  Its arithmetic monodromy group is
\[
 A_f^{(r)}
 :=\operatorname{Gal}(\Omega_{f,r}/k_r(f(\mathbf x)))
 \cong \mathcal N\rtimes\mathcal A_{\mathcal N}^{(r)},
\]
while, under the common geometric-sheet identification, its geometric
monodromy group is canonically identified with
$G_f\cong\mathcal N\rtimes\Lambda_{\mathcal N}$.
Moreover,
\[
 \kappa_f=\mathbb F_{q^{c_{\mathcal N}}},
 \qquad
 \kappa_{f,r}=\kappa_f k_r
 =\mathbb F_{q^{\operatorname{lcm}(c_{\mathcal N},r)}},
\]
and consequently
\[
 [A_f^{(r)}:G_f]
 =[\kappa_{f,r}:k_r]
 =\frac{c_{\mathcal N}}{\gcd(c_{\mathcal N},r)}.
\]
In particular, $c_{\mathcal N}\mid r$ if and only if $A_f^{(r)}=G_f$.
If $\gcd(r,c_{\mathcal N})=1$, then
$\mathcal A_{\mathcal N}^{(r)}=\mathcal A_{\mathcal N}$.
\end{corollary}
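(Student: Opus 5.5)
The plan is to reduce to Theorem~\ref{thm:affine-monodromy} applied to the base-changed datum. Base change of an equivariant isogeny datum from $k$ to $k_r$ is again an equivariant isogeny datum over $k_r$. It has the same curves, the same $\Lambda$-actions, the same $\varphi$ and the same kernel $\mathcal N$, and the linear image $\Lambda_{\mathcal N}$ is unchanged. The only difference is that the $q^r$-power Frobenius replaces $\mathcal F_E$, so $\mathcal F_{\mathcal N}$ becomes $\mathcal F_{\mathcal N}^{\,r}$. Stability under $q^r$-Frobenius conjugation follows from stability under $q$-Frobenius by iterating $\vartheta$.

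First I check that $\Omega_{f,r}=\Omega_f k_r$ is the arithmetic Galois closure of $k_r(\mathbf x)/k_r(f(\mathbf x))$. Since $k_r/k$ is a constant extension, it is linearly disjoint from the function fields in the relevant sense. The compositum of a normal closure with $k_r$ is normal over $k_r(f(\mathbf x))$ and contains $k_r(\mathbf x)$. Minimality holds because the normal closure over $k_r$, composed back down and intersected, already contains all conjugates of $\mathbf x$ over $k(f(\mathbf x))$: the conjugates over $k(f(\mathbf x))$ coincide with those over $k_r(f(\mathbf x))$, since the minimal polynomial of $\mathbf x$ has the same roots. Hence $\Omega_{f,r}$ is generated over $k_r$ by these conjugates and is therefore the normal closure.

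Next I apply Theorem~\ref{thm:affine-monodromy} over $k_r$ to obtain
\[
A_f^{(r)}\cong\mathcal N\rtimes\langle\Lambda_{\mathcal N},\mathcal F_{\mathcal N}^{\,r}\rangle.
\]
The geometric part is $\overline{\Omega}_f$ and its sheet identification $Q\mapsto\tau_Q\Lambda$, neither of which depends on the ground field, so $G_f$ is canonically the same subgroup. The theorem also gives $[\kappa_{f,r}:k_r]=\operatorname{ord}(\overline{\mathcal F}_{\mathcal N}^{\,r})$ in the cyclic group $\mathcal C_{\mathcal N}$ of order $c_{\mathcal N}$, which equals $c_{\mathcal N}/\gcd(c_{\mathcal N},r)$. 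Over $k$ it gives $\kappa_f=\mathbb F_{q^{c_{\mathcal N}}}$. Since $\kappa_f k_r\subseteq\kappa_{f,r}$, and $[\kappa_f k_r:k_r]=\operatorname{lcm}(c_{\mathcal N},r)/r=c_{\mathcal N}/\gcd(c_{\mathcal N},r)$, the two degrees agree. This forces $\kappa_{f,r}=\kappa_f k_r=\mathbb F_{q^{\operatorname{lcm}(c_{\mathcal N},r)}}$.

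The remaining claims are formal. We have $A_f^{(r)}=G_f$ if and only if the index is $1$, which holds if and only if $c_{\mathcal N}\mid r$. If $\gcd(r,c_{\mathcal N})=1$, then $\overline{\mathcal F}_{\mathcal N}^{\,r}$ generates $\mathcal C_{\mathcal N}$, so $\mathcal F_{\mathcal N}$ lies in $\langle\Lambda_{\mathcal N},\mathcal F_{\mathcal N}^{\,r}\rangle$ and the two groups $\mathcal A_{\mathcal N}^{(r)}$ and $\mathcal A_{\mathcal N}$ coincide.

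I expect the main subtlety to be the first step: correctly identifying $\Omega_f k_r$ with the $k_r$-normal closure inside the fixed algebraic closure. A second point needing care is that Theorem~\ref{thm:affine-monodromy}, applied over $k_r$, uses this same field, so that the sheet identifications are compatible.
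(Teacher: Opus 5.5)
Your proposal is correct and follows essentially the same route as the paper: you identify $\Omega_f k_r$ as the splitting field over $k_r(f(\mathbf x))$ of the minimal polynomial of $\mathbf x$, which stays irreducible because $[k_r(\mathbf x):k_r(f(\mathbf x))]=\deg f$ (the precise content of your linear-disjointness remark); you apply Theorem~\ref{thm:affine-monodromy} over $k_r$ with Frobenius acting as $\mathcal F_{\mathcal N}^{\,r}$; and you force $\kappa_{f,r}=\kappa_f k_r$ from $\kappa_f k_r\subseteq\kappa_{f,r}$ and equality of degrees. Your final coprimality argument is the same as the paper's.
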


\begin{proof}
Write $f=A/B$ with coprime $A,B\in k[X]$ and set
$P_f(T):=A(T)-f(\mathbf x)B(T)$.  Up to multiplication by a nonzero element of $k(f(\mathbf x))$, this is a
minimal polynomial of $\mathbf x$ over $k(f(\mathbf x))$: it vanishes at
$\mathbf x$ and has degree $\deg f=[k(\mathbf x):k(f(\mathbf x))]$.
For every $r\geqslant1$ one likewise has
$[k_r(\mathbf x):k_r(f(\mathbf x))]=\deg f$, so the same polynomial remains irreducible, hence minimal, over
$k_r(f(\mathbf x))$.  Since $f$ is separable, $\Omega_f$ is the splitting
field of $P_f(T)$ over $k(f(\mathbf x))$; therefore the splitting field over
$k_r(f(\mathbf x))$ is exactly $\Omega_f k_r=\Omega_{f,r}$.  This proves the
asserted identity of arithmetic Galois closures.

Because $k_r\subseteq\bar k$, one has
$\Omega_{f,r}\bar k=\Omega_f\bar k$, so the geometric Galois closure and
its sheet action are unchanged after the constant extension.  Arithmetic
Frobenius over $k_r$ acts on $\mathcal N$ as
$\mathcal F_{\mathcal N}^{\,r}$.  Applying
Theorem~\ref{thm:affine-monodromy} over $k_r$ therefore gives
\[
 A_f^{(r)}\cong
 \mathcal N\rtimes
 \langle\Lambda_{\mathcal N},\mathcal F_{\mathcal N}^{\,r}\rangle
\]
and
\[
 A_f^{(r)}/G_f
 \cong
 \left\langle\overline{\mathcal F}_{\mathcal N}^{\,r}\right\rangle
 \leq\mathcal C_{\mathcal N}.
\]
Hence
\[
 [A_f^{(r)}:G_f]
 =\operatorname{ord}\!\left(\overline{\mathcal F}_{\mathcal N}^{\,r}\right)
 =\frac{c_{\mathcal N}}{\gcd(c_{\mathcal N},r)}.
\]
Theorem~\ref{thm:affine-monodromy} gives
$\kappa_f=\mathbb F_{q^{c_{\mathcal N}}}$.  The constant-field exact sequence
over $k_r$ identifies the preceding index with
$[\kappa_{f,r}:k_r]$.  Since $\kappa_f k_r\subseteq\kappa_{f,r}$ and
\[
 [\kappa_f k_r:k_r]
 =\frac{c_{\mathcal N}}{\gcd(c_{\mathcal N},r)},
\]
the two fields are equal; hence
\[
 \kappa_{f,r}=\kappa_f k_r
 =\mathbb F_{q^{\operatorname{lcm}(c_{\mathcal N},r)}}.
\]
Finally, if $\gcd(r,c_{\mathcal N})=1$, then
$\overline{\mathcal F}_{\mathcal N}^{\,r}$ generates
$\mathcal C_{\mathcal N}$.  Thus
$\mathcal A_{\mathcal N}^{(r)}$ contains $\Lambda_{\mathcal N}$ and maps
onto
$\mathcal A_{\mathcal N}/\Lambda_{\mathcal N}
=\mathcal C_{\mathcal N}$, so
$\mathcal A_{\mathcal N}^{(r)}=\mathcal A_{\mathcal N}$.
\end{proof}

\begin{corollary}[Extension degrees coprime to the constant-field degree]
Assume that $f$ is exceptional over $k$.  If
\(\gcd(r,c_{\mathcal N})=1,\)
then $f/k_r$ is exceptional and
\(f:\mathbf P^1(k_r)\longrightarrow\mathbf P^1(k_r)\)
is bijective.
\end{corollary}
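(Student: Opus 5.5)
The plan is to reduce both assertions to the finite-kernel criterion. Corollary~\ref{cor:extension-kernel} identifies permutation of $\mathbf P^1(k_r)$ with a condition on the finite module $\mathcal N$ together with the linear maps $\Lambda_{\mathcal N}$ and $\mathcal F_{\mathcal N}$. Corollary~\ref{cor:base-change-monodromy} shows that, when $\gcd(r,c_{\mathcal N})=1$, the group $\mathcal A_{\mathcal N}^{(r)}$ equals $\mathcal A_{\mathcal N}$. Moreover $\overline{\mathcal F}_{\mathcal N}^{\,r}$ generates $\mathcal C_{\mathcal N}$, so the permutation condition over $k_r$ and over $k$ should both be questions about the same cyclic group of cosets.

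\textbf{Step 1: the kernel condition is a coset condition.} By Corollary~\ref{cor:extension-kernel}, $f$ permutes $\mathbf P^1(k_s)$ if and only if no element of the coset $\mathcal F_{\mathcal N}^{\,s}\Lambda_{\mathcal N}$ fixes a nonzero point of $\mathcal N$. Indeed, the condition $\mathcal N\cap\ker(\mathcal F_E^s-u_{\gamma,E})=\{\mathrm O_E\}$ says precisely that $u_{\gamma,E}^{-1}\mathcal F_E^s$ has no nonzero fixed point on $\mathcal N$, and as $\gamma$ ranges over $\Lambda$ these elements range over the whole coset $\mathcal F_{\mathcal N}^{\,s}\Lambda_{\mathcal N}$. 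Thus
\[
 s\in\mathcal D_{\mathrm{perm}}(f)\iff\overline{\mathcal F}_{\mathcal N}^{\,s}\in\mathcal C_{\mathcal N}^{\mathrm{fpf}}.
\]

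\textbf{Step 2: transfer from exceptionality over $k$ to $k_r$.} Since $f$ is exceptional over $k$, it permutes $\mathbf P^1(k_s)$ for infinitely many $s$, so $\mathcal C_{\mathcal N}^{\mathrm{fpf}}\neq\varnothing$ by Step~1. I expect this to be the main obstacle. Knowing that some coset $\overline{\mathcal F}_{\mathcal N}^{\,s}$ is fixed-point-free does not by itself force $\overline{\mathcal F}_{\mathcal N}^{\,r}$ to be fixed-point-free for every $r$ prime to $c_{\mathcal N}$. The extra input needed is a power-closure property. Suppose $\xi=\overline{\mathcal F}_{\mathcal N}^{\,s}$ is fixed-point-free and $j$ is prime to $c_{\mathcal N}$. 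Then every element of $\xi^j$ should be an element of $\mathcal A_{\mathcal N}$ some power of which lies in $\xi$, and so it too fixes no nonzero point.

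To carry this out, take $x\in\mathcal F_{\mathcal N}^{\,sj}\Lambda_{\mathcal N}$ with $xD=D$ for some $D\neq0$. Choose $j'$ with $jj'\equiv1\pmod{c_{\mathcal N}}$. Then $x^{j'}$ lies in the coset $\overline{\mathcal F}_{\mathcal N}^{\,s}$ and still fixes $D$, which is a contradiction. So $\mathcal C_{\mathcal N}^{\mathrm{fpf}}$ is stable under the powers $j$ prime to $c_{\mathcal N}$.

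It remains to reach the generator. Since $f$ permutes $\mathbf P^1(k_s)$ for infinitely many $s$ and $\overline{\mathcal F}_{\mathcal N}^{\,s}$ depends only on $s$ modulo $c_{\mathcal N}$, some residue class of $s$ modulo $c_{\mathcal N}$ is fixed-point-free. I need a generator of $\mathcal C_{\mathcal N}$ in $\mathcal C_{\mathcal N}^{\mathrm{fpf}}$. If $\gcd(s,c_{\mathcal N})=d>1$, the power trick does not reach the generator, so the plan must also use exceptionality over $k$ in its intended sense. Exceptionality over $k$ should force $\overline{\mathcal F}_{\mathcal N}$ itself, that is $s=1$, to be fixed-point-free. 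For this I would invoke the Fried and Guralnick--Tucker--Zieve characterization cited in the Introduction: exceptionality means that the Frobenius coset $\overline{\mathcal F}_{\mathcal N}$ in $A_f/G_f$ has only elements fixing exactly one sheet. By Theorem~\ref{thm:affine-monodromy}, the sheets are $\mathcal N$ and the element $\tau_Q\alpha$ fixes a sheet iff $Q\in(1-\alpha)\mathcal N$. So "exactly one fixed sheet" for every $\tau_Q\alpha$ amounts to $1-\alpha$ being bijective on $\mathcal N$, that is, $\alpha$ having no nonzero fixed point. Hence $\overline{\mathcal F}_{\mathcal N}\in\mathcal C_{\mathcal N}^{\mathrm{fpf}}$.

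\textbf{Step 3: conclude.} Applying the power-closure argument of Step~2 with $j=r$ gives $\overline{\mathcal F}_{\mathcal N}^{\,r}\in\mathcal C_{\mathcal N}^{\mathrm{fpf}}$, so $f$ permutes $\mathbf P^1(k_r)$ by Step~1. For exceptionality over $k_r$, repeat the argument with $r$ replaced by $rs$ for the infinitely many $s\equiv1\pmod{c_{\mathcal N}}$. Each such $rs$ is still prime to $c_{\mathcal N}$, so $f$ permutes $\mathbf P^1(k_{rs})$ for all of them. Alternatively, Theorem~\ref{thm:intro-kernel} gives exceptionality over $k_r$ directly from permutation over $k_r$.
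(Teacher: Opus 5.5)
Your proof is correct, but it takes a different route from the paper's. The paper stays at the level of permutation groups. Corollary~\ref{cor:base-change-monodromy} gives $\mathcal A_{\mathcal N}^{(r)}=\mathcal A_{\mathcal N}$ when $\gcd(r,c_{\mathcal N})=1$, so the arithmetic and geometric monodromy groups, with their sheet action, are unchanged over $k_r$. Exceptionality therefore transfers from $k$ to $k_r$; this implicitly uses that exceptionality of the monodromy pair does not depend on which generator of the cyclic quotient is the Frobenius coset. Bijectivity on $\mathbf P^1(k_r)$ then follows from persistence of collisions.

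You work instead inside the finite-kernel criterion. You use the coset characterization of Step~1, and the exponent-inversion trick $x\mapsto x^{j'}$ with $jj'\equiv1\pmod{c_{\mathcal N}}$ to show that $\mathcal C_{\mathcal N}^{\mathrm{fpf}}$ is stable under powers prime to $c_{\mathcal N}$. This makes explicit the mechanism behind the paper's ``same groups, hence same exceptionality'' step. It also produces permutation over $k_r$, and over every $k_{rs}$ with $s\equiv1\pmod{c_{\mathcal N}}$, directly. The paper's version is shorter but leans on the standard group-theoretic theory of exceptional triples; yours is self-contained given Corollary~\ref{cor:extension-kernel}.

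The ``main obstacle'' you identify in Step~2 is not actually an obstacle. Your appeal to the Fried and Guralnick--Tucker--Zieve criterion is valid, given the sheet identification in Theorem~\ref{thm:affine-monodromy}, but it is unnecessary. Two elementary arguments suffice:
\begin{itemize}
\item Exceptionality over $k$ already forces $f$ to permute $\mathbf P^1(k)$ by the collision-persistence argument you use at the end. So $1\in\mathcal D_{\mathrm{perm}}(f)$, and Step~1 gives $\overline{\mathcal F}_{\mathcal N}\in\mathcal C_{\mathcal N}^{\mathrm{fpf}}$.
\item Alternatively, if some $x\in\mathcal F_{\mathcal N}\Lambda_{\mathcal N}$ fixed a nonzero $D$, then $x^s\in\mathcal F_{\mathcal N}^{\,s}\Lambda_{\mathcal N}$ would fix $D$ as well. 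Hence any fixed-point-free power $\overline{\mathcal F}_{\mathcal N}^{\,s}$, whatever $\gcd(s,c_{\mathcal N})$ is, forces $\overline{\mathcal F}_{\mathcal N}$ itself to be fixed-point-free.
\end{itemize}
With either simplification, your argument needs nothing beyond Corollary~\ref{cor:extension-kernel} and elementary group theory.
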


\begin{proof}
By Corollary~\ref{cor:base-change-monodromy},
$\mathcal A_{\mathcal N}^{(r)}=\mathcal A_{\mathcal N}$ when
$\gcd(r,c_{\mathcal N})=1$.  Hence, under the common identification of
the geometric sheets, the arithmetic and geometric permutation groups
governing exceptionality are the same over $k_r$ as over $k$.  Thus
exceptionality over $k$ implies exceptionality over $k_r$.

A collision between two distinct $k_r$-rational points would persist over
every finite extension of $k_r$, contradicting exceptionality.  Hence $f$
is injective, and therefore bijective, on $\mathbf P^1(k_r)$.
\end{proof}

\begin{proposition}[Decomposition criterion]
\label{prop:decomposition}
Equivalence classes of decompositions
\[
 f=f_s\circ\cdots\circ f_1
\]
over $k$ are in bijection with strict chains
\[
 \{0\}=\mathcal N_0<\mathcal N_1<\cdots<\mathcal N_s=\mathcal N
\]
of $\mathcal A_{\mathcal N}$-stable subgroups.  Under this correspondence,
\[
 \deg f_i=[\mathcal N_i:\mathcal N_{i-1}].
\]
In particular,
\[
 f\text{ is }k\text{-indecomposable}
 \Longleftrightarrow
 \mathcal N\text{ has no nonzero proper }
 \mathcal A_{\mathcal N}\text{-stable subgroup}.
\]
\end{proposition}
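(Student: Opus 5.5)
The plan is to translate decompositions over $k$ into subgroup chains of the arithmetic monodromy group, and then use the affine structure from Theorem~\ref{thm:affine-monodromy}. This is the content of \cite[Lemma~2.8]{KZ14}, made explicit here. First, by Lüroth's theorem and the Galois correspondence (as in the proof of Proposition~\ref{prop:arithmetic-intermediate}), equivalence classes of $k$-decompositions $f=f_s\circ\cdots\circ f_1$ correspond bijectively to strict chains of intermediate fields
\[
 k(f(\mathbf x))=M_s\subsetneq\cdots\subsetneq M_0=k(\mathbf x).
\]
Changing intermediate coordinates by elements of $\operatorname{PGL}_2(k)$ does not change the fields, so equivalence is exactly equality of the field chains. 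Passing to $\Omega_f$ over $k(f(\mathbf x))$, such chains correspond to strict chains of subgroups
\[
 \operatorname{Stab}=B_0<B_1<\cdots<B_s=A_f,
\]
where $\operatorname{Stab}$ is the stabilizer of the zero sheet in $A_f$. Under the identification $A_f\cong\mathcal N\rtimes\mathcal A_{\mathcal N}$, this stabilizer is $\mathcal A_{\mathcal N}$, and the indices are $[B_i:B_{i-1}]=\deg f_i$.

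Next I would classify the subgroups $B$ with $\mathcal A_{\mathcal N}\le B\le\mathcal N\rtimes\mathcal A_{\mathcal N}$. By Dedekind's modular law,
\[
 B=(B\cap\mathcal N)\rtimes\mathcal A_{\mathcal N}.
\]
Moreover, $W:=B\cap\mathcal N$ is a subgroup of $\mathcal N$ normalized by $\mathcal A_{\mathcal N}$. Conjugation by $\mathcal A_{\mathcal N}$ on the normal subgroup $\mathcal N$ is the linear action, so $W$ is $\mathcal A_{\mathcal N}$-stable. Conversely, every $\mathcal A_{\mathcal N}$-stable subgroup $W$ makes $W\mathcal A_{\mathcal N}$ a subgroup. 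The maps $B\mapsto B\cap\mathcal N$ and $W\mapsto W\mathcal A_{\mathcal N}$ are mutually inverse and inclusion-preserving, with
\[
 [W_i\mathcal A_{\mathcal N}:W_{i-1}\mathcal A_{\mathcal N}]=[W_i:W_{i-1}],
\]
since $W_i\cap\mathcal A_{\mathcal N}=1$. This gives the chain bijection and the degree formula. Indecomposability is then the case $s=1$ being the only chain.

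The step I expect to need the most care is justifying that decompositions over $k$ (rather than over $\bar k$) correspond to subgroups of the arithmetic group $A_f$ containing the point stabilizer. One must check that $\Omega_f$ is the Galois closure over $k(f(\mathbf x))$ and that fields between $k(f(\mathbf x))$ and $k(\mathbf x)$ correspond exactly to subgroups between $\operatorname{Stab}_{A_f}(\mathbf x)$ and $A_f$. This is standard Galois theory for the finite Galois extension $\Omega_f/k(f(\mathbf x))$, using separability of $f$.

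A second check is that under the sheet identification of Theorem~\ref{thm:affine-monodromy}, the stabilizer of the zero sheet in $A_f$ is precisely $\mathcal A_{\mathcal N}$. This holds because $\mathcal N$ acts regularly on the sheets and $\mathcal A_{\mathcal N}$ fixes the zero sheet, so the stabilizer has the right index $|\mathcal N|$.
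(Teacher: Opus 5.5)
Your proposal is correct and takes essentially the same route as the paper. Both proofs look at the subgroups $U$ with $\mathcal A_{\mathcal N}\le U\le A_f\cong\mathcal N\rtimes\mathcal A_{\mathcal N}$ and match them, via $U\mapsto U\cap\mathcal N$ and $J\mapsto J\mathcal A_{\mathcal N}$, with the $\mathcal A_{\mathcal N}$-stable subgroups of $\mathcal N$, so that the indices become $[\mathcal N_i:\mathcal N_{i-1}]$.

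The only difference is in the first step. You obtain the correspondence between $k$-decomposition classes and subgroup chains directly from L\"uroth's theorem and the Galois correspondence for $\Omega_f/k(f(\mathbf x))$. The paper instead cites Lemma~2.8 and Example~2.7 of Kre\v{s}o--Zieve for that step, and it also treats the trivial case $\mathcal N=0$ explicitly; your argument covers that case implicitly.
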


\begin{proof}
If $\mathcal N=0$, then $\deg f=1$: there is no $k$-decomposition with
$s\geqslant1$ and no strict chain of the displayed form with $s\geqslant1$,
while the final indecomposability criterion is immediate under our convention.
Assume $\mathcal N\ne0$.  Apply \cite[Lemma~2.8]{KZ14} to the arithmetic
monodromy action
\(A_f\cong\mathcal N\rtimes\mathcal A_{\mathcal N}.\)
For covers of $\mathbf P^1$, the decomposition equivalence in that lemma is
exactly the rational-function equivalence used here; compare
\cite[Example~2.7]{KZ14}.  The stabilizer of the sheet $0$ is
$\mathcal A_{\mathcal N}$, while $\mathcal N$ acts regularly on the sheets.
Thus an intermediate subgroup $U$ with
$\mathcal A_{\mathcal N}\leq U\leq A_f$ corresponds to
$J:=U\cap\mathcal N$, with $U=J\mathcal A_{\mathcal N}$.  Since
$\mathcal N\cap\mathcal A_{\mathcal N}=0$, the condition
$J\mathcal A_{\mathcal N}=\mathcal A_{\mathcal N}J$ is equivalent to
$\mathcal A_{\mathcal N}$-stability of $J$.  Reversing the resulting decreasing
chain gives the displayed increasing chain
$0=\mathcal N_0<\cdots<\mathcal N_s=\mathcal N$.  For the corresponding
intermediate groups $U_i=\mathcal N_i\rtimes\mathcal A_{\mathcal N}$,
\([U_i:U_{i-1}]=[\mathcal N_i:\mathcal N_{i-1}],\)
so \cite[Lemma~2.8]{KZ14} gives the degree formula.  The final criterion is
the case of a two-factor decomposition.
\end{proof}

\begin{remark}[Elliptic realization and composition factors]
\label{rem:decomposition-realization}
This correspondence also has an elliptic realization by equivariant quotient
isogenies.
If $J\leq\mathcal N$ is $\mathcal A_{\mathcal N}$-stable, then it is
stable under Frobenius, so it defines a reduced finite $k$-subgroup of $E$ and a
separable quotient isogeny
\(E\longrightarrow E/J.\)
The affine $\Lambda$-action descends because its linear parts preserve $J$.
The descended $\Lambda$-action on $E/J$ remains faithful.  Indeed, $\varphi$
factors as
\[
 E\longrightarrow E/J\xrightarrow{\bar\varphi}E',
\]
with $\bar\varphi$ surjective.  If $\gamma\in\Lambda$ acts trivially on
$E/J$, equivariance gives
\(\gamma_{E'}\circ\bar\varphi=\bar\varphi,\)
so $\gamma_{E'}=1$ by surjectivity.  The prescribed target action is
faithful, hence $\gamma=1$.

The image of the descended linear-part homomorphism is also nontrivial.
Otherwise
\((u_{\gamma,E}-1)(E)\subseteq J\)
for every $\gamma\in\Lambda$.  Since $E$ is connected and $J$ is finite,
this forces $u_{\gamma,E}=1$ for every $\gamma$, contrary to the defining
nontriviality of the source linear-part image.  Thus every chain in Proposition~\ref{prop:decomposition} may be
represented by a chain of equivariant quotient isogenies.  More precisely, for
$0=\mathcal N_0<\cdots<\mathcal N_s=\mathcal N$, the successive map
\(E/\mathcal N_{i-1}\to E/\mathcal N_i\)
is a separable $k$-isogeny with geometric kernel
\(\mathcal N_i/\mathcal N_{i-1}.\)
At the final stage $E/\mathcal N$ is $k$-isomorphic to $E'$ via the
isomorphism induced by $\varphi$.

Complete decompositions therefore correspond to composition series of the
finite $\mathbb Z[\mathcal A_{\mathcal N}]$-module $\mathcal N$.  The
multiset of simple composition factors is independent of the chosen composition
series.  Accordingly, the corresponding invariance in the factorization theorem of
Kre\v{s}o--Zieve follows here from the composition factors of the finite kernel;
we do not claim a new general factorization theorem.
\end{remark}

\begin{corollary}[Prime and prime-square factor degrees]
\label{cor:prime-prime-square-factor-degrees}
Every $k$-indecomposable factor of such an induced quotient map has degree $\ell$ or $\ell^2$ for some prime $\ell$.  If
$\ell=\operatorname{char}k$, only degree $\ell$ can occur.
\end{corollary}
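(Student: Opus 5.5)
The plan is to read the degrees off the decomposition correspondence of Proposition~\ref{prop:decomposition} and Remark~\ref{rem:decomposition-realization}, and then use the structure of simple modules over a finite group. Consider a $k$-indecomposable factor $f_i$ of a complete decomposition of $f$. It corresponds to a step $\mathcal N_{i-1}<\mathcal N_i$ in a composition series of $\mathcal N$ as a $\mathbb Z[\mathcal A_{\mathcal N}]$-module, and its degree is $[\mathcal N_i:\mathcal N_{i-1}]$. If instead the factor sits in an arbitrary decomposition, I would first refine that decomposition to a complete one containing it as a single step. Then $M:=\mathcal N_i/\mathcal N_{i-1}$ is a simple $\mathcal A_{\mathcal N}$-module, since any intermediate stable subgroup would refine the chain and contradict indecomposability of $f_i$.

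The main step is to bound $M$.
\begin{enumerate}
\item[(a)] $M$ is elementary abelian $\ell$-group for a single prime $\ell$. For a finite abelian group $M$, the Sylow subgroups and $\ell M$ are characteristic, hence $\mathcal A_{\mathcal N}$-stable. Simplicity therefore forces $M$ to be an $\ell$-group of exponent $\ell$, exactly as in the proof of Theorem~\ref{thm:intro-reconstruction}.
\item[(b)] Since $M$ is a subquotient of $\mathcal N=\ker\varphi(\bar k)$, it suffices to control the $\ell$-rank of subquotients of $\mathcal N$. A subgroup of $\mathcal N$ is a subgroup of $E(\bar k)$, so its $\ell$-torsion lies in $E[\ell](\bar k)$. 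This has rank at most $2$, and rank at most $1$ when $\ell=\operatorname{char}k$.
\item[(c)] To pass from $\mathcal N_i$ down to the quotient $M=\mathcal N_i/\mathcal N_{i-1}$, I would use the realization in Remark~\ref{rem:decomposition-realization}. The step is the kernel of the separable isogeny $E/\mathcal N_{i-1}\to E/\mathcal N_i$, so $M$ embeds in the geometric points of the elliptic curve $E/\mathcal N_{i-1}$. Being of exponent $\ell$, $M$ lies in $(E/\mathcal N_{i-1})[\ell](\bar k)$. This gives $M\cong C_\ell^e$ with $e\leqslant2$, and $e=1$ if $\ell=\operatorname{char}k$, since reduced $p$-torsion of an elliptic curve in characteristic $p$ has rank at most one.
\end{enumerate}

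The only delicate point is step (c): one must be sure that the intermediate quotient is again an elliptic curve with a separable step isogeny, so that torsion rank bounds apply to a quotient rather than a subgroup of $\mathcal N$. Remark~\ref{rem:decomposition-realization} supplies exactly this, with a reduced finite $k$-subgroup and a separable quotient isogeny. Concluding, $\deg f_i=\ell^e\in\{\ell,\ell^2\}$, with only $\ell$ possible when $\ell=\operatorname{char}k$.
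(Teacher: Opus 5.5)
Your proposal is correct and follows essentially the same route as the paper. A step of a complete decomposition is a simple composition factor $\mathcal N_i/\mathcal N_{i-1}$ of the $\mathcal A_{\mathcal N}$-module $\mathcal N$, and the characteristic-subgroup argument forces it to be $C_\ell^e$. Remark~\ref{rem:decomposition-realization} then identifies it with the geometric kernel of the separable isogeny $E/\mathcal N_{i-1}\to E/\mathcal N_i$, which gives $e\leqslant 2$, and $e=1$ when $\ell=\operatorname{char}k$; the degree formula of Proposition~\ref{prop:decomposition} finishes the proof. Your step (b) is superfluous, and refining an arbitrary decomposition to a complete one is a harmless extra precision that the paper leaves implicit.
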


\begin{proof}
Let
\(V_i=\mathcal N_i/\mathcal N_{i-1}\)
be a successive quotient in a composition series from
Proposition~\ref{prop:decomposition}.  It has no nonzero proper invariant
subgroup, hence, as a finite abelian group, it has no nontrivial proper characteristic
subgroup:
\(V_i\cong C_\ell^e\)
for a prime $\ell$.  Remark~\ref{rem:decomposition-realization} identifies $V_i$ with the
geometric kernel of a separable isogeny between elliptic curves.  Thus
$e\leqslant2$; in characteristic $\ell$, the reduced geometric $\ell$-torsion
has rank at most one, so $e=1$.  The degree formula in
Proposition~\ref{prop:decomposition} gives the result.
\end{proof}

\begin{remark}[Monodromy of the factors]
For $V_i=\mathcal N_i/\mathcal N_{i-1}$, let $\Lambda_i$ and
$\mathcal A_i$ be the induced images of $\Lambda_{\mathcal N}$ and
$\mathcal A_{\mathcal N}$ on $V_i$.  Applying
Theorem~\ref{thm:affine-monodromy} to the corresponding quotient
isogeny gives
\[
 G_{f_i}\cong V_i\rtimes\Lambda_i,
 \qquad
 A_{f_i}\cong V_i\rtimes\mathcal A_i.
\]
\end{remark}

Recall that a coset $\xi\in\mathcal C_{\mathcal N}$ belongs to
$\mathcal C_{\mathcal N}^{\mathrm{fpf}}$ exactly when
$\ker(a-1)=\{0\}$ for every $a\in\xi$; equivalently, every
element of $\xi$ acts without fixed points on
$\mathcal N\setminus\{0\}$.

\begin{lemma}[Characterization by Frobenius cosets]
\label{lem:frobenius-coset-characterization}
For every $r\geqslant1$,
\[
 r\in\mathcal D_{\mathrm{perm}}(f)
 \Longleftrightarrow
 \overline{\mathcal F}_{\mathcal N}^{\,r}
 \in\mathcal C_{\mathcal N}^{\mathrm{fpf}}.
\]
\end{lemma}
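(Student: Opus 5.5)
We reduce the statement to Corollary~\ref{cor:extension-kernel}, which says that $r\in\mathcal D_{\mathrm{perm}}(f)$ if and only if
\[
 \mathcal N\cap\ker(\mathcal F_E^r-u_{\gamma,E})=\{\mathrm O_E\}
 \qquad\text{for every }\gamma\in\Lambda .
\]
It then suffices to show that this family of conditions is exactly the statement that every element of the coset $\overline{\mathcal F}_{\mathcal N}^{\,r}=\mathcal F_{\mathcal N}^{\,r}\Lambda_{\mathcal N}$ acts without nonzero fixed points on $\mathcal N$.

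First rewrite each condition inside $\mathcal N$. The group $\mathcal N$ is stable under $\mathcal F_E$ because $\varphi$ is defined over $k$. It is stable under every $u_{\gamma,E}$ by the linear equivariance \eqref{eq:linear-equivariance}: if $\varphi(D)=0$, then $\varphi(u_{\gamma,E}D)=u_{\gamma,E'}\varphi(D)=0$. Hence, for $D\in\mathcal N$,
\[
 \mathcal F_E^r(D)=u_{\gamma,E}(D)
 \iff u_{\gamma,E}^{-1}\mathcal F_{\mathcal N}^{\,r}(D)=D .
\]
Here $u_{\gamma,E}^{-1}$ denotes the restriction to $\mathcal N$ of the automorphism $u_{\gamma^{-1},E}$. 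So the condition for $\gamma$ says precisely that the element $a_\gamma:=(u_{\gamma,E}|_{\mathcal N})^{-1}\mathcal F_{\mathcal N}^{\,r}$ of $\operatorname{Aut}(\mathcal N)$ satisfies $\ker(a_\gamma-1)=\{0\}$.

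Next, identify the set $\{a_\gamma:\gamma\in\Lambda\}$ with the coset. As $\gamma$ runs over $\Lambda$, the restriction $u_{\gamma,E}|_{\mathcal N}$, and hence its inverse, runs over all of $\Lambda_{\mathcal N}$. Therefore the elements $a_\gamma$ run over exactly the left coset $\Lambda_{\mathcal N}\mathcal F_{\mathcal N}^{\,r}$. Frobenius normalizes $\Lambda_{\mathcal N}$, as recorded before Theorem~\ref{thm:affine-monodromy}, since $\mathcal F_E u_{\gamma,E}\mathcal F_E^{-1}=u_{\vartheta(\gamma),E}$. Hence this left coset equals the right coset $\mathcal F_{\mathcal N}^{\,r}\Lambda_{\mathcal N}=\overline{\mathcal F}_{\mathcal N}^{\,r}$. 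So the family of kernel conditions over all $\gamma\in\Lambda$ is exactly the defining condition for $\overline{\mathcal F}_{\mathcal N}^{\,r}\in\mathcal C_{\mathcal N}^{\mathrm{fpf}}$.

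The only points needing care are two bookkeeping issues. One is that $\mathcal F_E^{-1}$ and the $u$'s restrict to genuine automorphisms of the finite group $\mathcal N$, so working in $\operatorname{Aut}(\mathcal N)$ is legitimate. The other is that using all $\gamma$, rather than representatives of the $\vartheta^r$-twisted classes, is allowed; Corollary~\ref{cor:extension-kernel} already asserts this. I expect no real obstacle beyond this translation. Stated in one line: the kernel condition for $\gamma$ is exactly the fixed-point-freeness of the corresponding element of the Frobenius coset.
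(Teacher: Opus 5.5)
Your proposal is correct and follows the paper's own proof essentially verbatim. Like the paper, you reduce via Corollary~\ref{cor:extension-kernel} to the kernel condition for every linear part, rewrite $\mathcal F_{\mathcal N}^{\,r}(D)=u(D)$ as ``$u^{-1}\mathcal F_{\mathcal N}^{\,r}$ fixes $D$,'' and identify $\Lambda_{\mathcal N}\mathcal F_{\mathcal N}^{\,r}$ with $\mathcal F_{\mathcal N}^{\,r}\Lambda_{\mathcal N}$ because Frobenius normalizes $\Lambda_{\mathcal N}$. Your extra check via \eqref{eq:linear-equivariance} that $\mathcal N$ is stable under the linear parts is a harmless detail that the paper leaves implicit in its definition of $\Lambda_{\mathcal N}$.
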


\begin{proof}
By Corollary~\ref{cor:extension-kernel},
$r\in\mathcal D_{\mathrm{perm}}(f)$ exactly when
$\ker(\mathcal F_{\mathcal N}^r-u)=\{0\}$ for every
$u\in\Lambda_{\mathcal N}$.  A nonzero $D\in\mathcal N$ satisfies
$\mathcal F_{\mathcal N}^r(D)=u(D)$ if and only if
$u^{-1}\mathcal F_{\mathcal N}^r$ fixes $D$.  As $u$ ranges over
$\Lambda_{\mathcal N}$, so does $u^{-1}$; hence the kernel condition is
exactly that no element of the coset
$\Lambda_{\mathcal N}\mathcal F_{\mathcal N}^r
 =\mathcal F_{\mathcal N}^r\Lambda_{\mathcal N}$ fixes a nonzero point of
$\mathcal N$.  This is precisely the defining condition for membership in
$\mathcal C_{\mathcal N}^{\mathrm{fpf}}$.
\end{proof}

\begin{proof}[Proof of Theorem~\ref{thm:intro-permutation-extension-criterion}]
Under the isomorphism
\[
 \mathbb Z/c_{\mathcal N}\mathbb Z\xrightarrow{\sim}
 \mathcal C_{\mathcal N},
 \qquad j\longmapsto\overline{\mathcal F}_{\mathcal N}^{\,j},
\]
Lemma~\ref{lem:frobenius-coset-characterization} identifies
$\mathcal D_{\mathrm{perm}}(f)$ with the positive integers whose residue
classes correspond to $\mathcal C_{\mathcal N}^{\mathrm{fpf}}$.  Hence the
set is purely periodic and has limiting proportion
$\lvert\mathcal C_{\mathcal N}^{\mathrm{fpf}}\rvert/c_{\mathcal N}$.
A positive integer $\nu$ is a period exactly when left translation by
$\overline{\mathcal F}_{\mathcal N}^{\,\nu}$ preserves
$\mathcal C_{\mathcal N}^{\mathrm{fpf}}$, equivalently when
$\overline{\mathcal F}_{\mathcal N}^{\,\nu}\in H_{\mathcal N}$.  Since
$\mathcal C_{\mathcal N}$ is cyclic and generated by
$\overline{\mathcal F}_{\mathcal N}$, the least such positive $\nu$ is
$[\mathcal C_{\mathcal N}:H_{\mathcal N}]$.  The periodic set is infinite
exactly when $\mathcal C_{\mathcal N}^{\mathrm{fpf}}$ is nonempty.  If
$\mathcal N\ne0$, the identity element of $\mathcal A_{\mathcal N}$ fixes
all nonzero points of $\mathcal N$, so the identity coset does not belong to
$\mathcal C_{\mathcal N}^{\mathrm{fpf}}$.  Thus
$c_{\mathcal N}\mid r$ implies $r\notin\mathcal D_{\mathrm{perm}}(f)$.
\end{proof}

\begin{corollary}[Permutation--exceptionality equivalence]
\label{cor:permutation-exceptional}
For every $r\geqslant1$,
\[
 f\text{ permutes }\mathbf P^1(k_r)
 \Longleftrightarrow
 f\text{ is exceptional over }k_r.
\]
In particular, the following are equivalent:
\begin{enumerate}
\item[\textup{(i)}] $f$ permutes $\mathbf P^1(k)$;
\item[\textup{(ii)}] $f$ permutes $\mathbf P^1(k_r)$ for at least one $r\geqslant1$;
\item[\textup{(iii)}] $f$ is exceptional over $k$.
\end{enumerate}
\end{corollary}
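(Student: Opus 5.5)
The plan is to reduce everything to the finite cyclic group $\mathcal C_{\mathcal N}$ using Lemma~\ref{lem:frobenius-coset-characterization}, which says that $r\in\mathcal D_{\mathrm{perm}}(f)$ if and only if $\overline{\mathcal F}_{\mathcal N}^{\,r}\in\mathcal C_{\mathcal N}^{\mathrm{fpf}}$. By the definition recalled in the Introduction, $f$ is exceptional over $k_r$ if and only if the set $\{s\geqslant1: rs\in\mathcal D_{\mathrm{perm}}(f)\}$ is infinite. So both sides of the claimed equivalence become statements about the powers $\overline{\mathcal F}_{\mathcal N}^{\,rs}$ in a cyclic group of order $c_{\mathcal N}$.

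\emph{Direction (permutation $\Rightarrow$ exceptional).} Suppose $r\in\mathcal D_{\mathrm{perm}}(f)$. Take $s\equiv1\pmod{c_{\mathcal N}}$. Then $\overline{\mathcal F}_{\mathcal N}^{\,rs}=\overline{\mathcal F}_{\mathcal N}^{\,r}$, which lies in $\mathcal C_{\mathcal N}^{\mathrm{fpf}}$. Hence $rs\in\mathcal D_{\mathrm{perm}}(f)$ for infinitely many $s$, and $f$ is exceptional over $k_r$. Equivalently, this is just pure periodicity modulo $c_{\mathcal N}$ from Theorem~\ref{thm:intro-permutation-extension-criterion}.

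\emph{Direction (exceptional $\Rightarrow$ permutation).} This is the step I expect to need the most care, though it stays elementary. If infinitely many $rs$ lie in $\mathcal D_{\mathrm{perm}}(f)$, pick one such $s$. Then $\overline{\mathcal F}_{\mathcal N}^{\,rs}$ is fixed-point-free: no element of the coset $\mathcal F_{\mathcal N}^{rs}\Lambda_{\mathcal N}$ fixes a nonzero point of $\mathcal N$. We must show the same for the coset of $\mathcal F_{\mathcal N}^{r}$.

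Suppose instead that $a=\mathcal F_{\mathcal N}^r u$, with $u\in\Lambda_{\mathcal N}$, fixes some $D\ne0$. Then $a^s$ also fixes $D$. Since $\mathcal F_{\mathcal N}$ normalizes $\Lambda_{\mathcal N}$, we have $a^s\in\mathcal F_{\mathcal N}^{rs}\Lambda_{\mathcal N}$, which contradicts fixed-point-freeness. Hence $\overline{\mathcal F}_{\mathcal N}^{\,r}\in\mathcal C_{\mathcal N}^{\mathrm{fpf}}$, and $f$ permutes $\mathbf P^1(k_r)$. The only point to check here is that the coset of $a^s$ is $\overline{\mathcal F}_{\mathcal N}^{\,rs}$, which holds because the quotient map $\mathcal A_{\mathcal N}\to\mathcal C_{\mathcal N}$ is a homomorphism.

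This argument in fact shows more: membership in $\mathcal C_{\mathcal N}^{\mathrm{fpf}}$ passes from $\xi^s$ down to $\xi$. So a single $s$ with $rs\in\mathcal D_{\mathrm{perm}}(f)$ already suffices.

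\emph{The enumerated equivalences.} The implication (iii)$\Rightarrow$(i) is the case $r=1$ of the equivalence just proved, and (i)$\Rightarrow$(ii) is trivial. For (ii)$\Rightarrow$(iii), suppose $f$ permutes $\mathbf P^1(k_r)$ for some $r$. The descent observation with $r$ replaced by $1$ and $s$ replaced by $r$ gives $\overline{\mathcal F}_{\mathcal N}\in\mathcal C_{\mathcal N}^{\mathrm{fpf}}$, so $1\in\mathcal D_{\mathrm{perm}}(f)$ and $f$ permutes $\mathbf P^1(k)$. The first part of the corollary then gives exceptionality over $k$.
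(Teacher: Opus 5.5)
Your proof is correct. The forward direction matches the paper's: both rest on pure periodicity of $\mathcal D_{\mathrm{perm}}(f)$ modulo $c_{\mathcal N}$. The paper phrases this by reapplying Theorem~\ref{thm:intro-permutation-extension-criterion} to the datum base-changed to $k_r$, with Frobenius $\mathcal F_E^r$. You instead stay in the original coset group and choose $s\equiv1\pmod{c_{\mathcal N}}$.

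The converse is where your route genuinely differs.
\begin{itemize}
\item The paper never returns to $\mathcal C_{\mathcal N}$ there. It uses the elementary fact that two distinct $k_r$-points with the same image remain a collision over every $k_{rs}$. So exceptionality over $k_r$ forces injectivity, hence bijectivity, on $\mathbf P^1(k_r)$.
\item For (ii)$\Rightarrow$(iii), the paper simply notes that a nonempty $\mathcal D_{\mathrm{perm}}(f)$ means $\mathcal C_{\mathcal N}^{\mathrm{fpf}}\neq\varnothing$. The periodic set is then infinite.
\end{itemize}

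Your powering argument is the monodromy-group shadow of that collision argument. If $a\in\overline{\mathcal F}_{\mathcal N}^{\,r}$ fixes $D\neq0$, then $a^s\in\overline{\mathcal F}_{\mathcal N}^{\,rs}$ fixes $D$. This is valid because $\Lambda_{\mathcal N}\triangleleft\mathcal A_{\mathcal N}$.

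The paper's version is shorter and holds for every rational function, independently of the kernel criterion. Yours keeps the whole proof inside the finite group $\mathcal A_{\mathcal N}$. It also isolates the structural fact that $\xi^s\in\mathcal C_{\mathcal N}^{\mathrm{fpf}}$ implies $\xi\in\mathcal C_{\mathcal N}^{\mathrm{fpf}}$.

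Your remark that a single $s$ suffices is not an extra gain over the paper, though. The collision argument already gets injectivity on $\mathbf P^1(k_r)$ from one extension on which $f$ permutes. Likewise (ii)$\Rightarrow$(i) follows at once because $f$ maps $\mathbf P^1(k)\subseteq\mathbf P^1(k_r)$ into itself injectively.
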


\begin{proof}
After base change to $k_r$, the same equivariant-isogeny datum has Frobenius
$\mathcal F_E^r$, so Theorem~\ref{thm:intro-permutation-extension-criterion}
applies over $k_r$.  If $f$ permutes $\mathbf P^1(k_r)$, then degree $1$
belongs to the corresponding permutation extension-degree set; the theorem
therefore makes that set infinite, so $f/k_r$ is exceptional.  Conversely,
a collision between two $k_r$-rational points persists over every finite
extension of $k_r$, so exceptionality forces injectivity, hence bijectivity,
on $\mathbf P^1(k_r)$.  Finally, if $f$ permutes over some $k_r$, then
$\mathcal D_{\mathrm{perm}}(f)$ is nonempty, so Theorem~\ref{thm:intro-permutation-extension-criterion}
makes $f$ exceptional over $k$; the case $r=1$ gives the converse implication
needed for the final three-way equivalence.
\end{proof}

Together with Corollary~\ref{cor:extension-kernel}, this proves
Theorem~\ref{thm:intro-kernel}.  Combined with
Theorem~\ref{thm:intro-reconstruction}, it also proves
Corollary~\ref{cor:intro-exhaustive}.

\begin{proof}[Proof of Theorem~\ref{thm:intro-genus-at-most-one-permutation-exceptionality}]
Fix $r\geqslant1$ and put $K=k_r$.  The field $\bar k$ is also an
algebraic closure of $K$, and extending the constants does not change the
genus of the Galois closure.  Suppose first that $f$ permutes
$\mathbf P^1(K)$, and choose a complete $K$-decomposition
$f=f_s\circ\cdots\circ f_1$, with each $f_i$ $K$-indecomposable.  Since $f$
is separable, every $f_i$ is separable, and bijectivity of the composite on the
finite set $\mathbf P^1(K)$ forces every $f_i$ to be bijective there.

Use the fixed geometric Galois closure
$\overline{\Omega}_f/\bar k(f(\mathbf x))$, and put
$C:=(C_f)_{\bar k}$.  Set
\[
 \begin{aligned}
 \mathbf x_0&:=\mathbf x,
 &\qquad \mathbf x_i&:=f_i(\mathbf x_{i-1})\quad(1\leqslant i\leqslant s),\\
 J_i&:=\operatorname{Gal}(\overline{\Omega}_f/\bar k(\mathbf x_i)),\\
 \mathcal K_i&:=\bigcap_{a\in J_i}aJ_{i-1}a^{-1}.
 \end{aligned}
\]
Then $J_0<J_1<\cdots<J_s$.  For each $i$, $\mathcal K_i$ is
the largest normal subgroup of $J_i$ contained in $J_{i-1}$.  Indeed,
$\overline{\Omega}_f/\bar k(\mathbf x_i)$ is Galois with group $J_i$ and
$\bar k(\mathbf x_{i-1})=\overline{\Omega}_f^{J_{i-1}}$, so the normal
closure of $\bar k(\mathbf x_{i-1})/\bar k(\mathbf x_i)$ inside
$\overline{\Omega}_f$ is $\overline{\Omega}_f^{\mathcal K_i}$.  Thus,
under the coordinate identification $X\mapsto\mathbf x_{i-1}$, this fixed
field realizes the geometric Galois closure of the adjacent factor $f_i$, with
Galois-closure curve $C/\mathcal K_i$.  Riemann--Hurwitz therefore gives genus
at most one.  If that genus is zero, the
genus-zero permutation--exceptionality theorem
\cite[Theorem~9.8]{FanG0} makes $f_i/K$ exceptional; if it is one,
Corollary~\ref{cor:intro-exhaustive} does the same.

Each $f_i$ is separable, so over the finite field $K$ arithmetic
exceptionality is equivalent to exceptionality of its monodromy triple
\cite[\S2.8]{GMS03}.  Hence the faithful monodromy triple of every $f_i$ is
exceptional.  For a two-step composition, choose an arithmetic Galois closure
and write
\[
 M<U<A,
 \qquad
 G\triangleleft A,
\]
for the point stabilizer, the intermediate subgroup, and the arithmetic and
geometric groups.  Geometric monodromy is transitive on the sheets, so
$A=GM$; moreover $A/G$ is cyclic over the finite field $K$.  Thus the
hypotheses of \cite[Lemma~3.5]{GMS03} apply.  Its two induced coset actions are
\[
 (A,G,A/U)
 \qquad\text{and}\qquad
 (U,G\cap U,U/M).
\]
Their permutation kernels are respectively
$\operatorname{core}_A(U)$ and $\operatorname{core}_U(M)$; after quotienting
by these kernels, the arithmetic and geometric images are precisely the
monodromy triples of the outer and inner factors.  Exceptionality depends only
on these permutation images, so both induced triples are exceptional.
Lemma~3.5 therefore makes the two-step composite triple exceptional, and
\cite[\S2.8]{GMS03} converts this back to exceptionality of the composite
over $K$.  Iterating the two-step argument proves that the full composite
$f/K$ is exceptional.

Conversely, if $f/K$ is exceptional, it permutes $\mathbf P^1$ over infinitely
many finite extensions of $K$.  A collision between two distinct $K$-rational
points would persist over every such extension, so no collision exists.  Hence
$f$ is injective, and therefore bijective, on the finite set
$\mathbf P^1(K)$.  This proves the theorem.
\end{proof}

\begin{proof}[Proof of Theorem~\ref{thm:intro-genus-at-most-one-decomposition-extension-degrees}]
Let $f=f_s\circ\cdots\circ f_1$ be a $k$-decomposition.  Since $f$ is
separable, every factor $f_i$ is separable.  Use the fixed geometric Galois
closure $\overline{\Omega}_f/\bar k(f(\mathbf x))$ and put
$C:=(C_f)_{\bar k}$.  Set
\[
 \begin{aligned}
 \mathbf x_0&:=\mathbf x,
 &\qquad \mathbf x_i&:=f_i(\mathbf x_{i-1})\quad(1\leqslant i\leqslant s),\\
 J_i&:=\operatorname{Gal}(\overline{\Omega}_f/\bar k(\mathbf x_i)),\\
 \mathcal K_i&:=\bigcap_{a\in J_i}aJ_{i-1}a^{-1}.
 \end{aligned}
\]
Then $J_0<J_1<\cdots<J_s$.  For each $i$, $\mathcal K_i$ is
the largest normal subgroup of $J_i$ contained in $J_{i-1}$.  As above,
$\overline{\Omega}_f/\bar k(\mathbf x_i)$ is Galois with group $J_i$ and
$\bar k(\mathbf x_{i-1})=\overline{\Omega}_f^{J_{i-1}}$, so
$\overline{\Omega}_f^{\mathcal K_i}$ is the normal closure of
$\bar k(\mathbf x_{i-1})/\bar k(\mathbf x_i)$ inside
$\overline{\Omega}_f$.  Under the coordinate identification
$X\mapsto\mathbf x_{i-1}$, it therefore realizes the geometric Galois
closure of $f_i$, with curve $C/\mathcal K_i$.  Riemann--Hurwitz shows that
the Galois closure of every $f_i$ has genus at most one.

For each $r\geqslant1$, a composite of self-maps of the finite set
$\mathbf P^1(k_r)$ is bijective if and only if every factor is bijective.  Hence
\[
 \mathcal D_{\mathrm{perm}}(f)
 =\bigcap_{i=1}^s\mathcal D_{\mathrm{perm}}(f_i).
\]
Applying
Theorem~\ref{thm:intro-genus-at-most-one-permutation-exceptionality} to $f$ and
to every $f_i$ identifies each permutation extension-degree set with the
corresponding exceptional extension-degree set, so the preceding equality
gives the displayed formula for $\mathcal D(f)$.  The final assertion follows
immediately.
\end{proof}

\begin{proposition}[Rank-one kernels]
Assume $\mathcal N\cong C_\ell$, where $\ell$ is prime, and write
$\mathcal F_{\mathcal N}(P)=\lambda P$ with
$\lambda\in\mathbb F_\ell^\times$.  Then
\begin{enumerate}
\item[(1)] $\mathcal D(f)
=\{r\geqslant1:c_{\mathcal N}\nmid r\}$, where
\[
 c_{\mathcal N}
 =\operatorname{ord}_{\mathbb F_\ell^\times/\Lambda_{\mathcal N}}
 (\lambda\Lambda_{\mathcal N});
\]
\item[(2)] $f$ is exceptional if and only if $c_{\mathcal N}>1$;
\item[(3)] if $f$ is exceptional, then
$\operatorname{per}(f)=c_{\mathcal N}$ and
$\delta_f=1-1/c_{\mathcal N}$.
\end{enumerate}
If, in addition, $\Lambda_{\mathcal N}=\langle\rho\rangle
\leq\mathbb F_\ell^\times$ has order $m$, then
\[
 c_{\mathcal N}
 =\operatorname{ord}_{\mathbb F_\ell^\times}(\lambda^m),
\]
and consequently
\[
 \mathcal D(f)
 =\{r\geqslant1:
 \operatorname{ord}(\lambda^m)\nmid r\}.
\]
\end{proposition}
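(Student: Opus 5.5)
The plan is to specialize Theorem~\ref{thm:intro-permutation-extension-criterion} to a cyclic kernel of prime order. When $\mathcal N\cong C_\ell$ we have $\operatorname{Aut}(\mathcal N)=\mathbb F_\ell^\times$, which is abelian and cyclic. Both $\Lambda_{\mathcal N}$ and $\mathcal F_{\mathcal N}=\lambda$ lie in this group. Hence $\mathcal A_{\mathcal N}=\langle\Lambda_{\mathcal N},\lambda\rangle\leq\mathbb F_\ell^\times$, and $\mathcal C_{\mathcal N}=\mathcal A_{\mathcal N}/\Lambda_{\mathcal N}$ is the cyclic subgroup of $\mathbb F_\ell^\times/\Lambda_{\mathcal N}$ generated by $\lambda\Lambda_{\mathcal N}$. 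This gives the displayed formula for $c_{\mathcal N}$ directly from its definition as $\min\{\nu:\lambda^\nu\in\Lambda_{\mathcal N}\}$.

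The key step is to identify $\mathcal C_{\mathcal N}^{\mathrm{fpf}}$. A scalar $a\in\mathbb F_\ell^\times$ acting on the one-dimensional space $\mathcal N$ fixes a nonzero point if and only if $a=1$. So a coset $\xi=\lambda^j\Lambda_{\mathcal N}$ is fixed-point-free exactly when $1\notin\xi$, that is, when $\xi$ is not the identity coset. Hence $\mathcal C_{\mathcal N}^{\mathrm{fpf}}=\mathcal C_{\mathcal N}\setminus\{1\}$. Lemma~\ref{lem:frobenius-coset-characterization} then gives
\[
\mathcal D_{\mathrm{perm}}(f)=\{r:\lambda^r\notin\Lambda_{\mathcal N}\}=\{r:c_{\mathcal N}\nmid r\}.
\]
Corollary~\ref{cor:permutation-exceptionality} identifies this set with $\mathcal D_{\mathrm{exc}}(f)$, so it equals $\mathcal D(f)$, which proves (1).

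For (2), the set in (1) is nonempty if and only if $c_{\mathcal N}>1$. Theorem~\ref{thm:intro-permutation-extension-criterion} then gives exceptionality over $k$ if and only if $\mathcal C_{\mathcal N}^{\mathrm{fpf}}\neq\varnothing$, which holds if and only if $c_{\mathcal N}>1$.

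For (3), assume $c_{\mathcal N}>1$. The limiting proportion is $\delta_f=(c_{\mathcal N}-1)/c_{\mathcal N}$. For the least period, I compute the stabilizer $H_{\mathcal N}$ of the complement of the identity in a cyclic group. If $\xi\neq1$, then $\xi$ times $\xi^{-1}$ is the identity, so left translation by $\xi$ does not preserve the complement; hence $H_{\mathcal N}=1$. This gives $\operatorname{per}(f)=c_{\mathcal N}$. In the nonexceptional case the formulas degenerate consistently, since $c_{\mathcal N}=1$.

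For the final assertion, suppose $\Lambda_{\mathcal N}=\langle\rho\rangle$ has order $m$. Since $\mathbb F_\ell^\times$ is cyclic, its unique subgroup of order $m$ is the set of $m$-th roots of unity, which is the kernel of $x\mapsto x^m$. Therefore $\lambda^\nu\in\Lambda_{\mathcal N}$ if and only if $\lambda^{\nu m}=1$, which holds if and only if $\operatorname{ord}(\lambda^m)\mid\nu$. This gives $c_{\mathcal N}=\operatorname{ord}(\lambda^m)$, and substituting into (1) yields the displayed description of $\mathcal D(f)$.

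The only mildly delicate point is this use of the uniqueness of the order-$m$ subgroup of the cyclic group $\mathbb F_\ell^\times$; the remaining steps are bookkeeping on top of Theorem~\ref{thm:intro-permutation-extension-criterion}.
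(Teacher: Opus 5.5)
Your proposal is correct and follows the paper's own argument. You identify $\operatorname{Aut}(\mathcal N)$ with the scalars $\mathbb F_\ell^\times$, note that a scalar fixes a nonzero point only when it equals $1$ (so $\mathcal C_{\mathcal N}^{\mathrm{fpf}}$ is the complement of the identity coset), and get $c_{\mathcal N}=\operatorname{ord}(\lambda^m)$ because $\langle\rho\rangle$ is the group of $m$th roots of unity; your explicit check that $H_{\mathcal N}$ is trivial simply spells out the period computation the paper leaves implicit.
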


\begin{proof}
Because the isogeny is separable, $\mathcal N$ is a reduced geometric
kernel; this includes the possible rank-one case in characteristic $\ell$.
Identify the abstract group $\mathcal N$ with the one-dimensional
$\mathbb F_\ell$-space $\mathbb F_\ell$; every automorphism then acts by a
scalar.  A scalar fixes no nonzero point exactly when it is not $1$.
Thus the Frobenius coset belongs to $\mathcal C_{\mathcal N}^{\mathrm{fpf}}$ exactly when
$\lambda^r\notin\Lambda_{\mathcal N}$, which proves the first three
assertions.  In the cyclic case, $\langle\rho\rangle$ is the subgroup of
$m$th roots of unity in $\mathbb F_\ell^\times$.  Hence the order of the
coset $\lambda\langle\rho\rangle$ is exactly the order of $\lambda^m$,
which gives the final formulas.
\end{proof}

\begin{corollary}[Finite determinant test]
\label{cor:finite-determinant-test}
Assume $\varphi=[n]$ with $\gcd(n,q)=1$.  For each integer $j$ with
$0\leqslant j<c_{\mathcal N}$, put
\[
 D_j
 :=\prod_{u\in\Lambda_{\mathcal N}}
 \det\!\left(
 \mathcal F_{\mathcal N}^{\,j}-u
 \mid E[n]
 \right).
\]
Then $r\in\mathcal D(f)$ if and only if
$D_j\in(\mathbb Z/n\mathbb Z)^\times$, where
$j\in\{0,\ldots,c_{\mathcal N}-1\}$ is the least nonnegative residue of
$r$ modulo $c_{\mathcal N}$.
\end{corollary}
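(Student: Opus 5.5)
The plan is to combine the multiplication-map corollary of Section~\ref{sec:isogenies} with Corollary~\ref{cor:extension-kernel} and the periodicity in Theorem~\ref{thm:intro-permutation-extension-criterion}.

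\emph{Reduction to one residue.}  First, $\mathcal D(f)=\mathcal D_{\mathrm{perm}}(f)$ by Corollary~\ref{cor:permutation-exceptional}.  By Corollary~\ref{cor:extension-kernel}, $r\in\mathcal D_{\mathrm{perm}}(f)$ exactly when $\ker(\mathcal F_{\mathcal N}^{\,r}-u)=\{0\}$ for every $u\in\Lambda_{\mathcal N}$.  Here I use that $\mathcal F_E^r-u_{\gamma,E}$ restricted to $\mathcal N$ depends only on the restrictions $\mathcal F_{\mathcal N}^{\,r}$ and $u_{\gamma,E}|_{\mathcal N}$, and the latter run over $\Lambda_{\mathcal N}$.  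By Lemma~\ref{lem:frobenius-coset-characterization}, this condition depends only on the coset $\overline{\mathcal F}_{\mathcal N}^{\,r}$.  Write $r=ac_{\mathcal N}+j$ with $0\leqslant j<c_{\mathcal N}$.  Since $\mathcal F_{\mathcal N}^{\,c_{\mathcal N}}\in\Lambda_{\mathcal N}$ and $\Lambda_{\mathcal N}$ is normalized by Frobenius, the cosets of $r$ and $j$ agree.  Hence $r\in\mathcal D(f)$ if and only if $\mathcal F_{\mathcal N}^{\,j}-u$ has trivial kernel on $\mathcal N$ for all $u\in\Lambda_{\mathcal N}$.  For $j=0$ this reads $\ker(1-u)=0$ for all $u$, which is consistent with the fpf description in which $u=1$ fails.

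\emph{Translation into determinants.}  For $\varphi=[n]$ with $\gcd(n,q)=1$, we have $\mathcal N=E[n]$, which is free of rank two over $\mathbb Z/n\mathbb Z$.  Each $\mathcal F_{\mathcal N}^{\,j}-u$ is a $\mathbb Z/n\mathbb Z$-linear endomorphism of this module.  On a finite module, trivial kernel is equivalent to bijectivity.  For a free module, bijectivity is equivalent to the determinant being a unit, by the adjugate formula.  A finite product in $\mathbb Z/n\mathbb Z$ is a unit if and only if every factor is a unit.  Therefore the condition is equivalent to $D_j\in(\mathbb Z/n\mathbb Z)^\times$.

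\emph{Main obstacle.}  The only delicate point is the residue reduction: I must check that replacing $r$ by $j$ preserves the \emph{whole} family of conditions over $u\in\Lambda_{\mathcal N}$, not just a single condition.  The coset argument handles this, since $\mathcal F_{\mathcal N}^{\,r}\Lambda_{\mathcal N}=\mathcal F_{\mathcal N}^{\,j}\Lambda_{\mathcal N}$.  Alternatively, one can write $\mathcal F_{\mathcal N}^{\,r}=\mathcal F_{\mathcal N}^{\,j}w$ with $w\in\Lambda_{\mathcal N}$ and use that $\ker(\mathcal F^{\,j}w-u)=w^{-1}\ker(\mathcal F^{\,j}-uw^{-1})$, where $uw^{-1}$ again ranges over $\Lambda_{\mathcal N}$.
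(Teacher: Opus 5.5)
Your proposal is correct and takes essentially the same route as the paper. You reduce to the coset $\mathcal F_{\mathcal N}^{\,j}\Lambda_{\mathcal N}$ via Lemma~\ref{lem:frobenius-coset-characterization}, then turn invertibility on the free $\mathbb Z/n\mathbb Z$-module $E[n]$ into unit determinants; you also spell out two steps the paper's one-line proof leaves implicit, namely the residue reduction $r\mapsto j$ and the identification $\mathcal D(f)=\mathcal D_{\mathrm{perm}}(f)$.
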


\begin{proof}
A coset belongs to $\mathcal C_{\mathcal N}^{\mathrm{fpf}}$ exactly when every displayed endomorphism is invertible
on $E[n]$, equivalently when all determinants are units.
\end{proof}

\begin{remark}[Finite procedure]
\label{rem:finite-procedure}
Theorem~\ref{thm:intro-permutation-extension-criterion} yields an explicit finite procedure for a
prescribed equivariant isogeny datum over $k$.  Put
\(L=\Lambda_{\mathcal N}, \qquad F=\mathcal F_{\mathcal N}, \qquad c=\min\{\nu\geqslant1:F^\nu\in L\}.\)
For each integer $j$ with $0\leqslant j<c$, test whether
$\ker(F^j-u)=\{0\}$ for every $u\in L$, and let $A\subseteq\mathbb Z/c\mathbb Z$ be the set of residue classes
represented by those $j$.  Then
\[
 \mathcal D(f)
 =\{r\geqslant1:r\bmod c\in A\},
 \qquad
 \delta_f=\frac{\lvert A\rvert}{c},
\]
while $\operatorname{per}(f)$ is the least positive integer $\nu$ such that
\(A+\nu=A\)
in $\mathbb Z/c\mathbb Z$.  In the multiplication case,
Corollary~\ref{cor:finite-determinant-test} replaces these kernel tests by the
determinant unit tests $D_j\in(\mathbb Z/n\mathbb Z)^\times$.  Thus the procedure decides the extension-degree arithmetic for a prescribed
equivariant isogeny datum; finding suitable kernels or isogenies is a separate
algorithmic problem.
\end{remark}

\section{Cyclic symmetries and explicit extension-degree criteria}
\label{sec:cyclic}

This section applies the finite-kernel criterion to cyclic linear symmetries,
first deriving ground-field criteria and linear models and then giving explicit
rank-two formulas.

\subsection{Prime-to-characteristic cyclic Frobenius}

\begin{lemma}[Prime-to-characteristic cyclic Frobenius]
\label{lem:cyclic-frobenius-prime-to-p}
Let $L=\langle\beta\rangle\leq\operatorname{Aut}_{\bar k}(E,\mathrm O_E)$
be a cyclic group of order $m$, stable under Frobenius conjugation, with
$\operatorname{char}k\nmid m$.  Then
\begin{equation}
\label{eq:cyclotomic}
 \mathcal F_E\beta\mathcal F_E^{-1}=\beta^q.
\end{equation}
Equivalently, the induced Frobenius automorphism of $L$ is
$\vartheta_L(\beta)=\beta^q$.
\end{lemma}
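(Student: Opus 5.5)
Since $L$ is stable under Frobenius conjugation, $\mathcal F_E\beta\mathcal F_E^{-1}$ is again a generator of $L$, so it equals $\beta^a$ for some $a$ prime to $m$. We must show $a\equiv q\pmod m$. The plan is to compare the actions of both sides on the invariant differential of $E$: this character detects elements of $L$ exactly, and Frobenius acts on it through the $q$-power map on $\bar k$.

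First, let $\omega$ be an invariant differential on $E$ defined over $k$. Every $u\in\operatorname{Aut}_{\bar k}(E,\mathrm O_E)$ satisfies $u^*\omega=\chi(u)\omega$ for some $\chi(u)\in\bar k^\times$, and $\chi$ is a homomorphism. The key claim is that $\chi$ is injective on $L$. Its kernel consists of automorphisms with zero differential of $u-1$, i.e. with $u-1$ inseparable. If $\beta^j\ne1$ lies in the kernel, then the order $m'$ of $\beta^j$ divides $m$ and so is prime to $p$. Consider $1+\beta^j+\cdots+\beta^{j(m'-1)}$: since $(\beta^j-1)$ times this sum is $0$ and $\beta^j\ne1$, the sum is the zero endomorphism. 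Its differential, however, is $m'\ne0$ in $k$, because each $\beta^{ji}$ has trivial character. This is a contradiction, so $\chi|_L$ is injective. Hence $\chi(\beta)=\zeta$ is a primitive $m$th root of unity in $\bar k$, which exists because $p\nmid m$.

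Second, compute the character of the Frobenius conjugate. Write $\beta^{(q)}$ for the automorphism obtained by applying $\operatorname{Fr}_q$ to the coefficients of $\beta$. On geometric points, $\mathcal F_E\circ\beta=\beta^{(q)}\circ\mathcal F_E$, so $\mathcal F_E\beta\mathcal F_E^{-1}=\beta^{(q)}$. Because $\omega$ is defined over $k$, applying Galois to $\beta^*\omega=\zeta\omega$ gives $(\beta^{(q)})^*\omega=\zeta^q\omega$. Therefore $\chi(\beta^a)=\zeta^a=\zeta^q$. By injectivity of $\chi$ on $L$, this yields $a\equiv q\pmod m$, and hence $\mathcal F_E\beta\mathcal F_E^{-1}=\beta^q$. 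The reformulation $\vartheta_L(\beta)=\beta^q$ is the same statement in the notation defined before Corollary on multiplication maps.

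The main obstacle is the injectivity of $\chi$ on $L$, especially in characteristics $2$ and $3$. There the full automorphism group contains elements of order $p$ with trivial character, for instance the translations $x\mapsto x+r$ in characteristic $3$. The hypothesis $p\nmid m$ is exactly what excludes these, via the norm-sum argument above. I would double-check that argument, or alternatively argue that the kernel of $\chi$ on $\operatorname{Aut}(E,\mathrm O_E)$ is a $p$-group, using Silverman's description of the automorphism group.
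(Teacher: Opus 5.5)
Your proof is correct and follows the same route as the paper. Both use a faithful character of $L$ on the (co)tangent line at $\mathrm O_E$, together with the fact that Frobenius conjugation raises the character value to its $q$th power, which forces $\mathcal F_E\beta\mathcal F_E^{-1}=\beta^q$. The only difference is how faithfulness is proved. The paper iterates a local expansion $\gamma(t)=t+c_st^s+O(t^{s+1})$ to get $\gamma^n(t)=t+nc_st^s+O(t^{s+1})$. You instead use the norm sum $1+\beta^j+\cdots+\beta^{j(m'-1)}$, the absence of zero divisors in $\operatorname{End}(E)$, and additivity of pullback on the invariant differential. Both arguments are valid, and yours correctly handles characteristics $2$ and $3$.
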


\begin{proof}
The tangent character of $L$ at the origin is faithful.  Indeed, if a
nontrivial element $\gamma$ of order $n$ prime to the characteristic had
derivative $1$, then in a local parameter $t$ one could write
\(\gamma(t)=t+c_st^s+O(t^{s+1}), \qquad c_s\ne0, \qquad s>1.\)
Iteration gives
\(\gamma^n(t)=t+n c_st^s+O(t^{s+1}),\)
contradicting $\gamma^n=1$ because $n$ is invertible in $k$.  Thus
$\beta$ acts on the tangent space by a primitive $m$th root of unity.
Frobenius conjugation raises this tangent eigenvalue to its $q$th power,
and faithfulness of the tangent character gives \eqref{eq:cyclotomic}.
\end{proof}

We call the Frobenius action on $L$ \emph{split} if it is trivial on $L$.
When $m>2$, we call it \emph{nonsplit} if it acts by inversion on $L$.
By \eqref{eq:cyclotomic}, these conditions are equivalent to
$q\equiv1\pmod m$ and $q\equiv-1\pmod m$, respectively.  For $m=2$,
inversion is the identity, and we use only the term split.

\subsection{Good-characteristic cyclic criteria and linear models}

For the explicit ground-field criteria and linear models in this subsection,
assume $\operatorname{char}k>3$ and put
\(L=L_E=\operatorname{im} (\Lambda\to\operatorname{Aut}_{\bar k}(E,\mathrm O_E)).\)
By \cite[Theorem~III.10.1 and Corollary~III.10.2]{Sil09}, the nontrivial
group $L$ is cyclic; write
\(L=\langle\beta\rangle\cong C_m, \qquad m\in\{2,3,4,6\}.\)
Moreover, $j(E)=0$ for $m=3,6$ and $j(E)=1728$ for $m=4$.
Lemma~\ref{lem:cyclic-frobenius-prime-to-p} gives
\eqref{eq:cyclotomic}.  For $m\in\{2,3,4,6\}$, the split and nonsplit
cases defined above exhaust the possibilities, with only the split case for
$m=2$.

The kernel criterion depends only on $L$: requiring
$\mathcal N\cap\ker(\mathcal F_E-u_{\gamma,E})=\{\mathrm O_E\}$ for every
$\gamma\in\Lambda$ is equivalent to requiring the same condition
for every $u\in L$.  Moreover this condition is constant on the $\sim_{\vartheta_L}$-classes in $L$.

\begin{lemma}[Equivalence classes in a cyclic group]
\label{lem:cyclic-classes}
Suppose $\vartheta_L(\beta)=\beta^a$ with
$a\in(\mathbb Z/m\mathbb Z)^\times$.  Then
\[
 \beta^j
 \sim_{\vartheta_L}
 \beta^{j+(a-1)t}
 \qquad\text{for all }j,t\in\mathbb Z.
\]
Hence the $\sim_{\vartheta_L}$-classes are indexed by
\[
 \mathbb Z/m\mathbb Z
 \Big/
 \bigl((a-1)(\mathbb Z/m\mathbb Z)\bigr).
\]
There are $d=\gcd(m,a-1)$ classes, represented by
$1,\beta,\ldots,\beta^{d-1}$.
\end{lemma}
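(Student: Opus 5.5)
The plan is to compute the relation $\sim_{\vartheta_L}$ directly on exponents, using that $L$ is abelian. For $h=\beta^t$ and $u=\beta^j$, the defining relation gives
\[
 \vartheta_L(h)\,u\,h^{-1}
 =\beta^{at}\beta^{j}\beta^{-t}
 =\beta^{j+(a-1)t},
\]
because $\vartheta_L$ is an automorphism with $\vartheta_L(\beta^t)=\beta^{at}$ and everything commutes. Since every $h\in L$ has the form $\beta^t$, this proves the displayed relation. It also shows that the class of $\beta^j$ is exactly $\{\beta^{j+(a-1)t}:t\in\mathbb Z\}$.

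Next, identify $L$ with $\mathbb Z/m\mathbb Z$ via $\beta^j\mapsto j$. Under this identification the class of $j$ is the coset $j+(a-1)(\mathbb Z/m\mathbb Z)$. The classes are therefore the elements of the quotient of $\mathbb Z/m\mathbb Z$ by the subgroup $(a-1)(\mathbb Z/m\mathbb Z)$.

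The only real content is computing this subgroup. In the cyclic group $\mathbb Z/m\mathbb Z$, the subgroup generated by the residue of $a-1$ equals the subgroup generated by $d=\gcd(m,a-1)$: by Bézout, $d=x(a-1)+ym$ for some integers $x,y$, and conversely $d\mid a-1$. Hence the subgroup is $d\mathbb Z/m\mathbb Z$. It has index $d$, and $0,1,\ldots,d-1$ are distinct coset representatives. This holds including the degenerate case $a=1$, where $d=m$ and the classes are singletons.

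Translating back gives the $d$ classes represented by $1,\beta,\ldots,\beta^{d-1}$. No step is a genuine obstacle; the only care needed is to note that $\vartheta_L(\beta)=\beta^a$ with $a$ a unit indeed defines an automorphism, which is the hypothesis.
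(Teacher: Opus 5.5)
Your proof is correct and follows the paper's argument: the same one-line computation $\vartheta_L(\beta^t)\beta^j\beta^{-t}=\beta^{j+(a-1)t}$, using commutativity of $L$. You also spell out, via B\'ezout, the identification $(a-1)(\mathbb Z/m\mathbb Z)=d\,\mathbb Z/m\mathbb Z$ of index $d$, which the paper compresses into ``the remaining assertions follow.''
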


\begin{proof}
Since $L$ is abelian,
\(\vartheta_L(\beta^t)\beta^j\beta^{-t} =\beta^{at+j-t} =\beta^{j+(a-1)t}.\)
The remaining assertions follow.
\end{proof}

\begin{theorem}[The four cyclic criteria]
\label{thm:cyclic}
Retain the standing assumptions of this subsection, in particular
$\operatorname{char}k>3$, and let $\mathcal N=\ker\varphi(\bar k)$.  Then
$f$ permutes $\mathbf P^1(k)$ if and only if the corresponding condition
holds for the cyclic group $L$ of linear parts:
\begin{enumerate}
\item[(1)] if $L\cong C_2$, then
$\mathcal N\cap E(\mathbb F_{q^2})=\{\mathrm O_E\}$;
\item[(2)] if $L\cong C_3$, then
$\mathcal N\cap E(\mathbb F_{q^3})=\{\mathrm O_E\}$ for
$q\equiv1\pmod3$, while
$\mathcal N\cap E(k)=\{\mathrm O_E\}$ for $q\equiv2\pmod3$;
\item[(3)] if $L\cong C_4$, then
$\mathcal N\cap E(\mathbb F_{q^4})=\{\mathrm O_E\}$ for
$q\equiv1\pmod4$, while
$\mathcal N\cap E(\mathbb F_{q^2})=\{\mathrm O_E\}$ for
$q\equiv3\pmod4$;
\item[(4)] if $L\cong C_6$, then
$\mathcal N\cap E(\mathbb F_{q^6})=\{\mathrm O_E\}$ for
$q\equiv1\pmod6$, while
$\mathcal N\cap E(\mathbb F_{q^2})=\{\mathrm O_E\}$ for
$q\equiv5\pmod6$.
\end{enumerate}
\end{theorem}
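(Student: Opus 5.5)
The plan is to reduce each case to Theorem~\ref{thm:kernel}, which says that $f$ permutes $\mathbf P^1(k)$ exactly when
\[
 \mathcal N\cap\ker(\mathcal F_E-u)=\{\mathrm O_E\}
\]
for $u$ in a set of representatives of the $\sim_{\vartheta_L}$-classes of $L$. By Lemma~\ref{lem:cyclic-frobenius-prime-to-p}, $\vartheta_L(\beta)=\beta^q$. Lemma~\ref{lem:cyclic-classes} with $a=q$ then gives $d=\gcd(m,q-1)$ classes, represented by $1,\beta,\dots,\beta^{d-1}$. In the split case $d=m$, so every $u\in L$ must be tested. In the nonsplit case $a=-1$ and $d=\gcd(m,2)$.

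The key step is to convert a condition on $\ker(\mathcal F_E-u)$ into a condition on rational points over an extension field. If $D\in\mathcal N$ satisfies $\mathcal F_E D=uD$, then iterating gives $\mathcal F_E^j D=\vartheta_L^{j-1}(u)\cdots\vartheta_L(u)\,u\,D$.

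In the split case this reads $\mathcal F_E^jD=u^jD$. Taking $j=m$, every such $D$ lies in $E(\mathbb F_{q^m})$. Conversely, let $D\in\mathcal N\cap E(\mathbb F_{q^m})$ be nonzero. Since $\mathcal F_E$ commutes with $L$, the cyclic group $\langle\mathcal F_E\rangle$ acting on the finite module $\mathcal N\cap E(\mathbb F_{q^m})$ satisfies $\mathcal F_E^m=1$. Decompose $\mathcal N\cap E(\mathbb F_{q^m})$ into $\ell$-primary parts with $\ell\ne p$; here $\ell$ is prime to $m$ or not. The preferred route is to observe that $\mathcal F_E$ lies in $\operatorname{End}(E)$, which contains $\mathbb Z[\beta]$, and $\mathbb Z[\beta]$ is the maximal order $\mathbb Z[i]$ or $\mathbb Z[\zeta_3]$ when $m>2$. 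The relation $\mathcal F_E^m-1=\prod_{u\in L}(\mathcal F_E-u)$ then holds in the commutative ring $\mathbb Z[\beta][\mathcal F_E]$. Because $\mathcal F_E$ commutes with $\beta$ in the split case, this identity holds as endomorphisms, so $\ker(\mathcal F_E^m-1)\cap\mathcal N\neq0$ exactly when some factor $\mathcal F_E-u$ has a nonzero kernel on $\mathcal N$. The factors need not be pairwise coprime, but a nonzero kernel of the product forces a nonzero kernel of some factor: take the factor at which the successive images first vanish. This gives $\mathcal N\cap E(\mathbb F_{q^m})=\{\mathrm O_E\}\iff$ the kernel test holds for all $u\in L$. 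For $m=2$ this is the identity $\mathcal F_E^2-1=(\mathcal F_E-1)(\mathcal F_E+1)$, which gives case~(1). The split halves of (2)--(4) follow the same way with $m=3,4,6$.

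In the nonsplit case $\mathcal F_E\beta=\beta^{-1}\mathcal F_E$, so the ring is not commutative and each $m$ needs its own identity.
\begin{itemize}
\item For $m=3$ there is one class, represented by $u=1$, so the condition is directly $\mathcal N\cap E(k)=\{\mathrm O_E\}$.
\item For $m=4,6$ there are two classes, represented by $u=1$ and $u=\beta$. Compute $(\mathcal F_E-\beta)^2=\mathcal F_E^2-\mathcal F_E\beta-\beta\mathcal F_E+\beta^2$. Using $\mathcal F_E\beta=\beta^{-1}\mathcal F_E$, this becomes $\mathcal F_E^2-(\beta+\beta^{-1})\mathcal F_E+\beta^2$. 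For $m=4$ we have $\beta+\beta^{-1}=0$, so this equals $\mathcal F_E^2-1$ and $\ker(\mathcal F_E-\beta)\subseteq E(\mathbb F_{q^2})$. For $m=6$, apply the same twisted-product idea as in the split case, $\mathcal F_E^2D=\vartheta_L(u)uD$; with $u=\beta$ and $\vartheta_L(\beta)=\beta^{-1}$ this gives $\mathcal F_E^2D=D$.
\end{itemize}
So in the nonsplit case both kernels lie in $E(\mathbb F_{q^2})$.

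For the converse in the nonsplit case, factor $\mathcal F_E^2-1=(\mathcal F_E-\beta)(\mathcal F_E+\beta^{-1})\cdots$ in the twisted sense. Concretely, let $D\in\mathcal N\cap E(\mathbb F_{q^2})$ be nonzero and put $D'=(\mathcal F_E-\beta)D$. Then $(\mathcal F_E+\beta)D'=\mathcal F_E^2D-\beta^{-1}\beta\cdots$, which simplifies using $\mathcal F_E\beta=\beta^{-1}\mathcal F_E$. Either $D'=0$, so $D$ violates the test at $\beta$, or $D'$ is a nonzero element of $\ker(\mathcal F_E+\beta)$. Since $-\beta\in L$ lies in one of the two classes, every outcome violates the criterion.

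The main obstacle is this nonsplit converse for $m=6$, where $-\beta$ and $\beta^{\pm1}$ must be tracked carefully through the class representatives. I would first try the explicit identity $(\mathcal F_E-u')(\mathcal F_E-u)=\mathcal F_E^2-1$ with $u'=-\vartheta_L(u)^{-1}$-type choices. If that fails, I would fall back on the general principle: $\mathcal F_E^2$ commutes with $L$, and $\mathcal N\cap\ker(\mathcal F_E^2-1)$ is an $\langle\mathcal F_E\rangle$-module on which $\mathcal F_E$ is an involution, so it splits into $\pm1$-eigenspaces when $\ell$ is odd, with $\ell=2$ treated separately. Since $-1\in L$, both eigenvalues are of the form $u\in L$ and so violate the criterion.
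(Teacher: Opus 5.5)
Your reduction via Theorem~\ref{thm:kernel} and Lemma~\ref{lem:cyclic-classes}, the split case via $\mathcal F_E^m-1=\prod_{u\in L}(\mathcal F_E-u)$ with commuting factors, the nonsplit $C_3$ case, and the inclusion of $\mathcal N\cap\ker(\mathcal F_E-u)$ in $E(\mathbb F_{q^2})$ for nonsplit $C_4,C_6$ (from $\mathcal F_E^2D=\vartheta_L(u)uD=D$) all match the paper's proof. The gap is in the converse for nonsplit $C_4$ and $C_6$.

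You apply $\mathcal F_E+\beta$ to $D'=(\mathcal F_E-\beta)D$. With $\mathcal F_E\beta=\beta^{-1}\mathcal F_E$, however,
\[
(\mathcal F_E+\beta)(\mathcal F_E-\beta)=\mathcal F_E^2+(\beta-\beta^{-1})\mathcal F_E-\beta^2 .
\]
For $m=4$ this is $\mathcal F_E^2+2\beta\mathcal F_E+1\neq\mathcal F_E^2-1$. So nothing forces $D'$ into $\ker(\mathcal F_E+\beta)$, and the conclusion does not follow as written. The correct partner is $\mathcal F_E+\beta^{-1}$:
\[
(\mathcal F_E+\beta^{-1})(\mathcal F_E-\beta)=\mathcal F_E^2-\beta^{-1}\mathcal F_E+\beta^{-1}\mathcal F_E-1=\mathcal F_E^2-1 .
\]
With that factor, $-\beta^{-1}\in L$ finishes your argument. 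Your fallback only handles odd $\ell$ and leaves the $2$-primary part of $\mathcal N$ open. That case genuinely occurs: $\varphi=[2]$ is equivariant for any origin-preserving $C_4$ or $C_6$ action and is separable in characteristic greater than $3$.

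Both problems disappear with the paper's argument, which needs no twisted bookkeeping. Since $-1\in L$ and $\pm1$ commute with $\mathcal F_E$, one has $(\mathcal F_E-1)(\mathcal F_E+1)=\mathcal F_E^2-1$ as an honest identity. Take a nonzero $D\in\mathcal N\cap E(\mathbb F_{q^2})$. Either $(\mathcal F_E+1)D=0$, so the kernel test fails at $u=-1$. Or $(\mathcal F_E+1)D$ is a nonzero point of $\mathcal N\cap E(k)$, so the test fails at $u=1$. This works uniformly for every finite $\mathcal N$, with no eigenspace decomposition and no separate treatment of $\ell=2$.
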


\begin{proof}
Suppose first that Frobenius acts trivially on $L$.  Then it commutes with
$\beta$, and on $\mathcal N$
\[
 \mathcal F_E^m-1
 =\prod_{j=0}^{m-1}(\mathcal F_E-\beta^j).
\]
The commuting factors are all injective exactly when their product is, so
Theorem~\ref{thm:kernel} gives
\[
 f\text{ permutes }\mathbf P^1(k)
 \Longleftrightarrow
 \mathcal N\cap E(\mathbb F_{q^m})=\{\mathrm O_E\}.
\]
This gives all split cases.

For nonsplit $C_3$, Lemma~\ref{lem:cyclic-classes} gives a single
$\sim_{\vartheta_L}$-class in $L$, represented by $1$, so the
criterion reduces to
$\mathcal N\cap\ker(\mathcal F_E-1)=\{\mathrm O_E\}$, equivalently
$\mathcal N\cap E(k)=\{\mathrm O_E\}$.  Finally, for nonsplit $C_4$ or $C_6$, if
$\mathcal F_E(P)=\beta^jP$ then
$\mathcal F_E^2(P)=P$.  Conversely, if a nonzero $P\in\mathcal N$ satisfies
$\mathcal F_E^2(P)=P$, then either $(\mathcal F_E+1)P=0$, or the nonzero point
$(\mathcal F_E+1)P$ lies in $\ker(\mathcal F_E-1)$.  Since $\pm1\in L$,
the kernel criterion fails in either case.  Hence the criterion is
$\mathcal N\cap E(\mathbb F_{q^2})=\{\mathrm O_E\}$.
\end{proof}

\begin{remark}[Nonsplit $C_3$ quotient on rational points]
In the origin-preserving special case $\Lambda=L\cong C_3$ with
$q\equiv2\pmod3$, the quotient map itself induces
$E(k)\xrightarrow{\sim}(E/C_3)(k)$.  Indeed the unique $\sim_{\vartheta_L}$-class has size $3$, so Proposition~\ref{prop:weighted-fibre-counting} gives $3m_Q(1)=3$.
\end{remark}

After a $\bar k$-isomorphism, the linear pairs $(E,L)$ admit the following
normal forms; the listed coordinates generate the fixed fields of the
origin-preserving linear actions:
\begin{equation}
\label{eq:models}
\begin{array}{c|c|c}
L&E&\text{linear quotient coordinate}\\ \hline
C_2&y^2=x^3+ax+b,\ \Delta_E\ne0,\ \beta(x,y)=(x,-y)&x\\
C_3&y^2=x^3+b,\ b\ne0,\ \beta(x,y)=(\zeta_3x,y)&y\\
C_4&y^2=x^3+ax,\ a\ne0,\ \beta(x,y)=(-x,\zeta_4y)&x^2\\
C_6&y^2=x^3+b,\ b\ne0,\ \beta(x,y)=(\zeta_3x,-y)&x^3.
\end{array}
\end{equation}
Here $\zeta_3^2+\zeta_3+1=0$ and $\zeta_4^2=-1$.  These explicit linear
actions also make \eqref{eq:cyclotomic} transparent.  Changing the translation component can alter the $k$-rational quotient
morphism and the chosen quotient coordinate, so
\eqref{eq:models} should be read as a table of linear models.

\begin{corollary}[Coordinate expression of the quotient map]
Let $(E,E',\Lambda,\varphi)$ be an equivariant isogeny datum over $k$.
Choose $k$-coordinates $z$ on $E/\Lambda$ and $z'$ on $E'/\Lambda$, and
denote their pullbacks to $E$ and $E'$ by the same symbols.  Then the lower
rational function is the unique $f\in k(X)$ satisfying
\[
 z'\circ\varphi=f\circ z.
\]
In the origin-preserving linear cases of \eqref{eq:models}, one may take
$z=x,y,x^2,x^3$ for $L=C_2,C_3,C_4,C_6$, respectively, and analogously on
the target.  Thus the coordinate formulas for $\varphi$ determine the
corresponding $f(X)$ explicitly.
\end{corollary}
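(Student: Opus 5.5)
The argument is purely field-theoretic and uses the defining relation $\pi'\circ\varphi=f\circ\pi$ from Section~\ref{sec:isogenies}. Fix $k$-coordinates $z$ on $E/\Lambda$ and $z'$ on $E'/\Lambda$; these are $k$-isomorphisms to $\mathbf P^1$ by Lemma~\ref{lem:quotient} and Lemma~\ref{lem:target-frobenius}. Their pullbacks $\pi^*z$ and $\pi'^*z'$ generate the fixed fields $k(E)^\Lambda=k(z)$ and $k(E')^\Lambda=k(z')$. Pulling back the relation gives $\varphi^*(z')=f(z)$ inside $k(E)$, where $f$ is the rational-function representative of the quotient map in these coordinates. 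So $f$ satisfies $z'\circ\varphi=f\circ z$.

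For uniqueness, suppose $f_1\in k(X)$ also satisfies $z'\circ\varphi=f_1\circ z$. Then $f(z)=f_1(z)$ in $k(E)$. Since $z$ is transcendental over $k$, the substitution $X\mapsto z$ is an injective $k$-homomorphism $k(X)\to k(E)$, so $f=f_1$. This also shows that $\varphi^*(z')$ lies in $k(z)$. That fact is automatic from equivariance: $\varphi^*$ sends $\Lambda$-invariant functions on $E'$ to $\Lambda$-invariant functions on $E$, because $\varphi\circ\gamma_E=\gamma_{E'}\circ\varphi$.

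For the explicit coordinates, I would check in each model of \eqref{eq:models} that the listed coordinate generates the fixed field of $\langle\beta\rangle$. First, the listed coordinate is visibly $\beta$-invariant:
\begin{itemize}
\item for $C_2$, $x$ is fixed by $(x,y)\mapsto(x,-y)$;
\item for $C_3$, $y$ is fixed;
\item for $C_4$, $(-x)^2=x^2$;
\item for $C_6$, $(\zeta_3x)^3=x^3$.
\end{itemize}
Second, in each case $[\bar k(E):\bar k(w)]=m$, where $w$ is the listed coordinate:
\begin{itemize}
\item $[\bar k(x,y):\bar k(x)]=2$;
\item $[\bar k(x,y):\bar k(y)]=3$, since $x^3=y^2-b$;
\item $[\bar k(x,y):\bar k(x^2)]=4$;
\item $[\bar k(x,y):\bar k(x^3)]=6$, since $y^2=x^3+b$ has degree $2$ over $\bar k(x^3)$ and $x$ has degree $3$.
\end{itemize}
Because $\lvert L\rvert=m$ and the fixed field has index exactly $m$, invariance plus equal degree gives $\bar k(E)^L=\bar k(w)$.

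In the origin-preserving case $\Lambda$ acts through $L$. When the models are $k$-rational, $w$ is a $k$-coordinate on $E/\Lambda$, and the same holds on the target. Writing $\varphi$ in coordinates then gives $f$ explicitly as the expression of $w'\circ\varphi$ in terms of $w$.

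The main obstacle is the remark about rationality of the models. The normal forms are only $\bar k$-forms, so one needs the model and $\beta$ defined over $k$ for $w$ to be a $k$-coordinate. I would phrase this step as ``after a $k$-isomorphism when available, and otherwise up to $\bar k$-Möbius change of coordinates''. The statement's ``one may take'' permits exactly this.
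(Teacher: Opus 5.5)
Your argument is correct and follows essentially the paper's route. The paper gets existence from the $\Lambda$-invariance of $z'\circ\varphi$, which places it in $k(E/\Lambda)=k(z)$. It gets uniqueness from $z$ being a coordinate, and the explicit part from the invariant generators in \eqref{eq:models}. Your index count $[\bar k(E):\bar k(w)]=m$ spells out the check the paper simply reads off the table.

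The one thing to fix is your final paragraph. The rationality worry is unfounded, and the claim that $\beta$ must be defined over $k$ is wrong. Since $\operatorname{char}k>3$, $E$ and $E'$ have $k$-models $y^2=x^3+Ax+B$, with $A=0$ when $j=0$ and $B=0$ when $j=1728$. So the curves in \eqref{eq:models} can always be taken over $k$.

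The roots of unity $\zeta_3,\zeta_4$ enter only the formula for $\beta$, not the coordinate $w\in\{x,y,x^2,x^3\}$. That coordinate lies in $k(E)$ and is invariant under the Frobenius-stable group $L$. Hence $k(E)\cap\bar k(E)^L=k(E)\cap\bar k(w)=k(w)$, and $w$ is a genuine $k$-coordinate on $E/\Lambda$ even in the nonsplit cases. Example~\ref{ex:c4} is such a case: $\zeta_4\notin\mathbb F_7$, yet $u=x^2$ is used over $\mathbb F_7$.

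Drop your fallback ``up to $\bar k$-M\"obius change of coordinates''. It is never needed, and it would not produce $f\in k(X)$.
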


\begin{proof}
The function $z'\circ\varphi$ is $\Lambda$-invariant by equivariance, hence
belongs to the quotient function field $k(E/\Lambda)=k(z)$.  Thus there is a
unique $f\in k(X)$ with $z'\circ\varphi=f\circ z$, and this is precisely the
coordinate expression of the lower morphism.  The final assertion follows
from the invariant generators listed in \eqref{eq:models}.
\end{proof}

Example~\ref{ex:c4} carries out this computation for a $C_4$ quotient, while
Example~\ref{ex:affine-c2} exhibits a genuinely affine quotient coordinate.

\begin{proposition}[Point-count criteria for multiplication maps]
\label{prop:point-count-multiplication-maps}
Retain the standing assumptions of this subsection, in particular
$\operatorname{char}k>3$.  Let $\varphi=[n]$ with $\gcd(n,q)=1$, and put
$a_q=q+1-\lvert E(k)\rvert$.  Then the criteria for $f$ to permute
$\mathbf P^1(k)$ are:
\begin{enumerate}
\item[(1)] for $L=C_2$, $\gcd(n,(q+1)^2-a_q^2)=1$;
\item[(2)] for split $C_3$, $\gcd(n,q^3+1-a_q^3+3qa_q)=1$;
\item[(3)] for nonsplit $C_3$, $\gcd(n,q+1)=1$;
\item[(4)] for split $C_4$, $\gcd(n,q^4+1-a_q^4+4qa_q^2-2q^2)=1$;
\item[(5)] for nonsplit $C_4$, $\gcd(n,(q+1)^2-a_q^2)=1$;
\item[(6)] for split $C_6$,
$\gcd(n,q^6+1-a_q^6+6qa_q^4-9q^2a_q^2+2q^3)=1$;
\item[(7)] for nonsplit $C_6$, $\gcd(n,(q+1)^2-a_q^2)=1$.
\end{enumerate}
\end{proposition}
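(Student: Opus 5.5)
The plan is to start from Theorem~\ref{thm:cyclic}, which already reduces each case to a condition of the form $\mathcal N\cap\ker(\mathcal F_E^s-1)=\{\mathrm O_E\}$ with $s\in\{1,2,3,4,6\}$. Here $\mathcal N=E[n]$ and $\gcd(n,q)=1$, so $E[n]\cong(\mathbb Z/n\mathbb Z)^2$. The condition therefore says that $\mathcal F_E^s-1$ is invertible on $E[n]$, i.e.\ that $\det(\mathcal F_E^s-1\mid E[n])$ is a unit mod $n$. For $s=1$ one may equivalently use $\det(\mathcal F_E-1\mid E[n])$ directly.

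The key input is the characteristic polynomial of Frobenius on $E[n]$. For $\ell$-adic Tate modules this is $T^2-a_qT+q$, and reducing mod every prime power dividing $n$ gives the same statement on $E[n]$. It follows that
\[
\det(\mathcal F_E^s-1\mid E[n])\equiv \prod_{\alpha}(\alpha^s-1)=(1-\alpha^s)(1-\bar\alpha^s)=1-(\alpha^s+\bar\alpha^s)+q^s \pmod n,
\]
where $\alpha,\bar\alpha$ are the roots of $T^2-a_qT+q$. By Hasse--Weil, $\lvert E(\mathbb F_{q^s})\rvert=q^s+1-(\alpha^s+\bar\alpha^s)$. The criterion in each case thus becomes $\gcd(n,\lvert E(\mathbb F_{q^s})\rvert)=1$.

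It remains to express the power sums $t_s=\alpha^s+\bar\alpha^s$ in terms of $a_q$ and $q$ through the Newton recursion $t_{s}=a_qt_{s-1}-qt_{s-2}$, with $t_0=2$ and $t_1=a_q$. This gives
\begin{itemize}
\item $t_2=a_q^2-2q$,
\item $t_3=a_q^3-3qa_q$,
\item $t_4=a_q^4-4qa_q^2+2q^2$,
\item $t_6=a_q^6-6qa_q^4+9q^2a_q^2-2q^3$.
\end{itemize}
Substituting into $q^s+1-t_s$ produces the listed expressions:
\begin{itemize}
\item $s=2$: $q^2+1-a_q^2+2q=(q+1)^2-a_q^2$, which serves $C_2$, nonsplit $C_4$ and nonsplit $C_6$;
\item $s=3$: the split $C_3$ formula;
\item $s=4$: the split $C_4$ formula;
\item $s=6$: the split $C_6$ formula;
\item $s=1$, nonsplit $C_3$: this gives $\gcd(n,q+1-a_q)=1$.
\end{itemize}

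The last case is the step I expect to be the main obstacle. The statement asserts $\gcd(n,q+1)$ instead, so I need to show $a_q\equiv 0$ in that case. A nonsplit $C_3$ forces $j(E)=0$ with $q\equiv2\pmod3$, and in characteristic $p>3$ this means $p\equiv2\pmod 3$. Such an $E$ is supersingular, since $p$ is inert in $\mathbb Z[\zeta_3]$. Supersingularity gives $p\mid a_q$, and Hasse together with the nonsplit (anti-commuting) relation $\mathcal F_E\beta=\beta^{-1}\mathcal F_E$ should force $a_q=0$. Concretely, taking the trace of $\beta\mathcal F_E\beta^{-1}=\beta^2\mathcal F_E$ gives $\operatorname{tr}\mathcal F_E=\operatorname{tr}(\beta^2\mathcal F_E)$, and I expect this, combined with $1+\beta+\beta^2=0$, to force $\operatorname{tr}\mathcal F_E=0$. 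Alternatively one can argue directly: $\mathcal F_E$ and $\beta$ generate a noncommutative order, so $\mathcal F_E$ is not in $\mathbb Z[\beta]$, and the relation makes $\mathcal F_E$ trace-zero in the quaternion algebra. With $a_q=0$ the case becomes $\gcd(n,q+1)=1$, as claimed.
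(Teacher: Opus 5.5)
Your proposal is correct. For items (1), (2) and (4)--(7) it is the paper's proof: Theorem~\ref{thm:cyclic} reduces each case to $\gcd(n,\lvert E(\mathbb F_{q^s})\rvert)=1$, and the recursion $t_{s+1}=a_qt_s-qt_{s-1}$ produces the listed polynomials.

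The genuine difference is item (3), where both arguments must show $a_q=0$. The paper does this by an explicit count. Since $j(E)=0$ and $\operatorname{char}k>3$, $E$ has a $k$-model $y^2=x^3+b$. Cubing permutes $k$ because $\gcd(3,q-1)=1$, so $\sum_{x\in k}\chi(x^3+b)=\sum_{w\in k}\chi(w)=0$ and $\lvert E(k)\rvert=q+1$.

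Your trace argument is a valid intrinsic alternative, and it closes with no further input:
\begin{enumerate}
\item[(a)] Lemma~\ref{lem:cyclic-frobenius-prime-to-p} with $q\equiv2\pmod3$ gives $\mathcal F_E\beta=\beta^{-1}\mathcal F_E$.
\item[(b)] Hence $\beta\mathcal F_E\beta^{-1}=\beta^2\mathcal F_E$ and $\beta^2\mathcal F_E\beta^{-2}=\beta\mathcal F_E$.
\item[(c)] The trace on $T_\ell E$ ($\ell\ne p$) is conjugation-invariant, so $\operatorname{tr}(\beta^j\mathcal F_E)=a_q$ for $j=0,1,2$.
\item[(d)] Summing, and using $1+\beta+\beta^2=0$ in the domain $\operatorname{End}(E)$ (valid since $\beta\ne1$ and $\beta^3=1$), gives $3a_q=0$, so $a_q=0$.
\end{enumerate}
This route needs no Weierstrass model. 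It is the same mechanism as the trace-zero step in Lemma~\ref{lem:nonsplit-frobenius-square}. The paper's route, by contrast, is a short elementary point count specific to the $j=0$ model.

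You should drop the supersingularity/Hasse remark. The bounds $p\mid a_q$ and $\lvert a_q\rvert\leqslant2\sqrt q$ force $a_q=0$ only when $q=p$, not when $q=p^a$ with odd $a\geqslant3$. So that remark is not what makes the argument work; the anticommutation-plus-trace computation is.
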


\begin{proof}
For $\gcd(n,q)=1$,
\(E[n]\cap E(\mathbb F_{q^s})=\{\mathrm O_E\} \Longleftrightarrow \gcd(n,\lvert E(\mathbb F_{q^s})\rvert)=1.\)
If $\alpha,\bar\alpha$ are the Frobenius eigenvalues and
$a_{q^s}=\alpha^s+\bar\alpha^s$, then
\[
 \lvert E(\mathbb F_{q^s})\rvert=q^s+1-a_{q^s},
 \qquad
 a_{q^0}=2,
 \quad a_{q^1}=a_q,
 \quad
 a_{q^{s+1}}=a_q a_{q^s}-q a_{q^{s-1}},
\]
by the standard Frobenius point-count formula
\cite[Chap.~V, \S2]{Sil09}.  This gives the displayed expressions.  For nonsplit $C_3$,
$j(E)=0$ and $q\equiv2\pmod3$, so $E$ admits a $k$-model
\(y^2=x^3+b, \qquad b\in k^\times.\)
Since $\gcd(3,q-1)=1$, the cube map $x\mapsto x^3$ is a bijection of $k$.
Let $\chi$ be the quadratic character of $k$, extended by $\chi(0)=0$.
Then
\(\sum_{x\in k}\chi(x^3+b) =\sum_{z\in k}\chi(z+b) =\sum_{w\in k}\chi(w) =0.\)
Hence $\lvert E(k)\rvert=q+1$, which gives the nonsplit $C_3$ criterion.
\end{proof}

Item~\textup{(1)} is the classical $C_2$ Latt\`es permutation criterion.
For multiplication maps it is exactly \cite[Corollary~2.6]{Bell22}.  In the
CM endomorphism setting the corresponding ideal-theoretic criterion is
\cite[Corollary~2.8]{Kuc14}; Bell et al. explicitly note the essential
equivalence of the two formulations \cite[Remark~2.7]{Bell22}.  In the
origin-preserving prime-degree $C_2$ case, Theorem~\ref{thm:cyclic} also
contains \cite[Theorem~1]{BT18} as a special case.

We include the $C_2$ row for comparison with the classical criterion.  The purpose of Theorem~\ref{thm:cyclic} and the table above is not to
introduce new cyclic quotient geometries, but to place the $C_2$, $C_3$,
$C_4$, and $C_6$ symmetries under one finite-field kernel criterion.  The identical numerical expression in the nonsplit $C_4$ and $C_6$ rows
comes from the same inversion action of Frobenius, although the quotient
symmetries are different.
The argument for a prescribed affine equivariant-isogeny datum itself does
not require prime degree.

\begin{corollary}[Cyclic criteria after base extension]
Retain the standing assumptions of this subsection, in particular
$\operatorname{char}k>3$.  Then $r\in\mathcal D(f)$ if and only if the
corresponding condition below holds:
\begin{enumerate}
\item[(1)] if $L\cong C_2$, then
$\mathcal N\cap E(\mathbb F_{q^{2r}})=\{\mathrm O_E\}$;
\item[(2)] if $L\cong C_3$ and $q\equiv1\pmod3$, then
$\mathcal N\cap E(\mathbb F_{q^{3r}})=\{\mathrm O_E\}$;
\item[(3)] if $L\cong C_3$ and $q\equiv2\pmod3$, then
\[
 \begin{cases}
 \mathcal N\cap E(\mathbb F_{q^r})=\{\mathrm O_E\},&r\text{ odd},\\
 \mathcal N\cap E(\mathbb F_{q^{3r}})=\{\mathrm O_E\},&r\text{ even};
 \end{cases}
\]
\item[(4)] if $L\cong C_4$ and $q\equiv1\pmod4$, then
$\mathcal N\cap E(\mathbb F_{q^{4r}})=\{\mathrm O_E\}$;
\item[(5)] if $L\cong C_4$ and $q\equiv3\pmod4$, then
\[
 \begin{cases}
 \mathcal N\cap E(\mathbb F_{q^{2r}})=\{\mathrm O_E\},&r\text{ odd},\\
 \mathcal N\cap E(\mathbb F_{q^{4r}})=\{\mathrm O_E\},&r\text{ even};
 \end{cases}
\]
\item[(6)] if $L\cong C_6$ and $q\equiv1\pmod6$, then
$\mathcal N\cap E(\mathbb F_{q^{6r}})=\{\mathrm O_E\}$;
\item[(7)] if $L\cong C_6$ and $q\equiv5\pmod6$, then
\[
 \begin{cases}
 \mathcal N\cap E(\mathbb F_{q^{2r}})=\{\mathrm O_E\},&r\text{ odd},\\
 \mathcal N\cap E(\mathbb F_{q^{6r}})=\{\mathrm O_E\},&r\text{ even}.
 \end{cases}
\]
\end{enumerate}
\end{corollary}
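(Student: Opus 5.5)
The plan is to apply Corollary~\ref{cor:extension-kernel} over $k_r$: $r\in\mathcal D(f)$ exactly when $\mathcal N\cap\ker(\mathcal F_E^r-u)=\{\mathrm O_E\}$ for every $u\in L$. The argument then follows the proof of Theorem~\ref{thm:cyclic}, with $\mathcal F_E$ replaced by $\mathcal F_E^r$. By Lemma~\ref{lem:cyclic-frobenius-prime-to-p}, conjugation by $\mathcal F_E^r$ sends $\beta$ to $\beta^{q^r}$. So over $k_r$ the action is split when $q^r\equiv1\pmod m$ and nonsplit when $q^r\equiv-1\pmod m$.

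In the split cases (1), (2), (4), (6), every $q^r\equiv1$, so $\mathcal F_E^r$ commutes with $\beta$. On $\mathcal N$ we then have the factorization
\[
 \mathcal F_E^{rm}-1=\prod_{j=0}^{m-1}(\mathcal F_E^r-\beta^j).
\]
Its commuting factors are all injective on the finite group $\mathcal N$ if and only if the product is. Hence the kernel condition is equivalent to $\mathcal N\cap E(\mathbb F_{q^{mr}})=\{\mathrm O_E\}$, since $\ker(\mathcal F_E^{rm}-1)=E(\mathbb F_{q^{mr}})$. This handles rows (1), (2), (4), (6) and also the even-$r$ halves of (3), (5), (7). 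The reason is that $q\equiv-1\pmod m$ and $r$ even give $q^r\equiv1\pmod m$; for $m=3$ this uses $q\equiv2\equiv-1\pmod 3$.

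In the nonsplit cases with $r$ odd, $q^r\equiv-1\pmod m$, so $\vartheta_L^{(r)}$ is inversion. For $C_3$, Lemma~\ref{lem:cyclic-classes} with $a=-1$ gives $\gcd(3,-2)=1$ class, represented by $1$. Since the condition is constant on $\sim$-classes (the argument in the proof of Theorem~\ref{thm:kernel}, applied over $k_r$), only $u=1$ needs checking, and the criterion becomes $\mathcal N\cap E(\mathbb F_{q^r})=\{\mathrm O_E\}$.

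For $C_4$ and $C_6$ with $r$ odd, I would repeat the $\mathbb F_{q^2}$ argument of Theorem~\ref{thm:cyclic} with $\mathcal F_E^r$ in place of $\mathcal F_E$. First suppose $\mathcal F_E^rP=\beta^jP$. Conjugating gives $\mathcal F_E^r\beta^j=\beta^{-j}\mathcal F_E^r$, so
\[
 \mathcal F_E^{2r}P=\mathcal F_E^r\beta^jP=\beta^{-j}\mathcal F_E^rP=P,
\]
and $P\in E(\mathbb F_{q^{2r}})$. Conversely, suppose a nonzero $P\in\mathcal N$ has $\mathcal F_E^{2r}P=P$. Either $(\mathcal F_E^r+1)P=0$, which violates the condition for $u=-1\in L$, or $(\mathcal F_E^r+1)P$ is a nonzero element of $\mathcal N\cap\ker(\mathcal F_E^r-1)$, which violates it for $u=1$. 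Here $-1\in L$ because $m$ is even.

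The only step needing care is the commutation relation used above, $\mathcal F_E^r u\mathcal F_E^{-r}=\vartheta_L^r(u)$ on $\mathcal N$. This follows from Lemma~\ref{lem:cyclic-frobenius-prime-to-p} by iteration. It also requires checking that $\mathcal N$ is stable under $L$ and $\mathcal F_E$, which holds by \eqref{eq:linear-equivariance} and because $\varphi$ is defined over $k$. I expect no real obstacle: the argument is a parity bookkeeping of $q^r\bmod m$.
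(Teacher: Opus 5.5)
Your proposal is correct and follows the paper's route: the paper applies Theorem~\ref{thm:cyclic} over $k_r$, noting that Frobenius now acts by $\beta\mapsto\beta^{q^r}$, and that in the nonsplit cases $q^r\equiv q\pmod m$ for odd $r$ and $q^r\equiv 1\pmod m$ for even $r$. You reach the same conclusion by re-running that theorem's proof with $\mathcal F_E^r$ in place of $\mathcal F_E$, which is equivalent and slightly more self-contained.
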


\begin{proof}
Over $k_r=\mathbb F_{q^r}$ the group $L$ of linear parts is unchanged, while
Frobenius acts on its generator by $\beta\mapsto\beta^{q^r}$.
Apply Theorem~\ref{thm:cyclic} over $k_r$.  If the original action is split,
it remains split for every $r$.  If it is nonsplit, then $q^r\equiv q$ modulo
$m$ for odd $r$ and $q^r\equiv1\pmod m$ for even $r$, giving the displayed
alternatives.
\end{proof}

\subsection{Explicit rank-two formulas}
\label{subsec:explicit-rank-two-formulas}

Assume $\mathcal N=E[\ell](\bar k)$, where $\ell$ is odd,
$\ell\ne\operatorname{char}k$, and $\ell\nmid m$.  Let
\(L=\langle\beta\rangle\cong C_m,\quad m\in\{2,3,4,6\},\quad
\operatorname{char}k\nmid m,\)
be stable under Frobenius conjugation, and suppose its restriction to
$E[\ell]$ has exact order $m$ and equals
\(\Lambda_{\mathcal N}=\langle\beta|_{E[\ell]}\rangle.\)
Write $\beta$ for this restriction and put $F=\mathcal F_E|_{E[\ell]}$.
Then $F\beta F^{-1}=\beta^q$ by
Lemma~\ref{lem:cyclic-frobenius-prime-to-p}.  We use the split/nonsplit
convention above.  No exceptionality hypothesis is imposed; the formulas determine exactly the
extension degrees for which the map is a permutation, equivalently exceptional.  These hypotheses also cover the tame cases $m=3$ in characteristic $2$ and
$m=2,4$ in characteristic $3$.

Put \(a_q:=q+1-\lvert E(k)\rvert.\)  The characteristic polynomial of
$F$ on $E[\ell]$ is the reduction modulo $\ell$ of
\(T^2-a_qT+q\); let
$\alpha_1,\alpha_2\in\overline{\mathbb F}_\ell^\times$ be its roots.

\begin{lemma}[Determinant and eigenvalue identities]
\label{lem:determinant-eigenvalue-identities}
On $E[\ell]$ one has
\[
 \det(\beta)=1,
 \qquad
 \det(F)=q
 \quad\text{in }\mathbb F_\ell^\times,
\]
where $q$ on the right denotes its residue class modulo $\ell$.
If $q\equiv1\pmod m$, then for every $a\in\mathbb Z$ the operator
$\beta^aF$ has the same $m$th powers of eigenvalues as $F$.  If $m>2$ and
$q\equiv-1\pmod m$, then $(\beta^aF)^2=F^2$.
\end{lemma}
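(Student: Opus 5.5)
We treat the four assertions separately, using the Weil pairing for the determinants and the relation $F\beta F^{-1}=\beta^q$ from Lemma~\ref{lem:cyclic-frobenius-prime-to-p} for the eigenvalue claims.

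\emph{Determinants.} Since $\ell\ne\operatorname{char}k$, the Weil pairing $e_\ell$ on $E[\ell]$ is a nondegenerate alternating pairing. Because $\dim E[\ell]=2$, any endomorphism $A$ satisfies $e_\ell(AP,AQ)=e_\ell(P,Q)^{\det A}$. The compatibility $e_\ell(\psi P,\psi Q)=e_\ell(P,Q)^{\deg\psi}$ for isogenies $\psi$ then gives $\det\beta=\deg\beta=1$, since $\beta$ is an automorphism, and $\det F=\deg\mathcal F_E=q$. Alternatively, for $F$ one can read $\det F=q$ off the characteristic polynomial $T^2-a_qT+q$ recalled above.

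\emph{Split case.} Suppose $q\equiv1\pmod m$. Then $F\beta F^{-1}=\beta^q=\beta$, so $F$ and $\beta$ commute on $E[\ell]$. Since $\ell\nmid m$, the order of $\beta$ is prime to $\ell$, so $\beta$ is semisimple over $\overline{\mathbb F}_\ell$. Commuting operators can be simultaneously triangularized over $\overline{\mathbb F}_\ell$; in such a basis $\beta$ has diagonal entries $\zeta_1,\zeta_2$ with $\zeta_i^m=1$, and $F$ has diagonal entries $\alpha_1,\alpha_2$. Hence $\beta^aF$ is triangular with diagonal entries $\zeta_i^a\alpha_i$. Their $m$th powers are $\alpha_i^m$, because the triangular matrices commute and $\zeta_i^{am}=1$. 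So the eigenvalue multisets of $(\beta^aF)^m$ and $F^m$ agree, which is the claim.

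\emph{Nonsplit case.} Suppose $m>2$ and $q\equiv-1\pmod m$. Then $F\beta F^{-1}=\beta^{-1}$, so $F\beta^a=\beta^{-a}F$, and therefore
\[
(\beta^aF)^2=\beta^aF\beta^aF=\beta^a\beta^{-a}F^2=F^2.
\]

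The one step needing care is the semisimplicity and simultaneous triangularization in the split case. Rather than relying on triangularization, we can argue directly: $\beta^aF$ commutes with $\beta^a$, so
\[
(\beta^aF)^m=\beta^{am}F^m=F^m,
\]
and $(\beta^aF)^m$ and $F^m$ are literally equal. This makes the equality of $m$th powers of eigenvalues immediate and removes the obstacle.
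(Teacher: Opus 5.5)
Your proposal is correct and follows essentially the paper's route. The paper gets $\det(\beta)=1$ and $\det(F)=q$ from Silverman's Proposition III.8.6, whose proof is exactly your Weil-pairing identity $\det(\psi|_{E[\ell]})=\deg\psi$; the paper treats $m=2$ separately via $\beta=-I$, which your argument covers uniformly. The eigenvalue claims in both the paper and your proposal come from $F\beta F^{-1}=\beta^{\pm1}$. Your direct identity $(\beta^aF)^m=\beta^{am}F^m=F^m$ in the split case is a slightly cleaner form of the paper's remark that multiplying by $\beta^a$ changes the Frobenius eigenvalues by $m$th roots of unity.
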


\begin{proof}
For $m=2$, $\beta=-I$, so $\det(\beta)=1$.  Let $m>2$.  Since
$\ell\nmid m$, the operator $\beta$ is semisimple.  As it is the
restriction of a degree-one elliptic-curve automorphism,
\cite[Proposition~III.8.6]{Sil09} gives $\det(\beta)=1$ on $E[\ell]$;
hence its eigenvalues are $\rho,\rho^{-1}$, both of exact order $m$.
The same proposition gives $\det(F)=q$ in $\mathbb F_\ell^\times$.
In the split case $F$ commutes with $\beta$, so multiplying by $\beta^a$
changes the two Frobenius eigenvalues by $m$th roots of unity.  In the
nonsplit case $F\beta^a=\beta^{-a}F$, whence $(\beta^aF)^2=F^2$.
\end{proof}

\subsubsection{Split rank-two formulas}

Assume $q\equiv1\pmod m$; for $m=2$ this is automatic.  Put \(d_i:=\operatorname{ord}_{\overline{\mathbb F}_\ell^\times}(\alpha_i^m), \qquad i=1,2\).

\begin{theorem}[Split spectral criterion]
\label{thm:split-spectral-criterion}
Under the standing hypotheses of this subsection, assume $q\equiv1\pmod m$.
For every $r\geqslant1$,
\begin{align}
 r\in\mathcal D(f)
 &\Longleftrightarrow
 \det(\mathcal F_E^{mr}-I\mid E[\ell])\ne0
 \label{eq:split-rank-two-det}\\
 &\Longleftrightarrow
 d_1\nmid r\ \text{and}\ d_2\nmid r
 \label{eq:split-rank-two-order}\\
 &\Longleftrightarrow
 \ell\nmid\lvert E(\mathbb F_{q^{mr}})\rvert.
 \label{eq:split-rank-two-pointcount}
\end{align}
Moreover,
\[
 \begin{aligned}
 \delta_f&=
 1-\frac1{d_1}-\frac1{d_2}
 +\frac1{\operatorname{lcm}(d_1,d_2)},\\
 \operatorname{per}(f)&=
 \begin{cases}
 d_1,&d_1\mid d_2,\\
 d_2,&d_2\mid d_1,\\
 \operatorname{lcm}(d_1,d_2),&\text{otherwise}.
 \end{cases}
 \end{aligned}
\]
\end{theorem}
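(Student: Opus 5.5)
The plan is to start from Corollary~\ref{cor:extension-kernel}, or equivalently Lemma~\ref{lem:frobenius-coset-characterization}, with $\mathcal N=E[\ell]$ and $\Lambda_{\mathcal N}=\langle\beta\rangle$. These give $r\in\mathcal D(f)$ if and only if $\ker(F^r-\beta^j)=0$ on $E[\ell]$ for every $j\in\{0,\dots,m-1\}$. In the split case $q\equiv1\pmod m$, Lemma~\ref{lem:cyclic-frobenius-prime-to-p} says $F$ commutes with $\beta$, so $F^r$ commutes with every $\beta^j$. The argument of Theorem~\ref{thm:cyclic} then applies verbatim with $F$ replaced by $F^r$:
\[
 F^{mr}-I=\prod_{j=0}^{m-1}(F^r-\beta^j).
\]
This is a product of commuting operators, and it is injective exactly when each factor is. This gives \eqref{eq:split-rank-two-det}. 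It also matches the criterion $\mathcal N\cap E(\mathbb F_{q^{mr}})=\{\mathrm O_E\}$, since $\ker(F^{mr}-I)=E[\ell]\cap E(\mathbb F_{q^{mr}})$. Because $\ell\ne\operatorname{char}k$, that intersection is the $\ell$-torsion of the finite abelian group $E(\mathbb F_{q^{mr}})$. It is therefore trivial exactly when $\ell\nmid\lvert E(\mathbb F_{q^{mr}})\rvert$, by Cauchy's theorem, which gives \eqref{eq:split-rank-two-pointcount}.

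For \eqref{eq:split-rank-two-order}, note that the eigenvalues of $F^{mr}$ over $\overline{\mathbb F}_\ell$ are $\alpha_1^{mr}$ and $\alpha_2^{mr}$. Hence $\det(F^{mr}-I)=(\alpha_1^{mr}-1)(\alpha_2^{mr}-1)$, which is nonzero exactly when $(\alpha_i^m)^r\ne1$ for $i=1,2$, that is, when $d_1\nmid r$ and $d_2\nmid r$. This step is formal; the only care needed is to compute in an extension of $\mathbb F_\ell$ in case the characteristic polynomial does not split.

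For the density, the set $\mathcal D(f)$ is the complement in $\mathbb Z_{\geqslant1}$ of $d_1\mathbb Z\cup d_2\mathbb Z$. Inclusion--exclusion over one full period $\operatorname{lcm}(d_1,d_2)$ gives $\delta_f=1-1/d_1-1/d_2+1/\operatorname{lcm}(d_1,d_2)$.

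The least period is where I expect the main care to be needed. The indicator of $\mathcal D(f)$ is periodic modulo $\operatorname{lcm}(d_1,d_2)$. If $d_1\mid d_2$, the union reduces to $d_1\mathbb Z$, whose least period is $d_1$. When $d_1=1$ the set is empty and the least period is $1=d_1$, matching the paper's convention; the case $d_2\mid d_1$ is symmetric. In the remaining case $d_1\nmid d_2$ and $d_2\nmid d_1$, I will show that any period $\nu$ is divisible by both $d_1$ and $d_2$. Since $d_1\in d_1\mathbb Z\cup d_2\mathbb Z$, so is $d_1+\nu$.

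If $d_1\mid\nu$, apply the same argument to $d_2$. If instead $d_1\nmid\nu$, then $d_2\mid d_1+\nu$. Now use $\nu'=-\nu$ modulo $\operatorname{lcm}(d_1,d_2)$ (periods form a subgroup of $\mathbb Z/\operatorname{lcm}(d_1,d_2)$): the point $d_2+k\nu$ must lie in the union for all $k$. To finish, I would pick an integer $x$ with $x\equiv0\pmod{d_1}$ and $x+\nu\not\equiv0$ modulo both $d_1$ and $d_2$. The non-divisibility hypotheses allow such a choice via the Chinese remainder theorem on the relevant prime powers, for instance choosing $x\equiv d_1$ modulo a prime power of $d_1$ not dividing $d_2$. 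This contradicts $\nu$ being a period. Concluding $\operatorname{per}(f)=\operatorname{lcm}(d_1,d_2)$ then follows, and this elementary number-theoretic case analysis is the step I expect to need the most checking.
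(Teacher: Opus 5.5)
Your derivation of \eqref{eq:split-rank-two-det}--\eqref{eq:split-rank-two-pointcount} and of $\delta_f$ is correct, but it takes a different route from the paper's. The paper argues on eigenlines. For $m>2$ it uses Lemma~\ref{lem:determinant-eigenvalue-identities} to see that $\beta$ has two distinct eigenvalues $\rho^{\pm1}$ over $\overline{\mathbb F}_\ell$. It notes that $F$ preserves the two eigenlines, and checks that some $F^r-\beta^j$ is singular on the $i$th line exactly when $\alpha_i^r\in\langle\rho\rangle$, i.e.\ $\alpha_i^{mr}=1$. The case $m=2$ is treated separately, and \eqref{eq:split-rank-two-pointcount} comes from the congruence $\det(I-\mathcal F_E^s\mid E[\ell])\equiv\lvert E(\mathbb F_{q^s})\rvert\pmod\ell$.

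You instead reuse the factorization $F^{mr}-I=\prod_{j=0}^{m-1}(F^r-\beta^j)$ from Theorem~\ref{thm:cyclic}, applied over $k_r$. This collapses the $m$ kernel tests into a single operator whose kernel is $E[\ell]\cap E(\mathbb F_{q^{mr}})$. Then \eqref{eq:split-rank-two-pointcount} is just Cauchy's theorem, and \eqref{eq:split-rank-two-order} is read off from the eigenvalues $\alpha_i^{mr}$. Your route is uniform in $m$ and needs neither the determinant lemma nor the point-count congruence. It is worth stating why the factorization holds: $\mathbb Z[\beta]\subseteq\operatorname{End}(E)$ is a domain in which $\beta$ has exact order $m$. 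In a general commutative ring, $\beta^m=1$ would not suffice.

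The least-period argument has the right structure, but its last step is not justified, and your hint points at the wrong hypothesis. Once $x\equiv0\pmod{d_1}$ and $d_1\nmid\nu$, the only requirement is $d_2\nmid x+\nu$, which is a condition on $x$ modulo $d_2$. Choosing $x$ modulo a prime power dividing $d_1$ but not $d_2$ exploits $d_1\nmid d_2$ and says nothing about this. The detour through $-\nu$ and $d_2+k\nu$ is also unnecessary.

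The hypothesis that matters is $d_2\nmid d_1$, and $x=\operatorname{lcm}(d_1,d_2)$ already works. A period $\nu$ must carry both $\operatorname{lcm}(d_1,d_2)$ and $d_1$ into the excluded set $d_1\mathbb Z\cup d_2\mathbb Z$. So if $d_1\nmid\nu$, then $d_2\mid\nu$ and $d_2\mid d_1+\nu$, whence $d_2\mid d_1$, a contradiction. Hence $d_1\mid\nu$; symmetrically, using $d_1\nmid d_2$, one gets $d_2\mid\nu$.

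This is exactly the paper's argument. A stabilizing translation $t$ of $H_1\cup H_2\subseteq\mathbb Z/\operatorname{lcm}(d_1,d_2)\mathbb Z$ must itself lie in $H_1\cup H_2$. If $0\ne t\in H_1\setminus H_2$, then $t+h\notin H_1\cup H_2$ for any $h\in H_2\setminus H_1$.
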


\begin{proof}
Assume first that $m>2$.  By
Lemma~\ref{lem:determinant-eigenvalue-identities}, $\beta$ has distinct
eigenvalues $\rho,\rho^{-1}$ of exact order $m$.  Since $F$ commutes with
$\beta$, the two $\beta$-eigenlines are $F$-stable; label the corresponding
$F$-eigenvalues by $\alpha_1,\alpha_2$.  On either line,
$F^r-\beta^j$ is singular for some $j$ exactly when
$\alpha_i^r\in\langle\rho\rangle$, equivalently
$\alpha_i^{mr}=1$.  The extension-field kernel criterion therefore gives
\eqref{eq:split-rank-two-det} and \eqref{eq:split-rank-two-order}.
For $m=2$, the forbidden operators are $F^r\mp I$; one is singular exactly
when some $\alpha_i^{2r}=1$, giving the same conclusion.

Finally,
\[
 \det(I-\mathcal F_E^s\mid E[\ell])
 =\lvert E(\mathbb F_{q^s})\rvert
 \quad\text{in }\mathbb F_\ell,
\]
with the integer reduced modulo $\ell$; taking $s=mr$ gives
\eqref{eq:split-rank-two-pointcount}.  The excluded degrees are the union of the multiples of $d_1$ and $d_2$,
so inclusion--exclusion gives $\delta_f$.  If one $d_i$ divides the other,
the least period is the smaller one.  Otherwise let
$L=\operatorname{lcm}(d_1,d_2)$ and let $H_i\leq\mathbb Z/L\mathbb Z$ be
the multiples of $d_i$.  The translation stabilizer of $H_1\cup H_2$ is
trivial: if $0\ne t\in H_1\setminus H_2$, choose
$h\in H_2\setminus H_1$; then $t+h$ lies in neither subgroup, and the
other case is symmetric.  Hence the least period is $L$.
\end{proof}

\begin{corollary}[Indecomposable split specialization]
If, in addition, $f$ is $k$-indecomposable, then
$d_1=d_2=c_{\mathcal N}=:d$ and
\[
 \mathcal D(f)
 =\{r\geqslant1:d\nmid r\}.
\]
Thus $\operatorname{per}(f)=d$ and $\delta_f=1-1/d$.  If $m>2$, one
may identify $E[\ell]$ with the one-dimensional
$\mathbb F_{\ell^2}$-space $\mathbb F_{\ell^2}$ so that $\beta$ and $F$
act by multiplication by $\rho$ and $\alpha$; then
\(d=\operatorname{ord}_{\mathbb F_{\ell^2}^\times}(\alpha^m).\)
For $m=2$ the same formula holds after identifying an irreducible Frobenius
action with multiplication by $\alpha\in\mathbb F_{\ell^2}^\times$.
\end{corollary}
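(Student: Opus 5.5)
The plan is to turn $k$-indecomposability into irreducibility of the kernel under $\mathcal A_{\mathcal N}$, and then read off the common spectral order. By Proposition~\ref{prop:decomposition}, $f$ is $k$-indecomposable exactly when $\mathcal N=E[\ell]$ has no nonzero proper $\mathcal A_{\mathcal N}$-stable subgroup. Since $\mathcal N$ is an $\mathbb F_\ell$-space, this means $\mathcal A_{\mathcal N}=\langle\beta,F\rangle$ acts irreducibly on the plane $E[\ell]$.

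The first step is the case $m>2$. Here $\beta$ has distinct eigenvalues $\rho,\rho^{-1}$ by Lemma~\ref{lem:determinant-eigenvalue-identities}. If $\rho\in\mathbb F_\ell$, the two $\beta$-eigenlines are defined over $\mathbb F_\ell$ and, in the split case, are $F$-stable. They would then be invariant subspaces, which is impossible. Hence $\rho\notin\mathbb F_\ell$, the commutant of $\beta$ in $\operatorname{End}(E[\ell])$ is the field $\mathbb F_\ell[\beta]\cong\mathbb F_{\ell^2}$, and $E[\ell]$ is a one-dimensional $\mathbb F_{\ell^2}$-space on which $\beta$ acts by $\rho$. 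Since $F$ commutes with $\beta$, it acts by some $\alpha\in\mathbb F_{\ell^2}^\times$. The two eigenvalues of $F$ are then $\alpha$ and $\alpha^\ell$, the Galois conjugates; likewise $\rho^{-1}=\rho^\ell$. Consequently $\alpha_2^m=(\alpha_1^m)^\ell$ has the same order, so $d_1=d_2=\operatorname{ord}(\alpha^m)=:d$.

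Next I need to identify $d$ with $c_{\mathcal N}$. By definition $c_{\mathcal N}$ is the least $\nu\geqslant1$ with $F^\nu\in\langle\beta\rangle$, that is, $\alpha^\nu\in\langle\rho\rangle=\mu_m(\mathbb F_{\ell^2})$. This holds exactly when $\alpha^{m\nu}=1$, so $c_{\mathcal N}=d$. Theorem~\ref{thm:split-spectral-criterion} with $d_1=d_2=d$ then gives $\mathcal D(f)=\{r:d\nmid r\}$, $\delta_f=1-2/d+1/d=1-1/d$, and $\operatorname{per}(f)=d$ from its first case.

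The case $m=2$ needs a separate argument, since there $\beta=-I$ is scalar and irreducibility of $\mathcal A_{\mathcal N}$ means $F$ itself is irreducible on $E[\ell]$. Its characteristic polynomial is then irreducible over $\mathbb F_\ell$, so $\mathbb F_\ell[F]\cong\mathbb F_{\ell^2}$. This makes $E[\ell]$ a one-dimensional $\mathbb F_{\ell^2}$-space with $F$ acting as multiplication by $\alpha$, and the eigenvalues are again $\alpha,\alpha^\ell$. So $d_1=d_2=\operatorname{ord}(\alpha^2)$. Also $c_{\mathcal N}$ is the least $\nu$ with $\alpha^\nu=\pm1$, which is $\operatorname{ord}(\alpha^2)$.

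The main obstacle is the step showing that $\rho\in\mathbb F_\ell$ forces reducibility. This relies on the split hypothesis $F\beta=\beta F$, which makes each $\beta$-eigenline defined over $\mathbb F_\ell$ and $F$-stable, and on $\ell\nmid m$, which makes $\beta$ semisimple. The remaining steps are routine.
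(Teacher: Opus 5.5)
Your proposal is correct and follows the paper's proof essentially step for step. Indecomposability becomes irreducibility of $\langle\beta,F\rangle$ via Proposition~\ref{prop:decomposition}; in the split case an $\mathbb F_\ell$-rational $\beta$-eigenline would be $F$-stable, so $\mathbb F_\ell[\beta]\cong\mathbb F_{\ell^2}$ and $F$ acts by a scalar $\alpha$ with eigenvalues $\alpha,\alpha^\ell$, giving $d_1=d_2=c_{\mathcal N}=\operatorname{ord}(\alpha^m)$ before you invoke Theorem~\ref{thm:split-spectral-criterion}; and your $m=2$ case, with $c_{\mathcal N}$ the least $\nu$ such that $\alpha^\nu=\pm1$, is exactly the paper's argument.
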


\begin{proof}
For $m=2$, $\Lambda_{\mathcal N}=\{\pm I\}$ is scalar, so
indecomposability is irreducibility of $F$.  If $\alpha$ is one eigenvalue,
the other is $\alpha^\ell$, and hence
$d_1=d_2=\operatorname{ord}(\alpha^2)=c_{\mathcal N}$, since
$F^\nu\in\{\pm I\}$ exactly when $(\alpha^2)^\nu=1$.

Let $m>2$.  If $\beta$ had an eigenline over $\mathbb F_\ell$, then that
line would also be $F$-stable, contradicting
Proposition~\ref{prop:decomposition}.  Thus $\beta$ is irreducible and
$\mathbb F_\ell[\beta]\cong\mathbb F_{\ell^2}$.  Since $F$ commutes with
$\beta$, identify $E[\ell]$ with $\mathbb F_{\ell^2}$ so that $\beta,F$
act by multiplication by $\rho,\alpha$.  Then
\[
 F^\nu\in\langle\beta\rangle
 \Longleftrightarrow
 (\alpha^m)^\nu=1,
\]
so $c_{\mathcal N}=\operatorname{ord}(\alpha^m)$.  The two $F$-eigenvalues
are $\alpha,\alpha^\ell$, whose $m$th powers have the same order; hence
$d_1=d_2=c_{\mathcal N}$.  The rest follows from
Theorem~\ref{thm:split-spectral-criterion}.
\end{proof}

\subsubsection{Nonsplit rank-two formulas}

Assume now
\(m\in\{3,4,6\}, \qquad q\equiv-1\pmod m.\)
Let $c\in\mathbb F_\ell^\times$ be the residue class of $-q$, and put
\(h:=\operatorname{ord}_{\mathbb F_\ell^\times}(c).\)

\begin{lemma}[Frobenius square in the nonsplit case]
\label{lem:nonsplit-frobenius-square}
On $E[\ell]$ one has
\[
 F^2=cI.
\]
Moreover $\langle\beta,F\rangle$ acts irreducibly on $E[\ell]$.
\end{lemma}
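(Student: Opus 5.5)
The plan is to use the nonsplit relation $F\beta F^{-1}=\beta^{q}=\beta^{-1}$ from Lemma~\ref{lem:cyclic-frobenius-prime-to-p} together with the determinant identities of Lemma~\ref{lem:determinant-eigenvalue-identities}. Since $m>2$ and $\ell\nmid m$, $\beta$ is semisimple on $E[\ell]$ with distinct eigenvalues $\rho,\rho^{-1}$ of exact order $m$, defined over $\mathbb F_\ell(\rho)$.

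\textbf{Step 1: $F$ swaps the eigenlines of $\beta$.} Over $\overline{\mathbb F}_\ell$, let $v_+$ and $v_-$ be eigenvectors of $\beta$ for $\rho$ and $\rho^{-1}$. The relation $\beta F=F\beta^{-1}$ gives $\beta(Fv_+)=F\beta^{-1}v_+=\rho^{-1}Fv_+$. So $F$ maps the $\rho$-line to the $\rho^{-1}$-line, and symmetrically back.

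\textbf{Step 2: compute $F^2$.} By Step 1, $F^2$ preserves each eigenline, so it is diagonal in the basis $(v_+,v_-)$. With respect to the basis $(v_+,Fv_+)$, where $Fv_+$ spans the $\rho^{-1}$-line, $F$ has matrix $\begin{pmatrix}0&x\\1&0\end{pmatrix}$ for some scalar $x$. Hence $F^2=xI$. Taking determinants and using $\det F=q$ from Lemma~\ref{lem:determinant-eigenvalue-identities} gives $-x=q$, that is, $x=-q=c$. This scalar lies in $\mathbb F_\ell$, so $F^2=cI$ holds on $E[\ell]$ itself.

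\textbf{Step 3: irreducibility.} Suppose a line $W\subset E[\ell]$ over $\mathbb F_\ell$ is stable under both $\beta$ and $F$. Then $W\otimes\overline{\mathbb F}_\ell$ is a $\beta$-eigenline, so it is either the $\rho$-line or the $\rho^{-1}$-line. But Step 1 shows $F$ moves each of these lines to the other, so neither can be $F$-stable. This contradiction shows there is no nonzero proper invariant subspace, and $\langle\beta,F\rangle$ acts irreducibly.

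The only delicate point is Step 1's use of the exact relation $F\beta F^{-1}=\beta^{-1}$ on $E[\ell]$. This follows because the restriction of $\beta$ to $E[\ell]$ has exact order $m$, so the relation $\beta^q=\beta^{-1}$ in $L$ carries over to the restriction. One also needs $\rho\ne\rho^{-1}$, which holds because $m>2$. I expect no further obstacle.
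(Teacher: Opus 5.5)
Your proof is correct and follows essentially the same route as the paper. In both, $F$ swaps the two $\beta$-eigenlines and $\det F=q$ forces $F^2=-qI$ (the paper writes this as $\operatorname{tr}F=0$ plus Cayley--Hamilton rather than using your basis $(v_+,Fv_+)$), and irreducibility holds because the only $\beta$-stable lines are the two eigenlines, which $F$ interchanges; your uniform argument after extending scalars to $\overline{\mathbb F}_\ell$ just streamlines the paper's case split on whether those eigenlines are defined over $\mathbb F_\ell$.
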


\begin{proof}
Over $\overline{\mathbb F}_\ell$, the relation $F\beta F^{-1}=\beta^{-1}$ exchanges the two
$\beta$-eigenlines.  Hence $\operatorname{tr}(F)=0$, while
$\det(F)=q$ by Lemma~\ref{lem:determinant-eigenvalue-identities}.
Cayley--Hamilton gives $F^2=-qI=cI$.  If $\beta$ has no
$\mathbb F_\ell$-eigenline, there is no common invariant line.  Otherwise
its two eigenlines are defined over $\mathbb F_\ell$, and $F$ exchanges
them.  Thus the full action is irreducible in either case.
\end{proof}

\begin{theorem}[Nonsplit criterion by parity and residue]
\label{thm:nonsplit-parity-residue-criterion}
Under the standing hypotheses of this subsection, assume
$m\in\{3,4,6\}$ and $q\equiv-1\pmod m$.
For every $r\geqslant1$,
\[
 r\in\mathcal D(f)
 \Longleftrightarrow
 \begin{cases}
 h\nmid r,&r\text{ odd},\\[1mm]
 h\nmid mr/2,&r\text{ even}.
 \end{cases}
\]
Equivalently,
\[
 r\in\mathcal D(f)
 \Longleftrightarrow
 \begin{cases}
 \ell\nmid q^r+1,&r\text{ odd},\\[1mm]
 \ell\nmid(-q)^{mr/2}-1,&r\text{ even}.
 \end{cases}
\]
In particular,
\[
 f\text{ permutes }\mathbf P^1(k)
 \Longleftrightarrow
 f\text{ is exceptional over }k
 \Longleftrightarrow
 \ell\nmid q+1.
\]
\end{theorem}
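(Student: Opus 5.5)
The plan is to apply the extension-field kernel criterion, Corollary~\ref{cor:extension-kernel}, directly on $\mathcal N=E[\ell]$. In that criterion $r\in\mathcal D(f)$ holds exactly when $F^r-\beta^j$ is injective on $E[\ell]$ for every $j$. Equivalently, by Lemma~\ref{lem:frobenius-coset-characterization}, no element of the coset $F^r\langle\beta\rangle$ has eigenvalue $1$. The main input is Lemma~\ref{lem:nonsplit-frobenius-square}, which gives $F^2=cI$ with $c\equiv-q$. Combined with the relation $F\beta F^{-1}=\beta^{-1}$, this makes every element of the coset explicit. I would therefore split according to the parity of $r$.

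\emph{Odd $r$.} Write $r=2s+1$, so $F^r=c^sF$. For any $a$, the operator $\beta^aF$ satisfies $(\beta^aF)^2=F^2=cI$ by Lemma~\ref{lem:determinant-eigenvalue-identities}. Hence $(\beta^aF^r)^2=c^{2s}\cdot c=c^r$. Moreover $\beta^aF$ has trace $0$, by the same eigenline-swapping argument used in Lemma~\ref{lem:nonsplit-frobenius-square}. Its eigenvalues are therefore $\pm\mu$ with $\mu^2=c^r$. It follows that $\beta^aF^r$ has eigenvalue $1$ if and only if $c^r=1$. This condition does not depend on $a$, which explains why a single test suffices. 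Since $c^r=1$ is equivalent to $h\mid r$, and also to $\ell\mid(-q)^r-1=-(q^r+1)$ for odd $r$, this case is done.

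\emph{Even $r$.} Write $r=2s$, so $F^r=c^sI$ is a scalar. The coset then consists of the operators $c^s\beta^j$. Because $\det\beta=1$ and $\beta$ is semisimple of exact order $m$, the eigenvalues of $\beta^j$ are $\rho^{\pm j}$ with $\rho$ of exact order $m$, taken in $\overline{\mathbb F}_\ell$. So some $c^s\beta^j$ has eigenvalue $1$ exactly when $c^{-s}\in\langle\rho\rangle$, that is, when $c^{sm}=1$. This is equivalent to $h\mid mr/2$, and also to $\ell\mid(-q)^{mr/2}-1$.

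The final statement is the case $r=1$: the map permutes $\mathbf P^1(k)$ if and only if $h\nmid1$, i.e.\ $c\ne1$, i.e.\ $\ell\nmid q+1$. Corollary~\ref{cor:permutation-exceptional} then identifies permutation over $k$ with exceptionality over $k$.

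The step I expect to need the most care is the odd case, specifically the claim that \emph{every} $\beta^aF$ has trace zero. I would verify it by working over $\overline{\mathbb F}_\ell$ in a $\beta$-eigenbasis; the eigenvalues $\rho\ne\rho^{-1}$ are distinct because $m>2$. In that basis $F$ is antidiagonal, since it swaps the two eigenlines, and left-multiplying by the diagonal $\beta^a$ keeps the matrix antidiagonal. Hence the trace is $0$ and $\det(\beta^aF^r)=-c^r$, which gives the characteristic polynomial $T^2-c^r$.
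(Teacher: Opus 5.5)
Your proposal is correct and follows essentially the paper's argument: you split by parity, use that every element of the odd-degree coset swaps the two $\beta$-eigenlines and so has trace zero, and in the even case you use $F^r=c^sI$ to reduce to whether $c^{sm}=1$. The only cosmetic difference is in the odd case, where the paper evaluates $\det(F^r-\beta^j)=1+q^r$ directly from trace $0$ and determinant $q^r$, whereas you obtain the same characteristic polynomial $T^2-c^r$ via $(\beta^aF^r)^2=c^rI$.
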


\begin{proof}
Let $r$ be odd.  For each $j$ put $A_j=\beta^{-j}F^r$.  The operator $A_j$
still exchanges the two $\beta$-eigenlines, so
$\operatorname{tr}(A_j)=0$, while $\det(A_j)=q^r$.  Since
$F^r-\beta^j=\beta^j(A_j-I)$ and $\det(\beta)=1$,
\[
 \det(F^r-\beta^j)=\det(A_j-I)=1+q^r,
\]
independently of $j$.  The exact extension-field kernel criterion therefore
reduces to $q^r+1\not\equiv0\pmod\ell$.  Since $r$ is odd, this is
equivalent to $c^r\ne1$, hence to $h\nmid r$.

If $r=2s$, Lemma~\ref{lem:nonsplit-frobenius-square} gives
$F^r=c^sI$.  As $j$ varies, the eigenvalues of $\beta^j$ run through all
$m$th roots of unity.  Hence $c^sI-\beta^j$ is singular for some $j$
exactly when $(c^s)^m=1$, equivalently when $h\mid ms$, or
$h\mid mr/2$.  This proves both formulations and the ground-field criterion.
\end{proof}

\begin{corollary}[Constant field and period]
\label{cor:nonsplit-constant-field-period}
Under the hypotheses of
Theorem~\ref{thm:nonsplit-parity-residue-criterion}, let
\[
 a:=\gcd(h,m),
 \qquad
 \varepsilon_h:=
 \begin{cases}
 1,&h\text{ odd},\\
 0,&h\text{ even}.
 \end{cases}
\]
Then
\[
 \begin{aligned}
 c_{\mathcal N}&=\frac{2h}{\gcd(h,\gcd(m,2))},\\
 \operatorname{per}(f)&=
 \begin{cases}
 2h/a,&h\text{ even},\\[1mm]
 h,&h\text{ odd and }a=1,\\
 2h,&h\text{ odd and }a>1,
 \end{cases}\\
 \delta_f&=1-\frac{a+\varepsilon_h}{2h}.
 \end{aligned}
\]
Thus this least period can be smaller than $c_{\mathcal N}$.
\end{corollary}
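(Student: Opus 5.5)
The plan is to work entirely on the kernel $\mathcal N=E[\ell]$, using the relations $F^2=cI$ from Lemma~\ref{lem:nonsplit-frobenius-square} and $F\beta F^{-1}=\beta^{-1}$ (here $q\equiv-1\pmod m$). We compute $c_{\mathcal N}$ directly from its definition as $\min\{\nu:F^\nu\in\langle\beta\rangle\}$. For $\delta_f$ and $\operatorname{per}(f)$ we read everything off the explicit description of $\mathcal D(f)$ in Theorem~\ref{thm:nonsplit-parity-residue-criterion}, treated as a subset of $\mathbb Z/2h\mathbb Z$.

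\emph{Constant-field degree.} First, odd powers never lie in $\langle\beta\rangle$. If $\nu$ is odd, then $F^\nu\beta F^{-\nu}=\beta^{-1}\ne\beta$ because $m>2$. Every element of $\langle\beta\rangle$ commutes with $\beta$, so $F^\nu\notin\langle\beta\rangle$.

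Next take $\nu=2s$, so that $F^\nu=c^sI$ is a scalar. The scalar matrices in $\langle\beta\rangle$ are determined by the eigenvalues $\rho^j,\rho^{-j}$ of $\beta^j$, which coincide only when $\rho^{2j}=1$. Hence these scalars are $\{I\}$ when $m=3$, and $\{\pm I\}$ when $m\in\{4,6\}$, with $-I=\beta^{m/2}$. The least $s$ with $c^s\in\{\pm1\}$ is therefore as follows.
\begin{itemize}
\item If $m$ is even and $h$ is even, it is $s=h/2$, since $-1=c^{h/2}$.
\item In every other case it is $s=h$: either $-1\notin\langle c\rangle$ because $h$ is odd, or $-1$ is not available because $m=3$.
\end{itemize}
This gives $c_{\mathcal N}=2h/\gcd(h,\gcd(m,2))$ in all cases.

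\emph{Limiting proportion.} By Theorem~\ref{thm:nonsplit-parity-residue-criterion}, whether $r\in\mathcal D(f)$ depends only on $r$ modulo $2h$. For odd $r$ the condition is $h\nmid r$. For even $r=2s$ the condition $h\nmid ms$ is equivalent to $(h/a)\nmid s$, that is, $(2h/a)\nmid r$.
\begin{itemize}
\item Odd residues mod $2h$. When $h$ is odd, these correspond bijectively to residues mod $h$ by the Chinese remainder theorem, so a fraction $1/h$ of them fail. When $h$ is even, no odd $r$ is divisible by $h$, so none fail. In both cases the failing fraction is $\varepsilon_h/h$.
\item Even residues $r=2s$ with $s$ mod $h$. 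A fraction $a/h$ of them fail.
\end{itemize}
Averaging the two halves gives $\delta_f=1-(a+\varepsilon_h)/(2h)$.

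\emph{Least period.} We determine the translation stabilizer in $\mathbb Z/2h\mathbb Z$ of the complement $Z$ of $\mathcal D(f)$; the least period is the index of that stabilizer.
\begin{itemize}
\item If $h$ is even, $Z$ is the subgroup generated by $2h/a$. Its stabilizer is $Z$ itself, so $\operatorname{per}(f)=2h/a$.
\item If $h$ is odd, $Z$ is the union of the odd multiples of $h$ and the multiples of $2h/a$.
\begin{itemize}
\item When $a=1$, this union is exactly the subgroup $h\mathbb Z/2h\mathbb Z$, so $\operatorname{per}(f)=h$.
\item When $a>1$ (so $a=3$), $Z=\langle h\rangle\cup\langle 2h/3\rangle$ is a union of two subgroups, neither containing the other. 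Reuse the argument at the end of Theorem~\ref{thm:split-spectral-criterion}. A nonzero stabilizing translation $t$ must lie in $Z$. Adding to it an element of the other subgroup not in its own subgroup produces an element outside $Z$. Hence the stabilizer is trivial and $\operatorname{per}(f)=2h$.
\end{itemize}
\end{itemize}

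The main obstacle is this last case distinction for $h$ odd and $a>1$. There one must check that $h$ and $2h/3$ generate incomparable subgroups of $\mathbb Z/2h\mathbb Z$, so that the union argument applies. One must also take care that $\mathcal D(f)$ is genuinely determined modulo $2h$, rather than only modulo $c_{\mathcal N}$; this is what allows the final remark that the period can be strictly smaller than $c_{\mathcal N}$, for instance when $h$ is even and $a=1$.
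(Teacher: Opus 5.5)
Your derivations of $c_{\mathcal N}$, $\delta_f$ and $\operatorname{per}(f)$ are correct and follow essentially the paper's route. You exclude odd powers of $F$ via $F\beta F^{-1}=\beta^{-1}$, compare even powers with the scalar subgroup $\{I\}$ or $\{\pm I\}$ of $\langle\beta\rangle$, get the density from the two disjoint excluded progressions, and get the period from the excluded set modulo $2h$. The one variation is the subcase $h$ odd, $a=3$. There you reuse the two-subgroup argument from Theorem~\ref{thm:split-spectral-criterion}, whereas the paper directly checks the translations by $2b,3b,4b$ modulo $6b$. That is fine, but you should state that $\langle h\rangle$ and $\langle 2h/3\rangle$ have orders $2$ and $3$ and so meet only in $0$. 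A nonzero element of the intersection would stabilize the union, and ``the other subgroup'' would not be defined for it.

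The genuine error is in your justification of the last sentence of the statement. Your proposed instance, $h$ even with $a=1$, does not give a period smaller than $c_{\mathcal N}$. If $h$ is even, then $\gcd(h,4)$ and $\gcd(h,6)$ are at least $2$, so $a=1$ forces $m=3$. Your own formulas then give $c_{\mathcal N}=2h=2h/a=\operatorname{per}(f)$.

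Correct instances can be read off from the same formulas:
\begin{itemize}
\item $h$ odd with $a=1$: period $h$ versus $c_{\mathcal N}=2h$.
\item $m=4$ with $4\mid h$: period $h/2$ versus $h$. For example, with $q=7$ and $\ell=5$, $-q\equiv3$ has order $h=4$, so $c_{\mathcal N}=4$, while $\mathcal D(f)$ is the set of odd $r$.
\item $m=3$ with $6\mid h$: period $2h/3$ versus $2h$.
\item $m=6$ with $6\mid h$: period $h/3$ versus $h$.
\end{itemize}

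Relatedly, your remark that $\mathcal D(f)$ must be ``determined modulo $2h$ rather than only modulo $c_{\mathcal N}$'' has the logic reversed. Since $c_{\mathcal N}\mid 2h$, periodicity modulo $c_{\mathcal N}$ is the stronger fact. Working modulo $2h$ is harmless because the least period of a purely periodic set does not depend on the modulus chosen. A period strictly below $c_{\mathcal N}$ occurs exactly when the stabilizer $H_{\mathcal N}$ of $\mathcal C_{\mathcal N}^{\mathrm{fpf}}$ is nontrivial; it has nothing to do with the choice of modulus.
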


\begin{proof}
No odd power of $F$ lies in $\langle\beta\rangle$, since it conjugates
$\beta$ to $\beta^{-1}$ whereas powers of $\beta$ centralize it.  For
$2s$, one has $F^{2s}=c^sI$; the scalar subgroup of $\langle\beta\rangle$
is $\{I\}$ for odd $m$ and $\{\pm I\}$ for even $m$.  Thus the least
positive even power of $F$ lying in $\langle\beta\rangle$ gives
$c_{\mathcal N}=2h/\gcd(h,\gcd(m,2))$.  Writing $a=\gcd(h,m)$, the
even condition $h\mid mr/2$ is equivalent to $2h/a\mid r$; the latter
modulus is even.  Hence the excluded odd degrees are the odd multiples of
$h$, while the excluded even degrees are the multiples of $2h/a$.  Counting
these disjoint progressions gives the displayed formula for $\delta_f$.

It remains to determine the least period.  If $h$ is even, there are no
excluded odd degrees, so the excluded set is exactly the set of multiples of
$2h/a$ and its least period is $2h/a$.  If $h$ is odd and $a=1$, the two
progressions combine to give exactly the multiples of $h$, so the least
period is $h$.

Finally suppose that $h$ is odd and $a>1$.  Since
$m\in\{3,4,6\}$, necessarily $a=3$.  Write $h=3b$; then $b$ is odd.  Modulo
$6b=2h$, the excluded residue classes are $\{0,2b,3b,4b\}$.  Any
stabilizing translation must send $0$ into this set, so it suffices to test
the three nonzero candidates.  Translation by $2b$ sends $3b$ to $5b$,
translation by $3b$ sends $2b$ to $5b$, and translation by $4b$ sends $3b$
to $b$, none of which is excluded.  Hence its least period is $6b=2h$.
\end{proof}

\begin{corollary}[Characteristic-two nonsplit cubic criterion]
Under the standing hypotheses of this subsection, let
$q=2^a$ with $a$ odd and let $m=3$.  Thus
$\mathcal N=E[\ell](\bar k)$ with $\ell\ne2,3$.  Let $h$ be the multiplicative order in $\mathbb F_\ell^\times$ of the
residue class of $-q$.
Then
\[
 r\in\mathcal D(f)
 \Longleftrightarrow
 \begin{cases}
 h\nmid r,&r\text{ odd},\\
 h\nmid3r/2,&r\text{ even},
 \end{cases}
 \qquad
 c_{\mathcal N}=2h.
\]
In particular, the ground-field map is permutation, equivalently
exceptional, exactly when $\ell\nmid q+1$.
\end{corollary}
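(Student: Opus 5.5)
This corollary is a specialization of Theorem~\ref{thm:nonsplit-parity-residue-criterion} and Corollary~\ref{cor:nonsplit-constant-field-period}, so the plan is to verify their standing hypotheses in this setting and then substitute $m=3$.

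First, the hypotheses. Since $q=2^a$ with $a$ odd, we have $q\equiv 2^{a}\equiv 2\equiv -1\pmod 3$. Hence $m=3\in\{3,4,6\}$ and $q\equiv-1\pmod m$, which is the nonsplit case. The standing hypotheses of this subsection already require $\ell$ odd, $\ell\ne\operatorname{char}k=2$ and $\ell\nmid m=3$, so $\ell\ne2,3$. They also give $\operatorname{char}k=2\nmid 3$, so Lemma~\ref{lem:cyclic-frobenius-prime-to-p} supplies $F\beta F^{-1}=\beta^q=\beta^{-1}$. Nothing further needs checking: the subsection explicitly allows the tame case $m=3$ in characteristic $2$. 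The element $c$ is the class of $-q$, so the $h$ defined here is exactly the one in Theorem~\ref{thm:nonsplit-parity-residue-criterion}.

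Second, the criterion for $\mathcal D(f)$. Substituting $m=3$ into Theorem~\ref{thm:nonsplit-parity-residue-criterion} gives the stated dichotomy directly: $h\nmid r$ for odd $r$, and $h\nmid 3r/2$ for even $r$. Taking $r=1$, the condition becomes $h\nmid1$, that is, $-q\not\equiv1\pmod\ell$, that is, $\ell\nmid q+1$. This is the ground-field statement. Its equivalence with exceptionality comes from the same theorem, or from Corollary~\ref{cor:permutation-exceptional}.

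Third, the constant-field degree. Corollary~\ref{cor:nonsplit-constant-field-period} gives $c_{\mathcal N}=2h/\gcd(h,\gcd(m,2))$. With $m=3$ we have $\gcd(3,2)=1$, so $c_{\mathcal N}=2h$. The underlying reason is the one used in that proof: the scalar subgroup of $\langle\beta\rangle$ is trivial for odd $m$, so $F^{2s}=c^sI$ lies in $\langle\beta\rangle$ exactly when $c^s=1$, while odd powers of $F$ never lie in it. The only point needing any care is the congruence $q\equiv-1\pmod3$ for odd $a$, which is elementary, so I expect no real obstacle.
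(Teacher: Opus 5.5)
Your proposal is correct and follows the paper's proof, which is simply the specialization $m=3$ of Theorem~\ref{thm:nonsplit-parity-residue-criterion} and Corollary~\ref{cor:nonsplit-constant-field-period}. Your added checks are both right: $q=2^a\equiv(-1)^a=-1\pmod 3$ for odd $a$, and $\gcd(h,\gcd(3,2))=1$, which gives $c_{\mathcal N}=2h$.
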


\begin{proof}
This is Theorem~\ref{thm:nonsplit-parity-residue-criterion} and
Corollary~\ref{cor:nonsplit-constant-field-period} with $m=3$.
\end{proof}

\section{Explicit examples: quotient symmetry and affine translations}
\label{sec:examples}

Examples~\ref{ex:c3} and~\ref{ex:c6} show that changing the linear quotient
symmetry can change the permutation extension degrees while the curve, kernel,
and Frobenius action stay fixed.  The following rank-one remark and
Example~\ref{ex:c4} provide complementary rank-one and nonsplit realizations.
The final example changes the translation component, quotient morphism, and
coordinate without changing the finite-kernel criterion once the induced
linear action is fixed.

\begin{example}[A split $C_3$ quotient]
\label{ex:c3}
Let $E_1/\mathbb F_7:y^2=x^3+1$, and let
$\Lambda=\langle\beta\rangle\cong C_3$ act on $E_1$ by
$\beta(x,y)=(\zeta_3x,y)$.  The quotient coordinate is $y$, and Frobenius
acts trivially on $C_3$ because $7\equiv1\pmod3$.  Take $\varphi=[5]$.
Then $\lvert E_1(\mathbb F_7)\rvert=12$ and $a_7=-4$.  Hence
$a_{7^3}=a_7^3-3\cdot7a_7=20$, so
$\lvert E_1(\mathbb F_{7^3})\rvert=7^3+1-20=324$.
Since $\gcd(5,324)=1$, Proposition~\ref{prop:point-count-multiplication-maps} shows that the
induced degree-$25$ map
\(f^{(3)}_{5,E_1}:E_1/C_3\longrightarrow E_1/C_3\)
permutes $\mathbf P^1(\mathbb F_7)$.

The permutation extension degrees are equally explicit.  On $E_1[5]$, the
Frobenius polynomial is
\(T^2-a_7T+7 \equiv T^2-T+2 \pmod5.\)
It is irreducible over $\mathbb F_5$.  If $\alpha$ is a root in
$\mathbb F_{25}$, then
\(\alpha^{12}=-1, \qquad \alpha^8=1+2\alpha\ne1.\)
Since $\mathbb F_{25}^{\times}$ has order $24$, $\alpha$ has order $24$;
its conjugate has the same order.  The $C_3$ action remains split over every
extension, and Theorem~\ref{thm:cyclic} applied over $\mathbb F_{7^r}$ gives
\[
 r\in\mathcal D(f^{(3)}_{5,E_1})
 \Longleftrightarrow 24\nmid3r
 \Longleftrightarrow 8\nmid r,
 \qquad
 \mathcal D(f^{(3)}_{5,E_1})=\{r\geqslant1:8\nmid r\},
\]
with $\operatorname{per}(f^{(3)}_{5,E_1})=8$ and $\delta_{f^{(3)}_{5,E_1}}=7/8$.
\end{example}

\begin{example}[The same isogeny with a split $C_6$ quotient]
\label{ex:c6}
Keep $E_1/\mathbb F_7:y^2=x^3+1$ and $\varphi=[5]$, but let
$\Lambda=\langle\beta\rangle\cong C_6$ act on $E_1$ by
$\beta(x,y)=(\zeta_3x,-y)$.  The quotient coordinate is
$x^3$, and Frobenius acts trivially on $C_6$ because
$7\equiv1\pmod6$.  Let
\(f^{(6)}_{5,E_1}:E_1/C_6\longrightarrow E_1/C_6\)
be the induced degree-$25$ map.  Its kernel and Frobenius action are exactly
those of Example~\ref{ex:c3}, so the Frobenius eigenvalues on $E_1[5]$ again
have order $24$.  Applying the split $C_6$ criterion over
$\mathbb F_{7^r}$ therefore gives
\[
 r\in\mathcal D(f^{(6)}_{5,E_1})
 \Longleftrightarrow 24\nmid6r
 \Longleftrightarrow 4\nmid r,
 \qquad
 \mathcal D(f^{(6)}_{5,E_1})=\{r\geqslant1:4\nmid r\}.
\]
In particular, $f^{(6)}_{5,E_1}$ permutes
$\mathbf P^1(\mathbb F_7)$, with $\operatorname{per}(f^{(6)}_{5,E_1})=4$ and $\delta_{f^{(6)}_{5,E_1}}=3/4$.

Comparing Examples~\ref{ex:c3} and~\ref{ex:c6}, the curve $E_1$, the
isogeny $[5]$, the kernel $E_1[5]$, and its Frobenius action are unchanged,
whereas the quotient symmetry changes from $C_3$ to $C_6$.  Nevertheless,
the set of permutation extension degrees changes from $\{r:8\nmid r\}$ to
$\{r:4\nmid r\}$, while the least period changes from $8$ to $4$ and $\delta_f$
from $7/8$ to $3/4$.  Thus these extension degrees are not determined by the isogeny kernel and
Frobenius alone; the induced quotient action is essential.
\end{example}

\begin{remark}[Effect of the quotient symmetry order in rank one]
There is an analogous rank-one phenomenon on the same CM curve.  Let
\(E/\mathbb F_7:y^2=x^3+1.\)
Its Frobenius polynomial is
\(T^2+4T+7\equiv(T-4)(T-5)\pmod{13}.\)
Let $\mathcal N\subset E[13]$ be the Frobenius eigenspace with multiplier
$5$.  Since $13\equiv1\pmod6$, this line is stable under both the
order-three and order-six linear symmetries.  The same cyclic $13$-isogeny
kernel therefore gives
\(c_{\mathcal N}^{(3)}=\operatorname{ord}_{\mathbb F_{13}^\times}(5^3)=4, \qquad c_{\mathcal N}^{(6)}=\operatorname{ord}_{\mathbb F_{13}^\times}(5^6)=2.\)
Thus the corresponding common sets of permutation and exceptionality extension
degrees are
\(\{r\geqslant1:4\nmid r\} \qquad\text{and}\qquad \{r\geqslant1:2\nmid r\},\)
respectively.  Hence the dependence on the quotient symmetry order visible in
Examples~\ref{ex:c3} and~\ref{ex:c6} already occurs for a cyclic
prime-degree isogeny kernel.
\end{remark}

\begin{example}[An explicit nonsplit $C_4$ permutation]
\label{ex:c4}
Let $E_2/\mathbb F_7:y^2=x^3+x$, so $j(E_2)=1728$, and let
$\Lambda=\langle\beta\rangle\cong C_4$ act on $E_2$ by
$\beta(x,y)=(-x,\zeta_4y)$.  Take the quotient coordinate
$u=x^2$.  Since $7\equiv3\pmod4$, the $C_4$ action is nonsplit:
Frobenius acts by inversion.  Take $\varphi=[3]$.  A direct point count gives
$\lvert E_2(\mathbb F_7)\rvert=8$ and $a_7=0$, hence
$\lvert E_2(\mathbb F_{7^2})\rvert=64$.  Since $\gcd(3,64)=1$,
Proposition~\ref{prop:point-count-multiplication-maps} shows that the quotient map is a permutation
over $\mathbb F_7$.

The tripling formula is
\[
 x([3]P)
 =
 x(P)
 \left(
 \frac{x(P)^4-6x(P)^2-3}
 {3x(P)^4+6x(P)^2-1}
 \right)^2.
\]
Passing to $u=x^2$ yields the degree-$9$ rational function
\begin{equation}
\label{eq:c4-explicit}
 f^{(4)}_{3,E_2}(u)
 =
 u
 \left(
 \frac{u^2-6u-3}
 {3u^2+6u-1}
 \right)^4
 \in\mathbb F_7(u).
\end{equation}
Direct evaluation on $\mathbf P^1(\mathbb F_7)$ gives
\[
\begin{array}{c|cccccccc}
u&0&1&2&3&4&5&6&\infty\\ \hline
f^{(4)}_{3,E_2}(u)
&0&1&4&3&2&5&6&\infty,
\end{array}
\]
so \eqref{eq:c4-explicit} fixes six points and interchanges $2$ and $4$.
Thus \eqref{eq:c4-explicit} gives a concrete coordinate realization of the
nonsplit quotient criterion.

To determine all permutation extension degrees, note that Frobenius on $E_2[3]$ has
polynomial
\(T^2+7\equiv T^2+1\pmod3,\)
whose roots have order $4$ in $\mathbb F_9^\times$.  If $r$ is odd, the
quotient remains nonsplit and $\mathcal F_{E_2}^{2r}=-1$ on $E_2[3]$, so the
criterion is satisfied.  If $r$ is even, the quotient becomes split and
$\mathcal F_{E_2}^{4r}=1$ on $E_2[3]$, so it fails.  Therefore
\[
 \mathcal D(f^{(4)}_{3,E_2})
 =
 \{r\geqslant1:r\text{ is odd}\},
\]
with $\operatorname{per}(f^{(4)}_{3,E_2})=2$ and $\delta_{f^{(4)}_{3,E_2}}=1/2$.  In particular,
\eqref{eq:c4-explicit} is exceptional and permutes precisely over the
odd-degree extensions of $\mathbb F_7$.
\end{example}

\begin{example}[A non-origin-preserving $C_2$-action over $\mathbb F_5$]
\label{ex:affine-c2}
Let
\(E/\mathbb F_5:\ y^2=x^3+x, \qquad a=(0,0), \qquad \iota_a(P)=a-P, \qquad \varphi=[3].\)
The group $E(\mathbb F_5)=\{\mathrm O_E,(0,0),(2,0),(3,0)\}$ is
$C_2\times C_2$, so $a\notin2E(\mathbb F_5)$.  Hence $\iota_a$ has no
$\mathbb F_5$-rational fixed point and is not $\mathbb F_5$-conjugate to an
origin-preserving involution.  Since $3a=a$, one has
$[3]\iota_a=\iota_a[3]$.

For $P=(x,y)$ the group law gives
\(\iota_a(x,y)=\left(\frac1x,\frac{y}{x^2}\right),\)
so $v:=y/x$ is invariant and $v^2=x+x^{-1}$.  Hence $x$ satisfies
$x^2-v^2x+1=0$, so $[k(E):k(v)]\leqslant2$.  Since the nontrivial
involution $\iota_a$ fixes $v$, one has
$k(v)\subseteq k(E)^{\langle\iota_a\rangle}$ and
$[k(E):k(E)^{\langle\iota_a\rangle}]=2$; therefore equality holds and
$v$ is a quotient coordinate.  Eliminating $x$ from the tripling formulas
yields the induced degree-$9$ rational function
\[
 f_a(v)=
 v\frac{v^8-v^4+2}{3v^8+v^4-1}
 \in\mathbb F_5(v).
\]
For $v\in\mathbb F_5^\times$, $v^4=1$, so $f_a(v)=-v$; moreover
$f_a(0)=0$ and $f_a(\infty)=\infty$.  Thus $f_a$ permutes
$\mathbf P^1(\mathbb F_5)$.

Here $\lvert E(\mathbb F_5)\rvert=4$, so the Frobenius polynomial on $E[3]$ is
\(T^2-2T+5\equiv T^2+T+2\pmod3.\)
Its roots in $\mathbb F_9^\times$ have order $8$.  Since the image of the linear-part homomorphism for
$\langle\iota_a\rangle$ is $C_2$, Theorem~\ref{thm:cyclic} over $\mathbb F_{5^r}$
gives
\[
 \mathcal D(f_a)
 =\{r\geqslant1:4\nmid r\},
 \qquad
 \operatorname{per}(f_a)=4,
 \qquad
 \delta_{f_a}=\frac34.
\]
Thus this affine action is not conjugate over the ground field to an
origin-preserving one, yet its permutation extension degrees are determined by
the same linear $C_2$ action on the kernel.
\end{example}

Together, these examples and the rank-one remark separate the roles of
linear and translation data.  The linear quotient symmetry can change the
permutation extension degrees, whereas the translation component can change the
$k$-rational quotient morphism and coordinate without changing the kernel
criterion once the induced linear action is fixed.

\section{Concluding remarks}

Together with the genus-zero permutation--exceptionality theorem
\cite{FanG0}, Theorems~\ref{thm:intro-genus-at-most-one-permutation-exceptionality}
and~\ref{thm:intro-genus-at-most-one-decomposition-extension-degrees} yield two
conclusions for every separable rational function whose Galois closure has
genus at most one.  Over every finite extension, the function is a permutation
if and only if it is exceptional, with no indecomposability hypothesis.  For
every $k$-decomposition, the common set $\mathcal D(f)$ is the intersection of
the corresponding sets $\mathcal D(f_i)$ for the factors.  Each factor's
Galois-closure curve is realized by a quotient of the original curve and hence
again has genus zero or one, so the genus-zero and $k$-indecomposable genus-one
results combine by repeated application of \cite[Lemma~3.5]{GMS03}.

For every equivariant-isogeny quotient, the finite data
$(\mathcal N,\mathcal F_{\mathcal N},\Lambda_{\mathcal N})$ determine
permutation and exceptionality over every finite extension, the arithmetic and
geometric monodromy permutation groups, the full constant fields after base
change, and the $k$-decomposition classes.  For a degree-greater-than-one
$k$-indecomposable rational function whose normal closure has genus one,
Theorem~\ref{thm:intro-reconstruction} reconstructs such a datum from the
rational function itself.

The examples also distinguish the roles of linear and translation data.  The linear quotient
symmetry can change the permutation extension degrees, whereas translation
components can change the $k$-rational quotient morphism and coordinate without
changing the kernel criterion or monodromy once the induced linear action on
$\mathcal N$ is fixed; see Example~\ref{ex:affine-c2}.

The only companion-manuscript input needed for the genus-at-most-one conclusion
is the genus-zero permutation--exceptionality theorem \cite{FanG0}; the genus-one
reconstruction, the factorization determined by the intrinsic translation
subgroup, the kernel criterion, affine monodromy, decomposition, and
extension-degree theory are proved here.
The companion \cite{FanEERF} treats the tame $k$-indecomposable exceptional
locus and develops the additional fixed-field classification there: affine
translation parameters and branch arithmetic, exact $k$-M\"obius equivalence,
prescribed-field models, occurrence, and exact enumeration.  The structural
results proved here apply to that locus, while those fixed-field classification
and counting results are not used in the proofs here.

Finally, Remark~\ref{rem:finite-procedure} gives a finite procedure for
computing $\mathcal D(f)$, $\operatorname{per}(f)$, and $\delta_f$ from kernel
or determinant tests.

\newcommand{\etalchar}[1]{$^{#1}$}
\providecommand{\bysame}{\leavevmode\hbox to3em{\hrulefill}\thinspace}
\providecommand{\MR}{\relax\ifhmode\unskip\space\fi MR }
\providecommand{\MRhref}[2]{%
  \href{http://www.ams.org/mathscinet-getitem?mr=#1}{#2}
}
\providecommand{\href}[2]{#2}

\end{document}